\documentclass[11pt]{amsart}
\usepackage{packages}

\title[Maximum Cuts and Fractional Cut Covers: A Computational Study]{%
  Maximum Cuts and Fractional Cut Covers:\\
  A Computational Study of a Randomized\\
  Semidefinite Programming Approach%
}
\author[N. Benedetto Proença]{Nathan Benedetto Proença\textsuperscript{1\P\textdagger}}
\address{%
  \textsuperscript{1}Department of Combinatorics and Optimization, University of Waterloo
}
\thanks{%
  \textsuperscript{\P}%
  Research of this author was supported in part by a
  Discovery Grant from the Natural Sciences and Engineering
  Research Council (NSERC) of Canada%
}
\author[M.K. Carli Silva]{Marcel K. Carli Silva\textsuperscript{2\textasteriskcentered}}
\address{%
  \textsuperscript{2}Institute of Mathematics, Statistics,
  and Computer Science,
  University of São Paulo%
}
\thanks{%
  \textsuperscript{\textasteriskcentered}%
  This work was partially supported by CNPq (408180/2023-4)
  and FAPESP (2023/03167-5).%
}
\author[C.M. Sato]{Cristiane M. Sato\textsuperscript{3\textasteriskcentered}}
\address{%
  \textsuperscript{3}Center for Mathematics, Computing and Cognition, Federal University of the ABC Region%
}
\author[L. Tunçel]{Levent Tunçel\textsuperscript{1\P}}
\thanks{%
  \textsuperscript{\textdagger}%
  Corresponding Author: Nathan Benedetto Proença.
  Affiliation: University of Waterloo.
  E-mail: \texttt{n2benede@uwaterloo.ca}%
}

\begin{document}

\begin{abstract}

We present experimental work on a primal-dual framework simultaneously
approximating maximum cut and weighted fractional cut-covering instances.
In this primal-dual framework, we solve a semidefinite programming
(SDP) relaxation to either the maximum cut problem or to the weighted
fractional cut-covering problem, and then independently sample a
collection of cuts via the random-hyperplane technique.
We then simultaneously certify the approximate optimality of a cut and
a fractional cut cover.
We present several implementations which reliably achieve the
celebrated Goemans and Williamson approximation ratio of \(\GWalpha
\approx 0.878\) for both optimization problems simultaneously, after \(\ceil{128 \ln m}\) samples, a number
significantly smaller than the best theoretical bounds.

This is the first experimental work approximating the weighted
fractional cut-covering problem, and we deliver robust and repeatable
results despite the use of randomized algorithms and floating-point
arithmetic.
Careful pre-processing of instances and post-processing of numeric
results allow for good empirical outcomes with both first-order and
second-order SDP solvers.
Nearly optimal SDP solutions are suitably perturbed to ensure better
probabilistic and numerical behavior.
Our experiments deviate from theory by using a linear programming (LP)
solver to compute fractional cut covers.
For most instances studied, LP solving produces certifiably better
results than the theoretical algorithm after \(\ceil{128 \ln m}\)
samples.
All our experiments strictly follow a unified pipeline which
explicitly documents all parameters used in each run.
\end{abstract}

\maketitle

\section{Introduction}
\label{sec:intro}

Let \(G = (V, E)\) be a (simple) graph with nonnegative edge weights \(w\in \Lp{E}\). The \emph{cut} with \emph{shore} \(S \subseteq V\) is the set \(\delta(S)\) of edges with exactly one endpoint in \(S\). The weight of the cut \(\delta(S)\) is 
\(\iprodt{w}{\incidvector{\delta(S)}}\), where \(\incidvector{F} \in
\set{0, 1}^E\) denotes the \emph{incidence vector} of \(F\) for every \(F\subseteq E\).
The \emph{maximum cut problem} (or \emph{maxcut problem}) 
\emph{for \((G,w)\)} is to find a cut that achieves the maximum in
\begin{equation}
  \label{eq:mc-def}
  \mc(G, w) \coloneqq \max\setst{
    \iprodt{w}{\incidvector{\delta(S)}}
  }{
    S \subseteq V
  }.
\end{equation}

The maxcut problem is a cornerstone of combinatorial optimization, having attracted continuous research interest from multiple viewpoints for over six decades.
Its influence spans a remarkably diverse range of fields,
reaching deep into graph theory, computational complexity,
statistical physics, circuit design, and quantum computing, among others.
One of the earliest results, proved by Erd\H{o}s~\cite{Erdos1965} in 1965,
is that every graph has a cut containing at least half of its edges,
from which a simple \(2\)-approximation algorithm follows 
(see also \cite{Edwards1975,PoljakTurzik1986}).
The maxcut problem has exceptionally broad applicability, with its relevance growing as new technologies and connections emerge, including Ising spin glass models~\cite{Barahona1982}, VLSI circuit layout~\cite{AL2012}, and quantum computing ~\cite{GP2019,AGM2020}.
Moreover, it~holds a prominent place in computational complexity theory~\cite{Karp1972,KKMO2007,KV2015}.
Many computational studies have been conducted using a variety of approaches~\cite{BJR1989,PR1990,PR1995,CCTW1996,HR1998,RRW2010,BJRR2014,KMR2017,MirkaWilliamson2023},
including heuristics, polyhedral methods, spectral techniques, and semidefinite programming techniques.

Goemans and Williamson's approximation algorithm
\cite{GoemansWilliamson1995} for the maxcut problem is one of the
most celebrated applications of semidefinite programming. 
They obtained a randomized \(\GWalpha\)\nobreakdash-approximation algorithm, where \(\GWalpha \coloneqq \min \setst{
  \frac{2}{\pi}
  \frac{\vartheta}{1 - \cos \vartheta}
}{0 < \vartheta \le \pi} \approx 0.87856\).
The heart of the algorithm uses an optimal solution of a semidefinite programming
relaxation in a randomized rounding procedure that samples a cut.
The relaxation can be written as follows.
Let \(\Sym{V}\) denote the set of \(V \times V\) symmetric matrices
and let \(\Psd{V}\subseteq \Sym{V}\) denote the set of positive
semidefinite matrices.
For every \(w \in \Reals^E\), we~denote by \(\Laplacian_G(w) \coloneqq
\sum_{ij \in E} w_{ij}\soprod{(e_i - e_j)} \in \Sym{V}\) the
\emph{weighted Laplacian of \(G\)},
where the elements of \(\setst{e_i}{i \in V}\) are the canonical basis vectors of \(\Reals^V\).
The map \(\Diag \colon \Reals^V \to \Sym{V}\) builds a diagonal matrix
from a vector, while \(\diag \colon \Sym{V} \to \Reals^V\) extracts
the diagonal of a matrix; \(\ones\) denotes the vector of all-ones.
For every \(w \in \Lp{E}\), set
\begin{subequations}
  \label{eq:GW-def}
  \begin{align}
    \label{eq:GW-supf}
    \GW(G, w)
    &\coloneqq \max\setst{
      \iprod{\tfrac{1}{4}\Laplacian_G(w)}{Y}
      }{
      Y \in \Psd{V},\,
      \diag(Y) = \ones
      }\\
    \label{eq:GW-gaugef}
    &= \min \setst{
      \iprodt{\ones}{x}
      }{
      x \in \Reals^V,\,
      \Diag(x) \succeq \tfrac{1}{4}\Laplacian_G(w)
      }.
  \end{align}
\end{subequations}
(We denote by \(\iprod{A}{B} \coloneqq
\trace(AB)\) the \emph{trace inner product} of \(A,\, B \in \Sym{V}\)
and by \(A \succeq B\) the fact that \(A - B \in \Psd{V}\).)
That~is, \(\GW(G, w)\) is the optimal value of the semidefinite
program (SDP) in~\cref{eq:GW-supf}, which can be approximated to arbitrary precision
in polynomial time; note that \cref{eq:GW-supf} and \cref{eq:GW-gaugef} have Slater points.
Goemans and Williamson's randomized hyperplane technique samples a cut with
expected weight at least \(\GWalpha \GW(G, w)\), which implies that
\begin{equation}
  \label{eq:GW-bound}
  \GWalpha \GW(G, w)
  \le \mc(G, w)
  \le \GW(G, w).
\end{equation}

We are interested in a deeply related problem: the weighted
fractional cut-covering problem.
Let \(\Powerset{V}\) denote the power set of \(V\).
For any \(z \in \Lp{E}\), 
the \emph{fractional cut-covering problem for \((G,z)\)} is to find an optimal solution to
\begin{align}
  \label{eq:fcc-def}
  \fcc(G, z)
  \coloneqq \min \setst[\bigg]{
    \iprodt{\ones}{y}
  }{
    y \in \Lp{\Powerset{V}},\,
    \sum_{S \subseteq V} y_S \incidvector{\delta(S)} \ge z
  }.
\end{align}
A \emph{fractional cut cover for \((G,z)\)} is
a feasible solution to~\cref{eq:fcc-def}.
Every fractional cut cover provides a lower bound on the maximum cut
value of every \(w \in \Lp{E}\):
\begin{equation}
  \label{eq:intro-Cauchy-Schwarz}
  \iprodt{w}{z}
  \le 
  w^\transp \paren[\bigg]{\,
    \sum_{S \subseteq V} y_S \incidvector{\delta(S)}
  }
  = \sum_{S \subseteq V} y_S \cdot \iprodt{w}{\incidvector{\delta(S)}}
  \le \paren[\bigg]{\,\sum_{S \subseteq V} y_S} \mc(G, w)
  = (\iprodt{\ones}{y}) \mc(G, w).
\end{equation}
The relationship between the maximum cut value for different weights
\(w \in \Lp{E}\) and fractional cut covers for \(z \in \Lp{E}\) can be
precisely captured by antiblocker duality
\cite{Fulkerson1971,Fulkerson1972}.
In particular, linear programming (LP) strong duality implies that
\begin{equation}
  \label{eq:fcc-as-mc-polar}
  \fcc(G, z) = \max\setst{
    \iprodt{z}{w}
  }{
    w \in \Lp{E},\,\mc(G, w) \le 1
  }.
\end{equation}
Dually, we have that
\begin{equation}
  \label{eq:mc-as-fcc-polar}
  \mc(G, w) = \max\setst{\iprodt{w}{z}}{
    z \in \Lp{E},\,
    \fcc(G, z) \le 1
  }.
\end{equation}
Indeed, using that \(\fcc(G, \incidvector{\delta(S)}) \le 1\) for
every \(S \subseteq V\) and \cref{eq:intro-Cauchy-Schwarz},
\[
  \max\setst{\iprodt{w}{\incidvector{\delta(S)}}}{S \subseteq V}
  \le \max\setst{\iprodt{w}{z}}{
    z \in \Lp{E},\, \fcc(G, z) \le 1
  }
  \le \mc(G, w).
\]
Antiblocker duality is yet another manifestation of the same duality that underlies
LP and SDP strong duality.
When solving an LP or SDP, we typically obtain an optimal dual solution
either as a byproduct or as a crucial part of solving the primal problem.
Therefore, it should come as no surprise that solving an instance of the fractional cut-covering problem implies solving a maximum cut instance, which can be established via standard results on the ellipsoid method \cite{GrotschelLovaszEtAl1981} and~\cref{eq:fcc-as-mc-polar}. 

The rich literature on the maximum cut problem contrasts with the scarcity of
papers on the fractional cut-covering problem.
To the best of our knowledge, we introduced the weighted fractional
cut-covering problem~\cref{eq:fcc-def} in our recent work
\cite{BenedettoProencadeCarliSilvaEtAl2025}.
\nameandcite{Samal2015} introduced the unweighted version \(\fcc(G)
\coloneqq \fcc(G, \ones)\) to study the existence of cut-continuous maps between graphs.
\nameandcite{NetoBen-Ameur2019} surveyed lower and upper bounds on~\(\fcc(G)\).
There is more literature on the unweighted \emph{cut-covering
problem}, where one wishes to find a smallest collection of cuts
containing every edge in the graph
\cite{Matula1972,HararyHsuMiller1977,Loulou1992,HoHsu1994}.
Studying fractional versions of combinatorial problems is a staple of
combinatorial optimization, with \cite{GrotschelLovaszEtAl1981}
already proving that computing the fractional chromatic number is NP-hard.
There is recent interest from the combinatorics community on
fractional versions of their invariants
\cite{ScheinermanUllman2011,KellyPostle2024,ChenJungZang2025}.

In~\cite{BenedettoProencadeCarliSilvaEtAl2025}, we presented a randomized SDP-based primal-dual algorithm that receives as input an instance \((G,w)\) of the maxcut problem, computes a paired instance \((G,z)\) for the weighted fractional cut-covering problem (or vice-versa), and simultaneously approximately solves both while providing a certificate for the quality of the approximation. This paper serves as the computational counterpart to~\cite{BenedettoProencadeCarliSilvaEtAl2025}, conducting an extensive computational study of that framework with some algorithmic refinements. We thus arrive at the central question this paper explores:
\begin{equation}
  \tag{Practicality}
  \refstepcounter{tagref}%
  \label[tagref]{eq:thesis-practical}
  \begin{gathered}
  \text{In practice, with reasonable resource usage,}\\
  \text{can we simultaneously approximate \(\mc\) and \(\fcc\)?}
  \end{gathered}
\end{equation}

\subsection*{Our framework}
We now provide an overview of the framework implemented in this study
(for a detailed description see \cref{ssec:real-number}).
Our algorithm in~\cite{BenedettoProencadeCarliSilvaEtAl2025} builds
upon the algorithm by~\nameandcite{GoemansWilliamson1995}.
Assume an instance \((G, w)\) of the maximum cut problem is given as
input.
For each \(Y \in \Psd{V}\) we denote by \(\GWrv(Y)\) a random
shore produced by the random hyperplane technique of
\cite{GoemansWilliamson1995} applied to the matrix \(Y\).
To approximately compute \(\mc(G, w)\), as well as to compute
an approximately optimal \(z \in
\Lp{E}\) in~\cref{eq:mc-as-fcc-polar}, we:
\begin{steplist}
\item\label{step:solve-sdp} compute an optimal solution \(Y \in \Psd{V}\)
  to~\cref{eq:GW-supf}, and set \(z \coloneqq
  \tfrac{1}{4}\Laplacian_G^*(Y)\);
\item\label{step:sample-shores} let \(\Fcal \subseteq \Powerset{V}\)
  be a finite set of shores independently sampled from \(\GWrv(Y)\);
\item\label{step:solve-fcc} output a shore \(S\in \Fcal\) that
  maximizes \(\iprodt{w}{\incidvector{\delta(S)}}\) over~\(\Fcal\) and a
  fractional cut cover \(y \in \Reals_+^{\Fcal}\) for \((G,z)\) that
  minimizes \(\iprodt{\ones}{y}\) over fractional cut covers in
  \(\Reals_+^{\Fcal}\).
\end{steplist}
\Cref{step:solve-fcc} is an algorithmic refinement
over~\cite{BenedettoProencadeCarliSilvaEtAl2025}: here we optimize
\(y\) by solving the LP in~\cref{eq:fcc-def} only with the shores
in~\(\Fcal\), whereas in~\cite{BenedettoProencadeCarliSilvaEtAl2025},
\(y\) is obtained from \(\Fcal\) by scaling the average of the
sampled incidence vectors.
One of the main features of the algorithm
in~\cite{BenedettoProencadeCarliSilvaEtAl2025} is the construction of
a simple, nearly combinatorial certificate for the approximation ratio
of both problems (see~\cref{sec:certificates}), which our pipeline
also produces.

Naturally, the framework performs well for the maxcut problem since
each sample of \(\GWrv(Y)\) yields a cut with expected weight at least
\(\GWalpha \GW(G, w)\), implying that a reasonable number of
independent samples should provide good concentration.
For the fractional cut cover, the situation is more challenging since
every single edge must be covered.
In fact, one of the main obstacles to obtaining good results is the
existence of \emph{thin edges}, i.e., edges with relative weight much
smaller than others.
Since \(\GWrv(Y)\) favors large cuts, thin edges are naturally harder
to cover.
\cite{BenedettoProencadeCarliSilvaEtAl2025} proposes multiple
solutions for this issue, which we explore in this paper.
Among these solutions, and central to this work, is the idea of
\emph{perturbation}: instead of sampling from the nearly optimal
solution \(Y\) to~\cref{eq:GW-supf}, we sample from \((1 - \eps) Y +
\eps I\) for a small \(\eps \in (0, 1)\).

\subsection*{Our contributions}
This paper provides a comprehensive computational validation of our
primal-dual SDP framework
in~\cite{BenedettoProencadeCarliSilvaEtAl2025} with the linear
optimization refinement described in Step 3, demonstrating that it
delivers practical algorithms for simultaneously solving maximum cut
and fractional cut-covering problems with strong empirical
performance.
Through computational experiments, we explore
the~\cref{eq:thesis-practical} of our framework, addressing metrics
such as: the number of samples needed in practice, the approximation
ratio that can be achieved, the usage of time and memory, the quality
of the certificates for our primal-dual problems, and performance when
starting from either a maximum cut instance or a fractional cut
covering instance.
To the best of our knowledge, this work is the first one to provide
computational experiments for the weighted version of the fractional
cut-covering problem, with \cite{NetoBen-Ameur2019} having some
explorations on bounds for \(\fcc(G, \ones)\), but not producing
actual cut covers.
On the maximum cut side, as the primal-dual framework from
\cite{BenedettoProencadeCarliSilvaEtAl2025} is new, we have in our
experiments a novel context to measure and evaluate maximum cut
algorithms.

We gather experimental evidence supporting
the~\cref{eq:thesis-practical} of our framework using the dataset
from~\cite{MirkaWilliamson2023}, a recent experimental study of
maximum cut algorithms.
To explore performance under resource constraints, we run experiments
on three machines: a budget personal computer, a higher-performance
personal computer, and a shared university server.

We answer \cref{eq:thesis-practical} in the affirmative: the max-cut
problem and the fractional cut-covering problem can indeed be
approximated simultaneously with reasonable resource usage.
Below we outline our key experimental findings:
\begin{enumerate}
\item Sample sizes significantly smaller than theoretical bounds
  suffice in practice, with perturbation playing a critical role in
  ensuring edge coverage.
  Obtaining fractional cut covers from modest sample sizes is a
  significant empirical finding.
\item Approximation ratios exceeded the theoretical guarantee of
  \(\GWalpha\) in nearly all instances, even with sample sizes below
  theoretical predictions, considering the use of perturbation and the
  linear optimization refinement.
\item The algorithms deliver repeatable, consistent results across
  runs despite their probabilistic nature.
\item The framework produces approximate optimality certificates for
  both maximum cut and fractional cut-covering, validating the
  theoretical primal-dual guarantees.
  Crucial to this guarantee is the use of \emph{sanitization} to
  obtain, from SDP solver outputs, optimality certificates that
  can be easily checked using floating-point arithmetic.
\item The framework works well with first-order and second-order SDP
  solvers, in part due to pre-processing of instances and
  post-processing of SDP outputs.
\item Floating-point implementations preserve theoretical guarantees
  satisfactorily.
  This is achieved by building on simple primitives (such as
  square-root computation and normal distribution sampling) and using
  robust solver technology.
\end{enumerate}

\subsection{Notation and Organization of the Text}

We denote by \(\Sym{V}\) the set of symmetric matrices indexed by the
finite set \(V\), and by \(\Psd{V} \subseteq \Sym{V}\) the set of
positive semidefinite matrices.
We use the \emph{L\"{o}wner order}, writing \(A \succeq B\) or \(B
\preceq A\) as a shorthand for \(A - B \in \Psd{V}\).
We equip the space of symmetric matrices with the \emph{trace inner
  product}, writing \(\iprod{A}{B} = \trace(AB) = \sum_{i \in V}
(AB)_{ii}\).
We denote by \(\norm[\mathrm{Fr}]{X} \coloneqq \iprod{X}{X}^{1/2}\)
the \emph{Frobenius norm} of a symmetric matrix.
For vector inner products, we usually write \(\iprodt{w}{z} \coloneqq
\sum_{e \in E} w_ez_e\) for every \(w,\,z \in \Reals^E\), as we have
done in the introduction.
We denote by \(\ones\) the all-ones vector, and by \(\incidvector{F}
\in \set{0, 1}^{E}\) the incidence vector of \(F \subseteq E\).

Let \(G = (V, E)\) be a graph.
Throughout the text, we denote by \(n \coloneqq \card{V}\) the number
of vertices of the graph, and by \(m \coloneqq \card{E}\) the number
of edges of a graph.

\Cref{sec:theory} presents the theoretical framework which our
algorithm builds upon.
\Cref{sec:main-attributes} presents our first set of experiments,
taking as input maximum cut instances.
\Cref{sec:fcc-instances} presents experiments which take as input
fractional cut-covering instances.
\Cref{sec:thin-edges} studies different approaches to ensure
feasibility with \(O(\ln m)\) samples.
\Cref{sec:solvers} studies different choices of solvers.
\cref{sec:tracing} compares the LP-solving approach with the simpler,
averaging-and-scaling approach presented in
\cite{BenedettoProencadeCarliSilvaEtAl2025}.

\section{Theoretical and Computational Foundations}
\label{sec:theory}

The theoretical framework we present here is from
\cite{BenedettoProencadeCarliSilvaEtAl2025}; we highlight the concepts
crucial for this manuscript.
First, we present our primal-dual framework in exact arithmetic.
This allows us to then discuss how we handle numerical errors in our
algorithms.
We then present our \emph{pipeline}, a single abstraction which
captures all the experiments performed in this manuscript.

Our first task is to define the meaning of \emph{simultaneously
solving} the fractional cut-covering and maximum cut problems.
\Cref{eq:fcc-as-mc-polar,eq:mc-as-fcc-polar} lead us into the
following definition:
\begin{definition}
  \label{def:beta-pairing}
  Let \(G=(V,E)\) be a graph and let \(\beta\in\halfclosed{0,1}\).
  A \emph{\(\beta\)-pairing} on \(G\) is a pair
  \((w, z) \in \Lp{E} \times \Lp{E}\) such that there exist
  \(\rho, \mu \in \Lp{}\), such that \(\rho = 0 = \mu\) if and only if
  \(w = 0 = z\), and
\begin{equation}
  \label{eq:beta-pairing-def}
  \iprodt{w}{z}
  \,\overset{\text{\hyperref[eq:beta-pairing-def]{(\ref{eq:beta-pairing-def}a)}}}{\mathclap{=}}
  \rho\mu
  \qquad
  \text{and}
  \qquad
  \beta\rho\mu
  \overset{\text{\hyperref[eq:beta-pairing-def]{(\ref{eq:beta-pairing-def}b)}}}{\mathclap{\leq}}
  \mc(G, w)\mu
  \overset{\text{\hyperref[eq:beta-pairing-def]{(\ref{eq:beta-pairing-def}c)}}}{\mathclap{\leq}}
  \rho\mu
  \overset{\text{\hyperref[eq:beta-pairing-def]{(\ref{eq:beta-pairing-def}d)}}}{\mathclap{\leq}}
  \rho\fcc(G, z)
  \overset{\text{\hyperref[eq:beta-pairing-def]{(\ref{eq:beta-pairing-def}e)}}}{\mathclap{\leq}}\,
  \frac{1}{\beta}\rho\mu.
\end{equation}
We define an \emph{exact pairing} on \(G\) to be a \(1\)-pairing on
\(G\).
\end{definition}
When \(\rho>0\) and \(\mu>0\), which we regard as the ``typical''
case, we may restate
\cref{eq:beta-pairing-def} as
\begin{gather*}
  \iprodt{w}{z}
  \overset{\text{\hyperref[eq:beta-pairing-def]{(\ref{eq:beta-pairing-def}a)}}}{\mathclap{=}}
  \rho\mu,\qquad
  \beta\rho
  \overset{\text{\hyperref[eq:beta-pairing-def]{(\ref{eq:beta-pairing-def}b)}}}{\mathclap{\leq}}
  \mc(G, w)
  \overset{\text{\hyperref[eq:beta-pairing-def]{(\ref{eq:beta-pairing-def}c)}}}{\mathclap{\leq}}
  \rho,
  \quad\text{and}\quad
  \mu
  \overset{\text{\hyperref[eq:beta-pairing-def]{(\ref{eq:beta-pairing-def}d)}}}{\mathclap{\leq}}
  \fcc(G, z)
  \overset{\text{\hyperref[eq:beta-pairing-def]{(\ref{eq:beta-pairing-def}e)}}}{\mathclap{\leq}}
  \tfrac{1}{\beta}\mu.
\end{gather*}
The definition is made to accommodate the case \(0 \in \set{\rho,
  \mu}\).

The goal of our algorithms is to obtain \(\beta\)-pairings, either
given \(G = (V, E)\) and \(w \in \Lp{E}\) as input, or \(G\) and \(z
\in \Lp{E}\) as input.
Yet, note that the definition of \(\beta\)-pairing does not specify
how one can prove either of the inequalities involved.
The inequalities
\hyperref[eq:beta-pairing-def]{(\ref{eq:beta-pairing-def}c)} and
\hyperref[eq:beta-pairing-def]{(\ref{eq:beta-pairing-def}d)} are
particularly challenging, as they require either limiting the value
attainable by any cut on the weighted graph \((G, w)\), or the value
of any fractional cut cover for \(z\).
This is a common situation when working with approximation algorithms,
and the SDP relaxations effectively address this issue.
We have that
\begin{equation}
  \label{eq:mc-upper-bound}
  \mc(G, w) \le \iprodt{\ones}{x}
  \text{ if }
  \Diag(x) \succeq \tfrac{1}{4}\Laplacian_G(w),
\end{equation}
since, for every \(S \subseteq V\), we have
\[
  \iprodt{\incidvector{\delta(S)}}{w}
  = \iprod{\oprodsym{(2\incidvector{S} - \ones)}}{\tfrac{1}{4}\Laplacian_G(w)}
  \le \iprod{\oprodsym{(2\incidvector{S} - \ones)}}{\Diag(x)}
  = \iprodt{\ones}{x}.
\]
Hence \(\Diag(x) \succeq \tfrac{1}{4}\Laplacian_G(w)\)
certifies~\hyperref[eq:beta-pairing-def]{(\ref{eq:beta-pairing-def}c)}
and we can choose any \(\rho \geq \iprodt{\ones}{x}\) as an upper bound for
\(\mc(G, w)\).
We also get a certificate
for~\hyperref[eq:beta-pairing-def]{(\ref{eq:beta-pairing-def}d)},
since, by~\cref{eq:fcc-as-mc-polar},
\[
  \fcc(G, z)
  = \max\setst{\iprodt{z}{w}}{
    w \in \Lp{E},\,
    \mc(G, w) \le 1
  }
  \ge \iprodt{z}{\paren[\bigg]{\frac{1}{\iprodt{\ones}{x}}w}},
\]
and we can set \(\mu \coloneq \iprodt{w}{z}/\rho\) so that~\hyperref[eq:beta-pairing-def]{(\ref{eq:beta-pairing-def}a)} holds.
To certify~\hyperref[eq:beta-pairing-def]{(\ref{eq:beta-pairing-def}b)}
and~\hyperref[eq:beta-pairing-def]{(\ref{eq:beta-pairing-def}e)}, we simply need a cut of weight at least \(\beta \rho\) and a fractional cut cover with value at most \(\frac{1}{\beta} \mu\).
Hence, we arrive at an strategy to certify \((w, z)\) to be a
\(\beta\)-pairing.

\begin{definition}
  \label{def:beta-cert}
  Let \(G=(V,E)\) be a graph and let \(\beta\in\halfclosed{0,1}\).
  Let \((w,z) \in \Lp{E} \times \Lp{E}\).
  A \emph{\(\beta\)-certificate for \((w, z)\)} is a tuple
  \((\rho, \mu, S, y, x, B)\) such that \(\rho = 0 = \mu\) if and only if
  \(w = 0 = z\), and
  \begin{subequations}
    \label{eq:beta-cert-items}
    \renewcommand{\theequation}{\theparentequation.\roman{equation}}
    \begin{flalign}
      \label{item:cert-1}
      &\rho,\mu \in \Reals_+
      \text{ are such that }
      \rho\mu = \iprodt{w}{z},&&
      \\
      \label{item:cert-2}
      &S \subseteq V
      \text{ is such that }
      \iprodt{w}{\incidvector{\delta(S)}} \ge \beta \rho,&&
      \\
      \label{item:cert-3}
      &y \in \Reals_+^{\Powerset{V}}
      \text{ is such that }
      {\textstyle\sum_{U \subseteq V}} y_U \incidvector{\delta(U)} \ge z
      \text{ and }
      \iprodt{\ones}{y} \le \tfrac{1}{\beta}\mu,
      \text{ and}&&
      \\
      \label{item:cert-4}
      &x \in \Reals^V
      \text{ and }
      B \in \Reals^{V \times V}
      \text{ are such that }
      \rho \geq \iprodt{\ones}{x}
      \text{ and }
      \Diag(x) - \tfrac{1}{4}\Laplacian_G(w) = B^{\transp}B.
    \end{flalign}
  \end{subequations}
\end{definition}

%It is routine to check that if \((w, z) \in \Lp{E} \times \Lp{E}\)
%admits a \(\beta\)-certificate, then it is a \(\beta\)-pairing.
Given an instance \((G,w)\) for the max-cut problem or an instance
instance \((G,z)\) for the fractional cut-covering problem, our goal
is obtaining a \(\beta\)-pairing together with a \(\beta\)-certificate.
The value of \(\beta\) is mostly determined by the value of the
solution obtained for the SDP relaxation, but we aim at \(\beta \geq
\GWalpha\), the approximation rate guarantee in the algorithm by
\nameandcite{GoemansWilliamson1995}.

\subsection{Algorithm in the Real-Number Machine Model}
\label{ssec:real-number}
This section presents the algorithm assuming exact arithmetic and that optimal SDP solutions are obtained.

\medskip 

\noindent\textbf{First step: solving an SDP relaxation.}
Our framework takes as input either a maximum cut instance \((G,w)\)
or a fractional cut-covering instance \((G,z)\), and computes the
corresponding paired instance.
The two cases differ only in the first step: computing an optimal SDP
solution \(Y \in \Psd{V}\), which is a relaxation for the maximum cut
problem in the first case and for the fractional cut-covering problem
in the second.
When starting from a maximum cut instance, we described the first step
as
\begin{steplist}
\item (\(\mc\)) compute an optimal solution \(Y \in \Psd{V}\)
  to~\cref{eq:GW-supf}, and set \(z \coloneqq
  \tfrac{1}{4}\Laplacian_G^*(Y)\).
\end{steplist}
Note how we obtain the instance for the fractional cut covering problem as \((G,z)\) with \(z \coloneqq \tfrac{1}{4}\Laplacian_G^*(Y)\). When starting from a fractional cut covering instance, we use the following SDP relaxation. Let \(G = (V, E)\) be a graph, and let \(z \in \Lp{E}\). We define
\begin{subequations}
  \label{eq:GW-polar-def}
  \begin{align}
    \label{eq:GW-polar-gaugef}
    \GW^{\polar}(G, z)
    &\coloneqq \min\setst{\mu}{
      \mu \in \Lp{},\,
      Y \in \Psd{V},\,
      \diag(Y) = \mu\ones,\,
      \tfrac{1}{4}\Laplacian_G^*(Y) \ge z
      }\\
    \label{eq:GW-polar-supf}
    &= \max \setst{
        \iprodt{z}{w}
    }{
      w \in \Lp{E},\,
      x \in \Reals^V,\,
      \Diag(x) \succeq \tfrac{1}{4}\Laplacian_G(w),\,
      \iprodt{\ones}{x} \le 1
    }
  \end{align}
\end{subequations}
Both problems admit Slater points and so, by Strong Duality, equality
holds and both problems attain their optimal value.
The first step then is
\begin{steplist}
\item (\(\fcc\)) compute an optimal solution \(Y \in \Psd{V}\),  \(w \in \Lp{E}\), and \(x \in \Reals^V\)
  to~\cref{eq:GW-polar-def}.
\end{steplist}
In practice, we only approximately solve either~\cref{eq:GW-def} or~\cref{eq:GW-polar-def}. Our framework includes a \texttt{Solver} for this task and a \texttt{Scaler} to normalize the input before solving and rescale the output afterwards.

The \emph{perturbation} mentioned in \cref{sec:intro} is implemented in this step and differs between the two starting points. For the maximum cut problem, we perturb the nearly optimal solution \(Y\) to~\cref{eq:GW-supf} to \((1 - \eps) Y + \eps I\) for a small \(\eps \in (0, 1)\), so that \(z\) becomes \(\tfrac{1}{4}\Laplacian_G^*((1 - \eps) Y + \eps I)\). For the fractional cut covering problem, the perturbation is applied to the SDP relaxation itself as in~\cref{eq:GW-polar-SDP} in \cref{sec:fcc-instances}.

\noindent\textbf{Second step: sampling cuts.}
For any positive semidefinite matrix \(Y \in \Psd{V}\) and \(g \in
\Reals^V\), set \(\GWrv(Y, g) \coloneqq \setst{i \in V}{
\iprodt{e_i}{Y^{\half}g} \ge 0}\), where \(Y^{\half} \in \Psd{V}\) is
the unique positive semidefinite square root of \(Y\).
Let \((\Omega, \Sigma, \prob)\) be a probability space, and let \(h
\colon \Omega \to \Reals^V\) be a uniformly distributed unit vector.
We denote by \(\GWrv(Y) \colon \Omega \to \Powerset{V}\) the random
shore defined by \(\omega \in \Omega \mapsto \GWrv(Y, h(\omega))\).
In particular, \(\GWrv(Y)\) is a possible implementation of the
randomized rounding algorithm in \cite{GoemansWilliamson1995}.
The second step is then
\begin{steplist}
\setcounter{steplisti}{1}
\item let \(\Fcal \coloneqq \set{S_1, \ldots, S_T}\) be a finite set
  of shores independently sampled from \(\GWrv(Y)\);
\end{steplist}
The main idea is to choose \(T \in \Naturals\) large enough so (with
high probability) some sampled shore induces a \(\beta\)-approximate
maximum cut, and the collection \(\Fcal\) is rich enough to yield a
good fractional cut covering.
For maximum cut this is straightforward: each sample is independent
and \(\GWrv(Y)\) yields a cut with expected weight at least \(\GWalpha
\GW(G, w)\), so high concentration follows directly from Chernoff's
inequality.
But why should \(\Fcal\) contain cuts that cover \(z \in \Lp{E}\)?
The strategy is motivated by the observation that
\begin{equation}
  \label{eq:expected-cut}
  \Ebb[\incidvector{\delta(\GWrv(Y))}]
  \ge \GWalpha \tfrac{1}{4}\Laplacian_G^*(Y).
\end{equation}
Since \(\tfrac{1}{4}\Laplacian_G^*(Y) \ge z\) holds whether
we solved~\cref{eq:GW-def} or~\cref{eq:GW-polar-def}, this implies
\(\tfrac{1}{\GWalpha} \Ebb[\incidvector{\delta(\GWrv(Y))}] \ge z\).
Inequality~\cref{eq:expected-cut} holds since
\[
  \prob\paren{ij \in \GWrv(Y)}
  = \frac{\arccos Y_{ij}}{\pi}
  \ge \GWalpha \frac{1 - Y_{ij}}{2}
\]
for every edge \(ij \in E\).
The formula for the probability of an edge being covered comes from
\cite{GoemansWilliamson1995}, and the inequality follows from the
definition of \(\GWalpha\).
Using perturbation, we guarantee that each \(z_{ij}\) is bounded away
from zero, so every edge has a non-negligible probability of being
covered.
Chernoff's inequality combined with a union bound over the edges then
gives high probability of covering all edges.

Our framework includes a \texttt{Sampler} component for this step.
We also explore mixing in a small number of cuts generated by
including each vertex in the shore independently with probability
\(\frac{1}{2}\) (see~\cref{ssec:uniform}), which are sampled by a
component we call \texttt{Presampler}.

%In particular, note how~\cref{eq:expected-cut}
%implies the first inequality in~\cref{eq:GW-bound}: since \(w \ge 0\),
%\[
%  \Ebb\sqbrac{
%    \iprodt{w}{\incidvector{\delta(\GWrv(Y))}}
%  }
%  = \iprodt{w}{
%    \paren{
%      \Ebb\sqbrac{\incidvector{\delta(\GWrv(Y))}}
%    }
%  }
%  \ge \GWalpha \iprodt{w}{\paren{\tfrac{1}{4}\Laplacian_G^*(Y)}}
%  = \GWalpha \iprod{\tfrac{1}{4}\Laplacian_G(w)}{Y}
%  = \GWalpha \GW(G, w).
%\]

%This is enough to outline our strategy for computing
%\(\beta\)-certificates: given \(G\) and \(z\) as input,
%\begin{enumerate}
%\item solve~\cref{eq:GW-polar-def}, obtaining nearly optimal \(Y \in
%  \Psd{V}\), \(w \in \Lp{E}\), and \(x \in \Reals^V\);
%\item let \(\Fcal \subseteq \Powerset{V}\) be a finite set of cuts
%  obtained from independent samples from \(\GWrv(Y)\);
%\item output the optimal solution \(S\) to \(\mc(\Fcal, w)\)
%  and the optimal solution \(y \in \Lp{\Fcal}\) to \(\fcc(\Fcal, z)\).
%\end{enumerate}
%Note how~\cref{eq:expected-cut} again motivates sampling from
%nearly optimal \(Y\) to cover \(z\).

\medskip
\noindent\textbf{Third step: computing a cut and a fractional cut cover.} At this point we have a rich collection of shores \(\Fcal \subseteq \Powerset{V}\).
We define
\begin{alignat}{2}
  \label{eq:mc-restricted}
  \mc(\Fcal, w) & \coloneqq
  \max\setst{\iprodt{w}{\incidvector{\delta(S)}}}{S \in \Fcal}
  & \quad
  \text{for every }
  w \in \Lp{E}
  \\
  \shortintertext{and}
  \label{eq:fcc-restricted-primal}
  \fcc(\Fcal, z) & \coloneqq
  \min \setst[\bigg]{
    \iprodt{\ones}{y}
  }{
    y \in \Lp{\Fcal},\,
    \sum_{S \in \Fcal} y_S\incidvector{\delta(S)} \ge z
  }
  & \quad
  \text{for every }
  z \in \Lp{E}.
\end{alignat}
The utility of these definitions is immediate: if \(\card{\Fcal}\) is
bounded by a polynomial on the size of the graph \(G\), then
 both \(\mc(\Fcal, w)\) and \(\fcc(\Fcal, z)\), for any \(w \in
\Lp{E}\) and \(z \in \Lp{E}\), are computable in polynomial time.
Moreover,
\begin{equation}
  \label{eq:fcal-relaxation}
  \mc(\Fcal, \cdot) \le \mc(G, \cdot)
  \text{ and }
  \fcc(\Fcal, \cdot) \ge \fcc(G, \cdot).
\end{equation}
Our third step computes \(\mc(\Fcal, w)\) and \(\fcc(\Fcal, z)\):
\begin{steplist}
\setcounter{steplisti}{2}
\item output a shore \(S \in \Fcal\) maximizing \(\iprodt{w}{\incidvector{\delta(S)}}\) over \(\Fcal\), and a fractional cut cover \(y \in \Reals_+^{\Fcal}\) for \((G,z)\) minimizing \(\iprodt{\ones}{y}\) over fractional cut covers in \(\Reals_+^{\Fcal}\).
\end{steplist}
Computing \(\mc(\Fcal, w)\) amounts to selecting the shore inducing
the largest-weight cut with respect to \(w\).
Computing \(\fcc(\Fcal, z)\) requires solving a linear program
restricted to the shores in \(\Fcal\); our framework includes a
\texttt{Cover Producer} component using Gurobi for this task.

Given the probabilistic nature of our algorithms, it is possible that the LP defining \(\fcc(\Fcal, z)\) is infeasible, which is an outcome reflecting insufficient samples taken. We call this outcome \emph{Restricted LP Infeasible} (RLI):
\begin{equation}
  \label{eq:RLI-def}
  \tag{RLI}
  \text{there exists }
  ij \in E,
  \text{ s.t. }
  z_{ij} > 0
  \text{ and }
  ij \not\in
  \bigcup_{S \in \Fcal} \delta(S).
\end{equation}

\medskip
\noindent\textbf{Certificates.}
Let \(G = (V, E)\) be a graph.
Whether we are given a maximum cut instance or a fractional
cut-covering instance as input, we always end up with a pair \(z \in
\Lp{E}\) and \(w \in \Lp{E}\) and feasible solutions to the following
problems:
\begin{align}
 \tag{\ref{eq:GW-gaugef}'}
  \label{eq:GW-gaugef-prime}
  \GW(G, w)
  &= \min\setst{
      \rho \in \Lp{}
    }{
    x \in \Reals^V,\,
    \Diag(x) \succeq \tfrac{1}{4}\Laplacian_G(w),\,
    \iprodt{\ones}{x} \le \rho
    },\\
  \tag{\ref{eq:GW-polar-gaugef}}
  \GW^{\polar}(G, z)
  &= \min \setst{
    \mu \in \Lp{}
    }{
      Y \in \Psd{V},\,
      \diag(Y) = \mu\ones,\,
      \tfrac{1}{4}\Laplacian_G^*(Y) \ge z
    }.
\end{align}
Indeed, given a maximum cut instance \((G, w)\) as input, if \(Y \in
\Psd{V}\) is feasible in \cref{eq:GW-supf}, then \((1, Y)\) is
feasible in \cref{eq:GW-polar-gaugef} for \(z \coloneqq
\tfrac{1}{4}\Laplacian_G^*(Y)\).
Dually, given a fractional cut-covering instance \((G, z)\),
if \((w, x)\) is feasible in~\cref{eq:GW-polar-supf}, then \((1,
x)\) is feasible in~\cref{eq:GW-gaugef-prime} for \(w\).

Now let \(z \in \Lp{E}\) and \(w \in \Lp{E}\), and let \((\rho, x)\) and
\((\mu, Y)\) be feasible in~\cref{eq:GW-gaugef-prime,eq:GW-polar-gaugef}.
\textsc{Step} 3 obtains a shore \(S\subseteq V\) such that
\(\iprodt{w}{\incidvector{\delta(S)}} = \mc(\Fcal,w)\) and a
fractional cut cover \(y \in \Reals_+^{\Powerset{V}}\) with support in
\(\Fcal\) with value \(\fcc(\Fcal,z)\).
Set
\begin{equation}
  \label{eq:quality-measures}
  \sigma = 1 - \frac{\iprodt{z}{w}}{\rho\mu},\quad
  \beta_{\fcc} \coloneqq \frac{(1 - \sigma)\mu}{\fcc(\Fcal, z)},\quad
  \beta_{\mc} \coloneqq \frac{\mc(\Fcal, w)}{\rho}.
\end{equation}
The value of \(\sigma\) measures how close to optimality were the
feasible solutions used to obtain the pairing between \(w\) and \(z\).
Indeed, \(\sigma \ge 0\), since
\[
  \iprodt{z}{w}
  \le \iprod{\tfrac{1}{4}\Laplacian_G^*(Y)}{w}
  = \iprod{Y}{\tfrac{1}{4}\Laplacian_G(w)}
  \le \iprod{Y}{\Diag(x)}
  \le \mu\rho.
\]
In particular, ``nearly solving the relevant SDP'' means computing
feasible solutions making \(\sigma\) small.
We claim that
\begin{equation}
  \label{eq:beta-fcc-le-beta-mc}
  \beta_{\fcc} \le \beta_{\mc}.
\end{equation}
Indeed, for every \(y \in \Lp{\Fcal}\) such that \(\sum_{S \in \Fcal}
y_S \incidvector{\delta(S)} \ge z\),
\begin{detailedproof}
  (1 - \sigma)\rho\mu
  \le \iprodt{z}{w}
  \le \sum_{S \in \Fcal} y_S \iprodt{\incidvector{\delta(S)}}{w}
  \le \iprodt{y}{\ones} \mc(\Fcal, w).
\end{detailedproof}
Hence \((1 - \sigma)\rho\mu \le \fcc(\Fcal, z) \mc(\Fcal, w)\), and
thus
\[
  \beta_{\fcc}
  = \frac{(1 - \sigma)\mu}{\fcc(\Fcal, z)}
  \le \frac{\mc(\Fcal, w)}{\rho}
  = \beta_{\mc}.
\]
The definitions of \(\beta_{\mc}\) and \(\beta_{\fcc}\) are made so as
to ensure that both values are always at most one.
%Indeed, by~\cref{eq:beta-fcc-le-beta-mc} it's enough to prove that
%\(\beta_{\mc} \le 1\), which follows from
%~\cref{eq:GW-gaugef-prime,eq:GW-bound,eq:fcal-relaxation}, since
%\(\rho \ge \GW(G, w) \ge \mc(G, w) \ge \mc(\Fcal, w)\).
We do not use the definitions in~\cref{eq:quality-measures} when the
Restricted LP~\cref{eq:fcc-restricted-primal} is Infeasible (RLI).

\subsection{Floating-Point Arithmetic and Certificate Computation}
\label{sec:certificates}
Let \(G = (V, E)\) be a graph, and let \(w,\, z \in \Lp{E}\).
Let \(\rho,\,\mu \in \Lp{}\) be such that
\begin{equation}
  \label{eq:H-sigma}
  \GW(G, w) \le \rho,\,
  \GW^{\polar}(G, z) \le \mu.
\end{equation}
As we just saw, these inequalities play a crucial role in constructing
\(\beta\)-certificates.
To store a numerical proof of~\cref{eq:H-sigma}, we store Cholesky
factorizations of the positive semidefinite matrices appearing
in~\cref{eq:GW-gaugef-prime,eq:GW-polar-gaugef}.
Namely, we store, for each \(w \in \Lp{E}\) and \(z \in \Lp{E}\):
\begin{equation}
  \label{eq:SDP-certificate}
  \begin{gathered}
    \rho,\, \mu \in \Lp{},\,
    R \in \Reals^{k \times V},\,
    B \in \Reals^{V \times V},\,
    x \in \Reals^V \text{ such that}\\
    \rho \ge \iprodt{\ones}{x}
    \text{ and }
    \tfrac{1}{4}\norm{Re_i - Re_j}^2 \ge z_{ij},\,
    \forall ij \in E,\\
    \norm[2]{\diag(R^\transp R) - \mu\ones} \approx 0\\
    \norm[\mathrm{Fr}]{
      \Diag(x) - \tfrac{1}{4}\Laplacian_{G}(w) - B^\transp B
    } \approx 0.
  \end{gathered}
\end{equation}
By prioritizing the certification of the positive semidefinite
inequalities, we must handle numerical error in the constraints
\(\diag(R^\transp R) = \mu\ones\) and \(S^{\transp} S = \Diag(x) -
\tfrac{1}{4}\Laplacian_G(w)\).

A certificate of the type~\cref{eq:SDP-certificate} is different from
the direct output of a solver in one important way: a solver may
output solutions which are not feasible, but only close to an actual
feasible solution.
On the other hand, the point of~\cref{eq:SDP-certificate} is to have a
\emph{numerical proof} of feasibility
to~\cref{eq:GW-gaugef-prime,eq:GW-polar-gaugef}.
The act of computing \cref{eq:SDP-certificate} from the solver output
is called \emph{sanitize} in our implementation, and it must bridge
this distinction.
Intuitively, we accomplish this by ``moving numerical error into the
objective value'', where we weaken the objective values produced in
order to ensure feasibility.
More precisely, we present in~\cref{alg:Slack-sanitize,%
alg:representation-sanitize} how we compute \cref{eq:SDP-certificate}
from the information produced by a solver.
Both algorithms use LAPACK's routines to perform numerical linear
algebra.

An important invariant of both algorithms is that they do not modify
neither \(w\) nor \(z\).
This is necessary to provide uniform treatment for both the case in
which a maximum cut instance is given as input, and the case in which
a fractional cut-covering instance is given as input.
Furthermore, they are by design simple procedures that either fail, or
provide the feasibility guarantee needed by~\cref{eq:SDP-certificate}.
For the sake of simplicity of exposition, the pseudocode represents
all failure cases with \(\bot\); in practice, the failure cases are
properly discriminated.

\begin{algorithm}
  \caption{Sanitize slack inequality ``\(\Diag(x) \succeq \tfrac{1}{4}\Laplacian_G(w)\)''}
  \label{alg:Slack-sanitize}
  \begin{algorithmic}[1]
    \algrenewcommand\algorithmicrequire{\textbf{Parameters:}}
    \Require \(\gamma \in (0, 1)\).
    \Comment{We have used \(\gamma \coloneqq 10^{-8}\) throughout.}
    \algrenewcommand\algorithmicrequire{\textbf{Input:}}
    \Require \(\tilde{x} \in \Reals^V\) and \(w \in \Lp{E}\)
    \algrenewcommand\algorithmicensure{\textbf{Output:}}
    \Ensure
    \Call{SlackSanitize${}_{\gamma}$}{$\tilde{x}, w$} can either fail,
    returning \(\bot\), or return \((\rho, x, S)\) such that
    \[
      \frac{1}{n}\norm[\mathsf{Fr}]{
        \Diag(x) - \tfrac{1}{4}\Laplacian_G(w) - S^\transp S
      }
      \approx 0
      \text{ and }
      \rho = \iprodt{\ones}{x}.
    \]
    \Procedure{SlackSanitize${}_{\gamma}$}{$\tilde{x},\, w$}
    \State \(\tau \gets \lambdamin(\Diag(\tilde{x}) - \tfrac{1}{4}\Laplacian_G(w))\)
    \Comment{Computed with LAPACK's \texttt{dsyevd}}
    \State \textbf{if} \(\tau < 0\) \textbf{then} \(\tilde{x} \gets \tilde{x} + (\gamma - \tau)\ones\)
    \State \(B \gets \mathrm{Cholesky}(\Diag(\tilde{x}) - \tfrac{1}{4}\Laplacian_G(w))\)
    \Comment{Computed with LAPACK's \texttt{dpotrf2}}
    \State \textbf{if} {Cholesky computation failed} \textbf{then} \textbf{return} \(\bot\)
    \State \(\rho \gets \iprodt{\ones}{\tilde{x}}\)
    \State \textbf{return} \((\rho, \tilde{x}, B)\)
    \EndProcedure
  \end{algorithmic}
\end{algorithm}

\begin{algorithm}
  \caption{Sanitize ``\(Y \in \Psd{V}\) such that
    \(\tfrac{1}{4}\Laplacian_G^*(Y) \ge z\)''}
  \label{alg:representation-sanitize}
  \begin{algorithmic}[1]
    \algrenewcommand\algorithmicrequire{\textbf{Parameters:}}
    \Require \(\gamma \in [0, 1)\).
    \Comment{We have always used \(\gamma \in \set{0, 10^{-8}}\).}
    \algrenewcommand\algorithmicrequire{\textbf{Input:}}
    \Require \(\tilde{Y} \in \Sym{V}\) and \(z \in \Lp{E}\) with
    \(\norm[\infty]{z} = 1\).
    \algrenewcommand\algorithmicensure{\textbf{Output:}} \Ensure
    \Call{RepresentationSanitize${}_{\gamma}$}{$\tilde{Y}, z$} can either fail,
    returning \(\bot\), or return \((\mu, R)\) such that
    \[
      \tfrac{1}{4}\norm[2]{Re_i - Re_j}^2 \ge z_{ij}
      ,\, \forall ij \in E
      \text{ and }
      \norm[2]{\diag(R^\transp R) - \mu\ones} \approx 0,\,
    \]
    \Procedure{RepresentationSanitize${}_{\gamma}$}{$\tilde{Y}, z$}
    \State Set \(h_i \gets \tilde{Y}_{ii}^{-\tfrac{1}{2}}\) for every \(i \in V\)
    \State Set \(Y_0 \gets \Diag(h) \tilde{Y} \Diag(h)\)
    \State Let
    \(
      R_0 \in \Reals^{V \times V},\,
      \lambda \in \Reals^V
    \) be such that
    \(
      Y_0 = R_0\Diag(\lambda)R_0^{\transp}
    \).
    \Comment{Computed with \texttt{dsyevd}}
    \State Let \(R_1 \in \Reals^{V \times k}\) and \(\nu \in
    \Reals^k\) contain all eigenpairs satisfying \(\lambda_i > \gamma\)
    \State \(R \gets \Diag(\nu)^{\tfrac{1}{2}} R_1^\transp\)
    \State \(\mu \gets 0\)
    \State \textbf{for each} \(ij \in E\) \textbf{do}
      \(\mu \gets \max\paren[\bigg]{
        \mu,\,
        \frac{z_{ij}}{\tfrac{1}{4}\norm[2]{Re_i - Re_j}^2}
      }\)
    \State \textbf{if} \(\mu \in \set{+\infty, 0}\) \textbf{then} \textbf{return} \(\bot\)
    \State \textbf{return} \((\mu, \sqrt{\mu}R)\)
    \EndProcedure
  \end{algorithmic}
\end{algorithm}

Only one type of failure was observed when
running~\cref{alg:Slack-sanitize,alg:representation-sanitize}.
\cref{alg:representation-sanitize} returned \(\bot\) for some inputs,
which occurs when
\begin{equation}
  \label{eq:bot-Y-def}
  \tag{\(\bot_{Y}\)}
  \exists ij \in E,
  \text{ s.t. }
  z_{ij} > 0
  \text{ and }
  \norm[2]{Re_i - Re_j}^2 = 0
\end{equation}
for \(R \in \Reals^{V \times k}\) as computed in step (6) of
\cref{alg:representation-sanitize}.
In exact arithmetic, this is impossible, as it contradicts the
assumption that the input \(Y\) satisfies
\(\tfrac{1}{4}\Laplacian_G^*(Y) \ge z\).

Let \(G = (V, E)\) be a graph, and let \(w,\, z \in \Lp{E} \setminus
\set{0}\).
To account for floating-point error, we enhance \Cref{def:beta-cert}
by saying that a \(\beta\)-certificate is a tuple \((\rho, \mu, S, y,
x, B, \tau)\) satisfying \crefrange{item:cert-1}{item:cert-3} and
\begin{equation}
  \label{eq:tau-def}
  \tag{\ref{item:cert-4}'}
  x \in \Reals^V
  \text{ and }
  B \in \Reals^{V \times V}
  \text{ are such that }
  \rho \ge \iprodt{\ones}{x}
  \text{ and }
  \norm[\mathrm{Fr}]{
    \Diag(x) - \tfrac{1}{4}\Laplacian_G(w) - B^\transp B
  } \le \tau \rho.
\end{equation}
We claim that if \((\rho, \mu, S, y, x, B, \tau)\) is a
\(\beta\)-certificate, then
\begin{equation}
  \label{eq:floating-point-pairing}
  (w, z)
  \text{ is a \(\tilde{\beta}\)-pairing, with }
  \tilde{\beta} \coloneqq \frac{1}{1 + \tau n}\beta.
\end{equation}
Since \(\norm[\mathrm{Fr}]{
  \Diag(x) - \tfrac{1}{4}\Laplacian_G(w) -
  \tilde{B}^{\transp}\tilde{B}
} \le \tau \rho\), it follows that
\(
  \Diag(x) - \tfrac{1}{4}\Laplacian_G(w)
  - B^{\transp} B
  \succeq -\tau \rho I
\).
Thus
\(\Diag(x + \tau \rho \ones) -\tfrac{1}{4}\Laplacian_G(w)
  \succeq B^{\transp} B
  \succeq 0
\), so \(\mc(G, w) \le (1 + \tau n)\rho\) by~\cref{eq:mc-upper-bound}.
Using~\cref{item:cert-2} we have
\[
  (1 + \tau n) \rho
  \ge \mc(G, w)
  \ge \iprodt{w}{\incidvector{\delta(S)}}
  \ge \beta \rho
  = \tilde{\beta} (1 + \tau n)\rho,
\]
and using~\cref{item:cert-1,eq:fcc-as-mc-polar,item:cert-3}, we have
\[
  \frac{\mu}{1 + \tau n}
  = \frac{\iprodt{w}{z}}{(1 + \tau n)\rho}
  \le \fcc(G, z)
  \le \iprodt{\ones}{y}
  \le \beta^{-1}\mu
  = \tilde{\beta}^{-1} \frac{\mu}{1 + \tau n}.
\]
Hence~\cref{eq:floating-point-pairing} holds.

Let \(G = (V, E)\) be a graph, and let \(w,\, z \in \Rationals_+^E\).
Let \((\rho, \mu, S, y, x, B)\) be a \(\beta\)-certificate as first
defined, with \(\rho,\,\mu \in \Rationals\), \(y \in
\Rationals^{\Fcal}\), \(x \in \Rationals^V\), and \(B \in
\Rationals^{V \times V}\).
If we compute
\[
  t \coloneqq
  \frac{1}{\rho^2}
  \norm[\mathrm{Fr}]{\Diag(x) - \tfrac{1}{4}\Laplacian_G(w) - B^{\transp} B}^2
  \in \Rationals,
\]
we get a formal proof, verifiable in a Turing machine, that \((w, z)\)
is a \(\tilde{\beta}\)-pairing for \(\tilde{\beta} \coloneqq (1 +
\sqrt{t}n)^{-1}\beta\).
We did not compute such values of \(t\), and hence our experiments do
not produce formal proofs.
In floating-point arithmetic, the largest value of \(\tau\) we have
observed was \(1.231 \times 10^{-16}\), and the largest value of
\(\tau n\) was \(2.068 \times 10^{-14}\).

\begin{proposition}
Let \(G = (V, E)\) be a graph.
Let \(w \in \Lp{E}\) and \(z \in \Lp{E}\) be such that \((\rho, \tilde{\mu},
R, B, x)\) satisfying~\cref{eq:SDP-certificate} exist.
Let \(\Fcal \subseteq \Powerset{V}\) be such that \(\fcc(\Fcal, z) <
+\infty\).
Set
\[
  \begin{gathered}
  \tau \coloneqq
    \rho^{-1}\norm[\mathrm{Fr}]{
      \Diag(x) - \tfrac{1}{4}\Laplacian_G(w)
      - B^\transp B
    },\,
  \sigma \coloneqq 1 - \frac{\iprodt{w}{z}}{\rho\tilde{\mu}},\\
  \mu \coloneqq (1 - \sigma)\tilde{\mu},\
  \beta_{\mc} \coloneqq \frac{\mc(\Fcal, w)}{\rho},\,
  \beta_{\fcc} \coloneqq \frac{\mu}{\fcc(\Fcal, z)}
  = \frac{(1 - \sigma)\tilde{\mu}}{\fcc(\Fcal, z)},
  \end{gathered}
\]
and let \(S \subseteq V\) and \(y \in \Lp{\Fcal}\) be
optimal solutions to \(\mc(\Fcal, w)\) and \(\fcc(\Fcal, z)\),
respectively.
Then \((\rho, \mu, S, y, x, B, \tau)\) is a
\(\beta_{\fcc}\)-certificate.
\end{proposition}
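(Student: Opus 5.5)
Our approach is to check the four conditions of the enhanced \Cref{def:beta-cert} directly: \cref{item:cert-1,item:cert-2,item:cert-3}, and \cref{eq:tau-def} in place of \cref{item:cert-4}. Throughout we take \(\beta \coloneqq \beta_{\fcc}\).

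\textbf{Conditions (\ref{item:cert-1}) and (\ref{eq:tau-def}).}
By definition of \(\sigma\), we have \(\rho\mu = \rho(1-\sigma)\tilde{\mu} = \iprodt{w}{z}\). Nonnegativity of \(\mu\) follows from \(w, z \ge 0\). We also need \(\rho > 0\) and \(\tilde{\mu} > 0\) so that \(\sigma\) is defined; this should follow from the nondegeneracy assumptions, and the degenerate case \(w = 0 = z\) is handled by the convention.

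Condition \cref{eq:tau-def} is immediate. The inequality \(\rho \ge \iprodt{\ones}{x}\) is part of \cref{eq:SDP-certificate}, and the Frobenius bound holds with equality by the very definition of \(\tau\).

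\textbf{Condition (\ref{item:cert-3}).}
Since \(\fcc(\Fcal, z) < +\infty\), the optimal \(y \in \Lp{\Fcal}\) exists. Extending \(y\) by zero to \(\Powerset{V}\) gives \(\sum_U y_U \incidvector{\delta(U)} \ge z\). Moreover,
\[
  \iprodt{\ones}{y} = \fcc(\Fcal, z) = \mu/\beta_{\fcc},
\]
which is exactly the required bound with \(\beta = \beta_{\fcc}\).

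\textbf{Condition (\ref{item:cert-2}) — the main step.}
We must show \(\iprodt{w}{\incidvector{\delta(S)}} = \mc(\Fcal, w) = \beta_{\mc}\rho \ge \beta_{\fcc}\rho\), that is, \(\beta_{\fcc} \le \beta_{\mc}\). We repeat the argument of \cref{eq:beta-fcc-le-beta-mc}, now using only the identity \(\iprodt{w}{z} = \rho\mu\) rather than SDP feasibility.

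For the optimal \(y\), covering and nonnegativity of \(w\) give
\[
  \rho\mu = \iprodt{z}{w} \le \sum_{U \in \Fcal} y_U \iprodt{\incidvector{\delta(U)}}{w} \le \iprodt{\ones}{y}\,\mc(\Fcal, w).
\]
Dividing by \(\rho\,\fcc(\Fcal, z)\) yields \(\beta_{\fcc} \le \beta_{\mc}\).

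The only real care needed is the sign condition \(\sigma \le 1\) (equivalently \(\mu \ge 0\)), together with the division when \(\rho\) or \(\fcc(\Fcal, z)\) vanishes. These are covered by the \(w = 0 = z\) convention: if \(z = 0\), then \(y = 0\) and everything is trivially zero.

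Finally, \(\beta_{\fcc} \in \halfclosed{0,1}\) is expected. Note that \(\beta_{\fcc} \le 1\) is not needed for the certificate itself; it matters only for interpreting the certificate through \cref{eq:floating-point-pairing}.
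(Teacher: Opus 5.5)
Your proposal is correct and follows the paper's proof: \cref{item:cert-1,item:cert-3} and \cref{eq:tau-def} are read off directly from the definitions of \(\sigma\), \(\mu\), \(y\), and \(\tau\), and \cref{item:cert-2} reduces to \(\beta_{\fcc} \le \beta_{\mc}\). The only difference is that you derive this inequality inline, from \(\iprodt{w}{z} = \rho\mu\) and the covering property of \(y\), whereas the paper cites \cref{eq:beta-fcc-le-beta-mc}; your version is slightly more self-contained, since that display was stated in the context of exactly SDP-feasible data.
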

\begin{proof}

We have that \cref{item:cert-1} holds since
\(
  \iprodt{w}{z}
  = \rho\tilde{\mu}(1 - \sigma)
  = \rho\mu
\)
by definition of \(\sigma\).
Using the definitions of \(S\) and \(\beta_{\mc}\), as well
as \(\beta_{\mc} \ge \beta_{\fcc}\), we have that
\(
  \iprodt{w}{\incidvector{\delta(S)}}
  = \mc(\Fcal, w)
  = \beta_{\mc}\rho
  \ge \beta_{\fcc}\rho
\), so~\cref{item:cert-2} holds.
Moreover, using the definitions of \(y\) and \(\beta_{\fcc}\), we have that
\(
  \iprodt{\ones}{y}
  = \fcc(\Fcal, z)
  = \tfrac{1}{\beta_{\fcc}} \mu,
\)
so~\cref{item:cert-3} holds.
Finally,\cref{eq:tau-def} holds by definition of \(\tau\).
\end{proof}

\subsection{Pipeline}
To improve the scientific value of our data, the execution of every
experiment in this work fits within a unified set of steps.
We refer to this sequence of steps as the \emph{pipeline}.
This abstraction allows for blanket statements about all the data
collected, and helps streamline the work involved in testing our
implementations.
A design priority was to produce verifiable certificates and to
explicitly record parameters used in each execution ---
see~\cref{sec:onboarding}.
\Cref{fig:pipeline} provides an overview of our pipeline.
We start by describing the main set of data structures used in our
experiments:
\begin{itemize}
\item \texttt{Weighted Graph} is a graph with weights on each edge;
  depending on the context (i.e., what \texttt{Solver} does with it)
  this may describe a maximum cut or a fractional cut-covering instance.
\item \texttt{Raw Output} is the nearly optimal solutions produced by
  the solver, represented by \(Y \in \Sym{V}\), \(x \in \Reals^V\),
  and \(\set{w,\,z} \subseteq \Lp{E}\).
\item \texttt{SDP certificate} are the objects described
  in~\cref{eq:SDP-certificate} produced
  by~\cref{alg:Slack-sanitize,alg:representation-sanitize}.
\item \texttt{Cuts} is a matrix in \(\set{\pm 1}^{V \times T}\), where
  each column represents the shore of a cut.
\item \texttt{\(\beta\)-certificate} is the final output, as
  in~\cref{def:beta-cert}.
\end{itemize}
Although there is a single implementation of \texttt{Sanitizer}, as
described in~\cref{alg:Slack-sanitize,alg:representation-sanitize},
there are many implementations of \texttt{Solver}, \texttt{Scaler},
\texttt{Presampler}, \texttt{Sampler}, and \texttt{Cover Producer}.
These provide the customization points, as selecting among the
available implementations is how one produces certificates for the
problem one is interested in.

\begin{figure}[h]
  \centering
  \begin{minipage}{.45\linewidth}
    \includegraphics[width=\textwidth]{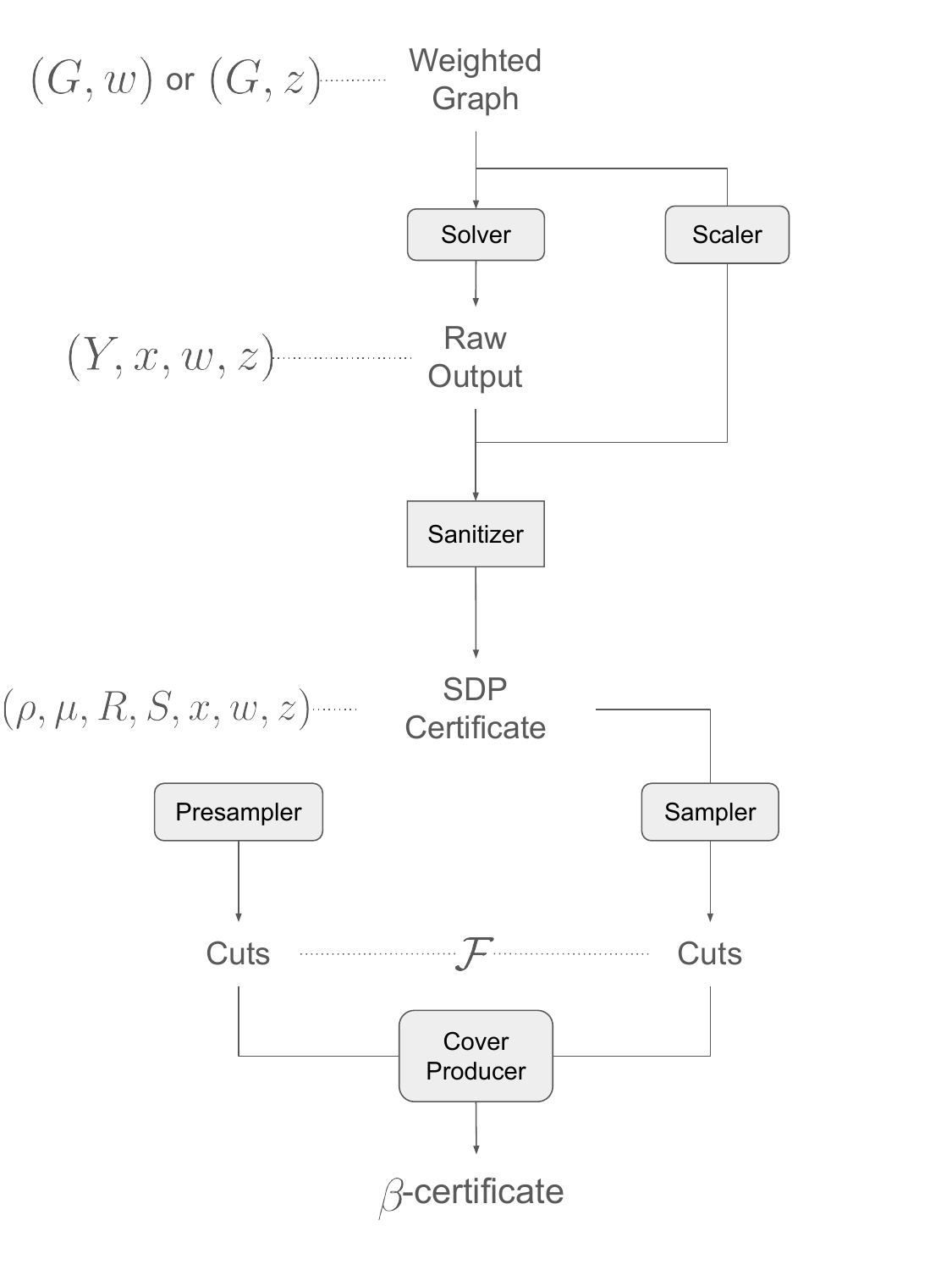}
    \caption{Pipeline}
    \label{fig:pipeline}
  \end{minipage}
  \begin{minipage}{.45\linewidth}
    The customization points are:
    \begin{itemize}
      \item \texttt{Solver} receives as input an weighted graph, which it
        uses to approximately solve either~\cref{eq:GW-def}
        or~\cref{eq:GW-polar-def}.
        Different implementations correspond to different solvers and
        optimization problems.
      \item \texttt{Scaler} ensures that the weighted graph sent to
        \texttt{Solver} is normalized appropriately.
        After the SDP relaxation is solved, it scales back both the input
        (i.e., \texttt{Weighted Graph}) and the output of the solver
        (i.e. \texttt{Raw Output}).
      \item \texttt{Sampler} and \texttt{Presampler} are the same abstract
        class, which receives as input a \texttt{SDP certificate} and
        produces \texttt{Cuts}.
        They allow for \texttt{Cover Producer} to work with cuts
        generated in different ways.
      \item \texttt{Cover Producer} receives two sequence of \texttt{Cuts},
        one arising from the \texttt{Presampler}, and one arising from the
        \texttt{Sampler}, and produces the final \(\beta\)-certificate.
      \end{itemize}
    \end{minipage}
\end{figure}

\clearpage
\section{Main Attributes of a Practical Implementation}
\label{sec:main-attributes}

The goal of this section is to empirically evaluate, in broad terms,
what sizes of instances can be solved with an affordable personal
computer, the running time needed to do so, as well as the range of
certifiable approximation factors one can expect to obtain across
instances.
We keep our settings comparable to a recent experimental study by
\nameandcite{MirkaWilliamson2023}, which implements several algorithms
for the maximum cut problem.

The experiments in this work were performed in the following machines:
\begin{equation}
  \tag{Budget}
  \refstepcounter{tagref}%
  \label[tagref]{eq:budget-laptop}
  \begin{gathered}
    \text{personal laptop with 8~GB of RAM and}\\
    \text{one Intel i3-1115G4 processor 2-core 3.0 GHz (Tiger Lake)},
  \end{gathered}
\end{equation}
\begin{equation}
  \tag{Laptop}
  \refstepcounter{tagref}%
  \label[tagref]{eq:laptop}
  \begin{gathered}
    \text{personal laptop with 40GB of RAM and}\\
    \text{one AMD Ryzen 7 7730U processor 8-core  4.5 GHz (Ryzen 7000)},
  \end{gathered}
\end{equation}
and
\begin{equation}
  \tag{Big}
  \refstepcounter{tagref}%
  \label[tagref]{eq:biglinux}
  \begin{gathered}
    \text{a shared university server with 768~GB of RAM and}\\
    \text{four Intel Xeon Gold 6230 20-core 2.1 GHz (Cascade Lake).}
  \end{gathered}
\end{equation}
For solving our SDP relaxations, we used SCS
\cite{ODonoghueChuParikhBoyd2016} version 3.2.7 \cite{scs-3.2.7}.
We used Gurobi to solve the LP~\cref{eq:fcc-restricted-primal}.
On~\cref{eq:budget-laptop} and \cref{eq:laptop} hardware we used
Gurobi~12, and on~\cref{eq:biglinux} hardware we used Gurobi~10.
The implementation of BLAS and LAPACK is provided by OpenBLAS on
\cref{eq:laptop} hardware, and by Intel's Math Kernel Library (MKL)
version 2024.2 on \cref{eq:budget-laptop} and \cref{eq:biglinux}
hardware.
In particular, SCS relies on Intel MKL in at least two distinct ways:
it uses Intel MKL Pardiso to solve linear systems, and uses LAPACK to
project matrices into the positive semidefinite cone.
For convenience, our use of BLAS and LAPACK is via the C++ library
Armadillo \cite{Armadillo} version~12.8.
Pseudorandom number generation was managed by the \texttt{random}
library from C++11, using the Mersenne Twister \texttt{std::mt19937}
as the underlying engine.
Normally distributed samples were obtained by passing the generated
bits through \texttt{std::normal\_distribution<double>}, which
implements the standard normal distribution.
Our implementation was sequential, with the tools we used employing
parallelization.
In particular, SCS and Gurobi ran multi-threaded code, either directly
or via their use of Intel MKL.
Some of our experiments were ran in parallel with the help of GNU
Parallel \cite{Tange2022}.
For each machine, we first observed how many cores were used when
running our experiments, and selected the number of parallel
experiments so as to avoid contention.

Our maximum cut instances come directly from
\nameandcite{MirkaWilliamson2023}.
They are divided into two sets of instances:
\begin{equation}
  \tag{Dense}
  \refstepcounter{tagref}%
  \label[tagref]{eq:tsp-instances}
  \text{dense weighted instances from TSPlib \cite{TSPlib95}.}
\end{equation}
and
\begin{equation}
  \tag{Sparse}
  \refstepcounter{tagref}%
  \label[tagref]{eq:mtx-instances}
  \text{sparse unweighted (i.e. \(w = \ones\)) instances from the Matrix
    Market repository \cite{MatrixMarket}},
\end{equation}
The one exception is \texttt{inf-USAir97}, which is an instance
from~\cref{eq:mtx-instances} which has weights on its edges.
Whereas the \cref{eq:tsp-instances} instances have their number of
vertices as part of the instance name (see \cref{tab:no-round-eps}),
we report the number of vertices in each \cref{eq:mtx-instances} graph
in \cref{tab:mtx-256}.
The graphs in the \cref{eq:mtx-instances} data set come from different
real-world data, ranging from graphs arising from finite-element
models employed to solve partial differential equations
(\texttt{dwt\_209} and \texttt{dwt\_503}) to graphs arising from email
interactions (\texttt{email-enrol-only} and \texttt{email-univ}).
The \cref{eq:tsp-instances} graphs were selected instances for testing
solvers for the Travelling Salesman Problem.
There is a small distinction between the instances used in
\cite{MirkaWilliamson2023} and those used in the broader TSP
literature, which we discuss in \cref{sec:tsplib}.

\subsection{A First Implementation and Calibration of Parameters}
\label{ssec:problem}

In this subsection, we will describe a first implementation of our
algorithms.
In this first setting, we observe certain difficulties arising from
trying to compute \(\beta\)-certificates.
We contrast the experimental results with theoretical guarantees,
setting the stage for the remainder of this manuscript.

The first batch of experiments were performed on \cref{eq:budget-laptop} hardware.
In these experiments, whose results we display in
\cref{tab:tsp-128,tab:mtx-128}, we receive as input a graph \(G =
(V, E)\) and nonnegative weights \(w \in \Lp{E}\).
We normalize the weights to ensure that \(\norm[1]{w} \coloneqq
\sum_{ij \in E} \abs{w_{ij}} = 1\).\footnote{% Ensuring that
  \(\norm[1]{w} = 1\) actually turned out to be crucial, as SCS
  otherwise reports the SDP~\cref{eq:GW-gaugef} as infeasible for the
  instances \texttt{bier127}, \texttt{brg180}, \texttt{d198},
  \texttt{gr137}, \texttt{gr202}, and \texttt{gr96}.
  See the discussion around~\cref{eq:SCS-GW-infeasible} in \cref{sec:solvers}.
}
We formulate~\cref{eq:GW-gaugef} as
\begin{equation}
  \label{eq:mc-scs}
  \inf \setst{\iprodt{c}{x}}{
    \Acal(x) \preceq b
  }
  \text{ for }
  \Acal(x) \coloneqq -\Diag(x),\,
  b \coloneqq -\tfrac{1}{4}\Laplacian_G(w),\,
  \text{ and }
  c \coloneqq \ones,
\end{equation}
to solve it with SCS, using its default stopping criteria and default
tolerances.
We then sample \(\ceil{128 \ln m}\) cuts via the random hyperplane
technique, from the matrix
\begin{equation}
  \label{eq:eps-def}
  Y \coloneqq
  (1 - \eps)\tilde{Y}  + \eps I,
  \text{ where }
  \tilde{Y}
  \text{ is the solver output and }
  \eps \coloneqq 10^{-4}.
\end{equation}
We refer to \(\eps\) as the \emph{perturbation parameter}.
Let \(\Fcal \subseteq \Powerset{V}\) denote the set of shores sampled.
We~solve \(\fcc(\Fcal, z)\) as defined
in~\cref{eq:fcc-restricted-primal} for
\(z \coloneqq \tfrac{1}{4}\Laplacian_G^*(Y) \in \Lp{E}\).
The fact that \(Y\) is a convex combination of the nearly optimal
solution with the identity matrix, as in~\cref{eq:eps-def}, plays
an important role throughout this our work.
We discuss the role and effect of the perturbation parameter in
\cref{ssec:perturbation}.
For~now, however, we highlight that the choice for \(\eps = 10^{-4}\)
matches the default accuracy of solutions produced by SCS.
Considering the numerical error intrinsic to floating-point
arithmetic, both \(\tilde{Y}\) and \(Y\) have similar claims to near
optimality, with the latter sidestepping issues arising from numerical
rank computations.
\Cref{tab:no-round-eps} provides a concrete argument for perturbation.
When selecting \(\eps = 0\) in~\cref{eq:eps-def}, not only do many
instances fail to attain feasibility (which is recorded in
the~\hyperref[eq:RLI-def]{RLI} column), the output of SCS for the
instance \texttt{brg180} is rejected
by~\cref{alg:representation-sanitize}.
In \cref{tab:tsp-128,tab:mtx-128} (as~well as in
\cref{tab:no-round-eps}), the columns \(\sigma\), \(\beta_{\mc}\), and
\(\beta_{\fcc}\) are computed according to \cref{eq:quality-measures}.
Each instance was solved 10 times with different random number seeds
for the cut generation procedure.
The~columns ``\(\beta_{\mc}\) avg'' and ``\(\beta_{\mc}\) std'' report
the average and standard deviation of all runs, whereas the analogous
columns for \(\beta_{\fcc}\) and ``time (s)'' only consider runs for
which \(\fcc(\Fcal, z)\) was~finite.
Whenever the restricted LP~\cref{eq:fcc-restricted-primal} is
infeasible, the running time of the pipeline is artificially improved
by the LP solver quickly terminating.
Hence such runs are excluded from the running time statistics.

Our first results are in \cref{tab:tsp-128}, which shows the
results for the set of~\cref{eq:tsp-instances} instances.
For~every instance we obtained a cut we could prove to be at least
94\% of the optimal, and in most instances we produced a cut which
is at least 99\% of the optimal value.
Moreover, the objective value found was quite stable, returning the
same approximation ratio across the 10 runs for all but one instance.
This robustness in stark contrast with the outcomes when computing
fractional cut covers.
In~particular, we failed to produce a feasible cover in all 10
independent runs for the instance \texttt{kroA100}, and in 9 of 10
independent runs for the instance \texttt{gr137}.
In general, we either failed to achieve feasibility, or obtained a
solution with \(\beta_{\fcc}\) better than the theoretical
guarantee~\(\GWalpha\).
Instances \texttt{ch150} and \texttt{eil101} stand out: for both, we
obtained an infeasible restricted LP in one run, but in the remaining
9 we obtained a certificate of quality at least \(87\%\).
These numbers suggest that sampling \(\ceil{128 \ln m}\) cuts is not
enough to reliably produce good empirical behaviour on every instance.

These instabilities when computing fractional cut covers are not
surprising.
Whereas the LP in~\cref{eq:fcc-def} has exponentially many variables,
Carathéodory's theorem ensures that there exists an optimal solution
using at most \(m + 1\) cuts.
This result is tight for general linear programming problems, but can
be improved for the particular case of
approximately solving~\cref{eq:fcc-def}.
In \cite{BenedettoProencadeCarliSilvaEtAl2025} we show that \(O(\ln
n)\) samples are enough to produce an approximately optimal fractional
cut cover with high probability.
This is asymptotically best possible.
Let \(z \in \Reals_{++}^E\) be a positive vector, and let \(\Fcal
\subseteq \Powerset{V}\) be a subset of shores.
An elementary argument shows that
\begin{equation}
  \label{eq:1}
  \card{\Fcal} \ge \ceil{\log_2(\chi(G))}
  \text{ if there exists }
  y \in \Lp{\Fcal}
  \text{ such that }
  \sum_{S \in \Fcal} y_S \incidvector{\delta(S)} \ge z,
\end{equation}
where \(\chi(G)\) denotes the \emph{chromatic number} of \(G\)
\cite{HararyHsuMiller1977}.
Hence, for any family of graphs whose chromatic number is
\(\Omega(n)\), one needs at least \(\log_2 n\) cuts.

Let \(G = (V, E)\) be a graph.
Whereas these considerations justify sampling \(\ceil{C \ln m}\) cuts
for a fixed \(C \in \Lp{}\), they say little about how to choose \(C\).
The arguments implying that \(\Theta(\ln m)\) cuts are necessary and
sufficient for sampling provide a range of possible choices of \(C\):
\begin{equation}
  \label{eq:C-reasonable-range}
  0.7213
  \le C
  \le 2.216 \times 10^{9}.
\end{equation}
One can adapt the arguments in
\cite{BenedettoProencadeCarliSilvaEtAl2025} to show that sampling
\(\ceil{2.216 \times 10^9 \ln m}\) shores is enough to compute a
\(\beta\)-certificate with \(\beta = 0.99\GWalpha\) with high
probability, i.e., with probability at least \(1 - 1/n\).
The lower bound in~\cref{eq:C-reasonable-range} follows
from~\cref{eq:1}: as our algorithms must produce feasible solutions on
complete graphs, the constant \(C\) must satisfy, for every \(n \in
\Naturals\) and \(m \coloneqq \binom{n}{2}\),
\[
  \ceil{C \ln m}
  = \card{\Fcal}
  \ge \ceil{\log_2 n}
  \ge \ceil{\paren{2 \ln 2}^{-1} \ln m}.
\]
Since \((2 \ln 2)^{-1} \approx 0.7213\), if we ignore ceilings for the
sake of simplicity we get~\cref{eq:C-reasonable-range}.

In light of~\cref{eq:C-reasonable-range}, \cref{tab:mtx-128} is
interesting as it shows that, for most of the~\cref{eq:mtx-instances}
instances, \(\ceil{128 \ln m}\) cuts are enough to reliably produce
good fractional cut covers.
Whereas only 3 instances had no~\hyperref[eq:RLI-def]{RLI} outcome
in~\cref{tab:tsp-128}, only 2 instances had an
\hyperref[eq:RLI-def]{RLI} outcome in some of their runs in
\cref{tab:mtx-128}, and even then, more often than not, feasibility
was attained for these instances.
Only \texttt{inf-USAir97}, \texttt{email-univ}, and
\texttt{p-hat700-1} did not achieve an average value of
\(\beta_{\fcc}\) of at least \(0.878\), and none of them were off by
much.
It is crucial to have no \hyperref[eq:RLI-def]{RLI} outcome when
trying to tighten the bounds in~\cref{eq:C-reasonable-range} in
practice.
Indeed, if we interpret the \hyperref[eq:RLI-def]{RLI} count as
estimating the probability of producing a feasible fractional cut
cover, we cannot say that \(\ceil{128 \ln m}\) cuts suffice to produce
certificates for \texttt{ca-netscience}, as with probability \(40\%\)
we have to sample a new set of \(\ceil{128 \ln m}\) cuts and solve the
new restricted LP~\cref{eq:fcc-restricted-primal}.

\nexttablegroup
\begin{table}
  \centering
  \begin{filecontents*}{tables/no_round_eps.csv}
instance,m,onesigma,bmc,bmcstd,rli,bfcc,bfccstd,time,timestd
bayg29,406,.9995,.9934,.0000,9,.8838,,0.13,
bays29,406,.9993,.9938,.0000,8,.8819,.0000,0.10,\bypassS{< .01}
berlin52,1326,.9991,.9896,.0000,0,.8904,.0000,0.68,0.11
brazil58,1653,.9981,.9993,.0000,10,,,,
gr96,4560,.9736,.9982,.0000,10,,,,
kroA100,4950,.0414,.9979,.0000,10,,,,
eil101,5050,.9413,.9809,.0000,8,.8780,.0000,3.23,0.04
gr120,7140,.9982,.9988,.0000,10,,,,
bier127,8001,.9978,.9730,.0000,0,.8772,.0003,9.97,0.90
ch130,8385,.9963,.9867,.0000,10,,,,
gr137,9316,.9252,.9996,.0000,10,,,,
ch150,11175,.9954,.9858,.0000,10,,,,
brg180,16110,.9983,.9414,.0012,\bypassS{\small\cref{eq:bot-Y-def}},,,,
d198,19503,.9981,.9975,.0000,10,,,,
gr202,20301,.9983,.9768,.0000,0,.8769,.0001,71.77,6.39
a280,39060,.9916,.9968,.0000,10,,,,
\end{filecontents*}
\DTLloaddb{no-round-eps}{tables/no_round_eps.csv}
\begin{tabular}{{l} % instance
  S[table-format=5.0,group-minimum-digits=4] % m
  *{3}{S[table-format=0.4,print-zero-integer=false]} % 1-σ, β_mc, β_mc std
  S[table-format=2.0] % RLI
  *{2}{S[table-format=0.4,print-zero-integer=false]} % β_fcc, β_fcc std
  S[table-format=2.2,group-minimum-digits=4,scientific-notation=false] % time
  S[table-format=3.2,group-minimum-digits=4] % time std
  }
\toprule
\multicolumn{1}{l}{\multirow{2}{*}{instance}} &
\multicolumn{1}{c}{\multirow{2}{*}{\(m\)}} &
\multicolumn{1}{c}{\multirow{2}{*}{\(1 - \hyperref[eq:quality-measures]{\sigma}\)}} &
\multicolumn{1}{c}{\hyperref[eq:quality-measures]{$\beta_{\mathrm{mc}}$}} &
\multicolumn{1}{c}{\hyperref[eq:quality-measures]{$\beta_{\mathrm{mc}}$}} &
\multicolumn{1}{c}{\multirow{2}{*}{\hyperref[eq:RLI-def]{RLI}}} &
\multicolumn{1}{c}{\hyperref[eq:quality-measures]{$\beta_{\mathrm{fcc}}$}} &
\multicolumn{1}{c}{\hyperref[eq:quality-measures]{$\beta_{\mathrm{fcc}}$}} &
{time} & {time} \\
& & & {avg} & {std} & & {avg} & {std} & {(s)} & {std (s)}
\\
\cmidrule(r){1-2}
\cmidrule(lr){3-3}
\cmidrule(lr){4-5}
\cmidrule(lr){6-8}
\cmidrule(l){9-10}
  \DTLforeach*{no-round-eps}{%
  \Instance=instance,
  \Edges=m,
  \OneSigma=onesigma,
  \BetaMC=bmc,
  \BetaMCstd=bmcstd,
  \RLI=rli,
  \BetaFCC=bfcc,
  \BetaFCCstd=bfccstd,
  \Time=time,
  \Timestd=timestd%
  }{\AssignOrDash{\BetaFCC}{\myBetaFCC}%
\AssignOrDash{\BetaFCCstd}{\myBetaFCCstd}%
\AssignOrDash{\Time}{\myTime}%
\AssignOrDash{\Timestd}{\myTimestd}%
{\ttfamily\small\detokenize\expandafter{\Instance}} &
\Edges &
\OneSigma &
\BetaMC &
\BetaMCstd &
\RLI &
\myBetaFCC &
\myBetaFCCstd &
\myTime &
\myTimestd\\
\DTLiflastrow{\bottomrule}{}
}%
\end{tabular}

  \vspace{-1em}
  \caption{\Cref{eq:tsp-instances} instances with \(\ceil{128 \ln m}\)
    cuts sampled and \(\eps = 0\), obtained on \cref{eq:budget-laptop} hardware.
    See~\cref{td:mc-section3}.
  }
  \label{tab:no-round-eps}
\end{table}

\nexttablegroup
\begin{table}[p]
  \centering
  \begin{filecontents*}{tables/tsp_mw_instances.csv}
instance,m,onesigma,bmc,bmcstd,rli,bfcc,bfccstd,time,timestd
bayg29,406,.9995,.9934,.0000,1,.8838,.0000,0.10,0.01
bays29,406,.9993,.9938,.0000,2,.8837,.0012,0.11,0.01
berlin52,1326,.9991,.9896,.0000,0,.8903,.0001,0.59,0.09
brazil58,1653,.9995,.9993,.0000,3,.8888,.0008,0.99,0.19
gr96,4560,.9994,.9982,.0000,6,.8793,.0000,3.16,0.43
kroA100,4950,.9977,.9979,.0000,10,,,,
eil101,5050,.9992,.9809,.0000,1,.8782,.0000,3.38,0.47
gr120,7140,.9982,.9988,.0000,5,.9463,.0000,7.23,0.83
bier127,8001,.9977,.9731,.0000,0,.8773,.0002,9.58,0.66
ch130,8385,.9982,.9867,.0000,3,.8772,.0000,7.72,0.51
gr137,9316,.9991,.9996,.0000,9,.9690,,10.62,
ch150,11175,.9977,.9858,.0000,1,.8766,.0002,11.01,1.22
brg180,16110,.9983,.9421,.0011,5,.8851,.0010,24.19,3.70
d198,19503,.9981,.9975,.0000,6,.8780,.0003,45.48,4.43
gr202,20301,.9982,.9768,.0000,0,.8770,.0001,67.70,5.46
a280,39060,.9953,.9969,.0000,4,.8741,.0003,98.04,4.27
\end{filecontents*}
\DTLloaddb{tsp-mw-instances}{tables/tsp_mw_instances.csv}
\begin{tabular}{{l} % instance
  S[table-format=5.0,group-minimum-digits=4] % m
  *{3}{S[table-format=0.4,print-zero-integer=false]} % 1-σ, β_mc, β_mc std
  S[table-format=2.0] % RLI
  *{2}{S[table-format=0.4,print-zero-integer=false]} % β_fcc, β_fcc std
  S[table-format=2.2,group-minimum-digits=4,scientific-notation=false] % time
  S[table-format=3.2,group-minimum-digits=4] % time std
  }
\toprule
\multicolumn{1}{l}{\multirow{2}{*}{instance}} &
\multicolumn{1}{c}{\multirow{2}{*}{\(m\)}} &
\multicolumn{1}{c}{\multirow{2}{*}{\(1 - \hyperref[eq:quality-measures]{\sigma}\)}} &
\multicolumn{1}{c}{\hyperref[eq:quality-measures]{$\beta_{\mathrm{mc}}$}} &
\multicolumn{1}{c}{\hyperref[eq:quality-measures]{$\beta_{\mathrm{mc}}$}} &
\multicolumn{1}{c}{\multirow{2}{*}{\hyperref[eq:RLI-def]{RLI}}} &
\multicolumn{1}{c}{\hyperref[eq:quality-measures]{$\beta_{\mathrm{fcc}}$}} &
\multicolumn{1}{c}{\hyperref[eq:quality-measures]{$\beta_{\mathrm{fcc}}$}} &
{time} & {time} \\
& & & {avg} & {std} & & {avg} & {std} & {(s)} & {std (s)}
\\
\cmidrule(r){1-2}
\cmidrule(lr){3-3}
\cmidrule(lr){4-5}
\cmidrule(lr){6-8}
\cmidrule(l){9-10}
  \DTLforeach*{tsp-mw-instances}{
  \Instance=instance,
  \Edges=m,
  \OneSigma=onesigma,
  \BetaMC=bmc,
  \BetaMCstd=bmcstd,
  \RLI=rli,
  \BetaFCC=bfcc,
  \BetaFCCstd=bfccstd,
  \Time=time,
  \Timestd=timestd%
  }{\AssignOrDash{\BetaFCC}{\myBetaFCC}%
   \AssignOrDash{\BetaFCCstd}{\myBetaFCCstd}%
   \AssignOrDash{\Time}{\myTime}%
   \AssignOrDash{\Timestd}{\myTimestd}%
   {\ttfamily\small\detokenize\expandafter{\Instance}} & % protect underscore
   \Edges &
   \OneSigma &
   \BetaMC &
   \BetaMCstd &
   \RLI &
   \myBetaFCC &
   \myBetaFCCstd &
   \myTime &
   \myTimestd\\
   \DTLiflastrow{\bottomrule}{}
}%
\end{tabular}
  \vspace{-1em}
  \caption{
    \Cref{eq:tsp-instances} maxcut instances with
    \(\ceil{128 \ln m}\) cuts sampled and \(\eps = 10^{-4}\),
    obtained on \cref{eq:budget-laptop} hardware.
    See~\cref{td:mc-section3}.
  }
  \label{tab:tsp-128}
\end{table}

\begin{table}[p]
  \centering
  \begin{filecontents*}{tables/mtx_control.csv}
instance,m,onesigma,n,bmc,bmcstd,rli,bfcc,bfccstd,time,timestd
ENZYMES8,133,.9983,88,.9785,.0000,0,.9149,.0045,3.30,0.41
soc-dolphins,159,.9979,62,.9730,.0000,0,.8900,.0000,1.11,0.09
road-chesapeake,170,.9991,39,.9673,.0000,0,.8880,.0004,0.24,0.01
email-enron-only,623,.9986,143,.9595,.0023,0,.8882,.0000,30.75,4.42
dwt_209,767,.9977,209,.9658,.0013,0,.8924,.0000,82.73,10.71
ca-netscience,914,.9963,379,.9623,.0022,4,.8856,.0000,446.92,32.04
Erdos991,1417,.9910,492,.9346,.0024,0,.8796,.0005,1450.67,65.65
hamming6-2,1824,.9997,64,.9997,.0000,0,.9997,.0000,4.42,0.93
inf-USAir97,2126,.9916,332,.9668,.0005,0,.8736,.0003,598.55,35.23
ia-infect-hyper,2196,.9990,113,.9687,.0010,0,.8898,.0004,16.05,2.12
DD687,2600,.9979,725,.9407,.0015,0,.8800,.0005,4395.64,164.04
dwt_503,2762,.9984,503,.9845,.0008,1,.8859,.0000,2605.63,58.90
ia-infect-dublin,2765,.9973,410,.9492,.0009,0,.8805,.0002,852.30,34.11
email-univ,5451,.9966,1133,.9260,.0012,0,.8690,.0003,29562.98,2166.08
johnson16-2-4,5460,.9997,120,.9715,.0013,0,.9325,.0006,15.95,2.85
p-hat700-1,60999,.9985,700,.9704,.0005,0,.8696,.0002,3031.69,56.45
\end{filecontents*}
\DTLloaddb{mtxcontrol}{tables/mtx_control.csv}
\begin{tabular}{{l} % instance
  S[table-format=5.0,group-minimum-digits=4] % m
  *{3}{S[table-format=0.4,print-zero-integer=false]} % 1-σ, β_mc, β_mc std
  S[table-format=2.0] % RLI
  *{2}{S[table-format=0.4,print-zero-integer=false]} % β_fcc, β_fcc std
  S[table-format=5.2,group-minimum-digits=4,scientific-notation=false] % time
  S[table-format=4.2,group-minimum-digits=4] % time std
  }
\toprule
\multicolumn{1}{l}{\multirow{2}{*}{instance}} &
\multicolumn{1}{c}{\multirow{2}{*}{\(m\)}} &
\multicolumn{1}{c}{\multirow{2}{*}{\(1 - \hyperref[eq:quality-measures]{\sigma}\)}} &
\multicolumn{1}{c}{\hyperref[eq:quality-measures]{$\beta_{\mathrm{mc}}$}} &
\multicolumn{1}{c}{\hyperref[eq:quality-measures]{$\beta_{\mathrm{mc}}$}} &
\multicolumn{1}{c}{\multirow{2}{*}{\hyperref[eq:RLI-def]{RLI}}} &
\multicolumn{1}{c}{\hyperref[eq:quality-measures]{$\beta_{\mathrm{fcc}}$}} &
\multicolumn{1}{c}{\hyperref[eq:quality-measures]{$\beta_{\mathrm{fcc}}$}} &
{time} & {time} \\
& & & {avg} & {std} & & {avg} & {std} & {(s)} & {std (s)}
\\
\cmidrule(r){1-2}
\cmidrule(lr){3-3}
\cmidrule(lr){4-5}
\cmidrule(lr){6-8}
\cmidrule(l){9-10}
  \DTLforeach*{mtxcontrol}{%
  \Instance=instance,
  \Edges=m,
  \OneSigma=onesigma,
  \BetaMC=bmc,
  \BetaMCstd=bmcstd,
  \RLI=rli,
  \BetaFCC=bfcc,
  \BetaFCCstd=bfccstd,
  \Time=time,
  \Timestd=timestd%
  }{%
  {\ttfamily\small\detokenize\expandafter{\Instance}} & % protect underscore
   \Edges &
   \OneSigma &
   \BetaMC &
   \BetaMCstd &
   \RLI &
   \BetaFCC &
   \BetaFCCstd &
   \Time &
   \Timestd\\
   \DTLiflastrow{\bottomrule}{}
}%
\end{tabular}

  \vspace{-1em}
  \caption{\Cref{eq:mtx-instances} maxcut instances with \(\ceil{128
      \ln m}\) cuts sampled and \(\eps = 10^{-4}\),
    obtained on \cref{eq:budget-laptop} hardware.
    See~\cref{td:mc-section3}.
  }
  \label{tab:mtx-128}
\end{table}

\subsection{Working Implementations}

\Cref{tab:tsp-128,tab:mtx-128} show that \(\ceil{128 \ln m}\) cuts
may not be enough to guarantee feasibility of the restricted
LP~\cref{eq:fcc-restricted-primal} in practice.
Considering only the theoretical results from
\cite{BenedettoProencadeCarliSilvaEtAl2025}, this is not surprising,
as \(128\) is much smaller than the upper bound on \(C\)
in~\cref{eq:C-reasonable-range}.
On the other hand, \cref{tab:mtx-128} hints that \(C \coloneqq
128\) is almost on target for~\cref{eq:mtx-instances} instances.
It is only natural to simply try more samples.

\Cref{tab:tsp-256,tab:mtx-256} display the results of sampling
\(\ceil{256 \ln m}\) cuts.
\Cref{td:mc-section3} details how these tables (as well as
\crefrange{tab:no-round-eps}{tab:mtx-128}) were generated.
All tables in this text are accompanied by similar descriptions of the
experiments they report.
When running experiments with \(\ceil{256 \ln m}\) cuts, memory became
a bottleneck: in particular, the
largest~\cref{eq:mtx-instances} instance \texttt{p-hat700-1} could not
be solved on \cref{eq:budget-laptop} hardware, which led us to run this set of
experiments on the \cref{eq:biglinux} hardware.
As~the machine is different from the one where previous experiments
were performed, we omit running time information to avoid misleading
comparisons.

Unsurprisingly, more samples improved the behavior of our algorithm on
both sets of instances.
This is less noticeable for the \cref{eq:mtx-instances} instances
(compare \cref{tab:mtx-128,tab:mtx-256}), but even there an
improvement is noticeable.
The two instances that previously had positive
\hyperref[eq:RLI-def]{RLI} count now have zero
\hyperref[eq:RLI-def]{RLI} count.
Instances \texttt{inf-USAir97} and \texttt{email-univ} are the only
instances in \cref{tab:mtx-256} where we
obtain average values of \(\beta_{\fcc}\) smaller than the theoretical
constant \(\GWalpha\).
More importantly, \cref{tab:tsp-256} had no run with an
\hyperref[eq:RLI-def]{RLI} outcome, which is quite an improvement over
\cref{tab:tsp-128}.
With feasibility occurring in every run, there was very little
variation in the value of \(\beta_{\fcc}\) produced: across both sets
of instances the standard deviation among independent runs remains at
or below \(0.0012\), and all the average values of \(\beta_{\fcc}\)
are at or above \(0.8739\), with the majority of them being above the
theoretical guarantee of \(\GWalpha \approx 0.87856\).

\begin{table-description}[{\Crefrange{tab:no-round-eps}{tab:intro-eps-mtx}}]
  \label{td:mc-section3}
  As input, we are given maximum cut instances \((G, w)\).
  We nearly solve the SDPs in~\cref{eq:GW-def} using SCS
  with~\cref{eq:mc-scs}, obtaining \(\tilde{x} \in \Reals^V\) and
  \(\tilde{Y} \in \Psd{V}\).
  We set \(Y \coloneqq (1 - \eps)\tilde{Y} + \eps I\) as
  in~\cref{eq:eps-def}, and
  \(z \coloneqq \tfrac{1}{4}\Laplacian_G^*(Y)\) as
  in~\cref{step:solve-sdp}, where \(\eps \in \set{0, 10^{-4}, \nicefrac{1}{64}}\).
  Set \((\rho, x, B) \coloneqq \SlackSanitize(\tilde{x}, w)\) and
  \((\mu, R \in \Reals^{[k] \times V}) \coloneqq
  \RepresentationSanitize(Y, z)\).
  For \(C \in \set{128, 256}\), we sample \(\ceil{C \ln m}\) vectors
  \(g \in \Reals^k\) according to the standard multivariate normal
  distribution, and produce a shore
  \(S \coloneqq \setst{i \in V}{\iprodt{g}{Re_i} > 0}\) for
  each~\(g\).
  Let \(\Fcal \subseteq \Powerset{V}\) be the set of shores generated.
  We compute {\rmc} directly, and we solve {\rfcc} using Gurobi.
  We then define \(\sigma\), \(\beta_{\mc}\), and \(\beta_{\fcc}\) as
  in~\cref{eq:quality-measures} for
  \crefrange{tab:no-round-eps}{tab:mtx-256},
  and as in~\cref{eq:quality-measures-mc}
  (with \(\sigma_{\eps}\) in place of~\(\sigma\)) for
  \cref{tab:intro-eps-tsp,tab:intro-eps-mtx}.
  The whole process is run 10 times with different random generator seeds.
  Statistics for \(\sigma\) (or~\(\sigma_{\eps}\)) and \(\beta_{\mc}\)
  use all 10 runs; those for \(\beta_{\fcc}\) and running time use
  only runs that produced a feasible fractional cut cover for~\(z\),
  thus excluding the infeasible runs counted in the
  \hyperref[eq:RLI-def]{RLI} column.
  For values of \(\eps\) with feasible fractional cut covers in all 10
  runs (i.e., zero \hyperref[eq:RLI-def]{RLI} count), this column is
  omitted.

  \Crefrange{tab:no-round-eps}{tab:intro-eps-mtx} report
  results obtained on \cref{eq:budget-laptop} hardware with
  \(C = 128\), except for \cref{tab:tsp-256,tab:mtx-256} which use
  \cref{eq:biglinux} hardware with \(C = 256\).
  \Cref{tab:no-round-eps,tab:tsp-128,tab:tsp-256,tab:intro-eps-tsp}
  use~\cref{eq:tsp-instances} instances, whereas
  \cref{tab:mtx-128,tab:mtx-256,tab:intro-eps-mtx} use
  \cref{eq:mtx-instances} instances.
  \Cref{tab:no-round-eps} uses \(\eps = 0\), while
  \cref{tab:tsp-128,tab:mtx-128,tab:tsp-256,tab:mtx-256} use
  \(\eps = 10^{-4}\).
  \Cref{tab:intro-eps-tsp,tab:intro-eps-mtx} combine
  the \(\eps = 10^{-4}\) results from
  \cref{tab:tsp-128,tab:mtx-128} with those for
  \(\eps = \nicefrac{1}{64}\), respectively.
\end{table-description}
\clearpage

\nexttablegroup
\begin{table}[p]
  \centering
  \begin{filecontents*}{tables/tsplib_256.csv}
instance,m,onesigma,bmc,bmcstd,bfcc,bfccstd
bayg29,406,.9995,.9934,.0000,.8839,.0000
bays29,406,.9993,.9938,.0000,.8840,.0012
berlin52,1326,.9991,.9896,.0000,.8904,.0000
brazil58,1653,.9995,.9993,.0000,.8897,.0009
gr96,4560,.9994,.9982,.0000,.8794,.0000
kroA100,4950,.9977,.9979,.0000,.9956,.0001
eil101,5050,.9992,.9809,.0000,.8782,.0000
gr120,7140,.9982,.9988,.0000,.9463,.0000
bier127,8001,.9977,.9732,.0001,.8774,.0002
ch130,8385,.9982,.9867,.0000,.8773,.0000
gr137,9316,.9991,.9996,.0000,.9690,.0000
ch150,11175,.9977,.9858,.0000,.8768,.0000
brg180,16110,.9983,.9422,.0010,.8869,.0001
d198,19503,.9981,.9975,.0000,.8790,.0006
gr202,20301,.9982,.9768,.0000,.8776,.0000
a280,39060,.9953,.9969,.0000,.8749,.0000
\end{filecontents*}
\DTLloaddb{tsplib-256}{tables/tsplib_256.csv}
\begin{tabular}{{l} % instance
  S[table-format=5.0,group-minimum-digits=4] % m
  *{3}{S[table-format=0.4,print-zero-integer=false]} % 1-σ, β_mc, β_mc std
  *{2}{S[table-format=0.4,print-zero-integer=false]} % β_fcc, β_fcc std
  }
\toprule
\multicolumn{1}{l}{\multirow{2}{*}{instance}} &
\multicolumn{1}{c}{\multirow{2}{*}{\(m\)}} &
\multicolumn{1}{c}{\multirow{2}{*}{\(1 - \hyperref[eq:quality-measures]{\sigma}\)}} &
\multicolumn{1}{c}{\hyperref[eq:quality-measures]{$\beta_{\mathrm{mc}}$}} &
\multicolumn{1}{c}{\hyperref[eq:quality-measures]{$\beta_{\mathrm{mc}}$}} &
\multicolumn{1}{c}{\hyperref[eq:quality-measures]{$\beta_{\mathrm{fcc}}$}} &
\multicolumn{1}{c}{\hyperref[eq:quality-measures]{$\beta_{\mathrm{fcc}}$}} \\
& & & {avg} & {std} & {avg} & {std}
\\
\cmidrule(r){1-2}
\cmidrule(lr){3-3}
\cmidrule(lr){4-5}
\cmidrule(lr){6-7}
  \DTLforeach*{tsplib-256}{%
  \Instance=instance,
  \Edges=m,
  \OneSigma=onesigma,
  \BetaMC=bmc,
  \BetaMCstd=bmcstd,
  \BetaFCC=bfcc,
  \BetaFCCstd=bfccstd%
  }{%
  {\ttfamily\small\detokenize\expandafter{\Instance}} & % protect underscore
   \Edges &
   \OneSigma &
   \BetaMC &
   \BetaMCstd &
   \BetaFCC &
   \BetaFCCstd\\
   \DTLiflastrow{\bottomrule}{}
}%
\end{tabular}

  \vspace{-1em}
  \caption{
    \Cref{eq:tsp-instances} maxcut instances with \(\ceil{256 \ln m}\)
    cuts sampled and \(\eps = 10^{-4}\), obtained on
    \cref{eq:biglinux} hardware.
    See \cref{td:mc-section3}.
  }
  \label{tab:tsp-256}
\end{table}

\begin{table}[p]
  \centering
  \begin{filecontents*}{tables/mtx_256.csv}
instance,m,onesigma,n,bmc,bmcstd,bfcc,bfccstd
ENZYMES8,133,.9983,88,.9785,.0000,.9161,.0008
soc-dolphins,159,.9979,62,.9730,.0000,.8900,.0000
road-chesapeake,170,.9991,39,.9673,.0000,.8883,.0000
email-enron-only,623,.9986,143,.9611,.0016,.8882,.0000
dwt_209,767,.9977,209,.9686,.0019,.8924,.0000
ca-netscience,914,.9963,379,.9643,.0017,.8856,.0000
Erdos991,1417,.9910,492,.9383,.0026,.8810,.0000
hamming6-2,1824,.9997,64,.9997,.0000,.9997,.0000
inf-USAir97,2126,.9916,332,.9670,.0006,.8739,.0000
ia-infect-hyper,2196,.9990,113,.9702,.0008,.8940,.0000
DD687,2600,.9979,725,.9419,.0011,.8864,.0001
dwt_503,2762,.9984,503,.9854,.0005,.8859,.0000
ia-infect-dublin,2765,.9973,410,.9499,.0009,.8866,.0000
email-univ,5451,.9966,1133,.9269,.0009,.8777,.0002
johnson16-2-4,5460,.9997,120,.9722,.0004,.9459,.0002
p-hat700-1,60999,.9985,700,.9704,.0003,.8786,.0001
\end{filecontents*}
\DTLloaddb{mtx-256}{tables/mtx_256.csv}
\begin{tabular}{{l} % instance
  *{2}{S[table-format=5.0,group-minimum-digits=4]} % n, m
  *{3}{S[table-format=0.4,print-zero-integer=false]} % 1-σ, β_mc, β_mc std
  *{2}{S[table-format=0.4,print-zero-integer=false]} % β_fcc, β_fcc std
  }
\toprule
\multicolumn{1}{l}{\multirow{2}{*}{instance}} &
\multicolumn{1}{c}{\multirow{2}{*}{\(m\)}} &
\multicolumn{1}{c}{\multirow{2}{*}{\(n\)}} &
\multicolumn{1}{c}{\multirow{2}{*}{\(1 - \hyperref[eq:quality-measures]{\sigma}\)}} &
\multicolumn{1}{c}{\hyperref[eq:quality-measures]{$\beta_{\mathrm{mc}}$}} &
\multicolumn{1}{c}{\hyperref[eq:quality-measures]{$\beta_{\mathrm{mc}}$}} &
\multicolumn{1}{c}{\hyperref[eq:quality-measures]{$\beta_{\mathrm{fcc}}$}} &
\multicolumn{1}{c}{\hyperref[eq:quality-measures]{$\beta_{\mathrm{fcc}}$}} \\
& & & & {avg} & {std} & {avg} & {std}
\\
\cmidrule(r){1-3}
\cmidrule(lr){4-4}
\cmidrule(lr){5-6}
\cmidrule(lr){7-8}
  \DTLforeach*{mtx-256}{%
  \Instance=instance,
  \Vertices=n,
  \Edges=m,
  \OneSigma=onesigma,
  \BetaMC=bmc,
  \BetaMCstd=bmcstd,
  \BetaFCC=bfcc,
  \BetaFCCstd=bfccstd%
  }{%
  {\ttfamily\small\detokenize\expandafter{\Instance}} & % protect underscore
   \Edges &
   \Vertices &
   \OneSigma &
   \BetaMC &
   \BetaMCstd &
   \BetaFCC &
   \BetaFCCstd\\
   \DTLiflastrow{\bottomrule}{}
}%
\end{tabular}

  \vspace{-1em}
  \caption{
    \Cref{eq:mtx-instances} maxcut instances with \(\ceil{256 \ln m}\)
    cuts sampled and \(\eps = 10^{-4}\), obtained on
    \cref{eq:biglinux} hardware.
    See \cref{td:mc-section3}.
  }
  \label{tab:mtx-256}
\end{table}

From a practical point of view, \cref{tab:tsp-256,tab:mtx-256}
allow us to consider \(\ceil{256 \ln m}\) as an upper bound for the
number of cuts we need to sample in this work, significantly narrowing
the range in~\cref{eq:C-reasonable-range}.
At least for the set of instances presented here, which were chosen in
a previous context for a computational study of the maximum cut
problem solely, we have no need to do experiments with \(\ceil{2 \times
10^9 \ln m}\) cuts to obtain reasonable \(\beta\)-certificates.
We further wish to avoid resorting to \cref{eq:biglinux} hardware, as results
obtained on a machine with 768~GB of RAM have little bearing on what
can be accomplished on affordable and readily available hardware.
For these reasons, the remaining experiments focus on sampling
\(\ceil{128 \ln m}\) cuts.

We conclude this section by presenting a first practical and robust
implementation of our primal-dual algorithm.
Our experiments are the same as the ones which generated
\cref{tab:tsp-128,tab:mtx-128}, except that we revisit our choice
of perturbation, changing the perturbation parameter \(\eps =
10^{-4}\) in~\cref{eq:eps-def} to a much higher \(\eps = \nicefrac{1}{64} =
0.015625\).
In particular, we sample \(\ceil{128 \ln m}\) cuts via the random
hyperplane technique, and run the experiments in \cref{eq:budget-laptop} hardware.
\Cref{tab:intro-eps-tsp,tab:intro-eps-mtx} present
the results of this set of experiments.
Before discussing the results, we highlight an important aspect of
these experiments, that will appear again in this paper.
Let \(\tilde{Y} \in \Sym{V}\) be the nearly optimal solution
to the SDP~\cref{eq:GW-supf} computed by SCS.
We call~\cref{alg:representation-sanitize} with inputs
\[
  Y_{\eps} \coloneqq (1 - \eps) \tilde{Y} + \eps I
  \qquad
  \text{and}
  \qquad
  z_{\eps} \coloneqq \tfrac{1}{4}\Laplacian_G^*(Y_{\eps})
  = \tfrac{1 - \eps}{4}\Laplacian_G^*(\tilde{Y}) + \tfrac{\eps}{2}\ones
\]
to sanitize \(Y_{\eps}\) before proceeding to the rest of our pipeline.
This is qualitatively different from what is shown
in \cref{tab:tsp-128,tab:mtx-128}: by picking \(\eps\) many orders of
magnitudes larger than solver accuracy, we~are deliberately deviating
from the nearly optimal solutions.
Thus, even though we started from the same initial \(w \in \Lp{E}\)
and solved the same SDP relaxation~\cref{eq:GW-def}, the change of the
perturbation parameter \(\eps\) now significantly changes the final
\(\beta\)-certificate.
In this way, the quality measures we introduced
in~\cref{eq:quality-measures} are all functions of \(\eps \in [0,
1]\).
More precisely, let \(\Fcal_{\eps}\) be the set of cuts obtained from
sampling from \(\GWrv(Y_{\eps})\), and let \(\mu_{\eps}\) be the sanitized
objective value produced by~\cref{alg:representation-sanitize}.
Our quality measures are
\begin{equation}
  \label{eq:quality-measures-mc}
  \sigma_{\eps} \coloneqq 1 - \frac{\iprodt{z_{\eps}}{w}}{\mu_{\eps}\rho},
  \qquad
  \beta_{\fcc} \coloneqq \frac{(1 - \sigma_{\eps})\mu_{\eps}}{\fcc(\Fcal_{\eps}, z_{\eps})},
  \quad
  \beta_{\mc} \coloneqq \frac{\mc(\Fcal_{\eps}, w)}{\rho};
  \qquad\text{see~\cref{eq:quality-measures}}.
\end{equation}
This has an important consequence on the interpretation of
\cref{tab:intro-eps-tsp,tab:intro-eps-mtx}:
the columns reflect the quality of certification obtained for
different pairs \((w, z_{\eps})\).
In~particular, whereas distinct values of \(\beta_{\mc}\) really
reflect whether one obtained better cuts for the given input \(w \in
\Lp{E}\) by using \(\Fcal_{10^{-4}}\) or \(\Fcal_{1/64}\), the
distinct values of \(\beta_{\fcc}\) are incomparable, as they
correspond to covering distinct vectors \(z_{10^{-4}}\) and
\(z_{1/64}\).
In this way, we are taking advantage of the theoretical framework
presented in~\cref{ssec:real-number}: when given \(w \in \Lp{E}\), we
have freedom to select \(z \in \Lp{E}\) and to certify \((w, z) \in
H_{\sigma}(G)\).
By selecting distinct values of \(\eps\), we attempt to obtain vectors
\(z_{\eps}\) which are easier to cover by repeated sampling, at the
cost of having worse pairing quality (i.e., with higher values of
\(\sigma\)).
This trade-off, which is most explicit in the definition of
\(\beta_{\fcc}\) in~\cref{eq:quality-measures-mc}, will be addressed
in~\cref{ssec:perturbation}.

We now discuss
\cref{tab:intro-eps-tsp,tab:intro-eps-mtx}.
In none of our instances the perturbation parameter \(\eps = \nicefrac{1}{64}\)
had a significant impact on \(\beta_{\mc}\):
\cref{tab:intro-eps-tsp} only had improvements in this
constant, and \cref{tab:intro-eps-mtx} had no loss greater
than \(1\%\).
On the other hand, the improvements on the outcome of
solving the restricted LP~\cref{eq:fcc-restricted-primal} in
\cref{tab:intro-eps-tsp} is remarkable, with no runs
resulting in \hyperref[eq:RLI-def]{RLI} when \(\eps =
\nicefrac{1}{64}\).
Comparing back to \cref{tab:tsp-256}, we see that by increasing
the perturbation, we are able to halve the number of samples necessary
without losing approximation quality or stability of behavior.
\Cref{tab:intro-eps-mtx} illustrates how increasing
perturbation decreases the probability of \hyperref[eq:RLI-def]{RLI},
at the cost of decreasing the value of \(\beta_{\fcc}\).

\Cref{tab:intro-eps-tsp-runtime,tab:intro-eps-mtx-runtime}
display both the average total running time and the average time spent
solving the LP for the experiments reported in
\cref{tab:tsp-128,tab:mtx-128,tab:intro-eps-tsp,%
  tab:intro-eps-mtx}.
Since, for each input graph, the SDP being solved is always the same
in all of these tables, there is no relevant difference on the running
time of the SDP solver, so we highlight only the time for computing
the LP~\cref{eq:fcc-restricted-primal} and the total running time.
\Cref{fig:tsplib-runtime,fig:mtx-runtime} dissect the running time of
our algorithms when \(\eps = \nicefrac{1}{64}\), breaking down the running time
spent on each of the most significant parts of the pipeline:
solving~the SDP~\cref{eq:GW-def} for input \((G, w)\), generating the
shores \(\Fcal\), and solving \(\fcc(\Fcal, z)\).
For each instance, we computed the average across the 10 independent
runs of the running time spent on each of these steps, divided them by
the average total running time for those instances, and plotted the
results.
For most instances, the extra work involved in computing the
\(\beta\)-certificates does not change the order of magnitude of
overall time spent, as solving the SDP remains the main bottleneck.
Two of the exceptions are the highly symmetric instances
\texttt{hamming6-2} and \texttt{johnson16-2-4}, where indeed solving
the LP completely dominates the running time of the overall procedure.
On the other exceptions, \texttt{bier127} and \texttt{brg180}, solving
the SDP still takes between 10\% and 20\% of the total running time of
the procedure, which implies that the whole procedure is between 5 to
10 times slower than just solving the SDP~\cref{eq:GW-def}.

\nexttablegroup
\begin{table}[p]
  \centering
  \begin{filecontents*}{tables/intro_perturbation_tsp.csv}
instance,onesigma1,onesigma2,bmc1,bmc2,rli,bfcc1,bfcc2,bfccstd1,bfccstd2
bayg29,.9995,.9959,.9934,.9934,1,.8838,.8864,.0000,.0000
bays29,.9993,.9957,.9938,.9938,2,.8837,.8873,.0012,.0000
berlin52,.9991,.9960,.9896,.9896,0,.8903,.8922,.0001,.0000
brazil58,.9995,.9958,.9993,.9993,3,.8888,.8920,.0008,.0000
gr96,.9994,.9956,.9982,.9982,6,.8793,.8817,.0000,.0000
kroA100,.9977,.9934,.9979,.9979,10,,.9902,,.0001
eil101,.9992,.9959,.9809,.9809,1,.8782,.8809,.0000,.0000
gr120,.9982,.9939,.9988,.9988,5,.9463,.9465,.0000,.0000
bier127,.9977,.9950,.9731,.9731,0,.8773,.8802,.0002,.0004
ch130,.9982,.9948,.9867,.9867,3,.8772,.8799,.0000,.0000
gr137,.9991,.9947,.9996,.9996,9,.9690,.9689,,.0000
ch150,.9977,.9944,.9858,.9858,1,.8766,.8795,.0002,.0000
brg180,.9983,.9958,.9421,.9434,5,.8851,.8868,.0010,.0005
d198,.9981,.9938,.9975,.9975,6,.8780,.8868,.0003,.0003
gr202,.9982,.9955,.9768,.9768,0,.8770,.8782,.0001,.0002
a280,.9953,.9914,.9969,.9969,4,.8741,.8765,.0003,.0001
\end{filecontents*}
\DTLloaddb{intro-perturbation-tsp}{tables/intro_perturbation_tsp.csv}
\begin{tabular}{{l} % instance
  *{4}{S[table-format=0.4,print-zero-integer=false]} % 1-σ, 1-σ, β_mc, β_mc
  S[table-format=2.0] % RLI
  *{4}{S[table-format=0.4,print-zero-integer=false]} % β_fcc, β_fcc, % β_fcc std, β_fcc std
  }
\toprule
\(\eps\) & {\(10^{-4}\)} & {\nicefrac{1}{64}} & {\(10^{-4}\)} & {\nicefrac{1}{64}} & {\(10^{-4}\)} & {\(10^{-4}\)} & {\nicefrac{1}{64}} & {\(10^{-4}\)} & {\nicefrac{1}{64}} \\[1pt]
{instance} &
{\(1-\hyperref[eq:quality-measures-mc]{\sigma_{\eps}}\)} &
{\(1-\hyperref[eq:quality-measures-mc]{\sigma_{\eps}}\)} &
\multicolumn{1}{c}{\hyperref[eq:quality-measures-mc]{$\beta_{\mathrm{mc}}$}} &
\multicolumn{1}{c}{\hyperref[eq:quality-measures-mc]{$\beta_{\mathrm{mc}}$}} &
\multicolumn{1}{c}{\hyperref[eq:RLI-def]{RLI}} &
\multicolumn{1}{c}{\hyperref[eq:quality-measures-mc]{$\beta_{\mathrm{fcc}}$}} &
\multicolumn{1}{c}{\hyperref[eq:quality-measures-mc]{$\beta_{\mathrm{fcc}}$}} &
\multicolumn{1}{c}{\hyperref[eq:quality-measures-mc]{$\beta_{\mathrm{fcc}}$} std} &
\multicolumn{1}{c}{\hyperref[eq:quality-measures-mc]{$\beta_{\mathrm{fcc}}$} std}\\
\cmidrule(r){1-1}
\cmidrule(lr){2-3}
\cmidrule(lr){4-5}
\cmidrule(lr){6-8}
\cmidrule(l){9-10}
  \DTLforeach*{intro-perturbation-tsp}{%
  \Instance=instance,
  \OneSigmaA=onesigma1,
  \OneSigmaB=onesigma2,
  \BetaMCA=bmc1,
  \BetaMCB=bmc2,
  \RLI=rli,
  \BetaFCCA=bfcc1,
  \BetaFCCB=bfcc2,
  \BetaFCCstdA=bfccstd1,
  \BetaFCCstdB=bfccstd2%
  }{\AssignOrDash{\BetaFCCA}{\myBetaFCCA}%
    \AssignOrDash{\BetaFCCstdA}{\myBetaFCCstdA}%
   {\ttfamily\small\detokenize\expandafter{\Instance}} & % protect underscore
   \OneSigmaA &
   \OneSigmaB &
   \BetaMCA &
   \BetaMCB &
   \RLI &
   \myBetaFCCA &
   \BetaFCCB &
   \myBetaFCCstdA &
   \BetaFCCstdB\\
   \DTLiflastrow{\bottomrule}{}
}%
\end{tabular}

  \vspace{-1.5em}
  \caption{%
    \Cref{eq:tsp-instances} instances with \(\ceil{128 \ln m}\) cuts
    sampled and \(\eps = \nicefrac{1}{64}\), compared against from
    \cref{tab:tsp-128}, which used \(\eps = 10^{-4}\).
    The largest standard deviation of \(\beta_{\mc}\) observed
    was~\(.0011\).
    See~\cref{td:mc-section3}.
  }
  \label{tab:intro-eps-tsp}
\end{table}

\begin{table}[p]
  \centering
  \begin{filecontents*}{tables/intro_perturbation_mtx.csv}
instance,onesigma1,onesigma2,bmc1,bmc2,rli,bfcc1,bfcc2,bfccstd1,bfccstd2
ENZYMES8,.9983,.9908,.9785,.9785,0,.9149,.9101,.0045,.0076
soc-dolphins,.9979,.9922,.9730,.9730,0,.8900,.8895,.0000,.0000
road-chesapeake,.9991,.9938,.9673,.9673,0,.8880,.8881,.0004,.0000
email-enron-only,.9986,.9940,.9595,.9586,0,.8882,.8887,.0000,.0000
dwt_209,.9977,.9927,.9658,.9654,0,.8924,.8924,.0000,.0000
ca-netscience,.9963,.9917,.9623,.9600,4,.8856,.8861,.0000,.0000
Erdos991,.9910,.9858,.9346,.9323,0,.8796,.8753,.0005,.0002
hamming6-2,.9997,.9985,.9997,.9997,0,.9997,.9982,.0000,.0000
inf-USAir97,.9916,.9869,.9668,.9665,0,.8736,.8744,.0003,.0003
ia-infect-hyper,.9990,.9965,.9687,.9692,0,.8898,.8899,.0004,.0003
DD687,.9979,.9931,.9407,.9376,0,.8800,.8758,.0005,.0004
dwt_503,.9984,.9939,.9845,.9841,1,.8859,.8880,.0000,.0000
ia-infect-dublin,.9973,.9935,.9492,.9471,0,.8805,.8778,.0002,.0006
email-univ,.9966,.9918,.9260,.9230,0,.8690,.8650,.0003,.0003
johnson16-2-4,.9997,.9978,.9715,.9699,0,.9325,.9308,.0006,.0003
p-hat700-1,.9985,.9968,.9704,.9699,0,.8696,.8685,.0002,.0003
\end{filecontents*}
\DTLloaddb{intro-perturbation-mtx}{tables/intro_perturbation_mtx.csv}
\begin{tabular}{{l} % instance
  *{4}{S[table-format=0.4,print-zero-integer=false]} % 1-σ, 1-σ, β_mc, β_mc
  S[table-format=1.0] % RLI
  *{4}{S[table-format=0.4,print-zero-integer=false]} % β_fcc, β_fcc, % β_fcc std, β_fcc std
  }
\toprule
\(\eps\) & {\(10^{-4}\)} & {\nicefrac{1}{64}} & {\(10^{-4}\)} & {\nicefrac{1}{64}} & {\(10^{-4}\)} & {\(10^{-4}\)} & {\nicefrac{1}{64}} & {\(10^{-4}\)} & {\nicefrac{1}{64}} \\[1pt]
{instance} &
{\(1-\hyperref[eq:quality-measures-mc]{\sigma_{\eps}}\)} &
{\(1-\hyperref[eq:quality-measures-mc]{\sigma_{\eps}}\)} &
\multicolumn{1}{c}{\hyperref[eq:quality-measures-mc]{$\beta_{\mathrm{mc}}$}} &
\multicolumn{1}{c}{\hyperref[eq:quality-measures-mc]{$\beta_{\mathrm{mc}}$}} &
\multicolumn{1}{c}{\hyperref[eq:RLI-def]{RLI}} &
\multicolumn{1}{c}{\hyperref[eq:quality-measures-mc]{$\beta_{\mathrm{fcc}}$}} &
\multicolumn{1}{c}{\hyperref[eq:quality-measures-mc]{$\beta_{\mathrm{fcc}}$}} &
\multicolumn{1}{c}{\hyperref[eq:quality-measures-mc]{$\beta_{\mathrm{fcc}}$} std} &
\multicolumn{1}{c}{\hyperref[eq:quality-measures-mc]{$\beta_{\mathrm{fcc}}$} std}\\
\cmidrule(r){1-1}
\cmidrule(lr){2-3}
\cmidrule(lr){4-5}
\cmidrule(lr){6-8}
\cmidrule(l){9-10}
  \DTLforeach*{intro-perturbation-mtx}{%
  \Instance=instance,
  \OneSigmaA=onesigma1,
  \OneSigmaB=onesigma2,
  \BetaMCA=bmc1,
  \BetaMCB=bmc2,
  \RLI=rli,
  \BetaFCCA=bfcc1,
  \BetaFCCB=bfcc2,
  \BetaFCCstdA=bfccstd1,
  \BetaFCCstdB=bfccstd2%
  }{%
  {\ttfamily\small\detokenize\expandafter{\Instance}} & % protect underscore
   \OneSigmaA &
   \OneSigmaB &
   \BetaMCA &
   \BetaMCB &
   \RLI &
   \BetaFCCA &
   \BetaFCCB &
   \BetaFCCstdA &
   \BetaFCCstdB\\
   \DTLiflastrow{\bottomrule}{}
}%
\end{tabular}

  \vspace{-2em}
  \caption{
    \Cref{eq:mtx-instances} instances with \(\ceil{128 \ln m}\) cuts
    sampled and \(\eps = \nicefrac{1}{64}\), compared against
    \cref{tab:mtx-128}, which used \(\eps = 10^{-4}\).
    The largest standard deviation of \(\beta_{\mc}\) observed
    was~\(0.0029\).
    See~\cref{td:mc-section3}.
  }
  \label{tab:intro-eps-mtx}
\end{table}

\nexttablegroup
\begin{table}[p]
  \centering
  \begin{filecontents*}{tables/intro_perturbation_tsp_runtime.csv}
instance,rli,lp1,lp2,total1,total2
bayg29,1,0.02,0.03,0.10,0.11
bays29,2,0.02,0.03,0.11,0.12
berlin52,0,0.18,0.32,0.59,0.69
brazil58,3,0.12,0.27,0.99,1.03
gr96,6,0.68,1.45,3.16,3.83
kroA100,10,,3.94,,6.32
eil101,1,0.87,2.32,3.38,5.00
gr120,5,1.39,1.71,7.23,7.36
bier127,0,5.45,18.72,9.58,22.94
ch130,3,1.85,3.90,7.72,9.00
gr137,9,1.44,2.99,10.62,10.61
ch150,1,2.55,6.97,11.01,15.40
brg180,5,18.12,58.49,24.19,65.00
d198,6,4.39,10.48,45.48,53.48
gr202,0,31.06,61.74,67.70,97.70
a280,4,17.67,38.20,98.04,127.56

\end{filecontents*}
\DTLloaddb{intro-perturbation-tsp-runtime}{tables/intro_perturbation_tsp_runtime.csv}
\begin{tabular}{{l} % instance
  S[table-format=2.0] % RLI
  *{2}{S[table-format=2.2,group-minimum-digits=4,scientific-notation=false]} % LPs
  S[table-format=2.2,group-minimum-digits=4,scientific-notation=false] % Total
  S[table-format=3.2,group-minimum-digits=4,scientific-notation=false] % Total
  }
\toprule
\(\eps\) & {\(10^{-4}\)} & {\(10^{-4}\)} & {\(\nicefrac{1}{64}\)} & {\(10^{-4}\)} & {\(\nicefrac{1}{64}\)} \\[1pt]
{instance} &
\multicolumn{1}{c}{\hyperref[eq:RLI-def]{RLI}} &
\multicolumn{1}{c}{\hyperref[eq:fcc-restricted-primal]{LP} (s)} &
\multicolumn{1}{c}{\hyperref[eq:fcc-restricted-primal]{LP} (s)} &
{Total (s)} &
{Total (s)}\\
\cmidrule(r){1-1}
\cmidrule(lr){2-4}
\cmidrule(lr){5-6}
  \DTLforeach*{intro-perturbation-tsp-runtime}{%
  \Instance=instance,
  \RLI=rli,
  \LPa=lp1,
  \LPb=lp2,
  \TotalA=total1,
  \TotalB=total2%
  }{\AssignOrDash{\LPa}{\myLPa}%
   \AssignOrDash{\TotalA}{\myTotalA}%
   {\ttfamily\small\detokenize\expandafter{\Instance}} & % protect underscore
   \RLI &
   \myLPa &
   \LPb &
   \myTotalA &
   \TotalB\\
   \DTLiflastrow{\bottomrule}{}
}%
\end{tabular}

  \caption{Running time information for \cref{tab:tsp-128,tab:intro-eps-tsp}.
    \cref{eq:RLI-def} for \(\eps = \nicefrac{1}{64}\) is omitted as it was always zero.
    Columns ``LP (s)'' and ``Total (s)'' display the average running
    time for runs for which \(\fcc(\Fcal, z)\) is finite.
  }
  \label{tab:intro-eps-tsp-runtime}
\end{table}

\begin{figure}[p]
  \centering
  \includegraphics[width=0.8\textwidth]{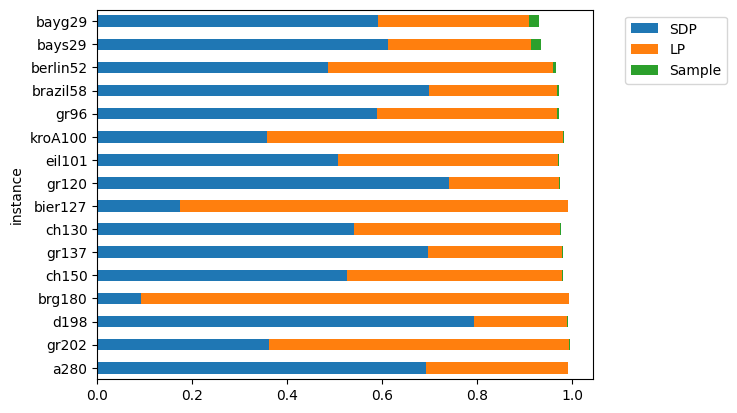}
  \caption{Running time breakdown of experiments with \(\eps = \nicefrac{1}{64}\)
    in~\cref{tab:intro-eps-tsp}.}
  \label{fig:tsplib-runtime}
\end{figure}

\begin{table}[p]
  \centering
  \begin{filecontents*}{tables/intro_perturbation_mm_runtime.csv}
instance,rli,lp1,lp2,total1,total2
ENZYMES8,0,0.02,0.04,3.30,3.56
soc-dolphins,0,0.03,0.05,1.11,1.05
road-chesapeake,0,0.01,0.02,0.24,0.25
email-enron-only,0,0.89,0.83,30.75,32.93
dwt_209,0,1.40,1.42,82.73,78.08
ca-netscience,4,1.37,2.24,446.92,468.37
Erdos991,0,6.93,6.83,1450.67,1460.16
hamming6-2,0,4.15,9.23,4.42,9.49
inf-USAir97,0,3.07,4.61,598.55,623.08
ia-infect-hyper,0,5.98,5.55,16.05,15.17
DD687,0,9.08,9.20,4395.64,4445.81
dwt_503,1,8.03,14.95,2605.63,2628.42
ia-infect-dublin,0,8.55,9.29,852.30,853.91
email-univ,0,22.74,21.17,29562.98,30590.25
johnson16-2-4,0,14.87,14.09,15.95,15.16
p-hat700-1,0,326.55,325.06,3031.69,3027.89
\end{filecontents*}
\DTLloaddb{intro-perturbation-mm-runtime}{tables/intro_perturbation_mm_runtime.csv}
\begin{tabular}{{l} % instance
  S[table-format=1.0] % RLI
  *{2}{S[table-format=3.2,group-minimum-digits=4,scientific-notation=false]} % LPs
  S[table-format=5.2,group-minimum-digits=4,scientific-notation=false] % Total
  S[table-format=5.2,group-minimum-digits=4,scientific-notation=false] % Total
  }
\toprule
\(\eps\) & {\(10^{-4}\)} & {\(10^{-4}\)} & {\(\nicefrac{1}{64}\)} & {\(10^{-4}\)} & {\(\nicefrac{1}{64}\)} \\[1pt]
{instance} &
\multicolumn{1}{c}{\hyperref[eq:RLI-def]{RLI}} &
\multicolumn{1}{c}{\hyperref[eq:fcc-restricted-primal]{LP} (s)} &
\multicolumn{1}{c}{\hyperref[eq:fcc-restricted-primal]{LP} (s)} &
{Total (s)} &
{Total (s)}\\
\cmidrule(r){1-1}
\cmidrule(lr){2-4}
\cmidrule(lr){5-6}
  \DTLforeach*{intro-perturbation-mm-runtime}{%
  \Instance=instance,
  \RLI=rli,
  \LPa=lp1,
  \LPb=lp2,
  \TotalA=total1,
  \TotalB=total2%
  }{%
  {\ttfamily\small\detokenize\expandafter{\Instance}} & % protect underscore
   \RLI &
   \LPa &
   \LPb &
   \TotalA &
   \TotalB\\
   \DTLiflastrow{\bottomrule}{}
}%
\end{tabular}

  \caption{Running time information for \cref{tab:mtx-128,tab:intro-eps-mtx}.
    \cref{eq:RLI-def} for \(\eps = \nicefrac{1}{64}\) is omitted as it was always zero.
    Columns ``LP (s)'' and ``Total (s)'' display the average running
    time for runs for which \(\fcc(\Fcal, z)\) is finite.
  }
  \label{tab:intro-eps-mtx-runtime}
\end{table}

\begin{figure}[p]
  \centering
  \includegraphics[width=0.8\textwidth]{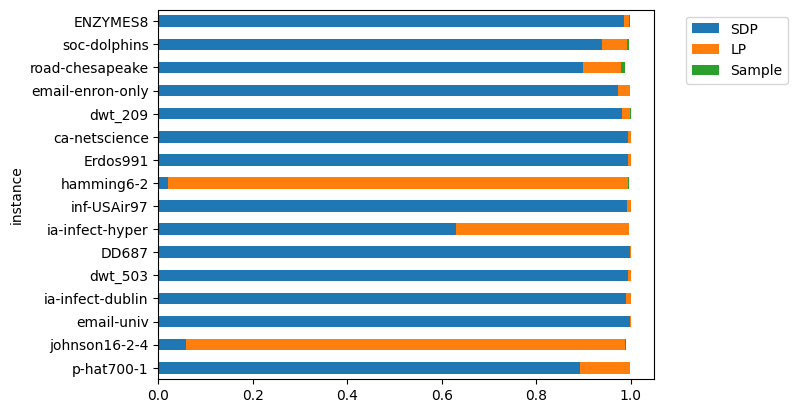}
  \caption{Running time breakdown of experiments with \(\eps = \nicefrac{1}{64}\)
    in~\cref{tab:intro-eps-mtx}.}
  \label{fig:mtx-runtime}
\end{figure}

Before moving on, we make a couple of remarks comparing our
experiments to the recent work of \nameandcite{MirkaWilliamson2023}.
As \cite{MirkaWilliamson2023} is only interested in approximately
solving maximum cut instances, it is natural to run a \emph{local
  improvement heuristic} on every cut produced by the random
hyperplane technique.
Concretely, given a shore \(S\) and a vertex \(i \in V\), one can
always check which of \(S \cup \set{i}\) or \(S \setminus \set{i}\)
has better objective value.
One can then repeat this local improvement until obtaining a shore
where no such exchange improves the objective value.
When it comes to using the cuts obtained to cover \(z \in \Lp{E}\),
the benefit of the heuristic is not clear.
If we add every intermediate cut as a column to our LP, we are
significantly increasing the number of cuts considered and thus the
running time of our covering algorithm; such cuts, however, are of
limited utility, as consecutive cuts are extremely similar to each
other.
On the other hand, adding only the last produced cut means we take
much longer to generate each cut, while also restricting the cuts
considered to a subset that is not necessarily better suited for the
covering problem.
This leads to a noteworthy difference between our works: whereas
\cite{MirkaWilliamson2023} always sample 10 cuts and run the local
improvement heuristic on every cut, we sample \(\ceil{C \ln m}\) cuts
and do not employ the heuristic.

In part due to this difference, and in part due to the different
hardware used, directly comparing running time of our experiments with
the ones in \cite{MirkaWilliamson2023} is of limited value.
It is important to point out, however, that the running time we have
obtained in our experiments using \cref{eq:budget-laptop} hardware is on the same
ballpark as the running times they reported.
In general, the running times in \cref{tab:tsp-128,tab:mtx-128,%
  tab:intro-eps-tsp-runtime,tab:intro-eps-mtx-runtime}
are at most 3 times slower than what was reported in
\cite{MirkaWilliamson2023}.
There are four exceptions: \texttt{ENZYMES8}, \texttt{email-univ},
\texttt{johnson16-2-4}, and \texttt{hamming6-2}.
\texttt{ENZYMES8} is the smallest instance among the sparse ones, but
took 24.41s to solve in \cite{MirkaWilliamson2023}, which is almost
6.8 times slower than what we report in
\cref{tab:intro-eps-mtx-runtime}.
On the other end, \texttt{email-univ} is the slowest instance to solve
both for us and for \cite{MirkaWilliamson2023}, and took 5 times
longer in \cref{eq:budget-laptop} hardware than what is reported by Mirka and
Williamson.
For this instance, \cref{fig:mtx-runtime} shows almost all of this
time was spent solving the SDP relaxation.
Our experiments are 7.9 times slower for \texttt{hamming6-2} and 37
times slower for \texttt{johnson16-2-4}.
\Cref{tab:intro-eps-mtx-runtime} pinpoints this slowdown to
solving the LP: when solving \texttt{johnson16-2-4} we spend, on
average, 1.07 seconds out of 15.16 seconds on everything but
solving the LP~\cref{eq:fcc-restricted-primal}, whereas \cite{MirkaWilliamson2023}
reports \(0.408\) seconds as the total time spent on this instance.

\clearpage
\section{Fractional Cut-Covering Instances}
\label{sec:fcc-instances}

\Cref{sec:main-attributes} presents experiments which compute
\(\beta\)-certificates given maximum cut instances \((G, w)\) as
input, where \(G = (V, E)\) is a graph and \(w \in \Lp{E}\).
In this section, we study the~\cref{eq:thesis-practical} of our
algorithms when given fractional cut-covering instances \((G, z)\) as
input, with \(z \in \Lp{E}\).

Let \(G = (V, E)\) be a graph.
Let \(\eps \in (0, 1)\), and set \(\gamma_G \coloneqq 1 + n + m\).
We formulate~\cref{eq:GW-polar-def} as
\begin{equation}
  \label{eq:GW-polar-SDP}
  \begin{alignedat}[t]{6}
    \text{Minimize \ } & \gamma_G \cdot \mu, && &\qquad\text{Maximize \ } & \iprodt{z}{w},
    \\
    \text{subject to \ } & \tfrac{1}{4}\Laplacian_G^*(Y + \mu \eps I) &&\geq z,&\qquad\text{subject to \ }&\tfrac{\eps}{2} \iprodt{\ones}{w} + (1-\eps)\iprodt{\ones}{x} &&\le \gamma_G,
    \\
    & \diag(Y + \mu \eps I) &&= \mu \ones,&&-\tfrac{1}{4}\Laplacian_G(w) + \Diag(x) &&\succeq 0,
    \\
    & \mu \in \Lp{},\,Y \in \Psd{V}&&&&w \in \Lp{E},\, x \in \Reals^V
  \end{alignedat}
\end{equation}
Here, \(\eps\) plays a role analogous to the perturbation parameter
defined in~\cref{eq:eps-def}, which justifies our choice of notation.
Except for the scaling factor \(\gamma_G\),
formulation~\cref{eq:GW-polar-SDP} was introduced in
\cite{BenedettoProencadeCarliSilvaEtAl2025} as a crucial step to prove
correctness of our algorithms in exact arithmetic.
The scaling factor \(\gamma_G\) is a minor convenience: it ensures that
\(\slater{w} \coloneqq \ones\) and \(\slater{x} \coloneqq \tfrac{1}{2}
\deg + \ones\) form a strictly feasible solution to the dual problem,
where \(\deg \in \Reals^V\) denotes the degree vector of \(G\)
(ignoring weights).
We explored some alternative formulations before settling on
\cref{eq:GW-polar-SDP}; see \cref{sec:formulations-appendix}.

The stark contrast between the literature on the maximum cut problem
and the literature on the fractional cut-covering problem becomes
apparent when considering which fractional cut-covering instances to
solve.
Whereas we were able to use instances from a recent work on the
maximum cut problem in~\cref{sec:main-attributes}, we believe this
manuscript to be the first one producing fractional cut covers for
weighted instances \((G, z)\).
To the best of our knowledge, the only experiments approximating the
fractional cut-covering number appear in \cite{NetoBen-Ameur2019},
which compares distinct bounds for \(\fcc(G) = \fcc(G, \ones)\) on a
set of graphs, without actually obtaining a fractional cut cover.
In particular, there was no established benchmark data set for us to
experiment with.

A first choice is to use the same instances as
\cref{sec:main-attributes}, but to reinterpret its weights as defining
a fractional cut-covering instance rather than a maximum cut instance.
We present in
\cref{tab:fcc-tsp-basic,tab:fcc-mtx-basic}
the results of these experiments.
To highlight that we are starting from the fractional cut-covering
side, these tables first show information about the quality of the fcc
certificates, and then about the maximum cut certificates.
Results in
\cref{tab:fcc-tsp-basic,tab:fcc-mtx-basic}
are strikingly better than those in~\cref{tab:tsp-128,tab:mtx-128}.
In particular, in every single run of our pipeline we obtained a
feasible fractional cut cover.
With a comparable running time to \cref{tab:tsp-128}, the worst
certifiable approximation factor for fractional cut covering in
\cref{tab:fcc-tsp-basic} is \(89.95\%\), and most are
above~\(95\%\).
Comparing \cref{tab:mtx-128,tab:fcc-mtx-basic}, we note
a significant difference on running time.
On average, no instance in \cref{tab:fcc-mtx-basic}
took longer than 1,000 seconds, whereas 5 instances took longer
than that in \cref{tab:mtx-128}.

\begin{table-description}[\Cref{tab:fcc-tsp-basic,tab:fcc-mtx-basic}]
  \label{td:fcc-basic}
  Experiments obtained on \cref{eq:budget-laptop} hardware.
  Inputs are weighted graphs previously used as maximum cut instances
  in \crefrange{tab:no-round-eps}{tab:intro-eps-mtx}, but with the
  weights being interpreted as describing a fractional cut-covering
  instance \((G, z)\).
  We~nearly solve the SDPs in~\cref{eq:GW-polar-SDP} with
  \(\eps \coloneqq 10^{-4}\) using SCS (see~\cref{eq:fcc-scs}), thus
  computing nearly optimal \(\tilde{\mu} \in \Reals\),
  \(\tilde{Y} \in \Sym{V}\), \(\tilde{w} \in \Reals^E\), and
  \(\tilde{x} \in \Reals^V\).
  Set \(Y \coloneqq \tilde{Y} + \tilde{\mu}\eps I\), set
  \((\rho, x, B) \coloneqq \SlackSanitize(\tilde{x}, \tilde{w})\), and
  set
  \((\mu, R \in \Reals^{[k] \times V})
  \coloneqq\RepresentationSanitize(Y, z)\).
  We sample \(\ceil{128 \ln m}\) vectors \(g \in \Reals^k\) according
  to the standard multivariate normal distribution, and produce a shore
  \(S \coloneqq \setst{i \in V}{g^\transp R e_i > 0}\) for each \(g\).
  Let \(\Fcal \subseteq \Powerset{V}\) be the set of shores generated.
  We~compute {\rmc} directly, and we solve {\rfcc} using Gurobi.
  We then define \(\sigma\), \(\beta_{\mc}\), and \(\beta_{\fcc}\) as
  in~\cref{eq:quality-measures}.
  The whole process is run 10 times with different random generator seeds.
  Statistics for \(\sigma\), \(\beta_{\mc}\), and \(\beta_{\fcc}\) use
  all 10 runs; there were no \hyperref[eq:RLI-def]{RLI} occurrences.
  \Cref{tab:fcc-tsp-basic} reports
  \cref{eq:tsp-instances}~instances, and
  \cref{tab:fcc-mtx-basic} reports
  \cref{eq:mtx-instances}~instances.
\end{table-description}
\clearpage

\nexttablegroup
\begin{table}[p]
  \centering
  \begin{filecontents*}{tables/fcc-side-tsp-basic-instances.csv}
instance,m,onesigma,bmc,bmcstd,bfcc,bfccstd,time,timestd
bayg29,406,.9999,.9648,.0000,.9996,.0000,0.16,0.02
bays29,406,.9999,.9707,.0000,.9999,.0000,0.15,0.01
berlin52,1326,.9989,.9217,.0000,.9995,.0000,0.48,0.04
brazil58,1653,.9997,.9454,.0000,.9876,.0000,0.61,0.06
gr96,4560,.9994,.9231,.0000,.9995,.0000,3.00,0.40
kroA100,4950,.9999,.9874,.0000,.9999,.0000,4.39,0.53
eil101,5050,.9998,.9691,.0000,.9999,.0000,3.39,0.47
gr120,7140,.9999,.9681,.0000,.9999,.0000,5.65,0.45
bier127,8001,.9999,.9629,.0000,.9999,.0000,6.68,0.85
ch130,8385,.9996,.9636,.0000,.9997,.0000,6.27,0.37
gr137,9316,.9998,.9594,.0000,.9978,.0000,7.64,1.14
ch150,11175,.9994,.9697,.0000,.9995,.0000,10.66,0.85
brg180,16110,.9644,.9533,.0007,.9953,.0009,59.13,5.25
d198,19503,.9999,.9504,.0000,.9996,.0000,28.55,2.44
gr202,20301,.9998,.8995,.0000,.9830,.0000,21.30,1.64
a280,39060,.9997,.9606,.0000,.9998,.0000,62.24,6.07
\end{filecontents*}
\DTLloaddb{fcc-side-tsp-basic-instances}{tables/fcc-side-tsp-basic-instances.csv}
\begin{tabular}{{l} % instance
  S[table-format=5.0,group-minimum-digits=4] % m
  *{3}{S[table-format=0.4,print-zero-integer=false]} % 1-σ, β_mc, β_mc std
  *{2}{S[table-format=0.4,print-zero-integer=false]} % β_fcc, β_fcc std
  S[table-format=2.2,group-minimum-digits=4,scientific-notation=false] % time
  S[table-format=1.2,group-minimum-digits=4] % time std
  }
\toprule
\multicolumn{1}{l}{\multirow{2}{*}{instance}} &
\multicolumn{1}{c}{\multirow{2}{*}{\(m\)}} &
\multicolumn{1}{c}{\multirow{2}{*}{\(1 - \hyperref[eq:quality-measures]{\sigma}\)}} &
\multicolumn{1}{c}{\hyperref[eq:quality-measures]{$\beta_{\mathrm{fcc}}$}} &
\multicolumn{1}{c}{\hyperref[eq:quality-measures]{$\beta_{\mathrm{fcc}}$}} &
\multicolumn{1}{c}{\hyperref[eq:quality-measures]{$\beta_{\mathrm{mc}}$}} &
\multicolumn{1}{c}{\hyperref[eq:quality-measures]{$\beta_{\mathrm{mc}}$}} &
{time} & {time} \\
& & & {avg} & {std} & {avg} & {std} & {(s)} & {std (s)}
\\
\cmidrule(r){1-2}
\cmidrule(lr){3-3}
\cmidrule(lr){4-5}
\cmidrule(lr){6-7}
\cmidrule(l){8-9}
  \DTLforeach*{fcc-side-tsp-basic-instances}{
  \Instance=instance,
  \Edges=m,
  \OneSigma=onesigma,
  \BetaMC=bmc,
  \BetaMCstd=bmcstd,
  \BetaFCC=bfcc,
  \BetaFCCstd=bfccstd,
  \Time=time,
  \Timestd=timestd%
  }{%
  {\ttfamily\small\detokenize\expandafter{\Instance}} & % protect underscore
   \Edges &
   \OneSigma &
   \BetaMC &
   \BetaMCstd &
   \BetaFCC &
   \BetaFCCstd &
   \Time &
   \Timestd\\
   \DTLiflastrow{\bottomrule}{}
}%
\end{tabular}

  \vspace{-1em}
  \caption{%
    \Cref{eq:tsp-instances} fcc instances with \(\ceil{128 \ln m}\)
    cuts sampled and \(\eps = 10^{-4}\), obtained on
    \cref{eq:budget-laptop} hardware.
    See \cref{td:fcc-basic}.
  }
  \label{tab:fcc-tsp-basic}
\end{table}

\begin{table}[p]
  \centering
  \begin{filecontents*}{tables/fcc-side-mtx-basic-instances.csv}
instance,m,onesigma,bmc,bmcstd,bfcc,bfccstd,time,timestd
ENZYMES8,133,.9999,.8888,.0000,.8888,.0000,0.52,0.06
soc-dolphins,159,.9998,.9598,.0000,.9598,.0000,0.22,0.03
road-chesapeake,170,.9990,.9591,.0000,.9596,.0000,0.15,0.01
email-enron-only,623,.9996,.9805,.0009,.9997,.0000,2.59,0.25
dwt_209,767,.9980,.9190,.0007,.9982,.0000,9.63,1.09
ca-netscience,914,.9999,.9646,.0013,.9875,.0000,42.44,5.84
Erdos991,1417,.9994,.9534,.0009,.9790,.0000,93.50,3.35
hamming6-2,1824,.9998,.9681,.0006,.9999,.0000,7.03,1.13
inf-USAir97,2126,.9997,.9280,.0000,.9957,.0000,61.21,8.50
ia-infect-hyper,2196,.9974,.9602,.0005,.9966,.0002,6.56,0.70
DD687,2600,.9999,.9275,.0010,.9999,.0000,177.78,3.96
dwt_503,2762,.9992,.9343,.0005,.9797,.0023,49.63,2.47
ia-infect-dublin,2765,.9999,.9652,.0010,.9999,.0000,43.38,4.86
email-univ,5451,.9998,.9512,.0010,.9998,.0000,945.88,63.76
johnson16-2-4,5460,.9998,.9325,.0005,.9721,.0008,15.36,1.63
p-hat700-1,60999,.9970,.9172,.0001,.9794,.0003,471.34,31.92
\end{filecontents*}
\DTLloaddb{fcc-side-mtx-basic-instances}{tables/fcc-side-mtx-basic-instances.csv}
\begin{tabular}{{l} % instance
  S[table-format=5.0,group-minimum-digits=4] % m
  *{3}{S[table-format=0.4,print-zero-integer=false]} % 1-σ, β_mc, β_mc std
  *{2}{S[table-format=0.4,print-zero-integer=false]} % β_fcc, β_fcc std
  S[table-format=3.2,group-minimum-digits=4,scientific-notation=false] % time
  S[table-format=1.2,group-minimum-digits=4] % time std
  }
\toprule
\multicolumn{1}{l}{\multirow{2}{*}{instance}} &
\multicolumn{1}{c}{\multirow{2}{*}{\(m\)}} &
\multicolumn{1}{c}{\multirow{2}{*}{\(1 - \hyperref[eq:quality-measures]{\sigma}\)}} &
\multicolumn{1}{c}{\hyperref[eq:quality-measures]{$\beta_{\mathrm{fcc}}$}} &
\multicolumn{1}{c}{\hyperref[eq:quality-measures]{$\beta_{\mathrm{fcc}}$}} &
\multicolumn{1}{c}{\hyperref[eq:quality-measures]{$\beta_{\mathrm{mc}}$}} &
\multicolumn{1}{c}{\hyperref[eq:quality-measures]{$\beta_{\mathrm{mc}}$}} &
{time} & {time} \\
& & & {avg} & {std} & {avg} & {std} & {(s)} & {std (s)}
\\
\cmidrule(r){1-2}
\cmidrule(lr){3-3}
\cmidrule(lr){4-5}
\cmidrule(lr){6-7}
\cmidrule(l){8-9}
  \DTLforeach*{fcc-side-mtx-basic-instances}{
  \Instance=instance,
  \Edges=m,
  \OneSigma=onesigma,
  \BetaMC=bmc,
  \BetaMCstd=bmcstd,
  \BetaFCC=bfcc,
  \BetaFCCstd=bfccstd,
  \Time=time,
  \Timestd=timestd%
  }{%
  {\ttfamily\small\detokenize\expandafter{\Instance}} & % protect underscore
   \Edges &
   \OneSigma &
   \BetaMC &
   \BetaMCstd &
   \BetaFCC &
   \BetaFCCstd &
   \Time &
   \Timestd\\
   \DTLiflastrow{\bottomrule}{}
}%
\end{tabular}

  \vspace{-1.5em}
  \caption{%
    \Cref{eq:mtx-instances} fcc instances with \(\ceil{128 \ln m}\)
    cuts sampled and \(\eps = 10^{-4}\), obtained on
    \cref{eq:budget-laptop} hardware.
    See \cref{td:fcc-basic}.
  }
  \label{tab:fcc-mtx-basic}
\end{table}

Our duality framework provides an alternative way of generating
instances.
Central to our work is the idea that the SDP
relaxation~\cref{eq:GW-def} studied by Goemans and Williamson and the
one we have introduced in~\cref{eq:GW-polar-def} can produce
\emph{pairs} of instances, as well as the necessary information for
certifying these objects to be paired via~\cref{eq:SDP-certificate}.
In particular, for every maximum cut instance \((G, w)\), by solving
the SDP relaxation~\cref{eq:GW-def} we compute an associated \(z \in
\Lp{E}\), and then certify \((w, z) \in H_{\sigma}(G)\).
As~a byproduct, we obtain a fractional cut-covering instance \((G,
z)\) paired with \((G, w)\).
In this way, we can generate what we shall call \emph{paired
  instances}.
We describe in \cref{alg:pairing} how our procedure pairs maximum cut
instances \((G, w)\) with fractional cut-covering instances \((G, z)\).

\begin{algorithm}
  \caption{Compute an fcc instance \((G, z)\) paired to an input maxcut instance \((G, w)\)}
  \label{alg:pairing}
  \begin{algorithmic}[1]
    \algrenewcommand\algorithmicrequire{\textbf{Parameters:}}
    \Require \(\gamma \in [0, 1)\).
    \Comment{We have used \(\gamma \coloneqq 10^{-6}\).}
    \algrenewcommand\algorithmicrequire{\textbf{Input:}}
    \Require Weighted maximum cut instance \((G, w)\)
    \algrenewcommand\algorithmicensure{\textbf{Output:}}
    \Ensure \Call{PairedInstance${}_{\gamma}$}{$G, w$} can either fail,
    returning \(\bot\), or return an fcc instance \((G, z)\) for which there exists
    \(R \in \Reals^{[k] \times V}\) such that
    \[
      \norm[2]{\diag(R^\transp R) - \ones} \approx 0,\,
      z_{ij} \le \tfrac{1}{4}\norm[2]{Re_i - Re_j}^2 \text{ for all }
      ij \in E,
      \text{ and }
      \iprodt{w}{z} \approx \GW(G, w)
    \]
    \Procedure{PairedInstance${}_{\gamma}$}{$G, w$}
    \State Nearly solve~\cref{eq:GW-def} with
    precision \(10^{-12}\), obtaining \(\tilde{x} \in \Reals^V\) and
    \(\tilde{Y} \in \Psd{V}\) \Comment{See~\cref{sec:solvers}}
    \State Set \(\tilde{z} \gets \tfrac{1}{4}\Laplacian_G^*(\tilde{Y})\)
    \If{\Call{SlackSanitize${}_{10^{-8}}$}{$\tilde{x}, w$} = \(\bot\)
        \text{ or }
        \Call{RepresentationSanitize${}_{0}$}{$\tilde{Y}, \tilde{z}$} = \(\bot\)
      }
      \State \textbf{return} \(\bot\)
    \ElsIf{\(\mu \neq 1\) for \((\mu, R) \gets \)
        \Call{RepresentationSanitize${}_0$}{$\tilde{Y}, \tilde{z}$}}
        \State \textbf{return} \(\bot\)
    \Else
        \State \textbf{for each} \(ij \in E\), set
      \(
        z_{ij}
        \gets
        \begin{cases}
          \tilde{z}_{ij}, & \text{ if } \tilde{z}_{ij} > \gamma \norm[\infty]{\tilde{z}}\\
          0, & \text{ otherwise}
        \end{cases}
      \)
      \State \textbf{return} \((G, z)\)
    \EndIf
    \EndProcedure
  \end{algorithmic}
\end{algorithm}

To obtain high precision, we solved the SDP relaxation
in~\cref{eq:GW-def} using DDS, an implementation of an interior-point
method producing solutions with precision far beyond what SCS can
achieve.
We~postpone the discussion of how we implemented~\cref{eq:GW-def} in
DDS to \cref{sec:solvers}, which is devoted to the impact of the
choice of solvers.
\Cref{tab:fcc-tsp-paired,tab:fcc-mtx-paired}
show the result of running our algorithms on these paired instances.
These are much more challenging for our fractional cut-covering
algorithm, with only 7 out of the 16 instances producing feasible covers
in all 10 independent runs in \cref{tab:fcc-tsp-paired}.
In
\cref{tab:fcc-mtx-basic,tab:fcc-mtx-paired},
running time distinguishes easier from harder instances.
Indeed, 6 instances in \cref{tab:fcc-mtx-paired} had
average running time above 1,000 seconds, whereas no instance in
\cref{tab:fcc-mtx-basic} took that long on average.
Due to these discrepancies, our subsequent experiments are performed
on paired instances.

Analogous to~\cref{sec:main-attributes}, perturbation can really
improve the behavior of our algorithms.
In~\cref{tab:fcc-tsp-eps,%
tab:fcc-mtx-eps} we compare the results obtained with
\(\eps = \nicefrac{1}{64}\) and \(\eps = 10^{-4}\), as in
\cref{tab:intro-eps-tsp,tab:intro-eps-mtx}.
Setting \(\eps = 10^{-4}\) proved insufficient even to reliably reach
feasibility: for some instances we observed a \(50\%\) probability of
failing to produce a fractional cut cover after \(\ceil{128 \ln m}\)
samples.
With the higher value of \(\eps = \nicefrac{1}{64}\), on the other
hand, we reliably produced feasible fractional cut covers, with almost
no cost on the average value of \(\beta_{\fcc}\) on the runs producing
feasible solutions.
However, there is one interesting difference between the effect of
\(\eps\) in this section and in~\cref{sec:main-attributes}.
Note that \(\eps\) does not appear in the formulations
in~\cref{eq:GW-def}: there we always solve the same SDP relaxation,
and then from the output \(\tilde{Y}\) of the solver we compute a
convex combination \((1 - \eps)\tilde{Y} + \eps I\) with the identity
matrix as in~\cref{eq:eps-def}.
Here, as one can see in~\cref{eq:GW-polar-SDP}, the perturbation
parameter \(\eps\) appears directly in the relaxation, changing the
SDP problem we are solving.
In particular, for different choices of \(\eps \in [0, 1]\), only \(z
\in \Lp{E}\) is kept fixed, and all the other quality measures change,
with
\begin{equation}
  \label{eq:quality-measures-fcc}
  \sigma_{\eps} \coloneqq 1 - \frac{\iprodt{z}{w_{\eps}}}{\mu_{\eps}\rho_{\eps}},
  \qquad
  \beta_{\fcc} \coloneqq \frac{(1 - \sigma_{\eps})\mu_{\eps}}{\fcc(\Fcal_{\eps}, z)},
  \qquad
  \beta_{\mc} \coloneqq \frac{\mc(\Fcal_{\eps}, w_{\eps})}{\rho_{\eps}};
  \qquad
  \text{see~\cref{eq:quality-measures}}.
\end{equation}
Here, \(\Fcal_{\eps} \subseteq \Powerset{V}\) is the set of shores obtained
from sampling \(\GWrv(Y_{\eps})\) with \(Y_{\eps} \coloneqq Y + \mu \eps
I\), and \(\mu_{\eps}\) is the sanitized objective value produced
by~\cref{alg:representation-sanitize} on input \((Y_{\eps}, z)\).
Similar to \cref{tab:intro-eps-tsp,%
  tab:intro-eps-mtx}, we reliably obtain feasible cut covers
with certifiably good \(\beta_{\fcc}\) values when \(\eps =
\nicefrac{1}{64}\).
However, here the choice of \(\eps\) may directly impact the running time
of the algorithm, as it changes the SDP being solved.
There are some instances for which running the algorithm with \(\eps =
\nicefrac{1}{64}\) was significantly faster than with \(\eps = 10^{-4}\)
(see \texttt{inf-USAir97}, \texttt{DD687}, \texttt{ia-infect-dublin}),
and one instance where it was significantly slower (see
\texttt{p-hat700-1}).

\begin{table-description}[\Crefrange{tab:fcc-tsp-paired}%
 {tab:fcc-mtx-eps}]
  \label{td:fcc-paired}
  Experiments obtained on \cref{eq:budget-laptop} hardware.
  As~input, we are given fractional cut-covering instances \((G, z)\).
  In all cases they are \hyperref[alg:pairing]{paired} instances,
  obtained by solving the corresponding maximum cut instance
  via~\cref{alg:pairing}; see~\cref{sec:dds-mc}.
  In~\cref{tab:fcc-tsp-paired,%
    tab:fcc-tsp-eps} the paired instances were
  obtained from \cref{eq:tsp-instances} instances, and in
  \cref{tab:fcc-mtx-paired,%
    tab:fcc-mtx-eps} from
  \cref{eq:mtx-instances}~instances.
  We~nearly solve the SDPs in~\cref{eq:GW-polar-SDP} with
  \(\eps \in \set{10^{-4}, \nicefrac{1}{64}}\) using SCS
  via~\cref{eq:fcc-scs}, thus computing nearly optimal
  \(\tilde{\mu} \in \Reals\), \(\tilde{Y} \in \Sym{V}\),
  \(\tilde{w} \in \Reals^E\), and \(\tilde{x} \in \Reals^V\).
  Set \(Y \coloneqq \tilde{Y} + \eps \tilde{\mu} I\), set
  \((\rho, x, B) \coloneqq \SlackSanitize(\tilde{x}, \tilde{w})\), and
  set
  \((\mu, R \in \Reals^{[k] \times V}) \coloneqq
  \RepresentationSanitize(Y, z)\).
  We sample \(\ceil{128 \log m}\) vectors \(g \in \Reals^k\) according
  to the standard multivariate normal distribution, and produce a
  shore \(S \coloneqq \setst{i \in V}{g^\transp R e_i > 0}\) for each
  \(g\).
  Let \(\Fcal \subseteq \Powerset{V}\) be the set of shores generated.
  We compute {\rmc} directly, and we solve {\rfcc} using Gurobi.
  We then define \(\sigma_{\eps}\), \(\beta_{\mc}\), and
  \(\beta_{\fcc}\) as in~\cref{eq:quality-measures-fcc}.
  The whole process is run 10 times with different random generator
  seeds.
  Statistics for \(\sigma_{\eps}\) and \(\beta_{\mc}\) use all 10
  runs; those for \(\beta_{\fcc}\) and running time use only runs that
  produced a feasible fractional cut cover for~\(z\), thus excluding
  the infeasible runs counted in the \hyperref[eq:RLI-def]{RLI}
  column.
  For values of \(\eps\) with feasible fractional cut covers in all 10
  runs (i.e., zero \hyperref[eq:RLI-def]{RLI} count), this column is
  omitted.
  For ease of comparison, we repeat the relevant columns from
  \cref{tab:fcc-tsp-paired,%
    tab:fcc-mtx-paired} in
  \cref{tab:fcc-tsp-eps,tab:fcc-mtx-eps}.
\end{table-description}

\nexttablegroup
\begin{table}[p]
  \centering
  \begin{filecontents*}{tables/fcc-side-tsp-paired-instances.csv}
instance,m,rli,bfcc,bfccstd,bmc,bmcstd,time,timestd
bayg29,405,0,.8849,.0006,.9882,.0000,0.07,0.01
bays29,406,3,.8862,.0008,.9885,.0000,0.06,< .01
berlin52,1326,0,.8899,.0000,.9922,.0000,0.32,0.05
brazil58,1641,2,.8892,.0000,.9993,.0000,0.30,0.05
kroA100,2500,0,.9999,.0000,.9999,.0000,2.73,0.54
gr96,4542,1,.8809,.0007,.9981,.0000,1.25,0.17
eil101,5046,0,.8785,.0000,.9799,.0011,1.44,0.13
gr120,6942,5,.9701,.0000,.9997,.0000,2.28,0.31
bier127,8001,0,.8791,.0003,.9842,.0000,6.71,0.84
ch130,8372,1,.8785,.0002,.9890,.0000,2.86,0.40
gr137,8803,3,.9864,.0000,.9988,.0000,3.22,0.52
ch150,11165,0,.8781,.0004,.9862,.0001,3.95,0.23
brg180,16023,2,.8866,.0007,.9740,.0014,21.70,4.79
d198,19396,5,.8814,.0010,.9994,.0000,7.58,1.10
gr202,20301,0,.8784,.0001,.9883,.0002,31.51,3.02
a280,38874,3,.8794,.0000,.9994,.0000,21.23,2.37
\end{filecontents*}
\DTLloaddb{fcc-side-tsp-paired-instances}{tables/fcc-side-tsp-paired-instances.csv}
\begin{tabular}{{l} % instance
  S[table-format=5.0,group-minimum-digits=4] % m
  S[table-format=1.0] % RLI
  *{2}{S[table-format=0.4,print-zero-integer=false]} % β_fcc, β_fcc std
  *{2}{S[table-format=0.4,print-zero-integer=false]} % β_mc, β_mc std
  S[table-format=2.2,group-minimum-digits=4,scientific-notation=false] % time
  S[table-format=3.2,group-minimum-digits=4] % time std
  }
\toprule
\multicolumn{1}{l}{\multirow{2}{*}{instance}} &
\multicolumn{1}{c}{\multirow{2}{*}{\(m\)}} &
\multicolumn{1}{c}{\multirow{2}{*}{\hyperref[eq:RLI-def]{RLI}}} &
\multicolumn{1}{c}{\hyperref[eq:quality-measures]{$\beta_{\mathrm{fcc}}$}} &
\multicolumn{1}{c}{\hyperref[eq:quality-measures]{$\beta_{\mathrm{fcc}}$}} &
\multicolumn{1}{c}{\hyperref[eq:quality-measures]{$\beta_{\mathrm{mc}}$}} &
\multicolumn{1}{c}{\hyperref[eq:quality-measures]{$\beta_{\mathrm{mc}}$}} &
{time} & {time} \\
& & & {avg} & {std} & {avg} & {std} & {(s)} & {std (s)}
\\
\cmidrule(r){1-2}
\cmidrule(lr){3-5}
\cmidrule(lr){6-7}
\cmidrule(lr){8-9}
  \DTLforeach*{fcc-side-tsp-paired-instances}{
  \Instance=instance,
  \Edges=m,
  \RLI=rli,
  \BetaFCC=bfcc,
  \BetaFCCstd=bfccstd,
  \BetaMC=bmc,
  \BetaMCstd=bmcstd,
  \Time=time,
  \Timestd=timestd%
  }{%
  {\ttfamily\small\detokenize\expandafter{\Instance}} & % protect underscore
   \Edges &
   \RLI &
   \BetaFCC &
   \BetaFCCstd &
   \BetaMC &
   \BetaMCstd &
   \Time &
   \Timestd\\
   \DTLiflastrow{\bottomrule}{}
}%
\end{tabular}

  \vspace{-1em}
  \caption{%
    \hyperref[alg:pairing]{Paired} \cref{eq:tsp-instances} instances
    with \(\ceil{128 \ln m}\) cuts sampled and \(\eps = 10^{-4}\),
    obtained on \cref{eq:budget-laptop} hardware.
    See~\cref{td:fcc-paired}.
  }
  \label{tab:fcc-tsp-paired}
\end{table}

\begin{table}[p]
  \centering
  \begin{filecontents*}{tables/fcc-side-mtx-paired-instances.csv}
instance,m,rli,bfcc,bfccstd,bmc,bmcstd,time,timestd
ENZYMES8,133,0,.9107,.0060,.9717,.0000,1.17,0.12
soc-dolphins,159,0,.8908,.0000,.9825,.0000,5.66,0.41
road-chesapeake,170,0,.8878,.0002,.9613,.0000,0.14,0.02
email-enron-only,623,0,.8787,.0000,.9639,.0023,4.80,0.48
dwt_209,767,0,.8910,.0000,.9668,.0014,814.51,75.20
ca-netscience,904,2,.8793,.0000,.9829,.0002,3979.64,128.47
Erdos991,1417,0,.8668,.0003,.9420,.0016,190.31,8.58
hamming6-2,1824,0,.9999,.0000,.9999,.0000,0.80,0.12
inf-USAir97,2126,1,.8776,.0000,.9845,.0005,1781.84,151.11
ia-infect-hyper,2196,0,.8894,.0003,.9409,.0012,6.86,0.66
DD687,2597,0,.8812,.0004,.9419,.0014,76286.39,3302.99
dwt_503,2762,3,.8859,.0000,.9830,.0003,25857.31,893.64
ia-infect-dublin,2765,0,.8817,.0003,.9415,.0012,15269.45,355.02
email-univ,5450,0,.8626,.0003,.9225,.0008,5346.54,259.53
johnson16-2-4,5460,0,.9329,.0007,.9718,.0012,14.91,1.76
p-hat700-1,60999,0,.8701,.0003,.9447,.0009,457.74,21.99
\end{filecontents*}
\DTLloaddb{fcc-side-mtx-paired-instances}{tables/fcc-side-mtx-paired-instances.csv}
\begin{tabular}{{l} % instance
  S[table-format=5.0,group-minimum-digits=4] % m
  S[table-format=1.0] % RLI
  *{2}{S[table-format=0.4,print-zero-integer=false]} % β_fcc, β_fcc std
  *{2}{S[table-format=0.4,print-zero-integer=false]} % β_mc, β_mc std
  S[table-format=5.2,group-minimum-digits=4,scientific-notation=false] % time
  S[table-format=4.2,group-minimum-digits=4] % time std
  }
\toprule
\multicolumn{1}{l}{\multirow{2}{*}{instance}} &
\multicolumn{1}{c}{\multirow{2}{*}{\(m\)}} &
\multicolumn{1}{c}{\multirow{2}{*}{\hyperref[eq:RLI-def]{RLI}}} &
\multicolumn{1}{c}{\hyperref[eq:quality-measures]{$\beta_{\mathrm{fcc}}$}} &
\multicolumn{1}{c}{\hyperref[eq:quality-measures]{$\beta_{\mathrm{fcc}}$}} &
\multicolumn{1}{c}{\hyperref[eq:quality-measures]{$\beta_{\mathrm{mc}}$}} &
\multicolumn{1}{c}{\hyperref[eq:quality-measures]{$\beta_{\mathrm{mc}}$}} &
{time} & {time} \\
& & & {avg} & {std} & {avg} & {std} & {(s)} & {std (s)}
\\
\cmidrule(r){1-2}
\cmidrule(lr){3-5}
\cmidrule(lr){6-7}
\cmidrule(lr){8-9}
  \DTLforeach*{fcc-side-mtx-paired-instances}{
  \Instance=instance,
  \Edges=m,
  \RLI=rli,
  \BetaFCC=bfcc,
  \BetaFCCstd=bfccstd,
  \BetaMC=bmc,
  \BetaMCstd=bmcstd,
  \Time=time,
  \Timestd=timestd%
  }{%
  {\ttfamily\small\detokenize\expandafter{\Instance}} & % protect underscore
   \Edges &
   \RLI &
   \BetaFCC &
   \BetaFCCstd &
   \BetaMC &
   \BetaMCstd &
   \Time &
   \Timestd\\
   \DTLiflastrow{\bottomrule}{}
}%
\end{tabular}

  \vspace{-1em}
  \caption{
    \hyperref[alg:pairing]{Paired} \cref{eq:mtx-instances} instances
    with \(\ceil{128 \ln m}\) cuts sampled and \(\eps = 10^{-4}\),
    obtained on \cref{eq:budget-laptop} hardware.
    See~\cref{td:fcc-paired}.
  }
  \label{tab:fcc-mtx-paired}
\end{table}

\nexttablegroup
\begin{table}[p]
  \centering
  \begin{filecontents*}{tables/fcc-side-tsp-perturbation-cmp.csv}
instance,onesigma1,onesigma2,rli,bfcc1,bfcc2,bmc1,bmc2,time1,time2
bayg29,.9998,.9909,0,.8849,.8845,.9882,.9972,0.07,0.12
bays29,.9998,.9916,3,.8862,.8868,.9885,.9977,0.06,0.11
berlin52,.9994,.9890,0,.8899,.8872,.9922,.9963,0.32,1.03
brazil58,.9999,.9910,2,.8892,.8883,.9993,.9991,0.30,0.67
kroA100,.9999,.9920,0,.9999,.9998,.9999,.9998,2.73,2.63
gr96,.9998,.9918,1,.8809,.8832,.9981,.9997,1.25,4.06
eil101,.9998,.9915,0,.8785,.8782,.9799,.9965,1.44,4.31
gr120,.9996,.9918,5,.9701,.9700,.9997,.9996,2.28,6.72
bier127,.9997,.9911,0,.8791,.8777,.9842,.9904,6.71,21.33
ch130,.9998,.9915,1,.8785,.8781,.9890,.9992,2.86,8.34
gr137,.9985,.9918,3,.9864,.9873,.9988,.9997,3.22,10.07
ch150,.9998,.9917,0,.8781,.8783,.9862,.9980,3.95,12.85
brg180,.9999,.9896,2,.8866,.8828,.9740,.9771,21.70,71.15
d198,.9998,.9907,5,.8814,.8864,.9994,.9992,7.58,22.06
gr202,.9998,.9851,0,.8784,.8701,.9883,.9932,31.51,66.68
a280,.9998,.9919,3,.8794,.8795,.9994,.9997,21.23,66.74
\end{filecontents*}
\DTLloaddb{fcc-side-tsp-perturbation-cmp}{tables/fcc-side-tsp-perturbation-cmp.csv}
\begin{tabular}{{l} % instance
  *{2}{S[table-format=0.4,print-zero-integer=false]} % 1-σ, 1-σ
  S[table-format=1.0] % RLI
  *{4}{S[table-format=0.4,print-zero-integer=false]} % β_fcc, β_fcc, β_mc, β_mc
  *{2}{S[table-format=2.2,group-minimum-digits=4,scientific-notation=false]} % time, time
  }
\toprule
\(\eps\) & {\(10^{-4}\)} & {\nicefrac{1}{64}} & {\(10^{-4}\)} & {\(10^{-4}\)} & {\nicefrac{1}{64}} & {\(10^{-4}\)} & {\nicefrac{1}{64}} & {\(10^{-4}\)} & {\nicefrac{1}{64}} \\[1pt]
{instance} &
{\(1-\hyperref[eq:quality-measures-mc]{\sigma_{\eps}}\)} &
{\(1-\hyperref[eq:quality-measures-mc]{\sigma_{\eps}}\)} &
\multicolumn{1}{c}{\hyperref[eq:RLI-def]{RLI}} &
\multicolumn{1}{c}{\hyperref[eq:quality-measures-fcc]{$\beta_{\mathrm{fcc}}$}} &
\multicolumn{1}{c}{\hyperref[eq:quality-measures-fcc]{$\beta_{\mathrm{fcc}}$}} &
\multicolumn{1}{c}{\hyperref[eq:quality-measures-fcc]{$\beta_{\mathrm{mc}}$}} &
\multicolumn{1}{c}{\hyperref[eq:quality-measures-fcc]{$\beta_{\mathrm{mc}}$}} &
{time (s)} &
{time (s)}\\
\cmidrule(r){1-1}
\cmidrule(lr){2-3}
\cmidrule(lr){4-6}
\cmidrule(lr){7-8}
\cmidrule(l){9-10}
  \DTLforeach*{fcc-side-tsp-perturbation-cmp}{%
  \Instance=instance,
  \OneSigmaA=onesigma1,
  \OneSigmaB=onesigma2,
  \RLI=rli,
  \BetaFCCA=bfcc1,
  \BetaFCCB=bfcc2,
  \BetaMCA=bmc1,
  \BetaMCB=bmc2,
  \timeA=time1,
  \timeB=time2%
  }{%
  {\ttfamily\small\detokenize\expandafter{\Instance}} & % protect underscore
   \OneSigmaA &
   \OneSigmaB &
   \RLI &
   \BetaFCCA &
   \BetaFCCB &
   \BetaMCA &
   \BetaMCB &
   \timeA &
   \timeB\\
   \DTLiflastrow{\bottomrule}{}
}%
\end{tabular}

  \vspace{-1em}
  \caption{%
    Effect of~\(\eps\) perturbation on \hyperref[alg:pairing]{paired}
    \cref{eq:tsp-instances} instances with \(\ceil{128 \ln m}\) cuts
    sampled and \(\eps \in \set{10^{-4},\nicefrac{1}{64}}\),
    obtained on \cref{eq:budget-laptop} hardware.
    No standard deviation was larger than .0014.
    See~\cref{td:fcc-paired}.
  }
  \label{tab:fcc-tsp-eps}
\end{table}

\begin{table}[p]
  \centering
  \begin{filecontents*}{tables/fcc-side-mtx-perturbation-cmp.csv}
instance,onesigma1,onesigma2,rli,bfcc1,bfcc2,bmc1,bmc2,time1,time2
ENZYMES8,.9926,.9849,0,.9107,.9002,.9717,.9734,1.17,1.29
soc-dolphins,.9995,.9917,0,.8908,.8908,.9825,.9988,5.66,2.59
road-chesapeake,.9985,.9914,0,.8878,.8883,.9613,.9994,0.14,0.24
email-enron-only,.9874,.9913,0,.8787,.8883,.9639,.9991,4.80,5.17
dwt_209,.9991,.9920,0,.8910,.8915,.9668,.9991,814.51,681.10
ca-netscience,.9991,.9916,2,.8793,.8794,.9829,.9973,3979.64,3239.05
Erdos991,.9763,.9885,0,.8668,.8798,.9420,.9931,190.31,491.03
hamming6-2,.9998,.9921,0,.9999,.9969,.9999,.9999,0.80,1.27
inf-USAir97,.9975,.9902,1,.8776,.8779,.9845,.9986,1781.84,884.22
ia-infect-hyper,.9988,.9920,0,.8894,.8870,.9409,.9998,6.86,8.64
DD687,.9991,.9917,0,.8812,.8755,.9419,.9995,76286.39,2063.97
dwt_503,.9951,.9910,3,.8859,.8862,.9830,.9927,25857.31,25974.20
ia-infect-dublin,.9986,.9920,0,.8817,.8773,.9415,.9998,15269.45,261.17
email-univ,.9878,.9915,0,.8626,.8666,.9225,.9982,5346.54,6772.99
johnson16-2-4,.9998,.9979,0,.9329,.9311,.9718,.9715,14.91,16.59
p-hat700-1,.9996,.9833,0,.8701,.8567,.9447,.9111,457.74,1349.75
\end{filecontents*}
\DTLloaddb{fcc-side-mtx-perturbation-cmp}{tables/fcc-side-mtx-perturbation-cmp.csv}
\begin{tabular}{{l} % instance
  *{2}{S[table-format=0.4,print-zero-integer=false]} % 1-σ, 1-σ
  S[table-format=1.0] % RLI
  *{4}{S[table-format=0.4,print-zero-integer=false]} % β_fcc, β_fcc, β_mc, β_mc
  *{2}{S[table-format=5.2,group-minimum-digits=4,scientific-notation=false]} % time, time
  }
\toprule
\(\eps\) & {\(10^{-4}\)} & {\nicefrac{1}{64}} & {\(10^{-4}\)} & {\(10^{-4}\)} & {\nicefrac{1}{64}} & {\(10^{-4}\)} & {\nicefrac{1}{64}} & {\(10^{-4}\)} & {\nicefrac{1}{64}} \\[1pt]
{instance} &
{\(1-\hyperref[eq:quality-measures-mc]{\sigma_{\eps}}\)} &
{\(1-\hyperref[eq:quality-measures-mc]{\sigma_{\eps}}\)} &
\multicolumn{1}{c}{\hyperref[eq:RLI-def]{RLI}} &
\multicolumn{1}{c}{\hyperref[eq:quality-measures-fcc]{$\beta_{\mathrm{fcc}}$}} &
\multicolumn{1}{c}{\hyperref[eq:quality-measures-fcc]{$\beta_{\mathrm{fcc}}$}} &
\multicolumn{1}{c}{\hyperref[eq:quality-measures-fcc]{$\beta_{\mathrm{mc}}$}} &
\multicolumn{1}{c}{\hyperref[eq:quality-measures-fcc]{$\beta_{\mathrm{mc}}$}} &
{time (s)} &
{time (s)}\\
\cmidrule(r){1-1}
\cmidrule(lr){2-3}
\cmidrule(lr){4-6}
\cmidrule(lr){7-8}
\cmidrule(l){9-10}
  \DTLforeach*{fcc-side-mtx-perturbation-cmp}{%
  \Instance=instance,
  \OneSigmaA=onesigma1,
  \OneSigmaB=onesigma2,
  \RLI=rli,
  \BetaFCCA=bfcc1,
  \BetaFCCB=bfcc2,
  \BetaMCA=bmc1,
  \BetaMCB=bmc2,
  \timeA=time1,
  \timeB=time2%
  }{%
  {\ttfamily\small\detokenize\expandafter{\Instance}} & % protect underscore
   \OneSigmaA &
   \OneSigmaB &
   \RLI &
   \BetaFCCA &
   \BetaFCCB &
   \BetaMCA &
   \BetaMCB &
   \timeA &
   \timeB\\
   \DTLiflastrow{\bottomrule}{}
}%
\end{tabular}

  \vspace{-1em}
  \caption{%
    Effect of~\(\eps\) perturbation on \hyperref[alg:pairing]{paired}
    \cref{eq:mtx-instances} instances with \(\ceil{128 \ln m}\) cuts
    sampled and \(\eps \in \set{10^{-4},\nicefrac{1}{64}}\),
    obtained on \cref{eq:budget-laptop} hardware.
    No standard deviation was larger than .0093.
    See~\cref{td:fcc-paired}.
  }
  \label{tab:fcc-mtx-eps}
\end{table}

\clearpage
\section{Obstructions to the Feasibility of Restricted LP}
\label{sec:thin-edges}

In~\cref{sec:main-attributes} we introduced the perturbation
parameter \(\eps \in (0, 1)\) by sampling cuts using the matrix
\begin{equation*}
  Y \coloneqq (1 - \eps) \tilde{Y} + \eps I,
  \text{ where }
  \tilde{Y}
  \text{ is the solver output for the SDP~\cref{eq:GW-supf}; see~\cref{eq:eps-def}}
\end{equation*}
In~\cref{sec:fcc-instances},
the constraints of~\cref{eq:GW-polar-SDP} include~\(\eps\), and we sample
cuts using the matrix
\begin{equation*}
  Y \coloneqq \tilde{Y} + \eps \tilde{\mu} I,
  \text{ where }
  (\tilde{Y}, \tilde{\mu})
  \text{ is the solver output for the primal SDP in~\cref{eq:GW-polar-SDP}.}
\end{equation*}
The common language is easily justified: up to scaling, in both cases
we only sample cuts from matrices in the set
\(
  \setst{Y \in \Sym{V}}{
    Y \succeq \eps I,\,
    \diag(Y) = \ones
  }.
\)
We start this section explaining the theoretical motivation for
introducing perturbation.
We study distinct choices of perturbation values
in~\cref{ssec:perturbation}, and discuss the interaction between
perturbation and our sanitization procedure in
\Cref{ssec:perturbation-aids-sanitization}.
In \cref{ssec:uniform}, we explore an alternative to perturbation
based on uniform sampling of shores.

Let \(\gamma \in (0, 2)\), and consider the graph \(G = K_3\) with
\(z \in \Lp{E}\) given by \(z_{12} = z_{13} = 1\) and \(z_{23} =
\gamma\).
\cite[Proposition~31]{BenedettoProencadeCarliSilvaEtAl2025} shows an
optimal solution \(Y \in \Psd{V}\) to the perturbation-free
SDP~\cref{eq:GW-polar-gaugef} such that
\begin{equation}
  \label{eq:feasibility-lower-bound}
  \prob\paren{\set{2, 3} \in \delta(\GWrv(Y))}
  =
  \frac{2\sqrt{\gamma}}{\pi} + O(\gamma^{3/2}).
\end{equation}
As feasibility is not attained until a cut containing the edge
\(\set{2, 3}\) is sampled, the expected number of cuts necessary to
attain feasibility grows exponentially with the encoding length of
\(z\).
This graph \(G\) has in \(\set{2, 3}\) a ``thin edge'', i.e., an edge
\(e \in E\) whose positive value \(z_e\) is small relative to
\(\norm[\infty]{z}\).
\Cref{fig:thin-edges} shows that thin edges raise issues in practice.
For each \(\gamma \in \set{1, 8^{-1}, \ldots, 8^{-5}}\), we have a
fractional cut-covering instance \((G, z_{\gamma})\), which we feed to
our pipeline.
\Cref{fig:thin-edges} shows in blue the number of samples necessary to
obtain feasibility for each of 10 independent runs.
In orange, we~show \(\frac{\pi}{2\sqrt{\gamma}}\), motivated by the
RHS of~\cref{eq:feasibility-lower-bound}.
Going from \(\gamma = 8^{-1}\) to \(\gamma = 8^{-5}\) corresponds to
increasing the input by mere \(12\) bits, but as~\cref{fig:thin-edges}
shows, this translates into going from an instance that attains
feasibility with 10 samples, to an instance that does not attain
feasibility after 2000 samples.

\begin{figure}
  \centering
  \includegraphics[width=.8\textwidth]{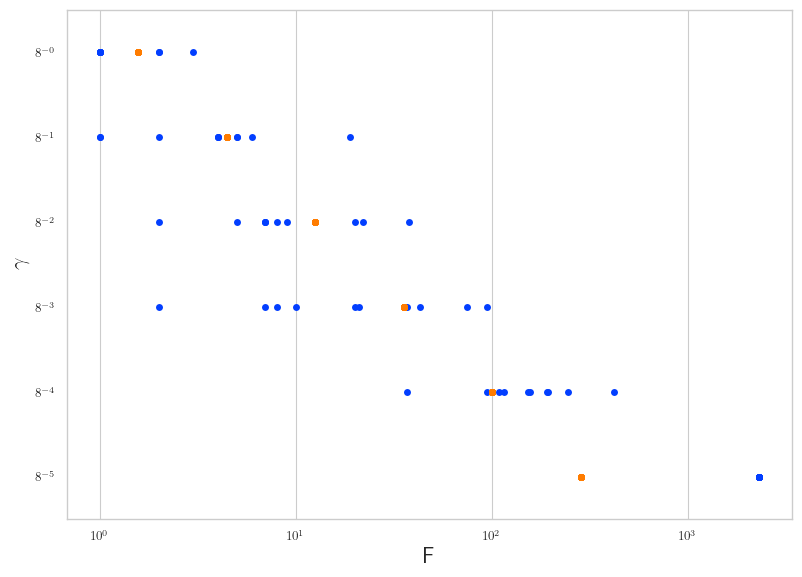}
  \caption{Number of samples \(F\) necessary to attain feasibility for \((G,
    z)\) where \(G = K_3\), \(z_{12} = z_{13} = 1\), and \(z_{23} =
    \gamma\).
    For \(\gamma = 8^{-5}\) we failed to achieve feasibility after
    \(2304 = 64 \times \ceil{32 \ln m}\) samples for all 10
    independent runs.}
  \label{fig:thin-edges}
\end{figure}

The experiments reported by
\Crefrange{tab:no-round-eps}{tab:fcc-mtx-eps}, when performing 10
independent runs per instance, all solve the relevant SDP from scratch.
With some instances taking up to 21 hours to be solved (as in
\cref{tab:fcc-mtx-paired}), this quite significantly
impacts our ability to collect data.
To reuse SDP solver output, we introduced two implementations of
abstract classes in our pipeline in~\cref{fig:pipeline}.
The~\texttt{FromFile} solver loads an SDP certificate
computed by~\cref{alg:Slack-sanitize,alg:representation-sanitize} from
a file.
These files are generated using \texttt{SolverOutputStore}, which is
an implementation of \texttt{Cover Producer} that simply stores the
certificate~\cref{eq:SDP-certificate} in a file.
Leveraging these implementations, to collect data for 10 independent
runs we run our pipeline eleven times: once just to solve the SDP
relaxation and store it, and once for each independent run, sampling
from the previously computed SDP solution.

\subsection{Perturbations of the Feasible Region}
\label{ssec:perturbation}

In this subsection, we first explain how perturbation addresses the
issue raised by thin edges.
It is natural to ask which other perturbation values produce good
behavior.
This is explored in \crefrange{tab:perturbation-1}{tab:perturbation-4}.

Let \(G = (V, E)\) be a graph, and let \(\eps \in (0, 1)\) be the
perturbation parameter.
Our algorithms from \cref{sec:main-attributes,sec:fcc-instances}
succeed because they do not attempt to cover an adversarially chosen
input \(z \in \Lp{E}\), rather covering a vector \(\hat{z} \in \Lp{E}\) such
that
\begin{equation}
  \label{eq:no-thin-edges}
  \hat{z} \ge \tfrac{\eps}{2}\norm[\infty]{\hat{z}}\ones
  \text{ and }
  \hat{z} \ge z.
\end{equation}
Since \(\hat{z} \ge z\), every fractional cover for \(\hat{z}\) is a
fractional cover for \(z\), and hence we can cover every instance by
covering instances such that \(\hat{z} \ge \eps/2
\norm[\infty]{\hat{z}} \ones\).
Covering \(\hat{z}\) instead of \(z\) can only weaken our certificates
by a multiplicative factor of \(1 - \eps\)
\cite[Theorem~21]{BenedettoProencadeCarliSilvaEtAl2025}.
To prove~\cref{eq:no-thin-edges}, we use two observations.
The first one is that
\begin{equation}
  \label{eq:Laplacian-monotone}
  A \preceq B
  \text{ implies }
  \tfrac{1}{4}\Laplacian_G^*(A) \le \tfrac{1}{4}\Laplacian_G^*(B)
  \text{ for every }
  A,\,B \in \Sym{V},
\end{equation}
which follows from duality, as \(\Laplacian_G(w) \in \Psd{V}\)
for every \(w \in \Lp{E}\).
The second observation is that
\begin{equation}
  \label{eq:mu-geq-infty}
  \begin{gathered}
    \text{for every }
    z \in \Lp{E}
    \text{ and for every }
    Y \in \Psd{V}
    \text{ such that }
    \diag(Y) = \mu\ones,\\
    \text{ if }
  \tfrac{1}{4}\Laplacian_G^*(Y) \ge z,
  \text{ then }
  \norm[\infty]{z} \le \mu.
  \end{gathered}
\end{equation}
Indeed, for every edge \(ij \in E\),
\(
  z_{ij}
  \le \iprod{\tfrac{1}{4}\Laplacian_G^*(Y)}{e_{ij}}
  = \tfrac{1}{4}(Y_{ii} + Y_{jj} - 2 Y_{ij})
  = \tfrac{1}{2}(\mu - Y_{ij})
  \le \mu
\),
where the last inequality holds since \(\mu^2 - Y_{ij}^2\) is a
principal minor of the positive semidefinite matrix \(Y\).
In \cref{sec:main-attributes,sec:fcc-instances}, to cover \(z \in
\Lp{E}\) we sampled shores from \(Y \in \Sym{V}\) such that
\[
  Y \succeq \eps\mu I,\, \diag(Y) = \mu\ones
  \text{ and }
  \tfrac{1}{4}\Laplacian_G^*(Y) \ge z
\]
for \(\mu \in \Lp{}\).
Set \(\hat{z} \coloneqq \tfrac{1}{4}\Laplacian_G^*(Y)\).
It is immediate that \(\hat{z} \ge z\), and moreover
by~\cref{eq:Laplacian-monotone} and~\cref{eq:mu-geq-infty} we have
\(
  \hat{z}
  = \tfrac{1}{4}\Laplacian_G^*(Y)
  \ge \tfrac{1}{4}\Laplacian_G^*(\eps\mu I)
  = \tfrac{\eps}{2}\mu \ones
  \ge \tfrac{\eps}{2} \norm[\infty]{\hat{z}} \ones.
\)
Hence~\cref{eq:no-thin-edges} holds.

\begin{table-description}[\Cref{tab:perturbation-1,tab:perturbation-2}]
  \label{td:perturbation-mc}
  Experiments obtained on \cref{eq:biglinux} hardware.
  As input, we are given maximum cut instances \((G, w)\).
  In \cref{tab:perturbation-1} they are~\cref{eq:tsp-instances}
  instances, and in \cref{tab:perturbation-2} they
  are~\cref{eq:mtx-instances} instances.
  We nearly solve the SDPs in~\cref{eq:GW-def} using SCS
  with~\cref{eq:mc-scs}, obtaining \(\tilde{x} \in \Reals^V\) and
  \(\tilde{Y} \in \Sym{V}\).
  We set \(Y \coloneqq (1 - \eps) \tilde{Y} + \eps I\) as
  in~\cref{eq:eps-def}, and
  \(z \coloneqq \tfrac{1}{4}\Laplacian_G^*(Y)\) as
  in~\cref{step:solve-sdp}.
  Set \((\rho, x, B) \coloneqq \SlackSanitize(\tilde{x}, w)\) and
  \((\mu, R \in \Reals^{k \times V}) \coloneqq
  \RepresentationSanitize(Y, z)\).
  We store \(w\), \(z\), along with the elements \(\rho\), \(\mu\),
  \(R\), \(B\), and~\(x\) of the SDP
  certificate~\cref{eq:SDP-certificate}, in a file.
  This step is done once for each choice of
  \(\eps \in \set{10^{-4}, \nicefrac{1}{64}, \nicefrac{1}{16},
    \nicefrac{1}{8}, \nicefrac{1}{4}}\) and \((G, w)\).

  For each such file, we run our pipeline with 10 different seeds used
  to sample cuts.
  In~these runs, we first load \((w,z)\) and the SDP certificate from
  the file.
  We sample \(\ceil{128 \ln m}\) vectors \(g \in \Reals^k\) according
  to the standard multivariate normal distribution, and produce a
  shore \(S \coloneqq \setst{i \in V}{g^\transp Re_i > 0}\) for each
  \(g\).
  Let \(\Fcal \subseteq \Powerset{V}\) be the set of shores generated.
  We~compute {\rmc} directly, and we solve {\rfcc} using Gurobi.
  We then define \(\beta_{\mc}\) and \(\beta_{\fcc}\) as
  in~\cref{eq:quality-measures-mc}.
  Statistics for \(\beta_{\mc}\) use all 10 runs; those
  for \(\beta_{\fcc}\) use only runs that produced a feasible
  fractional cut cover for~\(z\), thus excluding the infeasible runs
  counted in the \hyperref[eq:RLI-def]{RLI} column.
  For values of \(\eps\) with feasible fractional cut covers in all 10
  runs (i.e., zero \hyperref[eq:RLI-def]{RLI} count), this column is
  omitted.
\end{table-description}

We first focus on the approximation quality
\(\beta_{\mc}\) obtained on the maximum cut instances in
\cref{tab:perturbation-1,tab:perturbation-2}.
\Cref{tab:perturbation-1} is remarkable in how big perturbations of
the nearly optimal solution do not meaningfully affect the quality of
the best cut sampled.
Indeed, for all instances in this table, the same \(\beta_{\mc}\)
value is obtained to within 2 digits across all values of \(\eps\).
On the other hand, for instances in \cref{tab:perturbation-2}, each
additional increase in \(\eps\) has a negative impact on
\(\beta_{\mc}\).
Notice that for \texttt{email-univ} the value of \(\beta_{\mc}\)
degrades from \(.9260\) to \(.8942\), and for \texttt{DD687} it
degrades from \(.9407\) to \(.9065\).

Now consider the approximation quality \(\beta_{\fcc}\) on the same
tables.
Recalling~\cref{eq:quality-measures-mc}, note that each
\(\beta_{\fcc}\) column in \cref{tab:perturbation-1,%
tab:perturbation-2} corresponds to covering a distinct vector
\(z_{\eps} \in \Lp{E}\).
This is fundamentally different from the columns reporting
\(\beta_{\mc}\), where the instance \(w \in \Lp{E}\) is kept fixed.
Perturbation values \(\eps\) larger than or equal to
\(\nicefrac{1}{64}\) made the outcome of our algorithm robust: the
only \hyperref[eq:RLI-def]{RLI} outcomes occurred for \(\eps = 10^{-4}\).
In \cref{tab:perturbation-1}, the value of \(\beta_{\fcc}\) increases
with \(\eps\) for most instances.
Still, even in this set of examples there are instances where
increasing from \(\eps = \nicefrac{1}{64}\) to \(\eps =
\nicefrac{1}{4}\) degrades \(\beta_{\fcc}\): see \texttt{kroA100},
\texttt{gr120}, \texttt{gr137}, and \texttt{brg180}.
This degradation, however, is much more noticeable in
\cref{tab:perturbation-2}, where \texttt{email-enron-only},
\texttt{inf-USAir97}, and \texttt{ia-infect-hyper} are the only
instances for which the highest \(\beta_{\fcc}\) is attained for some
\(\eps \ge \nicefrac{1}{16}\).

\Cref{tab:perturbation-3,tab:perturbation-4} have as input fractional
cut-covering instances obtained via pairing
(see~\cref{sec:fcc-instances}).
For these tables, recall from~\cref{eq:quality-measures-fcc} that \(z
\in \Lp{E}\) is given an input, which is then paired to \(w_{\eps} \in
\Lp{E}\) in a way that is dependent on \(\eps\).
In particular, each \(\beta_{\mc}\) column in
\cref{tab:perturbation-3,tab:perturbation-4} certifies the quality of
a cut with respect to distinct \(w_{\eps} \in \Lp{E}\).
Whereas changing from \(\eps = 10^{-4}\) to \(\eps =
\nicefrac{1}{64}\) is sufficient to drive \hyperref[eq:RLI-def]{RLI}
to zero (as we had observed in~\cref{sec:fcc-instances}), subsequently
increasing \(\eps\) may actually harm the value of \(\beta_{\fcc}\),
as one can see in \cref{tab:perturbation-4}.
There is no significant improvement on \(\beta_{\fcc}\) for values of
\(\eps\) above \(1/16\).
It~is noticeable how often
in~\cref{tab:perturbation-3,tab:perturbation-4} the maximum cut
instances obtained via pairing have certifiable cuts with ratios above
\(.99\), especially for \(\eps \ge \nicefrac{1}{64}\).

Since specific instances respond differently to perturbation, given
either a maximum cut or a fractional cut-covering instance, it is
worthwhile to experiment with distinct choices of \(\eps\).
\Crefrange{tab:perturbation-1}{tab:perturbation-4} suggest it may be
reasonable to start such exploration by selecting perturbation
parameters in the range \(10^{-4} < \eps \le \nicefrac{1}{16}\).

\begin{table-description}[\Cref{tab:perturbation-3,tab:perturbation-4}]
  \label{td:perturbation-fcc}
  Experiments obtained on \cref{eq:biglinux} hardware.
  We are given fractional cut-covering instances \((G, z)\) as input.
  In all cases they are \hyperref[alg:pairing]{paired} instances,
  obtained by solving the corresponding maximum cut instance
  via~\cref{alg:pairing}; see~\cref{sec:dds-mc}.
  In \cref{tab:perturbation-3} they are
  \cref{eq:tsp-instances}~instances, and in \cref{tab:perturbation-4}
  they are \cref{eq:mtx-instances}.
  We~nearly solve the SDPs in~\cref{eq:GW-polar-SDP} with
  \(\eps \in \set{10^{-4}, \nicefrac{1}{64}, \nicefrac{1}{16},
    \nicefrac{1}{8}, \nicefrac{1}{4}}\) using SCS
  via~\cref{eq:fcc-scs}, thus computing nearly optimal
  \(\tilde{\mu} \in \Reals\), \(\tilde{Y} \in \Sym{V}\),
  \(\tilde{w} \in \Reals^E\), and \(\tilde{x} \in \Reals^V\).
  Set \(Y \coloneqq \tilde{Y} + \eps \tilde{\mu} I\), set
  \((\rho, x, B) \coloneqq \SlackSanitize(\tilde{x}, \tilde{w})\), and
  set
  \((\mu, R \in \Reals^{[k] \times V}) \coloneqq
  \RepresentationSanitize(Y, z)\).
  We store \(\tilde{w}\), \(z\), along with the elements \(\rho\), \(\mu\),
  \(R\), \(B\), and~\(x\) of the SDP
  certificate~\cref{eq:SDP-certificate}, in a file.
  This step is done once for each choice of
  \(\eps \in \set{10^{-4}, \nicefrac{1}{64}, \nicefrac{1}{16},
    \nicefrac{1}{8}, \nicefrac{1}{4}}\) and \((G, z)\).

  For each such file, we run our pipeline with 10 different seeds used
  to sample cuts.
  In~these runs, we first load \((w,z)\) and the SDP certificate from
  the file.
  We sample \(\ceil{128 \log m}\) vectors \(g \in \Reals^k\) according
  to the standard multivariate normal distribution, and produce a
  shore \(S \coloneqq \setst{i \in V}{g^\transp R e_i > 0}\) for each
  \(g\).
  Let \(\Fcal \subseteq \Powerset{V}\) be the set of shores generated.
  We~compute {\rmc} directly, and we solve {\rfcc} using Gurobi.
  We then define \(\beta_{\mc}\), and \(\beta_{\fcc}\) as
  in~\cref{eq:quality-measures-fcc}.
  Statistics for \(\beta_{\mc}\) use all 10 runs; those
  for \(\beta_{\fcc}\) use only runs that produced a feasible
  fractional cut cover for~\(z\), thus excluding the infeasible runs
  counted in the \hyperref[eq:RLI-def]{RLI} column.
  For values of \(\eps\) with feasible fractional cut covers in all 10
  runs (i.e., zero \hyperref[eq:RLI-def]{RLI} count), this column is
  omitted.
\end{table-description}

\nexttablegroup
\begin{table}[p]
  \centering
  \begin{filecontents*}{tables/perturbation_1.csv}
instance,bmc10,bmc64,bmc16,bmc8,bmc4,rli,bfcc10,bfcc64,bfcc16,bfcc8,bfcc4
bayg29,.9934,.9934,.9934,.9934,.9934,1,.8838,.8864,.8942,.9015,.9147
bays29,.9938,.9938,.9938,.9938,.9938,2,.8837,.8873,.8946,.9007,.9138
berlin52,.9896,.9896,.9896,.9896,.9896,0,.8903,.8922,.8980,.9059,.9191
brazil58,.9993,.9993,.9993,.9993,.9993,3,.8888,.8920,.8959,.9013,.9129
gr96,.9982,.9982,.9982,.9982,.9982,6,.8793,.8817,.8888,.8988,.9068
kroA100,.9979,.9979,.9979,.9979,.9979,10,,.9902,.9746,.9548,.9160
eil101,.9809,.9809,.9809,.9808,.9808,1,.8782,.8809,.8887,.8990,.9086
gr120,.9988,.9988,.9988,.9988,.9988,5,.9463,.9465,.9470,.9472,.9131
bier127,.9731,.9731,.9726,.9721,.9709,0,.8773,.8802,.8854,.8911,.8963
ch130,.9867,.9867,.9867,.9867,.9865,3,.8772,.8799,.8876,.8978,.9041
gr137,.9996,.9996,.9996,.9996,.9996,9,.9690,.9689,.9660,.9506,.9127
ch150,.9858,.9858,.9858,.9858,.9855,1,.8766,.8795,.8874,.8976,.9026
brg180,.9418,.9427,.9443,.9444,.9431,4,.8857,.8869,.8860,.8833,.8774
d198,.9975,.9975,.9975,.9975,.9974,7,.8780,.8868,.8907,.8942,.8929
gr202,.9768,.9768,.9763,.9759,.9743,0,.8770,.8782,.8830,.8885,.8923
a280,.9969,.9969,.9969,.9969,.9968,4,.8741,.8765,.8823,.8885,.8867
\end{filecontents*}
\DTLloaddb{perturbation_1}{tables/perturbation_1.csv}
\begin{tabular}{{l} % instance
  *{5}{S[table-format=0.4,print-zero-integer=false]} % β_mc, β_mc, β_mc, β_mc, β_mc
  S[table-format=1.0] % RLI
  *{5}{S[table-format=0.4,print-zero-integer=false]} % β_fcc, β_fcc, β_fcc, β_fcc, β_fcc
  }
\toprule
\(\eps\) & {\(10^{-4}\)} & {\(\nicefrac{1}{64}\)} & \(\nicefrac{1}{16}\) & {\(\nicefrac{1}{8}\)} & {\(\nicefrac{1}{4}\)} & {\(10^{-4}\)} & {\(10^{-4}\)} & {{\(\nicefrac{1}{64}\)}} & {\(\nicefrac{1}{16}\)} & {\(\nicefrac{1}{8}\)} & {\(\nicefrac{1}{4}\)} \\
instance &
\multicolumn{1}{c}{\hyperref[eq:quality-measures-mc]{$\beta_{\mathrm{mc}}$}} &
\multicolumn{1}{c}{\hyperref[eq:quality-measures-mc]{$\beta_{\mathrm{mc}}$}} &
\multicolumn{1}{c}{\hyperref[eq:quality-measures-mc]{$\beta_{\mathrm{mc}}$}} &
\multicolumn{1}{c}{\hyperref[eq:quality-measures-mc]{$\beta_{\mathrm{mc}}$}} &
\multicolumn{1}{c}{\hyperref[eq:quality-measures-mc]{$\beta_{\mathrm{mc}}$}} &
\multicolumn{1}{c}{\hyperref[eq:RLI-def]{RLI}} &
\multicolumn{1}{c}{\hyperref[eq:quality-measures-mc]{$\beta_{\mathrm{fcc}}$}} &
\multicolumn{1}{c}{\hyperref[eq:quality-measures-mc]{$\beta_{\mathrm{fcc}}$}} &
\multicolumn{1}{c}{\hyperref[eq:quality-measures-mc]{$\beta_{\mathrm{fcc}}$}} &
\multicolumn{1}{c}{\hyperref[eq:quality-measures-mc]{$\beta_{\mathrm{fcc}}$}} &
\multicolumn{1}{c}{\hyperref[eq:quality-measures-mc]{$\beta_{\mathrm{fcc}}$}}
\\
\cmidrule(r){1-1}
\cmidrule(lr){2-6}
\cmidrule(l){7-12}
  \DTLforeach*{perturbation_1}{%
  \Instance=instance,
  \BetaMCa=bmc10,
  \BetaMCb=bmc64,
  \BetaMCc=bmc16,
  \BetaMCd=bmc8,
  \BetaMCe=bmc4,
  \RLI=rli,
  \BetaFCCa=bfcc10,
  \BetaFCCb=bfcc64,
  \BetaFCCc=bfcc16,
  \BetaFCCd=bfcc8,
  \BetaFCCe=bfcc4%
  }{\AssignOrDash{\BetaFCCa}{\myBetaFCCa}%
{\ttfamily\small\detokenize\expandafter{\Instance}} & % protect underscore
\BetaMCa &
\BetaMCb &
\BetaMCc &
\BetaMCd &
\BetaMCe &
\RLI &
\myBetaFCCa &
\BetaFCCb &
\BetaFCCc &
\BetaFCCd &
\BetaFCCe\\
\DTLiflastrow{\bottomrule}{}
}%
\end{tabular}

  \vspace{-2em}
  \caption{%
    \Cref{eq:tsp-instances} maximum cut instances with
    \(\ceil{128 \ln m}\) cuts sampled and
    \(\eps \in \set{10^{-4}, \nicefrac{1}{64}, \nicefrac{1}{16},
      \nicefrac{1}{8}, \nicefrac{1}{4}}\), obtained on
    \cref{eq:biglinux} hardware.
    See~\cref{td:perturbation-mc}.%
  }
  \label{tab:perturbation-1}
\end{table}

\begin{table}[p]
  \centering
  \begin{filecontents*}{tables/perturbation_2.csv}
instance,bmc10,bmc64,bmc16,bmc8,bmc4,rli,bfcc10,bfcc64,bfcc16,bfcc8,bfcc4
ENZYMES8,.9785,.9785,.9785,.9785,.9785,0,.9149,.9101,.9043,.8990,.8791
soc-dolphins,.9730,.9730,.9730,.9675,.9643,0,.8900,.8895,.8881,.8861,.8819
road-chesapeake,.9673,.9673,.9673,.9673,.9673,0,.8880,.8881,.8875,.8867,.8850
email-enron-only,.9595,.9586,.9554,.9536,.9457,0,.8882,.8887,.8902,.8846,.8707
dwt_209,.9658,.9654,.9628,.9567,.9491,0,.8924,.8924,.8917,.8837,.8631
ca-netscience,.9637,.9611,.9551,.9464,.9305,2,.8856,.8861,.8843,.8735,.8458
Erdos991,.9357,.9314,.9258,.9180,.9006,0,.8791,.8753,.8639,.8491,.8212
hamming6-2,.9997,.9997,.9997,.9997,.9951,0,.9997,.9954,.9824,.9517,.9314
inf-USAir97,.9668,.9665,.9660,.9648,.9619,0,.8736,.8744,.8777,.8756,.8575
ia-infect-hyper,.9687,.9692,.9679,.9658,.9622,0,.8898,.8899,.8903,.8909,.8904
DD687,.9407,.9388,.9330,.9245,.9065,0,.8801,.8760,.8648,.8508,.8236
dwt_503,.9844,.9841,.9822,.9792,.9740,1,.8859,.8880,.8874,.8808,.8617
ia-infect-dublin,.9491,.9471,.9435,.9386,.9278,0,.8805,.8778,.8707,.8622,.8459
email-univ,.9260,.9230,.9169,.9091,.8942,0,.8690,.8650,.8536,.8398,.8138
johnson16-2-4,.9720,.9713,.9691,.9658,.9582,0,.9325,.9308,.9261,.9206,.9084
p-hat700-1,.9704,.9698,.9689,.9668,.9631,0,.8696,.8685,.8664,.8647,.8627
\end{filecontents*}
\DTLloaddb{perturbation_2}{tables/perturbation_2.csv}
\begin{tabular}{{l} % instance
  *{5}{S[table-format=0.4,print-zero-integer=false]} % β_mc, β_mc, β_mc, β_mc, β_mc
  S[table-format=1.0] % RLI
  *{5}{S[table-format=0.4,print-zero-integer=false]} % β_fcc, β_fcc, β_fcc, β_fcc, β_fcc
  }
\toprule
\(\eps\) & {\(10^{-4}\)} & {\(\nicefrac{1}{64}\)} & \(\nicefrac{1}{16}\) & {\(\nicefrac{1}{8}\)} & {\(\nicefrac{1}{4}\)} & {\(10^{-4}\)} & {\(10^{-4}\)} & {{\(\nicefrac{1}{64}\)}} & {\(\nicefrac{1}{16}\)} & {\(\nicefrac{1}{8}\)} & {\(\nicefrac{1}{4}\)} \\
instance &
\multicolumn{1}{c}{\hyperref[eq:quality-measures-mc]{$\beta_{\mathrm{mc}}$}} &
\multicolumn{1}{c}{\hyperref[eq:quality-measures-mc]{$\beta_{\mathrm{mc}}$}} &
\multicolumn{1}{c}{\hyperref[eq:quality-measures-mc]{$\beta_{\mathrm{mc}}$}} &
\multicolumn{1}{c}{\hyperref[eq:quality-measures-mc]{$\beta_{\mathrm{mc}}$}} &
\multicolumn{1}{c}{\hyperref[eq:quality-measures-mc]{$\beta_{\mathrm{mc}}$}} &
\multicolumn{1}{c}{\hyperref[eq:RLI-def]{RLI}} &
\multicolumn{1}{c}{\hyperref[eq:quality-measures-mc]{$\beta_{\mathrm{fcc}}$}} &
\multicolumn{1}{c}{\hyperref[eq:quality-measures-mc]{$\beta_{\mathrm{fcc}}$}} &
\multicolumn{1}{c}{\hyperref[eq:quality-measures-mc]{$\beta_{\mathrm{fcc}}$}} &
\multicolumn{1}{c}{\hyperref[eq:quality-measures-mc]{$\beta_{\mathrm{fcc}}$}} &
\multicolumn{1}{c}{\hyperref[eq:quality-measures-mc]{$\beta_{\mathrm{fcc}}$}}
\\
\cmidrule(r){1-1}
\cmidrule(lr){2-6}
\cmidrule(l){7-12}
  \DTLforeach*{perturbation_2}{%
  \Instance=instance,
  \BetaMCa=bmc10,
  \BetaMCb=bmc64,
  \BetaMCc=bmc16,
  \BetaMCd=bmc8,
  \BetaMCe=bmc4,
  \RLI=rli,
  \BetaFCCa=bfcc10,
  \BetaFCCb=bfcc64,
  \BetaFCCc=bfcc16,
  \BetaFCCd=bfcc8,
  \BetaFCCe=bfcc4%
  }{%
{\ttfamily\small\detokenize\expandafter{\Instance}} & % protect underscore
\BetaMCa &
\BetaMCb &
\BetaMCc &
\BetaMCd &
\BetaMCe &
\RLI &
\BetaFCCa &
\BetaFCCb &
\BetaFCCc &
\BetaFCCd &
\BetaFCCe\\
\DTLiflastrow{\bottomrule}{}
}%
\end{tabular}

  \vspace{-2em}
  \caption{%
    \Cref{eq:mtx-instances} maximum cut instances with
    \(\ceil{128 \ln m}\) cuts sampled and
    \(\eps \in \set{10^{-4}, \nicefrac{1}{64}, \nicefrac{1}{16},
      \nicefrac{1}{8}, \nicefrac{1}{4}}\), obtained on
    \cref{eq:biglinux} hardware.
    See~\cref{td:perturbation-mc}.%
  }
  \label{tab:perturbation-2}
\end{table}

\nexttablegroup
\begin{table}[p]
  \centering
  \begin{filecontents*}{tables/perturbation_3.csv}
instance,rli,bfcc10,bfcc64,bfcc16,bfcc8,bfcc4,bmc10,bmc64,bmc16,bmc8,bmc4
bayg29,0,.8849,.8845,.8849,.8852,.8853,.9882,.9972,.9980,.9987,.9998
bays29,3,.8862,.8868,.8870,.8863,.8867,.9885,.9977,.9990,.9994,.9998
berlin52,0,.8899,.8872,.8876,.8880,.8790,.9922,.9963,.9981,.9986,.9995
brazil58,2,.8892,.8883,.8879,.8892,.8890,.9993,.9991,.9988,.9999,.9997
kroA100,0,.9999,.9998,.9999,.9999,.9999,.9999,.9998,.9999,.9999,.9999
gr96,1,.8809,.8832,.8830,.8828,.8758,.9981,.9997,.9996,.9994,.9998
eil101,0,.8785,.8782,.8780,.8771,.8702,.9799,.9965,.9996,.9985,.9956
gr120,5,.9701,.9700,.9681,.9699,.9703,.9997,.9996,.9976,.9994,.9998
bier127,0,.8791,.8777,.8673,.8601,.8440,.9842,.9904,.9936,.9943,.9996
ch130,1,.8785,.8781,.8781,.8771,.8695,.9890,.9992,.9995,.9985,.9994
gr137,3,.9864,.9873,.9776,.9870,.9867,.9988,.9997,.9898,.9993,.9989
ch150,1,.8781,.8783,.8771,.8779,.8664,.9862,.9981,.9985,.9996,.9983
brg180,4,.8867,.8846,.8742,.8605,.8283,.9738,.9789,.9817,.9849,.9929
d198,5,.8813,.8865,.8868,.8830,.8645,.9994,.9992,.9998,.9998,.9976
gr202,0,.8783,.8702,.8706,.8525,.8448,.9883,.9932,.9985,.9878,.9998
a280,5,.8794,.8795,.8794,.8771,.8695,.9994,.9997,.9997,.9998,.9997
\end{filecontents*}
\DTLloaddb{perturbation_3}{tables/perturbation_3.csv}
\begin{tabular}{{l}
  S[table-format=1.0] % RLI
  *{5}{S[table-format=0.4,print-zero-integer=false]} % β_fcc, β_fcc, β_fcc, β_fcc, β_fcc
  *{5}{S[table-format=0.4,print-zero-integer=false]} % β_mc, β_mc, β_mc, β_mc, β_mc
  }
\toprule
\(\eps\) & {\(10^{-4}\)} & {\(10^{-4}\)} & {\(\nicefrac{1}{64}\)} & \(\nicefrac{1}{16}\) & {\(\nicefrac{1}{8}\)} & {\(\nicefrac{1}{4}\)} & {\(10^{-4}\)} & {{\(\nicefrac{1}{64}\)}} & {\(\nicefrac{1}{16}\)} & {\(\nicefrac{1}{8}\)} & {\(\nicefrac{1}{4}\)} \\
instance &
\multicolumn{1}{c}{\hyperref[eq:RLI-def]{RLI}} &
\multicolumn{1}{c}{\hyperref[eq:quality-measures-fcc]{$\beta_{\mathrm{fcc}}$}} &
\multicolumn{1}{c}{\hyperref[eq:quality-measures-fcc]{$\beta_{\mathrm{fcc}}$}} &
\multicolumn{1}{c}{\hyperref[eq:quality-measures-fcc]{$\beta_{\mathrm{fcc}}$}} &
\multicolumn{1}{c}{\hyperref[eq:quality-measures-fcc]{$\beta_{\mathrm{fcc}}$}} &
\multicolumn{1}{c}{\hyperref[eq:quality-measures-fcc]{$\beta_{\mathrm{fcc}}$}} &
\multicolumn{1}{c}{\hyperref[eq:quality-measures-fcc]{$\beta_{\mathrm{mc}}$}} &
\multicolumn{1}{c}{\hyperref[eq:quality-measures-fcc]{$\beta_{\mathrm{mc}}$}} &
\multicolumn{1}{c}{\hyperref[eq:quality-measures-fcc]{$\beta_{\mathrm{mc}}$}} &
\multicolumn{1}{c}{\hyperref[eq:quality-measures-fcc]{$\beta_{\mathrm{mc}}$}} &
\multicolumn{1}{c}{\hyperref[eq:quality-measures-fcc]{$\beta_{\mathrm{mc}}$}}
\\
\cmidrule(r){1-1}
\cmidrule(lr){2-7}
\cmidrule(l){8-12}
  \DTLforeach*{perturbation_3}{%
  \Instance=instance,
  \RLI=rli,
  \BetaFCCa=bfcc10,
  \BetaFCCb=bfcc64,
  \BetaFCCc=bfcc16,
  \BetaFCCd=bfcc8,
  \BetaFCCe=bfcc4,
  \BetaMCa=bmc10,
  \BetaMCb=bmc64,
  \BetaMCc=bmc16,
  \BetaMCd=bmc8,
  \BetaMCe=bmc4%
  }{%
{\ttfamily\small\detokenize\expandafter{\Instance}} & % protect underscore
\RLI &
\BetaFCCa &
\BetaFCCb &
\BetaFCCc &
\BetaFCCd &
\BetaFCCe &
\BetaMCa &
\BetaMCb &
\BetaMCc &
\BetaMCd &
\BetaMCe\\
\DTLiflastrow{\bottomrule}{}
}%
\end{tabular}

  \vspace{-2em}
  \caption{%
    \Cref{eq:tsp-instances} \hyperref[alg:pairing]{paired} fractional
    cut-covering instances with \(\ceil{128 \ln m}\) cuts sampled and
    \(\eps \in \set{10^{-4}, \nicefrac{1}{64}, \nicefrac{1}{16},
      \nicefrac{1}{8}, \nicefrac{1}{4}}\), obtained on
    \cref{eq:biglinux} hardware.
    See~\cref{td:perturbation-fcc}.%
  }
  \label{tab:perturbation-3}
\end{table}

\begin{table}[p]
  \centering
  \begin{filecontents*}{tables/perturbation_4.csv}
instance,rli,bfcc10,bfcc64,bfcc16,bfcc8,bfcc4,bmc10,bmc64,bmc16,bmc8,bmc4
ENZYMES8,0,.9107,.9002,.9037,.9042,.8908,.9717,.9734,.9943,.9945,.9988
soc-dolphins,0,.8908,.8908,.8899,.8908,.8850,.9825,.9988,.9986,.9997,.9997
road-chesapeake,0,.8878,.8883,.8888,.8885,.8875,.9613,.9994,.9999,.9997,.9997
email-enron-only,0,.8787,.8883,.8856,.8733,.8440,.9639,.9991,.9999,.9999,.9999
dwt_209,0,.8910,.8915,.8881,.8753,.8425,.9667,.9992,.9998,.9997,.9994
ca-netscience,5,.8793,.8794,.8796,.8619,.8184,.9829,.9973,.9971,.9939,.9716
Erdos991,0,.8671,.8796,.8683,.8523,.8161,.9410,.9933,.9965,.9827,.9507
hamming6-2,0,.9999,.9996,.9694,.9175,.8671,.9999,.9997,.9997,.9999,.9936
inf-USAir97,0,.8776,.8779,.8768,.8754,.8566,.9844,.9986,.9984,.9983,.9977
ia-infect-hyper,0,.8894,.8870,.8785,.8673,.8408,.9409,.9998,.9998,.9998,.9994
DD687,0,.8811,.8755,.8563,.8385,.7765,.9410,.9995,.9998,.9999,.9998
dwt_503,2,.8859,.8861,.8818,.8655,.8291,.9829,.9927,.9938,.9940,.9970
ia-infect-dublin,0,.8816,.8776,.8642,.8463,.8092,.9412,.9998,.9999,.9994,.9993
email-univ,0,.8626,.8666,.8504,.8313,.7920,.9222,.9989,.9815,.9621,.9247
johnson16-2-4,0,.9328,.9310,.9264,.9200,.9081,.9718,.9715,.9676,.9641,.9581
p-hat700-1,0,.8701,.8647,.8523,.8365,.7671,.9448,.9195,.9183,.9195,.9388
\end{filecontents*}
\DTLloaddb{perturbation_4}{tables/perturbation_4.csv}
\begin{tabular}{{l}
  S[table-format=1.0] % RLI
  *{5}{S[table-format=0.4,print-zero-integer=false]} % β_fcc, β_fcc, β_fcc, β_fcc, β_fcc
  *{5}{S[table-format=0.4,print-zero-integer=false]} % β_mc, β_mc, β_mc, β_mc, β_mc
  }
\toprule
\(\eps\) & {\(10^{-4}\)} & {\(10^{-4}\)} & {\(\nicefrac{1}{64}\)} & \(\nicefrac{1}{16}\) & {\(\nicefrac{1}{8}\)} & {\(\nicefrac{1}{4}\)} & {\(10^{-4}\)} & {{\(\nicefrac{1}{64}\)}} & {\(\nicefrac{1}{16}\)} & {\(\nicefrac{1}{8}\)} & {\(\nicefrac{1}{4}\)} \\
instance &
\multicolumn{1}{c}{\hyperref[eq:RLI-def]{RLI}} &
\multicolumn{1}{c}{\hyperref[eq:quality-measures-fcc]{$\beta_{\mathrm{fcc}}$}} &
\multicolumn{1}{c}{\hyperref[eq:quality-measures-fcc]{$\beta_{\mathrm{fcc}}$}} &
\multicolumn{1}{c}{\hyperref[eq:quality-measures-fcc]{$\beta_{\mathrm{fcc}}$}} &
\multicolumn{1}{c}{\hyperref[eq:quality-measures-fcc]{$\beta_{\mathrm{fcc}}$}} &
\multicolumn{1}{c}{\hyperref[eq:quality-measures-fcc]{$\beta_{\mathrm{fcc}}$}} &
\multicolumn{1}{c}{\hyperref[eq:quality-measures-fcc]{$\beta_{\mathrm{mc}}$}} &
\multicolumn{1}{c}{\hyperref[eq:quality-measures-fcc]{$\beta_{\mathrm{mc}}$}} &
\multicolumn{1}{c}{\hyperref[eq:quality-measures-fcc]{$\beta_{\mathrm{mc}}$}} &
\multicolumn{1}{c}{\hyperref[eq:quality-measures-fcc]{$\beta_{\mathrm{mc}}$}} &
\multicolumn{1}{c}{\hyperref[eq:quality-measures-fcc]{$\beta_{\mathrm{mc}}$}}
\\
\cmidrule(r){1-1}
\cmidrule(lr){2-7}
\cmidrule(l){8-12}
  \DTLforeach*{perturbation_4}{%
  \Instance=instance,
  \RLI=rli,
  \BetaFCCa=bfcc10,
  \BetaFCCb=bfcc64,
  \BetaFCCc=bfcc16,
  \BetaFCCd=bfcc8,
  \BetaFCCe=bfcc4,
  \BetaMCa=bmc10,
  \BetaMCb=bmc64,
  \BetaMCc=bmc16,
  \BetaMCd=bmc8,
  \BetaMCe=bmc4%
  }{%
{\ttfamily\small\detokenize\expandafter{\Instance}} & % protect underscore
\RLI &
\BetaFCCa &
\BetaFCCb &
\BetaFCCc &
\BetaFCCd &
\BetaFCCe &
\BetaMCa &
\BetaMCb &
\BetaMCc &
\BetaMCd &
\BetaMCe\\
\DTLiflastrow{\bottomrule}{}
}%
\end{tabular}

  \vspace{-2em}
  \caption{%
    \Cref{eq:mtx-instances} \hyperref[alg:pairing]{paired} fractional
    cut-covering instances with \(\ceil{128 \ln m}\) cuts sampled and
    \(\eps \in \set{10^{-4}, \nicefrac{1}{64}, \nicefrac{1}{16},
      \nicefrac{1}{8}, \nicefrac{1}{4}}\), obtained on
    \cref{eq:biglinux} hardware.
    See~\cref{td:perturbation-fcc}.%
  }
  \label{tab:perturbation-4}
\end{table}

\subsection{Interaction between Perturbation and Sanitization}
\label{ssec:perturbation-aids-sanitization}

As one can see from the results for \texttt{brg180} in
\cref{tab:no-round-eps,tab:tsp-128}, perturbation was helpful in
producing SDP solutions that our sanitization procedures would accept.
Whereas the positive effect of perturbation on feasibility was known
before this paper, its positive effect on sanitization only became
clear from our experiments.
We propose two possible explanations for this behavior.

Let \(G = (V, E)\) be a graph, and let \(z \in \Lp{E}\).
Using~\cref{eq:mu-geq-infty,eq:Laplacian-monotone}, we have that
\[
  z \le \norm[\infty]{z}\ones
  \le \mu \ones
  = \frac{1}{2\eps}\Laplacian_G^*(\mu \eps I)
  \le \frac{1}{2\eps}\Laplacian_G^*(Y),
\]
so
\begin{equation}
  \label{eq:ratio-guarantee}
  z \le \frac{1}{2\eps} \Laplacian_G^*(Y)
  \text{ if }
  Y \succeq \eps\mu I,\,
  \diag(Y) = \mu\ones,
  \text{ and }
  \tfrac{1}{4}\Laplacian_G^*(Y) \ge z.
\end{equation}
To relate~\cref{eq:ratio-guarantee} to our sanitization procedure,
note that by line 7, \cref{alg:representation-sanitize} is working
with a Cholesky factorization of \(\tilde{Y}\) with \(\diag(\tilde{Y})
= \ones\).
Then step 8 computes \(\inf\setst{\mu \in
\Lp{}}{\mu\tfrac{1}{4}\Laplacian_G^*(\tilde{Y}) \ge z}\).
Since
\begin{equation*}
  \dfrac{z_{ij}}{\tfrac{1}{4}\paren[\big]{\Laplacian_G^*(Y)}_{ij}} \leq 2/\eps
\end{equation*}
by~\cref{eq:ratio-guarantee},
the divisions in step 8 of \cref{alg:representation-sanitize} are all
numerically well-behaved.
Of~course, there is a disconnect between~\cref{eq:ratio-guarantee},
which is proved using exact arithmetic, and the actual floating-point
arithmetic performed in our sanitization procedure.
Still, approximate feasibility is enough to provide an upper bound for
\(\mu\) not much weaker than \(2/\eps\), which then ensures that
dividing \(z_{ij}\) by \(\paren[\big]{\Laplacian_G^*(Y)}_{ij}\) is
well-behaved for every edge \(ij \in E\), and thus that our
computation of \(\mu\) will succeed.

Perturbation could also be helpful
for~\cref{alg:representation-sanitize} in the following way.
Let \(Y \in \Sym{V}\) be such that \(\tfrac{1}{4}\Laplacian_G^*(Y) \ge
z\), and let \(\gamma \in (0, 1)\) be the parameter used
in~\cref{alg:representation-sanitize}.
Let \(P,\, Z,\, N \subseteq \Psd{V}\) be such that \(Y = P + Z - N\),
with \(\lambdamin(P) > \gamma\) and \(\lambdamax(Z) \le \gamma\).
Numerically, we consider \(P\) the projection of \(Y\) onto
\(\Psd{V}\), thus ignoring \(Z\).
It holds that \(\Laplacian_G^*(Z) \le 2 \gamma \ones\), as for every
\(ij \in E\),
\[
  (\Laplacian_G^*(Z))_{ij}
  = \iprod{\Laplacian_G(e_{ij})}{Z}
  = \iprodt{(e_i - e_j)}{Z(e_i - e_j)}
  \le \lambdamax(Z) \norm[2]{e_i - e_j}^2
  = 2 \lambdamax(Z).
\]
Since \(Y = P + Z - N \preceq P + Z\), we can then
use~\cref{eq:Laplacian-monotone} to see that
\[
  z
  \le \tfrac{1}{4}\Laplacian_G^*(Y)
  \le \tfrac{1}{4}\Laplacian_G^*(P) + \tfrac{1}{4}\Laplacian_G^*(Z)
  \le \tfrac{1}{4}\Laplacian_G^*(P) + \tfrac{1}{2}\gamma\ones.
\]
Hence~\cref{eq:bot-Y-def} could occur, as it is possible that \(z_{ij}
> 0 = \tfrac{1}{4}\Laplacian_G^*(P)_{ij}\) if
\(\tfrac{1}{2}\gamma \ge z_{ij}\).
Note that \(Z = N = 0\) if \(\lambdamin(Y) \ge \eps > \gamma\), so
perturbed solutions \(Y\) do not face this issue.

\subsection{Uniform Sampling Shores}
\label{ssec:uniform}

Let \(U \colon \Omega \to \Powerset{V}\) be a random variable
representing the uniform distribution over subsets of \(V\).
Samples from \(U\) are helpful in covering thin edges since every edge
\(e \in E\) is treated uniformly, with
\(
  \prob\paren{e \in \delta(U)} = \frac{1}{2}.
\)
In particular, if one has \(T \in \Naturals\) independent samples
\(U_1, \ldots, U_T\), every edge is covered after \(O(\ln m)\) samples
with high probability, since
\[
  \prob\paren[\Big]{\,
    \bigcup\nolimits_{t = 1}^T \delta(U_t)
    \neq E
  }
  \le \sum_{e \in E} \prob\paren[\Big]{
    e \not\in \bigcup\nolimits_{t = 1}^T \delta(U_t)
  }
  = m2^{-T}.
\]
The experiments in this subsection share a common structure: pick a
uniform sampling quota \(Q \in \Naturals\) and compute a fractional
cut cover using \(\ceil{Q \ln m}\) shores sampled from \(U\), and
\(\ceil{(128 - Q) \ln m}\) shores sampled from the geometric
representation stored by the SDP certificate~\cref{eq:SDP-certificate}.
Our options for \(Q\) arise from the values of \(\eps\) in
\crefrange{tab:perturbation-1}{tab:perturbation-4}, as we consider
\[
  Q \in
  \set{2, 8, 16, 32}
  = \setst{128 \eps}{\eps \in
    \set{
      \nicefrac{1}{64},
      \nicefrac{1}{16},
      \nicefrac{1}{8},
      \nicefrac{1}{4}
    }}.
\]
As motivated in~\cref{ssec:perturbation-aids-sanitization}, we perturb
the optimal solution by \(\eps = 10^{-4}\) to obtain SDP solutions
acceptable to our sanitization procedures.

\crefrange{tab:perturbation-1}{tab:perturbation-4} have flexibility in
picking different pairings, as distinct columns in
\cref{tab:perturbation-1,tab:perturbation-2} report \(\beta_{\fcc}\)
values for distinct \(z_{\eps}\), and distinct columns on
\cref{tab:perturbation-3,tab:perturbation-4} report \(\beta_{\mc}\)
values for distinct \(w_{\eps}\).
This is not the case for the upcoming tables in this subsection.
In every column of \cref{tab:uniform-1,tab:uniform-2} we are
certifying the same pair \((w, z_{10^{-4}}) \in H_{\sigma}(G)\), and
in every column of \cref{tab:uniform-3,tab:uniform-4} we are
certifying the same pair \((w_{10^{-4}}, z) \in H_{\sigma}(G)\).

\begin{table-description}[\Cref{tab:uniform-1,tab:uniform-2}]
  \label{td:uniform-mc}
  Experiments obtained on \cref{eq:biglinux} hardware.
  As~input, we are given maximum cut instances \((G, w)\).
  In~\cref{tab:uniform-1} they are \cref{eq:tsp-instances}~instances,
  and in~\cref{tab:uniform-2} they are \cref{eq:mtx-instances}~instances.
  We nearly solve the SDPs in~\cref{eq:GW-def} using SCS
  with~\cref{eq:mc-scs}, obtaining \(\tilde{x} \in \Reals^V\) and
  \(\tilde{Y} \in \Sym{V}\).
  We set \(Y \coloneqq (1 - \eps) \tilde{Y} + \eps I\) as
  in~\cref{eq:eps-def}, with \(\eps = 10^{-4}\),
  and \(z \coloneqq \tfrac{1}{4}\Laplacian_G^*(Y)\) as
  in~\cref{step:solve-sdp}.
  Set \((\rho, x, B) \coloneqq \SlackSanitize(\tilde{x}, w)\) and
  \((\mu, R \in \Reals^{k \times V}) \coloneqq
  \RepresentationSanitize(Y, z)\).
  We store \(w\), \(z\), along with the elements \(\rho\), \(\mu\),
  \(R\), \(B\), and~\(x\) of the SDP
  certificate~\cref{eq:SDP-certificate}, in a file.
  and \(B\) in a file.
  This step is done once for each \((G, w)\).

  For each such file, for each number \(Q \in \set{2, 8, 16, 32}\),
  and for each of 10 different seeds used to sample cuts, we run our
  pipeline once.
  In~these runs, we first load \((w,z)\) and the SDP certificate from
  the file.
  We sample \(\ceil{Q \ln m}\) shores from the uniform distribution on
  \(\Powerset{V}\).
  We then sample \(\ceil{(128 - Q) \ln m}\) vectors \(g \in \Reals^k\)
  according to the standard multivariate normal distribution, and
  produce a shore \(S \coloneqq \setst{i \in V}{g^\transp Re_i > 0}\) for
  each \(g\).
  Let \(\Fcal \subseteq \Powerset{V}\) be the set of shores generated by
  both procedures.
  We compute {\rmc} directly, and we solve {\rfcc} using Gurobi.
  We then define \(\beta_{\mc}\) and \(\beta_{\fcc}\) as
  in~\cref{eq:quality-measures}.
  The entries in \cref{tab:uniform-3,tab:uniform-4} are the
  average across the 10 runs for each choice of choice of \(Q\) and
  input graph.
  For~each \(\beta_{\fcc}\) column with a sampling quota~\(Q\), we
  display beside it the \(\beta_{\fcc}\) column from
  \cref{tab:perturbation-1,tab:perturbation-2} with perturbation
  \(\eps\) such that \(Q = 128\eps\).
\end{table-description}

\begin{table-description}[\Cref{tab:uniform-3,tab:uniform-4}]
  \label{td:uniform-fcc}
  Experiments obtained on \cref{eq:biglinux} hardware.
  As~input, we are given fractional cut-covering instances \((G, z)\).
  In all cases they are \hyperref[alg:pairing]{paired} instances,
  obtained by solving the corresponding maximum cut instance
  via~\cref{alg:pairing}; see~\cref{sec:dds-mc}.
  In \cref{tab:uniform-3} they are \cref{eq:tsp-instances}~instances,
  and in \cref{tab:uniform-4} they are \cref{eq:mtx-instances}.
  We~nearly solve the SDPs in~\cref{eq:GW-polar-SDP} with
  \(\eps = 10^{-4}\) using SCS via~\cref{eq:fcc-scs}, thus computing
  nearly optimal \(\tilde{\mu} \in \Reals\),
  \(\tilde{Y} \in \Sym{V}\), \(\tilde{w} \in \Reals^E\), and
  \(\tilde{x} \in \Reals^V\).
  Set \(Y \coloneqq \tilde{Y} + \tilde{\mu} \eps I\), set
  \((\rho, x, B) \coloneqq \SlackSanitize(\tilde{x}, w)\), and set
  \((\mu, R \in \Reals^{k \times V}) \coloneqq
  \RepresentationSanitize(Y, z)\).
  We store \(\tilde{w}\), \(z\), along with the elements \(\rho\), \(\mu\),
  \(R\), \(B\), and~\(x\) of the SDP
  certificate~\cref{eq:SDP-certificate}, in a file.
  This step is done once for each \((G, z)\).

  For each such file, for each number \(Q \in \set{2, 8, 16, 32}\),
  and for each of 10 different seeds used to sample cuts, we run our
  pipeline once.
  In~these runs, we first load \((w,z)\) and the SDP certificate from
  the file.
  We sample \(\ceil{Q \ln m}\) shores from the uniform distribution on
  \(\Powerset{V}\).
  We then sample \(\ceil{(128 - Q) \ln m}\) vectors \(g \in \Reals^k\)
  according to the standard multivariate normal distribution, and
  produce a shore \(S \coloneqq \setst{i \in V}{g^\transp Re_i > 0}\) for
  each \(g\).
  Let \(\Fcal \subseteq \Powerset{V}\) be the set of shores generated by
  both procedures.
  We compute {\rmc} directly, and we solve {\rfcc} using Gurobi.
  We then define \(\beta_{\mc}\) and \(\beta_{\fcc}\) as
  in~\cref{eq:quality-measures}.
  The entries in \cref{tab:uniform-3,tab:uniform-4} are the
  average across the 10 runs for each choice of choice of \(Q\) and
  input graph.
  For~each \(\beta_{\fcc}\) column with a sampling quota~\(Q\), we
  display beside it the \(\beta_{\fcc}\) column from
  \cref{tab:perturbation-3,tab:perturbation-4} with perturbation
  \(\eps\) such that \(Q = 128\eps\).
\end{table-description}

For experiments running the pipeline twice, running time information
becomes ambiguous.
The \emph{single-run running time} of a run is the sum of
\begin{enumerate}
\item the time spent in the first run until the completion of the
  computation of the SDP certificate~\cref{eq:SDP-certificate}, and
\item the time spent in the second run, starting after the SDP
certificates~\cref{eq:SDP-certificate} are loaded from file.
\end{enumerate}
The single-run running time across \crefrange{tab:uniform-1}{tab:uniform-4}
never changes too much.
This is to be expected, as the time spent in SDP solving is fixed and
LP solving should not change too much.
The average single-run running time for the maximum cut instance
\texttt{gr96} is \(9.58\) seconds when \(Q = 2\), and \(19.68\)
seconds when \(Q = 32\), increasing by a factor of \(2.05\).
This is the largest difference on average single-run running time we have
found on the experiments in these tables.
The highest standard deviation across the 10 seeds we have observed in
these tables is \(0.0072\) for \(\beta_{\fcc}\) in
\cref{tab:uniform-2}.

We omit \(\beta_{\mc}\) values in
\crefrange{tab:uniform-1}{tab:uniform-4} since \(Q\) does not
noticeably affect \(\beta_{\mc}\).
This is unsurprising, since \(\ceil{(128 - Q) \ln m}\) samples from
the nearly optimal SDP solution tend to provide good \(\beta_{\mc}\)
values.
Uniform sampling fulfilled its primary purpose for \(\beta_{\fcc}\):
there are no \hyperref[eq:RLI-def]{RLI} outcomes in
\crefrange{tab:uniform-1}{tab:uniform-4}, an improvement on
\cref{tab:tsp-128,tab:mtx-128} corresponding to \(Q = 0\).
The effect of \(Q\) on the average \(\beta_{\fcc}\) is not as strong
as the effect of \(\eps\), and despite the clear benefit from going
from \(Q = 0\) to \(Q = 2\), there is little evidence supporting
choices of \(Q\) above \(2\).
These observations further explain the difference, with respect to
\(\beta_{\fcc}\), between uniform sampling and perturbation whenever
\(Q = \eps 128\).
When \(Q = 2\) and \(\eps = \nicefrac{1}{64}\), instances in
\crefrange{tab:uniform-2}{tab:uniform-4} evenly split between those
where uniform sampling outperforms perturbation and those where it
does not.
Since higher values of \(Q\) have small effect on \(\beta_{\fcc}\), to
ask which approach provides better results is to ask whether higher
perturbation improves or not the value of \(\beta_{\fcc}\).
This question is one of the main topics studied in~\cref{sec:tracing}.
\Cref{tab:uniform-1} is an exception, where uniform sampling
consistently outperforms perturbation.

\nexttablegroup
\begin{table}[p]
  \centering
  \begin{filecontents*}{tables/uniform_1.csv}
instance,m,bfcc64,bfccQ2,bfcc16,bfccQ8,bfcc8,bfccQ16,bfcc4,bfccQ32
bayg29,406,.8864,.8838,.8942,.8837,.9015,.8840,.9147,.8839
bays29,406,.8873,.8842,.8946,.8847,.9007,.8843,.9138,.8848
berlin52,1326,.8922,.8903,.8980,.8904,.9059,.8904,.9191,.8904
brazil58,1653,.8920,.8892,.8959,.8898,.9013,.8904,.9129,.8901
gr96,4560,.8817,.8793,.8888,.8793,.8988,.8794,.9068,.8794
kroA100,4950,.9902,.9953,.9746,.9955,.9548,.9956,.9160,.9957
eil101,5050,.8809,.8781,.8887,.8781,.8990,.8779,.9086,.8779
gr120,7140,.9465,.9463,.9470,.9463,.9472,.9463,.9131,.9463
bier127,8001,.8802,.8774,.8854,.8775,.8911,.8774,.8963,.8774
ch130,8385,.8799,.8773,.8876,.8771,.8978,.8770,.9041,.8768
gr137,9316,.9689,.9690,.9660,.9690,.9506,.9690,.9127,.9690
ch150,11175,.8795,.8766,.8874,.8764,.8976,.8764,.9026,.8762
brg180,16110,.8869,.8853,.8860,.8855,.8833,.8845,.8774,.8841
d198,19503,.8868,.8778,.8907,.8783,.8942,.8783,.8929,.8774
gr202,20301,.8782,.8771,.8830,.8769,.8885,.8766,.8923,.8765
a280,39060,.8765,.8742,.8823,.8741,.8885,.8740,.8867,.8738
\end{filecontents*}
\DTLloaddb{uniform_1}{tables/uniform_1.csv}
\begin{tabular}{{l} % instance
  *{8}{S[table-format=0.4,print-zero-integer=false]} % β_fcc times 8
  }
\toprule
& {\(\eps = \nicefrac{1}{64}\)} & {\(Q = 2\)}
& {\(\eps = \nicefrac{1}{16}\)} & {\(Q = 8\)}
& {\(\eps = \nicefrac{1}{8}\)}  & {\(Q = 16\)}
& {\(\eps = \nicefrac{1}{4}\)}  & {\(Q = 32\)}
\\
instance &
\multicolumn{1}{c}{\hyperref[eq:quality-measures]{$\beta_{\mathrm{fcc}}$}} &
\multicolumn{1}{c}{\hyperref[eq:quality-measures]{$\beta_{\mathrm{fcc}}$}} &
\multicolumn{1}{c}{\hyperref[eq:quality-measures]{$\beta_{\mathrm{fcc}}$}} &
\multicolumn{1}{c}{\hyperref[eq:quality-measures]{$\beta_{\mathrm{fcc}}$}} &
\multicolumn{1}{c}{\hyperref[eq:quality-measures]{$\beta_{\mathrm{fcc}}$}} &
\multicolumn{1}{c}{\hyperref[eq:quality-measures]{$\beta_{\mathrm{fcc}}$}} &
\multicolumn{1}{c}{\hyperref[eq:quality-measures]{$\beta_{\mathrm{fcc}}$}} &
\multicolumn{1}{c}{\hyperref[eq:quality-measures]{$\beta_{\mathrm{fcc}}$}}
\\
\cmidrule(r){1-1}
\cmidrule(lr){2-3}
\cmidrule(lr){4-5}
\cmidrule(lr){6-7}
\cmidrule(l){8-9}
  \DTLforeach*{uniform_1}{%
  \Instance=instance,
  \Edges=m,
  \BetaFCCa=bfcc64,
  \BetaFCCaa=bfccQ2,
  \BetaFCCb=bfcc16,
  \BetaFCCbb=bfccQ8,
  \BetaFCCc=bfcc8,
  \BetaFCCcc=bfccQ16,
  \BetaFCCd=bfcc4,
  \BetaFCCdd=bfccQ32%
  }{%
{\ttfamily\small\detokenize\expandafter{\Instance}} & % protect underscore
\BetaFCCa &
\BetaFCCaa &
\BetaFCCb &
\BetaFCCbb &
\BetaFCCc &
\BetaFCCcc &
\BetaFCCd &
\BetaFCCdd\\
\DTLiflastrow{\bottomrule}{}
}%
\end{tabular}

  \vspace{-1.5em}
  \caption{
    \Cref{eq:tsp-instances} maxcut instances with \(\ceil{Q \ln m}\)
    cuts sampled uniformly, and \(\ceil{(128 - Q) \ln m}\) cuts
    sampled from SDP solution~\(Y\) from~\cref{eq:eps-def},
    with~\(\eps = 10^{-4}\) and for each \(Q \in \set{2, 8, 16, 32}\).
    See~\cref{td:uniform-mc}.
  }
  \label{tab:uniform-1}
\end{table}

\begin{table}[p]
  \centering
  \begin{filecontents*}{tables/uniform_2.csv}
instance,m,bfcc64,bfccQ2,bfcc16,bfccQ8,bfcc8,bfccQ16,bfcc4,bfccQ32
ENZYMES8,133,.9101,.9148,.9043,.9088,.8990,.9134,.8791,.9146
soc-dolphins,159,.8895,.8900,.8881,.8900,.8861,.8900,.8819,.8900
road-chesapeake,170,.8881,.8882,.8875,.8883,.8867,.8883,.8850,.8882
email-enron-only,623,.8887,.8882,.8902,.8882,.8846,.8882,.8707,.8882
dwt_209,767,.8924,.8924,.8917,.8924,.8837,.8924,.8631,.8924
ca-netscience,914,.8861,.8856,.8843,.8856,.8735,.8856,.8458,.8856
Erdos991,1417,.8753,.8792,.8639,.8786,.8491,.8777,.8212,.8752
hamming6-2,1824,.9954,.9997,.9824,.9997,.9517,.9997,.9314,.9997
inf-USAir97,2126,.8744,.8738,.8777,.8738,.8756,.8740,.8575,.8738
ia-infect-hyper,2196,.8899,.8898,.8903,.8894,.8909,.8887,.8904,.8873
DD687,2600,.8760,.8797,.8648,.8792,.8508,.8782,.8236,.8758
dwt_503,2762,.8880,.8859,.8874,.8859,.8808,.8859,.8617,.8859
ia-infect-dublin,2765,.8778,.8802,.8707,.8794,.8622,.8788,.8459,.8765
email-univ,5451,.8650,.8690,.8536,.8679,.8398,.8670,.8138,.8642
johnson16-2-4,5460,.9308,.9316,.9261,.9304,.9206,.9289,.9084,.9241
p-hat700-1,60999,.8685,.8693,.8664,.8684,.8647,.8672,.8627,.8642
\end{filecontents*}
\DTLloaddb{uniform_2}{tables/uniform_2.csv}
\begin{tabular}{{l} % instance
  *{8}{S[table-format=0.4,print-zero-integer=false]} % β_fcc times 8
  }
\toprule
& {\(\eps = \nicefrac{1}{64}\)} & {\(Q = 2\)}
& {\(\eps = \nicefrac{1}{16}\)} & {\(Q = 8\)}
& {\(\eps = \nicefrac{1}{8}\)}  & {\(Q = 16\)}
& {\(\eps = \nicefrac{1}{4}\)}  & {\(Q = 32\)}
\\
instance &
\multicolumn{1}{c}{\hyperref[eq:quality-measures]{$\beta_{\mathrm{fcc}}$}} &
\multicolumn{1}{c}{\hyperref[eq:quality-measures]{$\beta_{\mathrm{fcc}}$}} &
\multicolumn{1}{c}{\hyperref[eq:quality-measures]{$\beta_{\mathrm{fcc}}$}} &
\multicolumn{1}{c}{\hyperref[eq:quality-measures]{$\beta_{\mathrm{fcc}}$}} &
\multicolumn{1}{c}{\hyperref[eq:quality-measures]{$\beta_{\mathrm{fcc}}$}} &
\multicolumn{1}{c}{\hyperref[eq:quality-measures]{$\beta_{\mathrm{fcc}}$}} &
\multicolumn{1}{c}{\hyperref[eq:quality-measures]{$\beta_{\mathrm{fcc}}$}} &
\multicolumn{1}{c}{\hyperref[eq:quality-measures]{$\beta_{\mathrm{fcc}}$}}
\\
\cmidrule(r){1-1}
\cmidrule(lr){2-3}
\cmidrule(lr){4-5}
\cmidrule(lr){6-7}
\cmidrule(l){8-9}
  \DTLforeach*{uniform_2}{%
  \Instance=instance,
  \Edges=m,
  \BetaFCCa=bfcc64,
  \BetaFCCaa=bfccQ2,
  \BetaFCCb=bfcc16,
  \BetaFCCbb=bfccQ8,
  \BetaFCCc=bfcc8,
  \BetaFCCcc=bfccQ16,
  \BetaFCCd=bfcc4,
  \BetaFCCdd=bfccQ32%
  }{%
{\ttfamily\small\detokenize\expandafter{\Instance}} & % protect underscore
\BetaFCCa &
\BetaFCCaa &
\BetaFCCb &
\BetaFCCbb &
\BetaFCCc &
\BetaFCCcc &
\BetaFCCd &
\BetaFCCdd\\
\DTLiflastrow{\bottomrule}{}
}%
\end{tabular}

  \vspace{-1.5em}
  \caption{
    \Cref{eq:mtx-instances} maxcut instances with \(\ceil{Q \ln m}\)
    cuts sampled uniformly, and \(\ceil{(128 - Q) \ln m}\) cuts
    sampled from SDP solution~\(Y\) from~\cref{eq:eps-def},
    with~\(\eps = 10^{-4}\) and for each \(Q \in \set{2, 8, 16, 32}\).
    See~\cref{td:uniform-mc}.
  }
  \label{tab:uniform-2}
\end{table}

\nexttablegroup
\begin{table}[p]
  \centering
  \begin{filecontents*}{tables/uniform_3.csv}
instance,m,bfcc64,bfccQ2,bfcc16,bfccQ8,bfcc8,bfccQ16,bfcc4,bfccQ32
bayg29,405,.8845,.8852,.8849,.8852,.8852,.8854,.8853,.8855
bays29,406,.8868,.8863,.8870,.8867,.8863,.8863,.8867,.8870
berlin52,1326,.8872,.8899,.8876,.8899,.8880,.8899,.8790,.8899
brazil58,1641,.8883,.8892,.8879,.8892,.8892,.8892,.8890,.8892
kroA100,2500,.9998,.9999,.9999,.9999,.9999,.9999,.9999,.9999
gr96,4542,.8832,.8810,.8830,.8815,.8828,.8820,.8758,.8816
eil101,5046,.8782,.8785,.8780,.8784,.8771,.8784,.8702,.8783
gr120,6942,.9700,.9701,.9681,.9701,.9699,.9701,.9703,.9701
bier127,8001,.8777,.8792,.8673,.8793,.8601,.8791,.8440,.8792
ch130,8372,.8781,.8784,.8781,.8783,.8771,.8784,.8695,.8780
gr137,8803,.9873,.9864,.9776,.9864,.9870,.9864,.9867,.9864
ch150,11165,.8783,.8781,.8771,.8782,.8779,.8779,.8664,.8777
brg180,16023,.8846,.8871,.8742,.8869,.8605,.8862,.8283,.8851
d198,19396,.8865,.8810,.8868,.8807,.8830,.8815,.8645,.8815
gr202,20301,.8702,.8784,.8706,.8782,.8525,.8780,.8448,.8780
a280,38874,.8795,.8792,.8794,.8794,.8771,.8793,.8695,.8792
\end{filecontents*}
\DTLloaddb{uniform_3}{tables/uniform_3.csv}
\begin{tabular}{{l} % instance
  *{8}{S[table-format=0.4,print-zero-integer=false]} % β_fcc times 8
  }
\toprule
& {\(\eps = \nicefrac{1}{64}\)} & {\(Q = 2\)}
& {\(\eps = \nicefrac{1}{16}\)} & {\(Q = 8\)}
& {\(\eps = \nicefrac{1}{8}\)}  & {\(Q = 16\)}
& {\(\eps = \nicefrac{1}{4}\)}  & {\(Q = 32\)}
\\
instance &
\multicolumn{1}{c}{\hyperref[eq:quality-measures]{$\beta_{\mathrm{fcc}}$}} &
\multicolumn{1}{c}{\hyperref[eq:quality-measures]{$\beta_{\mathrm{fcc}}$}} &
\multicolumn{1}{c}{\hyperref[eq:quality-measures]{$\beta_{\mathrm{fcc}}$}} &
\multicolumn{1}{c}{\hyperref[eq:quality-measures]{$\beta_{\mathrm{fcc}}$}} &
\multicolumn{1}{c}{\hyperref[eq:quality-measures]{$\beta_{\mathrm{fcc}}$}} &
\multicolumn{1}{c}{\hyperref[eq:quality-measures]{$\beta_{\mathrm{fcc}}$}} &
\multicolumn{1}{c}{\hyperref[eq:quality-measures]{$\beta_{\mathrm{fcc}}$}} &
\multicolumn{1}{c}{\hyperref[eq:quality-measures]{$\beta_{\mathrm{fcc}}$}}
\\
\cmidrule(r){1-1}
\cmidrule(lr){2-3}
\cmidrule(lr){4-5}
\cmidrule(lr){6-7}
\cmidrule(l){8-9}
  \DTLforeach*{uniform_3}{%
  \Instance=instance,
  \Edges=m,
  \BetaFCCa=bfcc64,
  \BetaFCCaa=bfccQ2,
  \BetaFCCb=bfcc16,
  \BetaFCCbb=bfccQ8,
  \BetaFCCc=bfcc8,
  \BetaFCCcc=bfccQ16,
  \BetaFCCd=bfcc4,
  \BetaFCCdd=bfccQ32%
  }{%
{\ttfamily\small\detokenize\expandafter{\Instance}} & % protect underscore
\BetaFCCa &
\BetaFCCaa &
\BetaFCCb &
\BetaFCCbb &
\BetaFCCc &
\BetaFCCcc &
\BetaFCCd &
\BetaFCCdd\\
\DTLiflastrow{\bottomrule}{}
}%
\end{tabular}

  \vspace{-1.5em}
  \caption{
    \Cref{eq:tsp-instances} \hyperref[alg:pairing]{paired} fractional
    cut-covering instances with \(\ceil{Q \ln m}\) cuts sampled
    uniformly, and \(\ceil{(128 - Q) \ln m}\) cuts sampled from SDP
    solution \(Y\) for~\cref{eq:GW-polar-SDP}, with \(\eps = 10^{-4}\)
    and for each \(Q \in \set{2, 8, 16, 32}\).
    See~\cref{td:uniform-fcc}.
  }
  \label{tab:uniform-3}
\end{table}

\begin{table}[p]
  \centering
  \begin{filecontents*}{tables/uniform_4.csv}
instance,m,bfcc64,bfccQ2,bfcc16,bfccQ8,bfcc8,bfccQ16,bfcc4,bfccQ32
ENZYMES8,133,.9002,.9104,.9037,.9073,.9042,.9134,.8908,.9113
soc-dolphins,159,.8908,.8908,.8899,.8908,.8908,.8908,.8850,.8908
road-chesapeake,170,.8883,.8879,.8888,.8879,.8885,.8879,.8875,.8879
email-enron-only,623,.8883,.8787,.8856,.8787,.8733,.8787,.8440,.8787
dwt_209,767,.8915,.8910,.8881,.8910,.8753,.8910,.8425,.8910
ca-netscience,904,.8794,.8793,.8796,.8793,.8619,.8827,.8184,.8815
Erdos991,1417,.8796,.8669,.8683,.8659,.8523,.8656,.8161,.8638
hamming6-2,1824,.9996,.9999,.9694,.9999,.9175,.9999,.8671,.9999
inf-USAir97,2126,.8779,.8776,.8768,.8776,.8754,.8776,.8566,.8778
ia-infect-hyper,2196,.8870,.8891,.8785,.8888,.8673,.8882,.8408,.8868
DD687,2597,.8755,.8809,.8563,.8804,.8385,.8794,.7765,.8773
dwt_503,2762,.8861,.8858,.8818,.8859,.8655,.8859,.8291,.8859
ia-infect-dublin,2765,.8776,.8814,.8642,.8810,.8463,.8800,.8092,.8781
email-univ,5450,.8666,.8624,.8504,.8619,.8313,.8607,.7920,.8579
johnson16-2-4,5460,.9310,.9319,.9264,.9306,.9200,.9289,.9081,.9244
p-hat700-1,60999,.8647,.8699,.8523,.8692,.8365,.8678,.7671,.8649
\end{filecontents*}
\DTLloaddb{uniform_4}{tables/uniform_4.csv}
\begin{tabular}{{l} % instance
  *{8}{S[table-format=0.4,print-zero-integer=false]} % β_fcc times 8
  }
\toprule
& {\(\eps = \nicefrac{1}{64}\)} & {\(Q = 2\)}
& {\(\eps = \nicefrac{1}{16}\)} & {\(Q = 8\)}
& {\(\eps = \nicefrac{1}{8}\)}  & {\(Q = 16\)}
& {\(\eps = \nicefrac{1}{4}\)}  & {\(Q = 32\)}
\\
instance &
\multicolumn{1}{c}{\hyperref[eq:quality-measures]{$\beta_{\mathrm{fcc}}$}} &
\multicolumn{1}{c}{\hyperref[eq:quality-measures]{$\beta_{\mathrm{fcc}}$}} &
\multicolumn{1}{c}{\hyperref[eq:quality-measures]{$\beta_{\mathrm{fcc}}$}} &
\multicolumn{1}{c}{\hyperref[eq:quality-measures]{$\beta_{\mathrm{fcc}}$}} &
\multicolumn{1}{c}{\hyperref[eq:quality-measures]{$\beta_{\mathrm{fcc}}$}} &
\multicolumn{1}{c}{\hyperref[eq:quality-measures]{$\beta_{\mathrm{fcc}}$}} &
\multicolumn{1}{c}{\hyperref[eq:quality-measures]{$\beta_{\mathrm{fcc}}$}} &
\multicolumn{1}{c}{\hyperref[eq:quality-measures]{$\beta_{\mathrm{fcc}}$}}
\\
\cmidrule(r){1-1}
\cmidrule(lr){2-3}
\cmidrule(lr){4-5}
\cmidrule(lr){6-7}
\cmidrule(l){8-9}
  \DTLforeach*{uniform_4}{%
  \Instance=instance,
  \Edges=m,
  \BetaFCCa=bfcc64,
  \BetaFCCaa=bfccQ2,
  \BetaFCCb=bfcc16,
  \BetaFCCbb=bfccQ8,
  \BetaFCCc=bfcc8,
  \BetaFCCcc=bfccQ16,
  \BetaFCCd=bfcc4,
  \BetaFCCdd=bfccQ32%
  }{%
{\ttfamily\small\detokenize\expandafter{\Instance}} & % protect underscore
\BetaFCCa &
\BetaFCCaa &
\BetaFCCb &
\BetaFCCbb &
\BetaFCCc &
\BetaFCCcc &
\BetaFCCd &
\BetaFCCdd\\
\DTLiflastrow{\bottomrule}{}
}%
\end{tabular}

  \vspace{-1.5em}
  \caption{
    \Cref{eq:mtx-instances} \hyperref[alg:pairing]{paired} fractional
    cut-covering instances with \(\ceil{Q \ln m}\) cuts sampled
    uniformly, and \(\ceil{(128 - Q) \ln m}\) cuts sampled from SDP
    solution \(Y\) for~\cref{eq:GW-polar-SDP}, with \(\eps = 10^{-4}\)
    and for each \(Q \in \set{2, 8, 16, 32}\).
    See~\cref{td:uniform-fcc}.
  }
  \label{tab:uniform-4}
\end{table}

\clearpage
\section{Solvers}
\label{sec:solvers}

The SDP solver plays a dual role in our pipeline: it provides rigorous
simultaneous certificates for both maximum cut and fractional
cut-covering problems, while also determining the overall
computational cost.
As illustrated in \cref{fig:tsplib-runtime,fig:mtx-runtime}, the
solver typically accounts for the vast majority of the running time.
Given this sensitivity to the solver's performance, it is natural to
ask whether the improved accuracy in the SDP solution and more
robust behaviour by the SDP solver would have a significant positive
effect on the performance of our pipeline.
We therefore reran our experiments using the second-order
interior-point method provided by Domain Driven Solver (DDS)
version~2.2 \cite{KarimiTuncel2025}.
We found that on our SDP instances, DDS was
more robust than SeDuMi (as explained in \cref{sec:sedumi}).
Another motivation for this choice is forward-looking: our
theoretical results extend beyond semidefinite programming and
fractional cut covers \cite{BenedettoProencadeCarliSilvaEtAl2024}, and
DDS offers infrastructure for these conic generalizations of covering
problems.
Since second-order methods are inherently more memory-intensive, not
all instances were solved with our computational resources.
To accommodate the increased memory footprint without resorting to the
noisy, shared environment of \cref{eq:biglinux} hardware, we performed our
solver comparison experiments on \cref{eq:laptop} hardware.

We revisit some details of our implementation using SCS to provide
context for the upcoming results with DDS.
Let \(\Ebb\) and \(\Ybb\) be Euclidean spaces.
Let \(\Acal \colon \Ebb \to \Ybb\) be a linear transformation, let
\(b \in \Ybb\), let \(c \in \Ebb\), and let \(K \subseteq \Ybb\) be a
closed convex cone.
In the linear conic case considered here, the standard form of an
optimization problem solved by SCS is
\begin{equation}
  \label{eq:SCS-standard-form}
  \min \setst{\iprod{c}{x}}{
    x \in \Ebb,\,
    s \in K,\,
    \Acal(x) + s = b
  }
  \ge
  \max \setst{
    \iprod{-b}{y}
  }{
    y \in K^*,\,
    -\Acal^*(y) = c
  }.
\end{equation}
SCS reports an instance to be infeasible according to a parameter
\(\epsinf\), whose default value is \(10^{-7}\).
For our formulation of the maxcut SDP~\cref{eq:GW-def}, we set \(\Ebb
\coloneqq \Reals^V\), \(\Ybb \coloneqq \Sym{V}\),
\begin{equation}
  \label{eq:GW-in-SCS}
  \Acal \coloneqq -\Diag,
  \qquad
  b \coloneqq -\tfrac{1}{4}\Laplacian_G(w),
  \qquad
  c \coloneqq \ones,
  \quad
  \text{and}
  \quad
  K \coloneqq \Psd{V}.
\end{equation}

For some instances, SCS reported the SDP~\cref{eq:GW-gaugef} as
infeasible or, equivalently, reported~\cref{eq:GW-supf} as unbounded.
For~\cref{eq:GW-in-SCS}, SCS reports infeasibility by producing \(Y
\in \Psd{V}\) such that
\begin{equation}
  \label{eq:SCS-GW-infeasible}
  \iprod{\tfrac{1}{4}\Laplacian_G(w)}{Y} = 1
  \text{ and }
  \norm[\infty]{\diag(Y)} < \epsinf.
\end{equation}
Such a certificate exists if and only if \(\GW(G, w) > \epsinf^{-1}\),
and for every such instance \((G, w)\), SCS may
report~\cref{eq:GW-gaugef} as infeasible.
In such cases, the output \(Y \in \Psd{V}\) meets no stopping criteria
other than having objective value larger than \(\epsinf^{-1}\).
The maximum cut instances \texttt{bier127}, \texttt{brg180},
\texttt{d198}, \texttt{gr137}, \texttt{gr202}, and \texttt{gr96} have
cuts of value larger than \(10^7 = \epsinf^{-1}\), and in our
experiments, SCS reports all of them as infeasible.
\nameandcite{MirkaWilliamson2023} do not address this issue, simply
using \(Y\) to sample cuts.
Since \(\norm[1]{w} \ge \GW(G, w)\) and an SCS infeasibility certificate
satisfying~\cref{eq:SCS-GW-infeasible} implies \(\GW(G, w) >
\epsinf^{-1}\), no such certificate exists if \(\norm[1]{w} <
\epsinf^{-1}\).
Hence, scaling the input so that \(\norm[1]{w} = 1\) sidesteps this problem.

Whereas normalization imposes structure on the inputs provided to SCS,
\cref{alg:Slack-sanitize,alg:representation-sanitize} impose structure
on its output, sanitizing it into a standardized numerical
certificate~\cref{eq:SDP-certificate}.
Importantly, we can easily try SCS, DDS, and SeDuMi precisely due
to~\cref{eq:SDP-certificate} being solver agnostic.
However, sanitization is necessary even when using a single solver.
Let \((V, E) = C_{301}\) be the cycle on 301 vertices, and set \(w
\coloneqq \ones \in \Reals^E\).
Early experiments solving~\cref{eq:GW-gaugef} had SCS terminating with
\(\tilde{x} \in \Reals^V\) such that \(\iprodt{\ones}{\tilde{x}} < 300
= \mc(C_{301}, w)\).
This is impossible if \(\Diag(\tilde{x}) \succeq
\tfrac{1}{4}\Laplacian_G(w)\), but does not contradict the termination
criteria for SCS, which merely ensures an upper bound on
\(\norm[\mathrm{SCS}]{\Diag(\tilde{x}) - \tfrac{1}{4}\Laplacian_G(w) -
S}\) for a norm \(\norm[\mathrm{SCS}]{\cdot}\) on \(\Sym{V}\).
This allows \(\Diag(\tilde{x}) - \tfrac{1}{4}\Laplacian_G(w)\) to be
sufficiently far from any positive semidefinite matrix, and thus
compromises any attempt at certification which does not sanitize the
solver output.
We refer the reader to~\cref{sec:SCS} for the stopping
criteria~\cref{eq:SCS-nu-guarantee} used by SCS when given the
formulation~\cref{eq:GW-in-SCS}, as well as for the definition of
\(\norm[\mathrm{SCS}]{\cdot}\) in~\cref{eq:scs-matrix-norm}.

We refrained from exploring distinct formulations of the relevant SDPs
(namely, \cref{eq:GW-def} and~\cref{eq:GW-polar-SDP}) to preserve the
scope of this work.
For each solver, our formulations are straightforward implementations
of \cref{eq:GW-def,eq:GW-polar-SDP}, with the caveat that we selected
the primal problem so as to reduce the number of decision variables.
For example, to solve \cref{eq:GW-def}, we formulated
\cref{eq:GW-supf} as primal in SeDuMi, and \cref{eq:GW-gaugef} as
primal in SCS and DDS.
Normalizing the input so that \(\norm[1]{w} = 1\) for every maximum cut
instance \((G, w)\) and \(\norm[\infty]{z} = 1\) for every fractional
cut covering instance \((G, z)\) was the only data pre-processing
used.

\subsection{DDS on Maximum Cut Instances}
\label{sec:dds-mc}

Having discussed important ways in which our pipeline interacts with
the SDP solver selected, we now present the standard formulation used
by DDS.
With notation as in~\cref{eq:GW-in-SCS}, DDS solves
\begin{equation}
  \label{eq:DDS-standard-form}
  \inf \setst{
    \iprod{c}{x}
  }{
    \Acal(x) + b \in K
  }
  + \inf \setst{
    - \iprod{y}{b}
  }{
    y \in K^{\polar},\,
    \Acal^*(y) = -c
  }
  \ge 0,
\end{equation}
where \(K^{\polar}\) is the polar cone of~\(K\).
The first infimum is the primal problem, and the second infimum is the
dual problem.
Assuming a strictly feasible point \(x^{(0)}\) is provided as the
initial point, upon successful termination, DDS outputs \((\tilde{x},
\tilde{y})\) such that
\begin{subequations}
  \label{eq:DDS-termination}
  \begin{align}
    \label{eq:DDS-termination-1}
    \Acal(\tilde{x}) + b \in \int(K),
    & \quad \tilde{y} \in \int(K^{\polar}),\\
    \label{eq:DDS-termination-2}
    \abs{\iprod{c}{\tilde{x}} - \iprod{\tilde{y}}{b}}
    &\le \epsdds\paren{
      1 + \abs{\iprod{c}{\tilde{x}}} + \abs{\iprod{\tilde{y}}{b}}
    },\\
    \label{eq:DDS-termination-3}
    \norm{\Acal^*(\tilde{y}) + c}
    &\le \epsdds (1 + \norm{c})
  \end{align}
\end{subequations}
where \(\epsdds \in (0, 1)\) is a user-specified tolerance.
The default choice is \(\epsdds = 10^{-8}\).
We provided strictly feasible solutions as starting points for all of
our formulations in this manuscript.

Our formulation of~\cref{eq:GW-def} takes the
form~\cref{eq:DDS-standard-form} with \(K \coloneqq \Psd{V}\), and
\(\Acal \colon \Reals^V \to \Sym{V}\) such that
\[
  \Acal(x) \coloneqq \Diag(x),
  \qquad
  b \coloneqq -\tfrac{1}{4}\Laplacian_G(w),
  \quad
  \text{and}
  \quad
  c \coloneqq \ones.
\]
In particular, upon successful termination, we have (after a change of
variable) \(x \in \Reals^V\) and \(Y \in \Sym{V}\) such that
\begin{subequations}
  \label{eq:GW-DDS-termination}
  \begin{align}
    \Diag(x) - \tfrac{1}{4}\Laplacian_G(w) \in \int(\Psd{V}),
    & \quad Y \in \int(\Psd{V}),\\
    \abs{\iprod{\ones}{x} - \tfrac{1}{4}\iprod{\Laplacian_G(w)}{Y}}
    &\le \epsdds\paren{
      1 + \abs{\iprod{\ones}{x}} + \abs{\tfrac{1}{4}\iprod{\Laplacian_G(w)}{Y}}
    },\\
    \norm{\diag(Y) - \ones}
    &\le \epsdds (1 + \sqrt{n}).
  \end{align}
\end{subequations}
Whereas for all experiments in this section we work with \(\epsdds =
10^{-8}\), the paired instances computed in~\cref{alg:pairing} used
feasible solutions that satisfy the above constraints with \(\epsdds =
10^{-12}\).

\begin{table-description}[\Cref{tab:solvers-1,tab:solvers-2}]
  \label{td:solvers_mc}

  Experiments obtained on \cref{eq:laptop} hardware.
  As input, we are given maximum cut instances \((G, w)\).
  In \cref{tab:solvers-1} they are~\cref{eq:tsp-instances} instances,
  and in \cref{tab:solvers-2} they are~\cref{eq:mtx-instances} instances.
  We use DDS on the SDPs in~\cref{eq:GW-def}, obtaining \(\tilde{x}
  \in \Reals^V\) and \(\tilde{Y} \in \Sym{V}\), from which we set \(Y
  \coloneqq (1 - \eps) \tilde{Y} + \eps I\) as in~\cref{eq:eps-def} and
  \(z \coloneqq \tfrac{1}{4}\Laplacian_G^*(Y)\) as
  in~\cref{step:solve-sdp}.
  Set \((\rho, x, B) \coloneqq \SlackSanitize(\tilde{x}, w)\),
  set \((\mu, R \in \Reals^{[k] \times V}) \coloneqq
  \RepresentationSanitize(Y, z)\).
  We store
  \(
    w,\, z,\, {\rho},\, {\mu},\, {x},\, {R},
  \) and \(B\) in a file.
  This step is done once for each \(\eps \in \set{0, 10^{-8}, \nicefrac{1}{64}}\)
  and input graph \((G, w)\).

  For each such file, we run our pipeline with 10 different seeds used
  to sample cuts.
  In these runs, we first load our SDP certificates from the file.
  We sample \(\ceil{128 \log m}\) vectors \(g \in \Reals^k\) according
  to the standard multivariate normal distribution, and produce a shore
  \(S \coloneqq \setst{i \in V}{g^\transp R e_i > 0}\) for each \(g\).
  Let \(\Fcal \subseteq \Powerset{V}\) be the set of shores generated.
  We compute {\rmc} directly, and solve {\rfcc} using Gurobi.
  We then define \(\beta_{\mc}\), and \(\beta_{\fcc}\) as
  in~\cref{eq:quality-measures-mc}.
  Statistics for \(\sigma\) and \(\beta_{\mc}\) use all 10 runs; those
  for \(\beta_{\fcc}\) use only runs that produced a feasible fractional
  cut cover for~\(z\), thus excluding the infeasible runs counted in the
  \hyperref[eq:RLI-def]{RLI} column; tables in which all runs produced a
  feasible solution omit this column.
\end{table-description}

\nexttablegroup
\begin{table}[p]
  \centering
  \begin{filecontents*}{tables/solvers_1.csv}
instance,onesigma0,onesigma1,onesigma2,bmc0,bmc1,bmc2,rli0,bfcc0,rli1,bfcc1,bfcc2
bayg29,.9999,.9999,.9963,.9938,.9938,.9938,8,.8848,8,.8848,.8880
bays29,.9999,.9999,.9963,.9943,.9943,.9943,6,.8845,6,.8845,.8895
berlin52,.9999,.9999,.9968,.9904,.9904,.9904,0,.8900,0,.8900,.8919
brazil58,.9999,.9999,.9961,.9996,.9996,.9996,10,,10,,.8905
gr96,.9999,.9999,.9961,.9985,.9985,.9985,10,,10,,.8857
kroA100,.9999,.9999,.9955,.9999,.9999,.9999,5,.9999,5,.9999,.9926
eil101,.9999,.9999,.9965,.9813,.9813,.9813,8,.8785,8,.8785,.8813
gr120,.9999,.9999,.9956,.9998,.9998,.9998,10,,10,,.9702
bier127,.9999,.9999,.9972,.9753,.9753,.9752,0,.8791,0,.8791,.8818
ch130,.9999,.9999,.9965,.9881,.9881,.9881,10,,10,,.8813
gr137,.9999,.9999,.9954,.9999,.9999,.9999,10,,10,,.9863
ch150,.9999,.9999,.9965,.9877,.9877,.9877,10,,10,,.8813
brg180,.9999,.9999,.9974,.9438,.9438,.9453,10,,10,,.8882
d198,.9999,.9999,.9956,.9985,.9985,.9985,10,,10,,.8891
gr202,.9999,.9999,.9971,.9783,.9783,.9782,0,.8783,0,.8783,.8796
a280,.9999,.9999,.9958,.9995,.9995,.9995,10,,10,,.8818
\end{filecontents*}
\DTLloaddb{solvers_1}{tables/solvers_1.csv}
\begin{tabular}{{l} % instance
  *{6}{S[table-format=0.4,print-zero-integer=false]} % 1-σ (x3), β_mc (x3)
  S[table-format=2.0] % RLI
  S[table-format=0.4,print-zero-integer=false] % β_fcc
  S[table-format=2.0] % RLI
  S[table-format=0.4,print-zero-integer=false] % β_fcc
  S[table-format=0.4,print-zero-integer=false] % β_fcc
  }
\toprule
\(\eps\) & {\(0\)} & {\(10^{-8}\)} & {\(\nicefrac{1}{64}\)} & {\(0\)} & {\(10^{-8}\)} & {\(\nicefrac{1}{64}\)} & {\(0\)} & {\(0\)} & {\(10^{-8}\)} & {\(10^{-8}\)} & {\(\nicefrac{1}{64}\)} \\
\multicolumn{1}{l}{\multirow{2}{*}{instance}} &
\multicolumn{1}{c}{\multirow{2}{*}{\(1 - \hyperref[eq:quality-measures]{\sigma}\)}} &
\multicolumn{1}{c}{\multirow{2}{*}{\(1 - \hyperref[eq:quality-measures]{\sigma}\)}} &
\multicolumn{1}{c}{\multirow{2}{*}{\(1 - \hyperref[eq:quality-measures]{\sigma}\)}} &
\multicolumn{1}{c}{\hyperref[eq:quality-measures]{$\beta_{\mathrm{mc}}$}} &
\multicolumn{1}{c}{\hyperref[eq:quality-measures]{$\beta_{\mathrm{mc}}$}} &
\multicolumn{1}{c}{\hyperref[eq:quality-measures]{$\beta_{\mathrm{mc}}$}} &
\multicolumn{1}{c}{\multirow{2}{*}{\hyperref[eq:RLI-def]{RLI}}} &
\multicolumn{1}{c}{\hyperref[eq:quality-measures]{$\beta_{\mathrm{fcc}}$}} &
\multicolumn{1}{c}{\multirow{2}{*}{\hyperref[eq:RLI-def]{RLI}}} &
\multicolumn{1}{c}{\hyperref[eq:quality-measures]{$\beta_{\mathrm{fcc}}$}} &
\multicolumn{1}{c}{\hyperref[eq:quality-measures]{$\beta_{\mathrm{fcc}}$}} \\
& & & & {avg} & {avg} & {avg} & & {avg} & & {avg} & {avg}
\\
\cmidrule(r){1-1}
\cmidrule(lr){2-4}
\cmidrule(lr){5-7}
\cmidrule(lr){8-9}
\cmidrule(lr){10-11}
\cmidrule(l){12-12}
  \DTLforeach*{solvers_1}{
  \Instance=instance,
  \OneSigmaA=onesigma0,
  \OneSigmaB=onesigma1,
  \OneSigmaC=onesigma2,
  \BetaMCa=bmc0,
  \BetaMCb=bmc1,
  \BetaMCc=bmc2,
  \RLIa=rli0,
  \BetaFCCa=bfcc0,
  \RLIb=rli1,
  \BetaFCCb=bfcc1,
  \BetaFCCc=bfcc2%
  }{\AssignOrDash{\BetaFCCa}{\myBetaFCCa}%
  \AssignOrDash{\BetaFCCb}{\myBetaFCCb}%
   {\ttfamily\small\detokenize\expandafter{\Instance}} & % protect underscore
   \OneSigmaA &
   \OneSigmaB &
   \OneSigmaC &
   \BetaMCa &
   \BetaMCb &
   \BetaMCc &
   \RLIa &
   \myBetaFCCa &
   \RLIb &
   \myBetaFCCb &
   \BetaFCCc \\
   \DTLiflastrow{\bottomrule}{}
}%
\end{tabular}

  \vspace{-3em}
  \caption{Dense maximum cut instances solved with DDS \(\ceil{128 \ln
      m}\) cut sampled.
  See~\cref{td:solvers_mc}.
  The largest standard deviation for \(\beta_{\mc}\) was \(0.0016\),
  and the largest for \(\beta_{\fcc}\) was \(0.0003\).
  No experiment with \(\eps = \nicefrac{1}{64}\) had an
  \cref{eq:RLI-def} outcome.}
  \label{tab:solvers-1}
\end{table}

\begin{table}[p]
  \centering
  \begin{filecontents*}{tables/solvers_2.csv}
instance,onesigma0,onesigma1,onesigma2,bmc0,bmc1,bmc2,rli0,bfcc0,rli1,bfcc1,bfcc2
ENZYMES8,.9999,.9999,.9924,.9795,.9795,.9795,0,.8873,0,.8873,.9034
soc-dolphins,.9999,.9999,.9942,.9744,.9744,.9720,0,.8911,0,.8911,.8906
road-chesapeake,.9999,.9999,.9945,.9679,.9679,.9679,0,.8888,0,.8888,.8888
email-enron-only,.9999,.9999,.9953,.9601,.9601,.9594,0,.8890,0,.8890,.8896
dwt_209,.9999,.9999,.9948,.9682,.9682,.9680,0,.8917,0,.8917,.8917
ca-netscience,.9999,.9999,.9952,.9682,.9682,.9666,10,,10,,.8814
Erdos991,.9999,.9999,.9945,.9488,.9488,.9468,0,.8876,0,.8876,.8841
hamming6-2,.9999,.9999,.9986,.9999,.9999,.9999,0,.9999,0,.9999,.9983
inf-USAir97,.9999,.9999,.9951,.9731,.9731,.9731,8,.8797,8,.8797,.8808
ia-infect-hyper,.9999,.9999,.9973,.9710,.9710,.9707,0,.8902,0,.8902,.8903
DD687,.9999,.9999,.9950,.9441,.9441,.9411,7,.8815,7,.8815,.8782
dwt_503,.9999,.9999,.9953,.9876,.9876,.9867,10,,10,,.8892
ia-infect-dublin,.9999,.9999,.9961,.9515,.9515,.9509,0,.8827,0,.8827,.8803
email-univ,.9999,.9999,.9950,.9306,.9306,.9274,0,.8734,0,.8734,.8690
johnson16-2-4,.9999,.9999,.9979,.9717,.9720,.9711,0,.9329,0,.9327,.9308
p-hat700-1,.9999,.9999,.9982,.9725,.9725,.9720,0,.8703,0,.8703,.8693
\end{filecontents*}
\DTLloaddb{solvers_2}{tables/solvers_2.csv}
\begin{tabular}{{l} % instance
  *{6}{S[table-format=0.4,print-zero-integer=false]} % 1-σ (x3), β_mc (x3)
  S[table-format=2.0] % RLI
  S[table-format=0.4,print-zero-integer=false] % β_fcc
  S[table-format=2.0] % RLI
  S[table-format=0.4,print-zero-integer=false] % β_fcc
  S[table-format=0.4,print-zero-integer=false] % β_fcc
  }
\toprule
\(\eps\) & {\(0\)} & {\(10^{-8}\)} & {\(\nicefrac{1}{64}\)} & {\(0\)} & {\(10^{-8}\)} & {\(\nicefrac{1}{64}\)} & {\(0\)} & {\(0\)} & {\(10^{-8}\)} & {\(10^{-8}\)} & {\(\nicefrac{1}{64}\)} \\
\multicolumn{1}{l}{\multirow{2}{*}{instance}} &
\multicolumn{1}{c}{\multirow{2}{*}{\(1 - \hyperref[eq:quality-measures]{\sigma}\)}} &
\multicolumn{1}{c}{\multirow{2}{*}{\(1 - \hyperref[eq:quality-measures]{\sigma}\)}} &
\multicolumn{1}{c}{\multirow{2}{*}{\(1 - \hyperref[eq:quality-measures]{\sigma}\)}} &
\multicolumn{1}{c}{\hyperref[eq:quality-measures]{$\beta_{\mathrm{mc}}$}} &
\multicolumn{1}{c}{\hyperref[eq:quality-measures]{$\beta_{\mathrm{mc}}$}} &
\multicolumn{1}{c}{\hyperref[eq:quality-measures]{$\beta_{\mathrm{mc}}$}} &
\multicolumn{1}{c}{\multirow{2}{*}{\hyperref[eq:RLI-def]{RLI}}} &
\multicolumn{1}{c}{\hyperref[eq:quality-measures]{$\beta_{\mathrm{fcc}}$}} &
\multicolumn{1}{c}{\multirow{2}{*}{\hyperref[eq:RLI-def]{RLI}}} &
\multicolumn{1}{c}{\hyperref[eq:quality-measures]{$\beta_{\mathrm{fcc}}$}} &
\multicolumn{1}{c}{\hyperref[eq:quality-measures]{$\beta_{\mathrm{fcc}}$}} \\
& & & & {avg} & {avg} & {avg} & & {avg} & & {avg} & {avg}
\\
\cmidrule(r){1-1}
\cmidrule(lr){2-4}
\cmidrule(lr){5-7}
\cmidrule(lr){8-9}
\cmidrule(lr){10-11}
\cmidrule(l){12-12}
  \DTLforeach*{solvers_2}{
  \Instance=instance,
  \OneSigmaA=onesigma0,
  \OneSigmaB=onesigma1,
  \OneSigmaC=onesigma2,
  \BetaMCa=bmc0,
  \BetaMCb=bmc1,
  \BetaMCc=bmc2,
  \RLIa=rli0,
  \BetaFCCa=bfcc0,
  \RLIb=rli1,
  \BetaFCCb=bfcc1,
  \BetaFCCc=bfcc2%
  }{\AssignOrDash{\BetaFCCa}{\myBetaFCCa}%
  \AssignOrDash{\BetaFCCb}{\myBetaFCCb}%
   {\ttfamily\small\detokenize\expandafter{\Instance}} & % protect underscore
   \OneSigmaA &
   \OneSigmaB &
   \OneSigmaC &
   \BetaMCa &
   \BetaMCb &
   \BetaMCc &
   \RLIa &
   \myBetaFCCa &
   \RLIb &
   \myBetaFCCb &
   \BetaFCCc \\
   \DTLiflastrow{\bottomrule}{}
}%
\end{tabular}

  \vspace{-3em}
  \caption{Sparse maximum cut instances solved with
  DDS and \(\ceil{128 \ln m}\) cut sampled.
  See~\cref{td:solvers_mc}.
  The largest standard deviation for \(\beta_{\mc}\) was \(0.0038\),
  and the largest for \(\beta_{\fcc}\) was \(0.0007\).
  No experiment with \(\eps = \nicefrac{1}{64}\) had an
  \cref{eq:RLI-def} outcome.}
  \label{tab:solvers-2}
\end{table}

We now consider \cref{tab:solvers-1,tab:solvers-2}, which replicate
the experiments from \cref{sec:main-attributes} using the DDS solver
and sampling \(\ceiled{128 \ln m}\) cuts.
As before, we evaluate three values of the perturbation parameter
\(\eps\): no perturbation, a perturbation equal to the solver's
default tolerance, and \(\nicefrac{1}{64}\).
For~DDS, whose default tolerance is \(10^{-8}\), this corresponds to
\(\eps \in \{0, 10^{-8}, \nicefrac{1}{64}\}\).

We see improved pairing quality (i.e., higher \(1 - \sigma\)) when
comparing \(\eps = 10^{-8}\) in \cref{tab:solvers-1,tab:solvers-2}
with \(\eps = 10^{-4}\) in \cref{tab:tsp-128,tab:mtx-128}.
In other words, whenever perturbation matches solver accuracy, we
obtain better pairings.
This is to be expected, as the second-order method produces more
accurate solutions, and this directly impacts the value of \(\sigma\).
For the same perturbation value of \(\eps = \nicefrac{1}{64}\), we can
compare \cref{tab:solvers-1,tab:solvers-2} to
\cref{tab:intro-eps-tsp,tab:intro-eps-mtx} and see
that we also consistently get better values of \(\sigma\).

The only instances in \cref{tab:solvers-1} which had a worse
\(\beta_{\mc}\) value for \(\eps = \nicefrac{1}{64}\) than for
\(\eps = 0\) were \texttt{bier127} and \texttt{gr202}, but even for
those instances the change was only on the fourth decimal place.
This fits within a common pattern observed in previous sections, where
perturbation has little impact on \(\beta_{\mc}\)
for~\cref{eq:tsp-instances} instances.
Also analogous to previous results, \cref{tab:solvers-2} has a clearer
impact of higher perturbation on the value of \(\beta_{\mc}\).
More interestingly, using DDS has an overall positive impact on
\(\beta_{\mc}\) compared to SCS.
There are three instances (\texttt{soc-dolphins},
\texttt{email-enron-only}, and \texttt{johnson-16-2-4}) where, with
\(\eps = 10^{-4}\), SCS obtained a better \(\beta_{\mc}\) than DDS
with \(\eps = \nicefrac{1}{64}\).
This is noticeable in that, even with much higher perturbation, DDS
still produces better \(\beta_{\mc}\) value than SCS for most
instances.

As in \cref{sec:main-attributes}, our algorithm was better at
producing fractional cut-covers for \cref{eq:mtx-instances}~instances
than for \cref{eq:tsp-instances}~instances.
This difference is most evident on the RLI count in
\cref{tab:solvers-1,tab:solvers-2}.
In~contrast, DDS always produced certificates accepted by our
sanitization procedure, even when \(\eps = 0\).
\Cref{tab:solvers-1,tab:solvers-2} do show that choosing
\(\eps = \nicefrac{1}{64}\) has a positive impact on the outcome of
our procedure, once again producing feasible fractional cut covers reliably.
For the same perturbation of \(\eps = \nicefrac{1}{64}\), there were 5
instances where \(\beta_{\fcc}\) was higher when using SCS compared
to DDS: \texttt{berlin52}, \texttt{brazil58}, \texttt{ENZYMES8},
\texttt{dwt\_209}, and \texttt{ca-netscience}.
Thus, in~general, DDS produced better \(\beta_{\fcc}\) results.

\vspace{-1em}

\subsection{DDS on Fractional Cut-Covering Instances}
We now turn our attention to experiments which take as input a
fractional cut-covering instance.
We refer the reader to~\cref{sec:formulations-appendix} for the exact
formulation of~\cref{eq:GW-polar-SDP} using DDS.
The higher memory requirements for DDS prevented us from solving the
fractional cut-covering instances obtained via pairing from
\texttt{a280} and \texttt{p-hat700-1}.
DDS~took considerably longer to solve~\cref{eq:GW-polar-SDP} for the
\cref{eq:tsp-instances} instances as opposed to the
\cref{eq:mtx-instances} instances; see
\cref{tab:solver_timing_3,tab:solver_timing_4}.
This is the opposite behavior to SCS, which struggled more with the
sparse instances (\cref{tab:fcc-mtx-paired}) than it
did with dense instances (\cref{tab:fcc-mtx-paired}).

\begin{table-description}[\Cref{tab:solvers-3,tab:solvers-4}]
  \label{td:solvers_fcc}

  Experiments obtained on \cref{eq:laptop} hardware.
  As input, we are given fractional cut-covering instances \((G, z)\).
  In \cref{tab:solvers-3} they are paired
  \cref{eq:tsp-instances}~instances, and in \cref{tab:solvers-4} they
  are paired \cref{eq:mtx-instances}~instances ---
  see~\cref{sec:fcc-instances}.
  We use DDS on the SDPs in~\cref{eq:GW-polar-SDP}, obtaining
  \(\tilde{\mu} \in \Lp{}\), \(\tilde{Y} \in \Sym{V}\),
  \(w \in \Lp{E}\), and \(\tilde{x} \in \Reals^V\).
  Set \(Y \coloneqq \tilde{Y} + \eps \tilde{\mu} I\),
  set \((\rho, x, B) \coloneqq \SlackSanitize(\tilde{x}, w)\), and
  set \((\mu, R \in \Reals^{[k] \times V}) \coloneqq
  \RepresentationSanitize(Y, z)\).
  We store
  \(
    w,\, z,\, {\rho},\, {\mu},\, {x},\, {R},
  \) and  \(B\) in a file.
  This step is done once for each choice of \(\eps \in \set{0,
    10^{-8}, \nicefrac{1}{64}}\) and input graph \((G, z)\).

  For each file produced in such a manner, we run our pipeline with 10
  different seeds used to sample cuts.
  In these runs, we first load our SDP certificates from the file.
  We sample \(\ceil{128 \log m}\) vectors \(g \in \Reals^k\) according
  to the standard multivariate normal distribution, and produce a shore
  \(S \coloneqq \setst{i \in V}{g^\transp R e_i > 0}\) for each \(g\).
  Let \(\Fcal \subseteq \Powerset{V}\) be the set of shores generated.
  We compute {\rmc} directly, and solve {\rfcc} using Gurobi.
  We then define \(\beta_{\mc}\), and \(\beta_{\fcc}\) as
  in~\cref{eq:quality-measures-fcc}.
  Statistics for \(\sigma\) and \(\beta_{\mc}\) use all 10 runs; those
  for \(\beta_{\fcc}\) use only runs that produced a feasible fractional
  cut cover for~\(z\), thus excluding the infeasible runs counted in the
  \hyperref[eq:RLI-def]{RLI} column; tables in which all runs produced a
  feasible solution omit this column.
\end{table-description}

\nexttablegroup
\begin{table}[p]
  \centering
  \begin{filecontents*}{tables/solvers_3.csv}
instance,onesigma0,onesigma1,onesigma2,rli0,bfcc0,rli1,bfcc1,bfcc2,bmc0,bmc1,bmc2
bayg29,.9996,.9994,.9921,9,.8848,7,.8845,.8854,.9951,.9921,.9999
bays29,.9995,.9997,.9921,9,.8845,7,.8847,.8870,.9939,.9952,.9999
berlin52,.9999,.9999,.9922,0,.8900,0,.8900,.8900,.9920,.9927,.9989
brazil58,.9999,.9999,.9921,10,,10,,.8892,.9987,.9986,.9999
kroA100,.0744,.1044,.9921,9,.9999,9,.9999,.9999,.9999,.9999,.9999
gr96,.8212,.8408,.9921,10,,10,,.8834,.9979,.9979,.9996
eil101,.9995,.9996,.9921,8,.8785,9,.8785,.8786,.9849,.9823,.9991
gr120,.9971,.9949,.9921,10,,10,,.9703,.9999,.9999,.9999
bier127,.9999,.9999,.9921,0,.8791,0,.8791,.8785,.9798,.9808,.9969
ch130,.9682,.9790,.9921,10,,10,,.8786,.9904,.9881,.9981
gr137,.9419,.9615,.9921,10,,10,,.9873,.9999,.9999,.9999
ch150,.9999,.9999,.9921,10,,10,,.8786,.9872,.9871,.9984
brg180,.8302,.8525,.9925,10,,10,,.8857,.9685,.9694,.9812
d198,.9982,.9986,.9921,10,,10,,.8878,.9981,.9981,.9993
gr202,.9991,.9993,.9921,0,.8782,0,.8783,.8764,.9821,.9818,.9977
\end{filecontents*}
\DTLloaddb{solvers_3}{tables/solvers_3.csv}
\begin{tabular}{{l} % instance
  *{3}{S[table-format=0.4,print-zero-integer=false]} % 1-σ (x3)
  S[table-format=2.0] % RLI
  S[table-format=0.4,print-zero-integer=false] % β_fcc
  S[table-format=2.0] % RLI
  S[table-format=0.4,print-zero-integer=false] % β_fcc
  S[table-format=0.4,print-zero-integer=false] % β_fcc
  *{3}{S[table-format=0.4,print-zero-integer=false]} % β_mc (x3)
  }
\toprule
\(\eps\) & {\(0\)} & {\(10^{-8}\)} & {\(\nicefrac{1}{64}\)} & {\(0\)} & {\(0\)} & {\(10^{-8}\)} & {\(10^{-8}\)} & {\(\nicefrac{1}{64}\)} & {\(0\)} & {\(10^{-8}\)} & {\(\nicefrac{1}{64}\)} \\
\multicolumn{1}{l}{\multirow{2}{*}{instance}} &
\multicolumn{1}{c}{\multirow{2}{*}{\(1 - \hyperref[eq:quality-measures]{\sigma}\)}} &
\multicolumn{1}{c}{\multirow{2}{*}{\(1 - \hyperref[eq:quality-measures]{\sigma}\)}} &
\multicolumn{1}{c}{\multirow{2}{*}{\(1 - \hyperref[eq:quality-measures]{\sigma}\)}} &
\multicolumn{1}{c}{\multirow{2}{*}{\hyperref[eq:RLI-def]{RLI}}} &
\multicolumn{1}{c}{\hyperref[eq:quality-measures]{$\beta_{\mathrm{fcc}}$}} &
\multicolumn{1}{c}{\multirow{2}{*}{\hyperref[eq:RLI-def]{RLI}}} &
\multicolumn{1}{c}{\hyperref[eq:quality-measures]{$\beta_{\mathrm{fcc}}$}} &
\multicolumn{1}{c}{\hyperref[eq:quality-measures]{$\beta_{\mathrm{fcc}}$}} &
\multicolumn{1}{c}{\hyperref[eq:quality-measures]{$\beta_{\mathrm{mc}}$}} &
\multicolumn{1}{c}{\hyperref[eq:quality-measures]{$\beta_{\mathrm{mc}}$}} &
\multicolumn{1}{c}{\hyperref[eq:quality-measures]{$\beta_{\mathrm{mc}}$}} \\
& & & & & {avg} & & {avg} & {avg} & {avg} & {avg} & {avg} \\
\cmidrule(r){1-1}
\cmidrule(lr){2-4}
\cmidrule(lr){5-6}
\cmidrule(lr){7-8}
\cmidrule(lr){9-9}
\cmidrule(l){10-12}
  \DTLforeach*{solvers_3}{
  \Instance=instance,
  \OneSigmaA=onesigma0,
  \OneSigmaB=onesigma1,
  \OneSigmaC=onesigma2,
  \BetaMCa=bmc0,
  \BetaMCb=bmc1,
  \BetaMCc=bmc2,
  \RLIa=rli0,
  \BetaFCCa=bfcc0,
  \RLIb=rli1,
  \BetaFCCb=bfcc1,
  \BetaFCCc=bfcc2%
  }{\AssignOrDash{\BetaFCCa}{\myBetaFCCa}%
  \AssignOrDash{\BetaFCCb}{\myBetaFCCb}%
   {\ttfamily\small\detokenize\expandafter{\Instance}} & % protect underscore
   \OneSigmaA &
   \OneSigmaB &
   \OneSigmaC &
   \RLIa &
   \myBetaFCCa &
   \RLIb &
   \myBetaFCCb &
   \BetaFCCc &
   \BetaMCa &
   \BetaMCb &
   \BetaMCc \\
   \DTLiflastrow{\bottomrule}{}
}%
\end{tabular}

  \vspace{-3em}
  \caption{Dense paired fractional cut-covering instances solved on
    \cref{eq:laptop} hardware with DDS and \(\eps \in \set[\Big]{0, 10^{-8},
      \nicefrac{1}{64}}\) and \(\ceil{128 \ln m}\) cut sampled.
  See \cref{td:solvers_fcc}.}
  \label{tab:solvers-3}
\end{table}

\begin{table}[p]
  \centering
  \begin{filecontents*}{tables/solvers_4.csv}
instance,onesigma0,onesigma1,onesigma2,rli0,bfcc0,rli1,bfcc1,bfcc2,bmc0,bmc1,bmc2
ENZYMES8,.9999,.9999,.9921,0,.8873,0,.8886,.9041,.9805,.9789,.9999
soc-dolphins,.9999,.9999,.9921,0,.8911,0,.8911,.8911,.9712,.9690,.9999
road-chesapeake,.9999,.9999,.9921,0,.8888,0,.8888,.8889,.9660,.9650,.9999
email-enron-only,.9999,.9999,.9921,0,.8890,0,.8890,.8890,.9574,.9579,.9999
dwt_209,.9998,.9996,.9921,0,.8917,0,.8917,.8915,.9680,.9684,.9999
ca-netscience,.6347,.9863,.9921,10,,10,,.8800,.9679,.9674,.9998
Erdos991,.9998,.9999,.9921,0,.8874,0,.8875,.8835,.9509,.9502,.9999
hamming6-2,.9999,.9999,.9921,0,.9999,0,.9999,.9999,.9999,.9999,.9999
inf-USAir97,.9895,.9868,.9921,8,.8797,9,.8797,.8795,.9735,.9738,.9997
ia-infect-hyper,.9999,.9999,.9921,0,.8902,0,.8902,.8873,.9618,.9618,.9999
DD687,.9996,.9997,.9921,6,.8819,6,.8818,.8776,.9439,.9440,.9999
dwt_503,.9790,.9801,.9921,10,,10,,.8871,.9860,.9863,.9998
ia-infect-dublin,.9998,.9999,.9921,0,.8826,0,.8827,.8785,.9477,.9475,.9999
email-univ,.9999,.9999,.9921,0,.8734,0,.8731,.8684,.9319,.9325,.9996
johnson16-2-4,.9999,.9999,.9979,0,.9330,0,.9324,.9313,.9718,.9718,.9717
\end{filecontents*}
\DTLloaddb{solvers_4}{tables/solvers_4.csv}
\begin{tabular}{{l} % instance
  *{3}{S[table-format=0.4,print-zero-integer=false]} % 1-σ (x3)
  S[table-format=2.0] % RLI
  S[table-format=0.4,print-zero-integer=false] % β_fcc
  S[table-format=2.0] % RLI
  S[table-format=0.4,print-zero-integer=false] % β_fcc
  S[table-format=0.4,print-zero-integer=false] % β_fcc
  *{3}{S[table-format=0.4,print-zero-integer=false]} % β_mc (x3)
  }
\toprule
\(\eps\) & {\(0\)} & {\(10^{-8}\)} & {\(\nicefrac{1}{64}\)} & {\(0\)} & {\(0\)} & {\(10^{-8}\)} & {\(10^{-8}\)} & {\(\nicefrac{1}{64}\)} & {\(0\)} & {\(10^{-8}\)} & {\(\nicefrac{1}{64}\)} \\
\multicolumn{1}{l}{\multirow{2}{*}{instance}} &
\multicolumn{1}{c}{\multirow{2}{*}{\(1 - \hyperref[eq:quality-measures]{\sigma}\)}} &
\multicolumn{1}{c}{\multirow{2}{*}{\(1 - \hyperref[eq:quality-measures]{\sigma}\)}} &
\multicolumn{1}{c}{\multirow{2}{*}{\(1 - \hyperref[eq:quality-measures]{\sigma}\)}} &
\multicolumn{1}{c}{\multirow{2}{*}{\hyperref[eq:RLI-def]{RLI}}} &
\multicolumn{1}{c}{\hyperref[eq:quality-measures]{$\beta_{\mathrm{fcc}}$}} &
\multicolumn{1}{c}{\multirow{2}{*}{\hyperref[eq:RLI-def]{RLI}}} &
\multicolumn{1}{c}{\hyperref[eq:quality-measures]{$\beta_{\mathrm{fcc}}$}} &
\multicolumn{1}{c}{\hyperref[eq:quality-measures]{$\beta_{\mathrm{fcc}}$}} &
\multicolumn{1}{c}{\hyperref[eq:quality-measures]{$\beta_{\mathrm{mc}}$}} &
\multicolumn{1}{c}{\hyperref[eq:quality-measures]{$\beta_{\mathrm{mc}}$}} &
\multicolumn{1}{c}{\hyperref[eq:quality-measures]{$\beta_{\mathrm{mc}}$}} \\
& & & & & {avg} & & {avg} & {avg} & {avg} & {avg} & {avg} \\
\cmidrule(r){1-1}
\cmidrule(lr){2-4}
\cmidrule(lr){5-6}
\cmidrule(lr){7-8}
\cmidrule(lr){9-9}
\cmidrule(l){10-12}
  \DTLforeach*{solvers_4}{
  \Instance=instance,
  \OneSigmaA=onesigma0,
  \OneSigmaB=onesigma1,
  \OneSigmaC=onesigma2,
  \BetaMCa=bmc0,
  \BetaMCb=bmc1,
  \BetaMCc=bmc2,
  \RLIa=rli0,
  \BetaFCCa=bfcc0,
  \RLIb=rli1,
  \BetaFCCb=bfcc1,
  \BetaFCCc=bfcc2%
  }{\AssignOrDash{\BetaFCCa}{\myBetaFCCa}%
  \AssignOrDash{\BetaFCCb}{\myBetaFCCb}%
   {\ttfamily\small\detokenize\expandafter{\Instance}} & % protect underscore
   \OneSigmaA &
   \OneSigmaB &
   \OneSigmaC &
   \RLIa &
   \myBetaFCCa &
   \RLIb &
   \myBetaFCCb &
   \BetaFCCc &
   \BetaMCa &
   \BetaMCb &
   \BetaMCc \\
   \DTLiflastrow{\bottomrule}{}
}%
\end{tabular}

  \vspace{-3em}
  \caption{Sparse paired fractional cut-covering instances solved on
    \cref{eq:laptop} hardware with DDS and \(\eps \in \set[\Big]{0, 10^{-8},
      \nicefrac{1}{64}}\) and \(\ceil{128 \ln m}\) cut sampled.
  See \cref{td:solvers_fcc}.}
  \label{tab:solvers-4}
\end{table}

\Cref{tab:solvers-3,tab:solvers-4} replicate the experiments from
\cref{sec:fcc-instances} on paired fcc instances, and are analogous to
\cref{tab:fcc-tsp-eps,tab:fcc-mtx-eps},
except that we additionally include the case \(\eps = 0\), and use the
DDS default tolerance \(\eps = 10^{-8}\) (instead of \(10^{-4}\) for SCS).
Once again, no perturbation and low perturbation lead to bad outcomes
when computing fractional cut covers, which are evident by the
high~\hyperref[eq:RLI-def]{RLI} counts.
Similar to the results with SCS, \cref{eq:tsp-instances}~instances
were harder to cover than the \cref{eq:mtx-instances}~instances.
For \(\eps = \nicefrac{1}{64}\), using DDS rather than SCS produces
better certifiable bounds \(\beta_{\fcc}\) and \(\beta_{\mc}\).
The instances \texttt{gr96} and \texttt{ch130} are the only
exceptions, where the estimated value of \(\beta_{\mc}\) is slightly
worse in \Cref{tab:solvers-3} than in
\Cref{tab:fcc-tsp-eps}.

A noticeable difference lies in the presence of remarkably low \(1 -
\sigma\) values in instances \texttt{kroA100}, \texttt{gr96},
\texttt{brg180}, and \texttt{ca-netscience} in
\cref{tab:solvers-3,tab:solvers-4}.
In all of these cases, DDS produced certificates which were accepted
by our sanitization procedure, albeit with high values of \(\mu\)
computed by~\cref{alg:representation-sanitize}.
Recalling~\cref{eq:quality-measures-fcc}, higher values of \(\mu\)
directly increase the value of \(\sigma\).
These instances show that even when using second-order methods
perturbation may still help improve the behaviour of our algorithms.

\clearpage
\section{Comparison with the Averaging-and-Scaling Algorithm}
\label{sec:tracing}

The framework introduced in
\cite{BenedettoProencadeCarliSilvaEtAl2025} works as follow:
\begin{steplist}
\item\label{step:sec7-solve-sdp}
  depending on whether the input is a maximum cut instance \((G, w)\)
  or a fractional cut-covering instance \((G, z)\),
  solve either~\cref{eq:GW-def} or \cref{eq:GW-polar-def}.
  Let \(Y \in \Psd{V}\) and \(\mu \in \Lp{}\) be the nearly optimal
  for~\cref{eq:GW-supf} or~\cref{eq:GW-polar-gaugef}, whichever SDP was
  solved;
\item\label{step:sec7-sample-shores} let \(S_1, \ldots, S_T\) be a
  finite set of shores obtained from independent samples from
  \(\GWrv(Y)\);
\item\label{step:sec7-avreage} output a shore \(S \subseteq V\) that
  maximizes \(\iprodt{w}{\incidvector{\delta(S)}}\) over~\(\Fcal
  \coloneqq \set{S_1, \ldots, S_T}\) and output
  \[
    y = \avg((S_i)_{i \in [T]}, z) \hat{p}
  \]
where
\[
  \hat{p} \coloneqq
  \frac{1}{T}
  \sum_{i = 1}^T \incidvector{\delta(S_i)}
  \text{ and }
  \avg((S_i)_{i \in [T]}, z)
  \coloneqq \inf\setst{\mu \in \Lp{}}{
    \mu \hat{p} \ge z
  }.
\]
\end{steplist}
We refer to this approach as the \emph{averaging-and-scaling
  algorithm}.
We abuse notation and write \(\avg(\Fcal, z)\) whenever \(\Fcal =
\set{S_1, \ldots, S_T}\) and the samples \(S_1, \ldots, S_T\) are
clear from context.
\cite{BenedettoProencadeCarliSilvaEtAl2025} proves that, for every
\(\beta \in (0, \GWalpha)\), one may take \(T \in \Theta(\ln m)\) such
that, with high probability,
\[
  \avg(\Fcal, z)
  \le \frac{1}{\beta} \fcc(G, z).
\]
Together with the trivial bound \(\fcc(\Fcal, z) \le \avg(\Fcal,
z)\), we obtain \(\fcc(\Fcal, z) \le (1/\beta) \fcc(G, z)\).
This section compares the averaging-and-scaling algorithm to our
refinement based on the restricted LP~\cref{eq:fcc-restricted-primal}.
We first derive the value of \(\avg((S_i)_{i \in [T]}, z)\) as the
number of samples \(T\) approaches~\(+\infty\), which we call the
\emph{asymptotic objective value} of the averaging-and-scaling
algorithm.
We then present experiments measuring how many samples are needed for
the LP-based approach to certify bounds better than the asymptotic
objective value of the averaging-and-scaling algorithm.
We also measure how many samples are needed for feasibility to be
attained, and provide an overview of these variables for the dataset
we have at hand.

Let \(G = (V, E)\) be a graph, and let \(z \in \Lp{E}\).
Let \(Y \in \Psd{V}\) and \(\mu \in \Lp{}\) be such that \(\diag(Y) =
\mu\ones\) and \(\tfrac{1}{4}\Laplacian_G^*(Y) \ge z\).
Let \(p \in \Lp{E}\) be given by \(p_{ij} = \prob\paren[\big]{ij \in
\delta(\GWrv(Y))}\) for every \(ij \in E\).
By the Strong Law of Large Numbers,
\[
  p =
  \lim_{T \to \infty}
  \frac{1}{T} \sum_{t = 1}^T \incidvector{\delta(S_t)}
\]
almost surely.
Whenever this convergence happens, the objective value of the
averaging-and-scaling algorithm converges to
\begin{equation}
  \label{eq:sampling-bound}
  \avg(Y, z)
  \coloneqq \max_{ij \in E} \frac{z_{ij}}{p_{ij}}
  = \max_{ij \in E} \frac{\pi z_{ij}}{\arccos(\mu^{-1}Y_{ij})}.
\end{equation}
In particular, this objective value satisfies
\begin{equation*}
  \avg(Y, z) \le \frac{\mu}{\GWalpha},
\end{equation*}
since \(\tfrac{1}{4}\Laplacian_G^*(Y) \ge z\) and the definition
of \(\GWalpha\) imply
\begin{equation*}
  \avg(Y, z)
  = \max_{ij \in E} \frac{z_{ij} \pi}{\arccos(\mu^{-1} Y_{ij})}
  \le \max_{ij \in E} \frac{\mu - Y_{ij}}{2} \frac{\pi}{\arccos(\mu^{-1} Y_{ij})}
  = \mu \max_{ij \in E}  \frac{\pi}{2} \frac{1 - \mu^{-1}Y_{ij}}{\arccos(\mu^{-1} Y_{ij})}
  \le \frac{\mu}{\GWalpha}.
\end{equation*}
The value \(\avg(Y, z)\) can be easily computed from \(Y\) and \(z\),
thus providing a meaningful alternative to running the
averaging-and-scaling algorithm.

Let \(G = (V, E)\) be a graph, and let \((z, w) \in
\GWOpt_{\sigma}(G)\) for a given \(\sigma \in (0, 1)\).
We shift our perspective on the algorithms described
in~\cref{sec:theory}: we now study them as stochastic processes, where
at each step a new sample is taken and the values {\rmc} and {\rfcc}
are computed for the set of shores \(\Fcal \subseteq \Powerset{V}\)
sampled so far.
Since \((w, z) \in H_{\sigma}(G)\), there exist \((Y, \mu)\) feasible
in~\cref{eq:GW-polar-gaugef} and \((\rho, x)\) feasible
in~\cref{eq:GW-gaugef-prime} such that \(\iprodt{w}{z} \ge (1 -
\sigma)\rho\mu\).
Let \(\paren{S_i(Y)}_{i \in \Naturals}\) be shores independently sampled
from \(\GWrv(Y)\).
For every \(t \in \Naturals\), set \(\Fcal_t(Y) \coloneqq
\setst{S_i(Y)}{i \in [t]}\).
If context allows, we omit \(Y\) and write \(\Fcal_t\) and
\(S_t\) for \(t \in \Naturals\).
We are interested in the following stopping times:
\begin{equation}
  \label{eq:F-A-def}
\begin{aligned}
  F &\coloneqq \inf \setst{
      t \in \Naturals
    }{
      \fcc(\Fcal_t(Y), z) < +\infty
    },\\
  A &\coloneqq \inf \setst{
      t \in \Naturals
    }{
      (1 - \sigma)\avg(Y, z)
      \ge  \fcc(\Fcal_t(Y), z)
    }.
\end{aligned}
\end{equation}
Both random variables describe properties of the restricted
LP~\cref{eq:fcc-restricted-primal}: \(F\) counts how many samples were
taken until it was feasible, and \(A\) counts how many samples were
taken until the resulting certificate had better value than the
averaging-and-scaling algorithm.
Indeed, recalling~\cref{eq:quality-measures}, the condition defining
\(A\) is equivalent to
\[
  \beta_{\mc}
  \ge \beta_{\fcc}
  = \frac{(1 - \sigma)\mu}{\fcc(\Fcal_t(Y), z)}
  \ge \frac{\mu}{\avg(Y, z)},
\]
In experiments, we report
\begin{equation}
  \label{eq:Ftilde-Atilde-def}
  \Ftilde_T \coloneqq \frac{\min\set{F, T}}{\ln m}
  \text{ and }
  \Atilde_T \coloneqq \frac{\min\set{A, T}}{\ln m}.
\end{equation}
The truncation by \(T\) arises from practical necessity, and
normalization by \(\ln m\) allows comparison between instances.
If context allows, we omit \(T\) and write \(\Ftilde\) and \(\Atilde\).

Before presenting experimental data, we establish some basic properties
of the random variables \(F\) and \(A\).
Let \(G = (V, E)\) be a graph.
We denote by \(\Laplacian_G(\Lp{E}) \coloneqq
\setst{\Laplacian_G(w)}{w \in \Lp{E}}\) the \emph{Laplacian cone},
with all matrices arising as Laplacians of weights on the graph \(G\).
We have that \(\Laplacian_G(\Lp{E}) \subseteq \Psd{V}\), and hence that
\[
  \Psd{V}
  \subseteq \paren[\big]{\Laplacian_G(\Lp{E})}^*
  = \setst{Y \in \Sym{V}}{
    \iprodt{(e_i - e_j)}{Y(e_i - e_j)} \ge 0,\,
    \forall ij \in E
  }.
\]
These cones geometrically capture the asymptotic behaviour of our
algorithms.

\begin{proposition}
\label{prop:eventually-feasible}

Let \(G = (V, E)\) be a graph, let \(z \in \Lp{E}\), and let \(Y \in
\Psd{V}\).
If \(Y \in \int\paren[\big]{(\Laplacian_G(\Lp{E}))^* }\), then
\[
  \lim_{t \to \infty} \fcc(\Fcal_t(Y), z) < +\infty
\]
almost surely.
\end{proposition}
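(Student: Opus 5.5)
The plan is to reduce the statement to a per-edge coverage event and then apply a Borel--Cantelli type argument (or simply independence) to each of the finitely many edges.

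\emph{Reduction to edge coverage.} Since \(\Fcal_t(Y) \subseteq \Fcal_{t+1}(Y)\), the sequence \(\fcc(\Fcal_t(Y), z)\) is nonincreasing in \(t\), by the definition~\cref{eq:fcc-restricted-primal}. So the limit exists in \([0, +\infty]\), and it is finite as soon as \(\fcc(\Fcal_t(Y), z) < +\infty\) for a single \(t\). Moreover, \(\fcc(\Fcal, z) < +\infty\) holds whenever every edge \(ij \in E\) lies in \(\delta(S)\) for some \(S \in \Fcal\). Indeed, if \(S_{ij}\) is such a shore for each \(ij\), then setting \(y_{S_{ij}} \coloneqq \norm[\infty]{z}\) for all such shores gives \(\sum_S y_S \incidvector{\delta(S)} \ge \norm[\infty]{z}\ones \ge z\). (This is exactly the negation of~\cref{eq:RLI-def}, strengthened to all edges.) It therefore suffices to show that, almost surely, every edge is eventually cut by some sampled shore.

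\emph{Positive per-sample probability.} Fix \(ij \in E\). The hypothesis \(Y \in \int((\Laplacian_G(\Lp{E}))^*)\) means that the finitely many defining inequalities \(\iprodt{(e_i - e_j)}{Y(e_i-e_j)} \ge 0\) hold strictly. That is, \(Y_{ii} + Y_{jj} - 2Y_{ij} > 0\), so \(\norm{Y^{1/2}e_i - Y^{1/2}e_j} > 0\) and the vectors \(u \coloneqq Y^{1/2}e_i\) and \(v \coloneqq Y^{1/2}e_j\) are distinct. I will then argue that \(p_{ij} \coloneqq \prob(ij \in \delta(\GWrv(Y))) > 0\), which is the main obstacle. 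In the generic case where \(u\) and \(v\) are nonzero and not positively parallel, the Goemans--Williamson formula gives \(p_{ij} = \arccos(\cos\angle(u,v))/\pi > 0\). The degenerate cases need separate care, because with the convention \(\iprodt{e_i}{Y^{1/2}h} \ge 0\) the shore is defined using non-strict inequalities:
\begin{itemize}
\item If \(u = 0 \ne v\), then \(i\) always lies in the shore, and \(j\) is excluded with probability \(1/2\) (the event \(\iprodt{v}{h} < 0\)), so \(p_{ij} = 1/2\).
\item If \(u\) and \(v\) are positively parallel with \(u \neq v\), they are nonzero multiples of the same direction. Their signs against \(h\) then always agree, so they would never be separated. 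I must rule this case out. In the applications \(\diag(Y) = \mu\ones\), so \(\norm{u} = \norm{v}\), and positive parallelism would force \(u = v\), a contradiction. In general I would either invoke this diagonal normalization or note that the interior condition by itself does not exclude the case. I expect to rely on the standing assumption \(\diag(Y) = \mu \ones\) from the surrounding setting, and to flag this in the proof.
\end{itemize}

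\emph{Conclusion.} Given \(p_{ij} > 0\), independence of the samples gives
\[
  \prob\paren[\big]{ij \notin \textstyle\bigcup_{s \le t} \delta(S_s)} = (1 - p_{ij})^t \to 0,
\]
so almost surely \(ij\) is eventually covered. A finite union bound over \(E\) shows that almost surely all edges are covered at some finite \(t\). By the reduction above, \(\fcc(\Fcal_t(Y), z)\) is then finite and stays finite thereafter, which gives \(\lim_{t\to\infty} \fcc(\Fcal_t(Y), z) < +\infty\) almost surely.
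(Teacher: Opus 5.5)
Your proof has the same skeleton as the paper's: each edge has a strictly positive per-sample probability of being cut, then independence, and finally a finite union over \(E\). The paper invokes Borel--Cantelli where you use \((1-p_{ij})^t \to 0\), and your monotonicity/feasibility reduction is fine.

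The difference is in how \(p_{ij} > 0\) is obtained. The paper writes \(\prob(ij \in \delta(\GWrv(Y))) \ge \GWalpha \iprod{Y}{\tfrac14\Laplacian_G(e_{ij})} \ge \GWalpha\rho\). This is the Goemans--Williamson inequality for unit-diagonal matrices, and it does not hold for general \(Y \in \Psd{V}\): the right-hand side scales with \(Y\), the left-hand side does not. For \(\diag(Y) = \mu\ones\) the correct bound is \(\GWalpha\rho/\mu\).

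Your direct case analysis on \(u = Y^{1/2}e_i\) and \(v = Y^{1/2}e_j\) exposes exactly this. The positively parallel case is not a technicality; it is a counterexample to the proposition as literally stated. Take \(G = K_2\) and \(Y = vv^{\transp}\) with \(v = (1,2)^{\transp}\).
\begin{itemize}
\item Then \(Y_{11} + Y_{22} - 2Y_{12} = 1 > 0\), so \(Y\) lies in the interior of \((\Laplacian_G(\Lp{E}))^*\).
\item But \(Y^{1/2} = Y/\sqrt{5}\) has second row equal to twice the first, so the edge is never cut.
\item Hence \(\fcc(\Fcal_t(Y), z) = +\infty\) for all \(t\) whenever \(z \neq 0\), and the paper's displayed inequality reads \(0 \ge \GWalpha/4\).
\end{itemize}

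So do not hedge between options: state \(\diag(Y) = \mu\ones\) as an explicit hypothesis. It holds for every matrix the pipeline samples from, and the interior condition forces \(\mu > 0\). Then \(\norm{u} = \norm{v} = \sqrt{\mu} > 0\) together with \(u \neq v\) rules out the bad case, and your generic case finishes the proof.

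Once corrected, the paper's route additionally gives the uniform quantitative bound \(\GWalpha\rho/\mu\) on the per-sample cut probability. Yours gives only positivity, which is all the almost-sure statement needs.
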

\begin{proof}

Let \(Y \in \int(\Laplacian_G(\Lp{E})^*)\), and set \(\rho \coloneqq
\min_{ij \in E} \iprod{\tfrac{1}{4}\Laplacian_G(e_{ij})}{Y} > 0\).
Let \(ij \in E\).
Since
\[
  \prob(ij \in \delta(\GWrv(Y)))
  \ge \GWalpha \iprod{Y}{\tfrac{1}{4}\Laplacian_G(e_{ij})}
  \ge \GWalpha \rho
  > 0,
\]
the Borel-Cantelli lemma implies \(ij \in \delta(S_t(Y))\) for
infinitely many \(t \in \Naturals\) almost surely.
In particular,
\[
  \prob\paren{
    \forall t \in \Naturals,\,
    \fcc(\Fcal_t(Y), z) = +\infty
  }
  \le \prob\paren{
    \exists ij \in E
    \text{ s.t. }
    \forall t \in \Naturals, ij \not\in \delta(S_t(Y))
  }
  = 0.
\]
Thus, almost surely, there exists \(t \in \Naturals\) such that
\(\fcc(\Fcal_t(Y), z)\) is finite.
Whenever this happens, \(\lim_{t \to \infty} \fcc(\Fcal_t(Y), z)\)
exists and is finite.
\end{proof}

\begin{proposition}
\label{prop:eventually-solve}
Let \(G = (V, E)\) be a graph, let \(z \in \Lp{E}\), and let \(Y \in \Psd{V}\).
If \(Y \in \int(\Psd{V})\), then
\[
  \fcc(G, z)
  = \lim_{t \to +\infty} \fcc(\Fcal_t(Y), z)
\]
almost surely.
\end{proposition}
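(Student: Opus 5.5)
The plan is to show that when \(Y \in \int(\Psd{V})\), every shore \(S \subseteq V\) is sampled by \(\GWrv(Y)\) with positive probability. Once that holds, almost surely every shore eventually appears in \(\Fcal_t(Y)\), and the restricted LP coincides with the full LP~\cref{eq:fcc-def}.

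\textbf{Step 1: every shore has positive probability.} Since \(Y\) is positive definite, \(Y^{\half}\) is invertible. Let \(S \subseteq V\), set \(s \coloneqq 2\incidvector{S} - \ones \in \set{\pm 1}^V\), and let \(g \coloneqq Y^{-\half} s\). Then \(\iprodt{e_i}{Y^{\half} g} = s_i\), which is \(+1\) for \(i \in S\) and \(-1\) otherwise. By continuity, there is an open neighbourhood \(N\) of \(g/\norm{g}\) on the unit sphere such that \(\GWrv(Y, h) = S\) for every \(h \in N\): the strict sign pattern is preserved under small perturbation, and the defining condition \(\iprodt{e_i}{Y^{\half}h} \ge 0\) is scale invariant. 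The uniform distribution on the sphere gives positive mass to every nonempty open set, so \(q_S \coloneqq \prob(\GWrv(Y) = S) > 0\).

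\textbf{Step 2: all shores appear almost surely.} \(\Powerset{V}\) is finite and the samples are independent. By Borel--Cantelli, as in the proof of \cref{prop:eventually-feasible}, or simply because \((1 - q_S)^t \to 0\), almost surely there is a finite \(t_0\) with \(\Fcal_{t_0}(Y) = \Powerset{V}\) up to the finitely many distinct shores. A union bound over the \(2^n\) shores handles the joint event.

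\textbf{Step 3: the restricted LP becomes the full LP.} The sequence \(t \mapsto \fcc(\Fcal_t(Y), z)\) is nonincreasing, since enlarging \(\Fcal\) only adds columns to~\cref{eq:fcc-restricted-primal}. It is bounded below by \(\fcc(G, z)\) by~\cref{eq:fcal-relaxation}. For \(t \ge t_0\), the set \(\Fcal_t(Y)\) contains every shore, so~\cref{eq:fcc-restricted-primal} is exactly~\cref{eq:fcc-def}, and \(\fcc(\Fcal_t(Y), z) = \fcc(G, z)\). Therefore the limit exists and equals \(\fcc(G, z)\) almost surely.

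The only step requiring care is Step 1. It depends on invertibility of \(Y^{\half}\): without positive definiteness, \(Y^{\half}\) has a nontrivial kernel and some sign patterns \(s\) are not in the range, which is why \cref{prop:eventually-feasible} only obtains finiteness under the weaker hypothesis. The tie case \(\iprodt{e_i}{Y^{\half}h} = 0\) has measure zero and is irrelevant, because the neighbourhood \(N\) avoids it.
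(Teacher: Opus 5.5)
Your proof is correct, and its skeleton matches the paper's: every shore has positive probability, so almost surely all of \(\Powerset{V}\) is eventually sampled, so the restricted LP becomes~\cref{eq:fcc-def}. The difference lies in Step~1, which is the key lemma.

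\textbf{Your route.} You argue directly. You exhibit the witness direction \(Y^{-\half}s/\norm{Y^{-\half}s}\), whose image under \(Y^{\half}\) has the strict sign pattern \(s\), and then use continuity.

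\textbf{The paper's route.} The paper proves the contrapositive. If \(\prob(\GWrv(Y) = S) = 0\), then the open set \(\{x : \Diag(s)Y^{\half}x > 0\}\) has zero Gaussian measure, so it is empty. Gordan's lemma then gives \(y \ge 0\), \(y \neq 0\), with \(Y\Diag(s)y = 0\), so \(\rank(Y) < n\).

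\textbf{What each buys.} Your argument is shorter and needs only invertibility of \(Y^{\half}\) and continuity. The paper's argument gives an explicit description of the obstruction for singular \(Y\). A shore can have probability zero only if \(Y\) has a nonzero kernel vector whose sign pattern conforms to \(s\).

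\textbf{Your closing remark.} That description also sharpens your closing heuristic. What matters is not whether \(s\) itself lies in the range of \(Y^{\half}\). It is whether that range meets the open orthant \(\{x : \Diag(s)x > 0\}\), and by Gordan's lemma this fails exactly when such a conforming kernel vector exists.

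\textbf{Step~3.} You spell out the monotonicity of \(t \mapsto \fcc(\Fcal_t(Y), z)\). The paper uses it only implicitly, when it passes from equality at some \(t\) to the limit.
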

\begin{proof}

Let \(Y \in \Psd{V}\).
Let \(S \subseteq V\) be such that \(\prob\paren{\GWrv(Y) = S} = 0\),
and set \(s \coloneqq 2\incidvector{S} - \ones\).
We claim that
\begin{equation}
  \label{eq:Farkas-pre}
  \setst{
    x \in \Reals^V
  }{
    \Diag(s)Y^{\half} x > 0
  }
  = \setst{
    x \in \Reals^V
  }{
    \iprod{Y^{\half}e_i}{x}s_i > 0
    ,\,\forall i \in V
  }
  = \emptyset.
\end{equation}
Indeed, if we denote by \(\gamma\) the Gaussian measure on
\(\Reals^V\), we have
\[
  0
  = \prob\paren{\GWrv(Y) = S}
  \ge \gamma(\setst{h \in \Reals^V}{\iprod{Y^{\half}e_i}{h}s_i > 0,\, \forall i \in V})
  \ge 0.
\]
Whence the set \(\setst{x \in \Reals^V}{\iprod{Y^{\half}e_i}{x}s_i >
0,\, \forall i \in V}\) is an open set with zero Gaussian measure,
so~\cref{eq:Farkas-pre} holds.
Applying Gordan's Lemma (see, e.g., \cite[Section~7.8]{Schrijver1986})
to~\cref{eq:Farkas-pre}, there exists \(y \in \Lp{V}\) with \(y \neq
0\) such that \( Y^{\half}\Diag(s) y = (\Diag(s)Y^{\half})^{\transp} y
= 0 \).
Since
\(
  Y(\Diag(s)y)
  = Y^{\half} Y^{\half} \Diag(s) y
  = 0
\)
and \(\Diag(s)y \neq 0\), we conclude \(\rank(Y) < n\).

Now let \(Y \in \int(\Psd{V})\), and let \(S \subseteq V\).
Since \(\prob\paren{S = \GWrv(Y)} > 0\), the Borel-Cantelli lemma
implies \(S_t(Y) = S\) for infinitely many \(t\) almost surely.
Hence
\[
  \prob\paren{
    \forall t \in \Naturals,\,
    \fcc(\Fcal_t(Y), z)
    \neq \fcc(G, z)
  }
  \le
  \prob\paren{
    \exists S \subseteq V
    \text{ s.t. }
    \forall t \in \Naturals,\,
    S_t(Y) \neq S
  }
  = 0.
\]
Thus, almost surely, there exists \(t \in \Naturals\) such that
\(\fcc(\Fcal_t(Y), z) = \fcc(G, z)\).
Whenever this happens, \(\lim_{t \to \infty} \fcc(\Fcal_t(Y), z)\)
exists and is equal to \(\fcc(G, z)\).
\end{proof}

Let \(G = (V, E)\) be a graph, let \(\sigma \in (0, 1)\), and let
\((w, z) \in H_{\sigma}(G)\).
There exist \((\mu, Y)\) feasible in~\cref{eq:GW-polar-gaugef} and
\((\rho, x)\) feasible in~\cref{eq:GW-gaugef-prime} such that \((1 -
\sigma)\rho\mu \le \iprodt{w}{z}\).
Whereas \Cref{prop:eventually-feasible} ensures that \(F\) is finite
almost surely, \cref{prop:eventually-solve} implies that, almost surely,
\(
  \avg(Y, z)
  \ge \fcc(G, z)
  = \lim_{t \to \infty} \fcc(\Fcal_t(Y), z),
\)
which is weaker than
\[
  (1 - \sigma) \avg(Y, z)
  \ge \lim_{t \to \infty} \fcc(\Fcal_t(Y), z),
\]
needed to prove that \(A\) is finite almost surely.
This is natural: since \(A\) records the number of samples necessary
to obtain a certificate better than \(\avg(Y, z)\), the pairing
quality captured by \(\sigma\) plays a crucial role in its definition.

\subsection{Overview of \(\Ftilde\) and \(\Atilde\) Behavior}

All figures in this subsection arise from the same set of experiments,
which we now describe.
The experiments were run on \cref{eq:biglinux} hardware.
As input, we are given maximum cut instances \((G, w)\), both
\cref{eq:mtx-instances} and~\cref{eq:tsp-instances}.
We use SCS to approximately solve the problems in~\cref{eq:GW-def},
obtaining \(\tilde{x} \in \Reals^V\) and \(\tilde{Y} \in \Psd{V}\),
from which we set \(Y \coloneqq (1 - \eps) \tilde{Y} + \eps I\) as
in~\cref{eq:eps-def}, and \(z \coloneqq
\tfrac{1}{4}\Laplacian_G^*(Y)\).
Set \((\rho, x, B) \coloneqq \SlackSanitize(\tilde{x}, w)\) and
\((\mu, R \in \Reals^{[k] \times V}) \coloneqq
\RepresentationSanitize(Y, z)\).
We sample \(T \coloneqq \ceil{256 \ln m}\) vectors \(g_t \in
\Reals^k\) according to the standard multivariate normal distribution,
and produce a shore \(S_t \coloneqq \setst{i \in V}{\iprodt{g_t}{Re_i}
> 0}\) for each \(g_t\).
For each \(t \in [T]\), we use Gurobi to compute the value of
\(\fcc(\Fcal_t, z)\).
This process is done once for each input graph, for each \(\eps \in
\set{ 10^{-4}, \nicefrac{1}{32}, \nicefrac{3}{64}, \nicefrac{1}{16} }
\), and for each of \(10\) different seeds used for the random number
generation.

The good results in \cref{tab:tsp-256,tab:mtx-256} motivate sampling
\(\ceil{256 \ln m}\) cuts.
\Cref{fig:all_instances} provides an overview of the values of
\(\Ftilde\) and \(\Atilde\) in our dataset: each point represents an
input instance, with the average value of \(\Ftilde\) and \(\Atilde\)
among the 10 runs providing its coordinates.
None of the 4 plots in~\cref{fig:all_instances} share the same
\(x\)-axis.
There is a clear difference between the behavior of the instances with
\(\eps = 10^{-4}\) and \(\eps \in \set{\nicefrac{1}{32}, \nicefrac{3}{64},
  \nicefrac{1}{16}}\).
This reflects the choice of these values, with \(10^{-4}\) being a
minimal perturbation.
A distinctive feature of \(\eps = 10^{-4}\) are the instances on the
line \(\Ftilde = \Atilde\), reflecting that whenever feasibility was
first attained, the objective value obtained was better than the
averaging-and-scaling algorithm.
We also have, on the line \(\Atilde = 256\), both sparse and dense
instances where the averaging-and-scaling algorithm would eventually
provide better covers than the ones obtained after \(\ceil{256 \ln
m}\) samples.
The comparison of \(\eps = 10^{-4}\) with \(\eps = \tfrac{1}{32}\)
captures the effect of perturbation really well.
Note that on the lower value of \(\eps = 10^{-4}\), we have values of
\(\Ftilde\) going up to 200, whereas on the higher value of \(\eps =
\nicefrac{1}{32} = 0.03125\) there is no instance whose average
\(\Ftilde\) is larger than 12.
Accordingly, the average values of \(\Ftilde\) decrease with every
subsequent increase in \(\eps\).
The effect of \(\eps\) on \(\Atilde\) can also be seen
in~\cref{fig:all_instances}.
Whereas for most instances, increasing \(\eps\) decreases \(\Atilde\),
we can identify 7 instances in our dataset where higher perturbations
either do not lower \(\Atilde\) or actually make them higher, and
these instances become evident in the plot for \(\eps = 0.0625 =
\tfrac{1}{16}\).
They are: \texttt{DD687}, \texttt{Erdos991}, \texttt{email-univ},
\texttt{ia-infect-dublin}, \texttt{inf-USAir97},
\texttt{johnson16-2-4}, and \texttt{p-hat700-1}.
\Cref{fig:eps_effect}, instead of showing averaged values, shows the
result of \(\Atilde\) and \(\Ftilde\) for each run for a subset of the
instances.
In both~\cref{fig:all_instances,fig:eps_effect}, the effect of
increasing \(\eps\) on \(\Ftilde\) is visible.

\Cref{fig:intersting_instances} display data for all 10 runs for 4
instances exhibiting distinct behaviour with respect to changes in
perturbation.
The color of each data point report its perturbation: \(\eps =
\nicefrac{1}{32} = 0.03125\) in blue, \(\eps = \nicefrac{3}{64} =
0.046875\) in orange, and \(\eps = \nicefrac{1}{16} = 0.0625\) in
green.
Each of the 4 graphs have different axis.
For \texttt{a280}, increasing the value of \(\eps\) improves both
\(\Ftilde\) and \(\Atilde\), as can be seen from the green points
clustering at smaller coordinates the orange points, which themselves
cluster at smaller coordinates than the blue points.
The instance \texttt{gr120} is one where feasibility is hard to
achieve, which is most evident by how many points were close to the
line \(\Atilde = \Ftilde\) when \(\eps = \nicefrac{1}{32}\).
The change on \(\Atilde\) for increasing \(\eps\) in this instances is
hard to notice.
For \texttt{dwt\_209}, the effect \(\eps\) on \(\Ftilde\) is
negligible, whereas \(\Atilde\) increases for each increase in
perturbation.
For the instance \texttt{dwt\_503} there is a trade-off: perturbation
improves \(\Ftilde\) at the cost of worsening \(\Atilde\).

Instances \texttt{gr120} and \texttt{dwt\_503} are particularly
interesting in that the values observed for \(\Ftilde\) are close to a
probabilistic bound we can formally prove.
Note that both instances predate works on fractional cut covers, and
are in our data set because they were was used in
\cite{MirkaWilliamson2023}.
In short, neither \texttt{gr120} nor \texttt{dwt\_503} was created or
selected due to how our algorithm behaves when solving it.
Let \(G = (V, E)\) be a graph, and let \(\eps \in (0, 1)\).
Let \(Y \in \Psd{V}\) be such that \(\diag(Y) = \mu\ones\) and \(Y
\succeq \eps \mu I\) for \(\mu > 0\).
Let \(T \in \Naturals\), and let \(X \coloneqq \sum_{t = 1}^T
\incidvector{\delta(S_t)}\), for \(S_1, \ldots, S_T\) independently
sampled from \(\GWrv(Y)\).
We claim that
\begin{equation}
  \label{eq:12}
  \text{ if }
  T
  \ge
  \frac{1}{-\ln(1 - \frac{\sqrt{2\eps}}{\pi})}
  \paren[\Big]{\ln m + \ln(20)}
  \text{ then }
  \prob\paren{\exists ij \in E,\, X_{ij} = 0} \le 0.05.
\end{equation}
This result is a numerical improvement
over \cref{prop:eventually-feasible} using a hypothesis stronger than
the one in \cref{prop:eventually-solve}.
Since \(Y \succeq \eps\mu I\), from
\cite[(63)]{BenedettoProencadeCarliSilvaEtAl2025} we have that
\begin{equation}
  \label{eq:edge-covering-lb}
  \prob\paren{ij \in \GWrv(\mu^{-1}Y)}
  \ge \frac{\sqrt{2\eps}}{\pi}
  \text{ for every }
  ij \in E.
\end{equation}
The assumption on \(T\) may be rewritten as
\(
  \ln(1/20) \ge \ln m + T \ln\paren[\bigg]{1 - \frac{\sqrt{2\eps}}{\pi}}
\),
and hence~\cref{eq:12} follows, since
\begin{equation*}
  \prob\paren{\exists ij \in E,\, X_{ij} = 0}
  \le
  m \paren[\bigg]{
    1 - \frac{\sqrt{2\eps}}{\pi}
  }^T
  =
  \exp\paren[\bigg]{
    \ln m + T\ln\paren[\bigg]{1 - \frac{\sqrt{2\eps}}{\pi}}
  }
  \le \frac{1}{20}.
\end{equation*}
The instance \texttt{gr120} has \(m = 7\,140\) edges.
If we divide the lower bound in~\cref{eq:12} by \(\ln m\) so it
matches the \(x\)-axis in~\cref{fig:intersting_instances}, we get
\[
    T/\ln m \ge 16.12 \text{ for } \eps = \nicefrac{1}{32},\,
    T/\ln m \ge 13.04 \text{ for } \eps = \nicefrac{3}{64},\,
    T/\ln m \ge 11.20 \text{ for } \eps = \nicefrac{1}{16}.
\]
For \texttt{dwt\_503}, we have that \(m = 126\,253\), and that
\[
    T/\ln m \ge 15.13 \text{ for } \eps = \nicefrac{1}{32},\,
    T/\ln m \ge 12.24 \text{ for } \eps = \nicefrac{3}{64},\,
    T/\ln m \ge 10.51 \text{ for } \eps = \nicefrac{1}{16}.
\]
These bounds are comparable with the values of \(\Ftilde\) for
instances \texttt{gr120} and \texttt{dwt\_503} shown
in~\cref{fig:intersting_instances}, substantiating the claim that this
is an instance where feasibility is hard to achieve.

\stepcounter{tablegroup}
\renewcommand{\thetable}{\arabic{tablegroup}}
\begin{table}[p]
\centering
\begin{tabular}{lrlr}
\toprule
instance & \(\Atilde = 256\) count  & instance & \(\Atilde = 256\) count\\
\midrule
\texttt{email-univ}       & 40 & \texttt{eil101}           & 10 \\
\texttt{johnson16-2-4}    & 40 & \texttt{gr202}            & 10 \\
\texttt{inf-USAir97}      & 38 & \texttt{ch150}            & 10 \\
\texttt{p-hat700-1}       & 31 & \texttt{bier127}          & 8 \\
\texttt{Erdos991}         & 21 & \texttt{kroA100}          & 1 \\
\texttt{DD687}            & 20 & \texttt{d198}             & 1 \\
\texttt{a280}             & 10 & \texttt{gr96}              &1 \\
\texttt{ch130}            & 10 \\
\bottomrule
\end{tabular}
\caption{Instances where we were unable to beat the
  averaging-and-scaling algorithm after \(\ceil{256 \ln m}\) samples.
  Each instance was solved 10 times for each of 4 possible choices of
  \(\eps\).}
\label{tab:avg-scaling-wins}
\end{table}

\begin{figure}[p]
  \centering
  \includegraphics[width=0.8\textwidth]{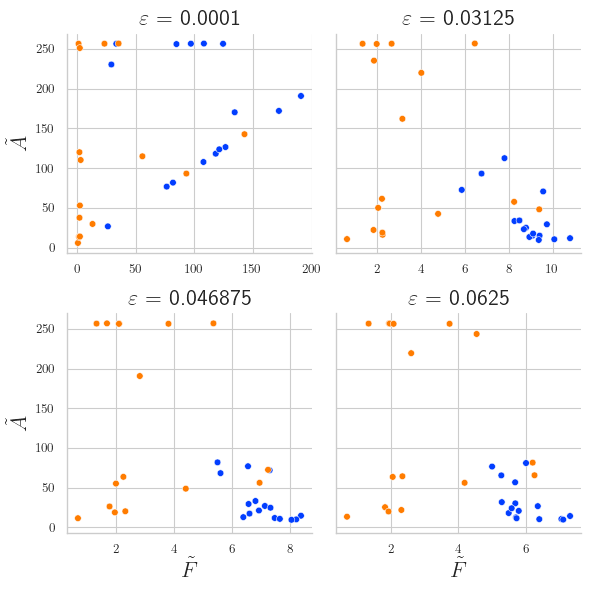}
  \caption{Overview of \(\Atilde\) and \(\Ftilde\).
    Each point represents an instance, with the average value
    of \(\Atilde\) and \(\Ftilde\) defining its coordinates.
    In orange we have \cref{eq:mtx-instances}~instances,
    whereas in blue we have \cref{eq:tsp-instances}~instances.}
  \label{fig:all_instances}
\end{figure}

\begin{figure}[p]
  \centering
  \includegraphics[height=0.45\textheight]{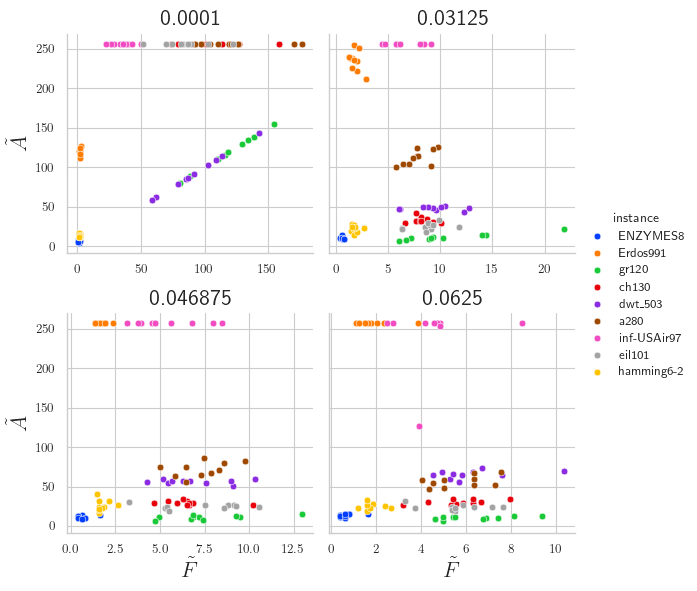}
  \caption{Each of the 10 recorded values of \(\Atilde\) and
    \(\Ftilde\) for a subset of the instances.}
  \label{fig:eps_effect}
\end{figure}

\begin{figure}[p]
  \centering
  \includegraphics[height=0.45\textheight]{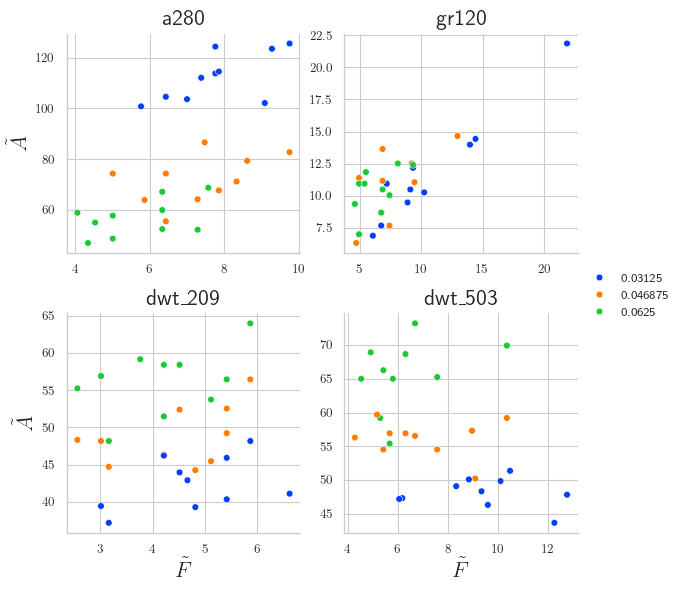}
  \caption{Insteresting instances}
  \label{fig:intersting_instances}
\end{figure}

\clearpage
\subsection{More Samples}

\Cref{tab:avg-scaling-wins} singles out 3 instances where \(\ceil{256
\ln m}\) cuts were not sufficient for the LP-based algorithm to
certify a bound better than the asymptotic objective value of the
averaging-and-scaling algorithm.
They are \texttt{email-univ}, \texttt{johnson16-2-4}, and
\texttt{inf-USAir97}.
For the other instances, there was at least one choice of perturbation
\(\eps\) in which LP-solving consistently outperformed
averaging-and-scaling.
For the instances \texttt{email-univ} and \texttt{inf-USAir97} we
repeat the experiments from the previous subsection, now sampling
\(\ceil{1024 \ln m}\), whereas for \texttt{johnson16-2-4} we sample
\(\ceil{2275 \ln m}\) cuts.
We plot the results in~\crefrange{fig:email-univ}{fig:johnson-detail}.

\Crefrange{fig:email-univ}{fig:johnson-detail} show the value of the
provable certificate for \(\beta_{\fcc}\) obtained at each sample.
More precisely, let \(G = (V, E)\) be a graph, and let \(w,\, z \in
\Lp{E}\).
Let \((\rho, \mu, R, B, x)\) be a tuple
satisfying~\cref{eq:SDP-certificate}, and set \(\sigma \coloneqq 1 -
\iprodt{z}{w}/(\rho\mu)\).
Let \(T \in \Naturals\) denote the total number of cuts sampled.
The figures present the value of the certificate quality for
\(\beta_{\fcc}\) obtained after each sample:
\[
  \beta_t \coloneqq \frac{(1 - \sigma)\mu}{\fcc(\Fcal_t(Y), z)}
  \text{ for every }
  t \in 1, \ldots, T \coloneqq \ceil{1024 \ln m}.
\]
Given the small variation on the value of \(\beta_t\) across
independent runs, we present the results of a single run.
The curves are color coded by the perturbation value: \(\eps =
10^{-4}\) in blue, \(\eps = \nicefrac{1}{32}\) in orange, \(\eps =
\nicefrac{3}{64}\) in green, and \(\eps = \nicefrac{1}{16}\) in red.
Following the same color coding, the colorful dashed horizontal lines
report the value of \(\avg(Y_{\eps}, z_{\eps})\)
for \(\eps \in \set{10^{-4}, \nicefrac{1}{32}, \nicefrac{3}{64},
  \nicefrac{1}{16}}\).
Whereas the dashed lines overlap in
\cref{fig:email-univ,fig:inf-USAir97}, they are clearly separated in
\cref{fig:johnson,fig:johnson-detail}.
The \(x\)-axis is scaled so it represents multiples of \(\ln m\).
For example, ``400'' indicates that \(400 \ln m\) cuts have been
sampled.
In~\cref{fig:email-univ,fig:inf-USAir97}, we show two vertical black
lines: one marking our usual choice of sampling of \(128 \ln m\)
samples, and one marking \(256 \ln m\) samples as in the previous
subsection.

Compare the effect of increased perturbation on the instance
\texttt{email-univ} in \cref{fig:email-univ} with the effect on the
instance \texttt{inf-USAir97} in \cref{fig:inf-USAir97}.
For both instances, perturbation does not noticeably changes the
asymptotic objective value of the averaging-and-scaling algorithm,
causing the dashed lines to overlap.
For \texttt{email-univ}, increasing perturbation consistently reduces
the value of \(\beta_{\fcc}\), with \(\eps = \nicefrac{1}{32}\) having
a certificate quality almost \(1\%\) lower than \(\eps = 10^{-4}\).
For \texttt{inf-USAir97}, when \(\eps = 10^{-4}\), the quality of the
certificate improves up until around \(190 \ln m\) samples, and then
no noticeable improvement is observed, even after \(1024 \ln m\)
samples.
This is not the case for the higher values of perturbation, where the
certificate quality keeps increasing as we keep sampling.
In particular, \(\eps = \nicefrac{1}{16}\) obtains a result better
than the averaging-and-scaling algorithm around \(256 \ln m\) samples,
and \(\eps = \nicefrac{3}{64}\) does so around \(500 \ln m\) samples.

\Cref{fig:johnson,fig:johnson-detail} show the same data but with the
\(y\)-axis restricted to different ranges.
Whereas \cref{fig:johnson} shows both the horizontal line \(\GWalpha\)
and \(\beta_{\fcc}^* \approx 0.9728\), \cref{fig:johnson-detail}
restricts the \(y\)-axis to emphasize the effect of perturbation on
the behavior of our algorithm.
For the specific instance \texttt{johnson16-2-4}, we know its maximum
cut and fractional cut-covering values, and we can prove (via
\cref{prop:eventually-solve}) that the lines in
\cref{fig:johnson,fig:johnson-detail} eventually achieve the value of
\(\beta_{\fcc}^*\) plotted in the figures.
The instance \texttt{johnson16-2-4} is similar to \texttt{email-univ}
in that perturbation has a negative impact on the quality of the
certificates.
Interestingly, perturbation improves the asymptotic objective value of
the averaging-and-scaling algorithm.
For the largest perturbation value \(\eps = \nicefrac{1}{16}\), after
\(\ceil{2275 \ln m}\) samples, the value of \(\beta_{\fcc}\) is just
below \(0.955\), whereas the asymptotic value of the
averaging-and-scaling algorithm is \(0.9573\).
For the other choices of perturbation, \(\ceil{2275 \ln m}\) seem to
be enough: we outperform the averaging-and-scaling algorithm after
approximately \(500 \ln m\) samples for \(\eps = 10^{-4}\), and after
approximately \(1200 \ln m\) samples for \(\eps = \nicefrac{1}{32}\).
For \(\eps = \nicefrac{3}{64}\), around \(2300 \ln m\) samples seem to
suffice to produce a certificate better than the averaging-and-scaling
algorithm.

We conclude this section computing the values \(\mc(G, w)\) and
\(\fcc(G, z)\) for \texttt{johnson16-2-4}.
Let \(G = (V, E)\) be a graph, and let \(w,\, z \in \Lp{E}\).
Let \((\rho, \mu, R, B, x)\) be a tuple
satisfying~\cref{eq:SDP-certificate}.
Set
\begin{equation}
  \label{eq:idealized-ratios}
  \sigma \coloneqq 1 - \frac{\iprodt{w}{z}}{\rho\mu},\,
  \beta_{\mc}^* \coloneqq \frac{\mc(G, w)}{\rho},
  \text{ and }
  \beta_{\fcc}^* \coloneqq \frac{(1 - \sigma)\mu}{\fcc(G, z)}.
\end{equation}
Since \((1 - \sigma)\rho\mu = \iprodt{z}{w} \le \fcc(G, z)\mc(G, z)\),
we have that \(\beta_{\mc}^* \ge \beta_{\fcc}^*\).
From~\cref{eq:fcal-relaxation} we have \(\beta_{\mc}^* \ge
\beta_{\mc}\), and \(\beta_{\fcc}^* \ge \beta_{\fcc}\).
\Cref{prop:eventually-solve} states that, as long as the matrix \(Y
\in \Psd{V}\) used for sampling is positive definite, our LP-based
algorithm eventually computes an optimal fractional cut cover, in
which case \(\beta_{\fcc}^*\) is the limit, as the number of samples
goes to infinity, of the certifiable quality \(\beta_{\fcc}\)
plotted in \crefrange{fig:email-univ}{fig:johnson-detail}.

Let \(G = (V, E)\) be an edge-transitive graph.
Since \(G\) is edge-transitive, a simple symmetrization argument (see,
e.g., \cite[Lemma~2.1]{Samal2015}) implies that
\begin{equation}
  \label{eq:edge-transitive-fcc}
  m = \fcc(G)\mc(G).
\end{equation}
Set \(w \coloneqq \ones\), set \(z \coloneqq \gamma\ones\) for some
\(\gamma \in \Lp{}\), and let \((\rho, \mu, R, B, x)\)
satisfy~\cref{eq:SDP-certificate}.
We claim that
\begin{equation}
  \label{eq:certificates-equal}
  \beta_{\fcc}^*
  = \beta_{\mc}^*.
\end{equation}
By definition of \(\sigma\), we have that \((1 - \sigma)\rho\mu =
\iprodt{z}{w} = \gamma m\).
Then \cref{eq:idealized-ratios}, the definition of \(z\), and
\cref{eq:edge-transitive-fcc} imply~\cref{eq:certificates-equal}:
\begin{equation*}
  \beta_{\fcc}^*
  = \frac{(1 - \sigma)\mu}{\fcc(G, z)}
  = \frac{(1 - \sigma)\mu}{\gamma\fcc(G)}
  = \frac{(1 - \sigma)\mu}{\gamma}\frac{\mc(G)}{m}
  = \frac{(1 - \sigma)\mu\rho}{\gamma m}\frac{\mc(G)}{\rho}
  = \beta_{\mc}^*.
\end{equation*}
Note that if \(z = \gamma \ones = \tfrac{1}{4}\Laplacian_G^*(Y)\),
then
\[
  \tfrac{1}{4}\Laplacian_G^*((1 - \eps) Y + \eps I)
  = (1 - \eps) z + \frac{\eps}{2} \ones
  = \paren[\bigg]{
    (1 - \eps)\gamma + \frac{\eps}{2}
  } \ones
\]
for any \(\eps \in [0, 1]\).
Therefore, for any perturbation \(\eps\) of the nearly optimal
solution,~\cref{eq:certificates-equal} still holds.
From~\cref{eq:certificates-equal}, we can conclude that for any
perturbation \(\eps\), we have
\[
  \beta_{\fcc}^*
  = \frac{\mc(G)}{\rho}
  \text{ if \(G\) edge-transitive and }
  z = \gamma \ones
  \text{ for }
  \gamma \in \Lp{},
\]
as \cref{prop:eventually-solve} implies that, with enough samples,
\(\beta_{\fcc} = \beta_{\fcc}^*\), and hence
\(
  \beta_{\mc}^*
  = \beta_{\mc}
  = \beta_{\fcc}
  = \beta_{\fcc}^*
\).

We can now apply these results to \texttt{johnson16-2-4}.
The instance \texttt{johnson16-2-4} comes from the second DIMACS
challenge \cite{DIMACS2}, and represents a graph where each vertex is
a subset of \([16]\) with two elements, with two sets adjacent if their
Hamming distance is at least 4.
Note that, for this vertex set, we may equivalently say that two
vertices are adjacent if the respective sets are disjoint.
Hence \texttt{johnson16-2-4} is the \emph{Kneser graph} \(\Kn(16,
2)\).
For every \(n \in \Naturals\), \nameandcite{PoljakTuza1987} prove that
a given set of shores of \(\Kn(n, 2)\) contains a maximum cut.
Enumerating this set of shores and picking the best one, we computed
\(\mc(\Kn(16, 2)) = 3036\) (see~\cref{ssec:mc-Kneser}).
When solving the SDP relaxation with \(\eps = 10^{-4}\), we obtained
\(\rho = 3120.57\).
Thus
\[
  \beta_{\fcc}^*
  = \frac{3036}{3120.57}
  \approx 0.9728.
\]
In practice, we obtained \(z \in \Lp{E}\) such that
\(
\norm[\infty]{z - \gamma \ones}
\le 10^{-13}
\)
for a given \(\gamma \in \Lp{}\).
This implies that, with enough sampling, all the lines
in~\cref{fig:johnson,fig:johnson-detail} will reach \(0.9728\).
One can prove that \(\GW(\Kn(16, 2)) = 3120\), and indeed after
sanitizing the output of DDS we were able to obtain \(\rho = 3120\).
This slightly improves the value of \(\beta_{\fcc}^*\) to \(3036/3120
\approx 0.9730\).

\begin{figure}[p]
  \centering
  \includegraphics[height=0.45\textheight]{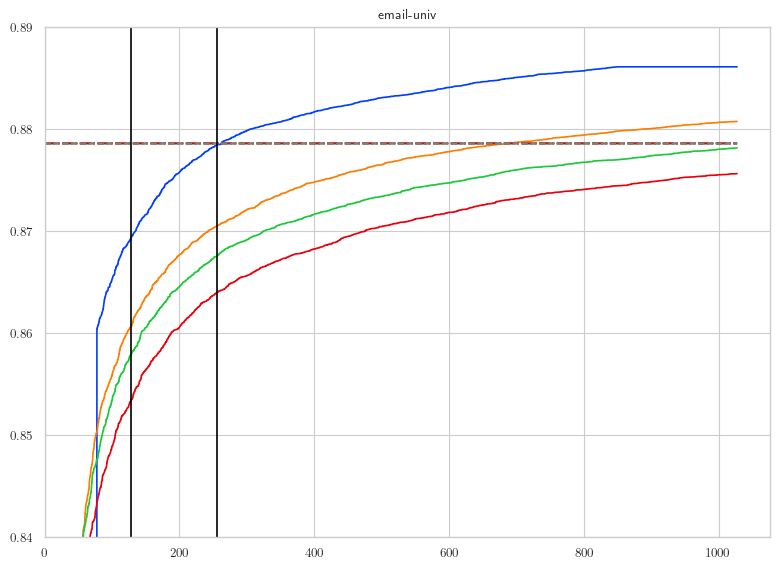}
  \caption{Value of \(\fcc(\Fcal_t, z)\) for \texttt{email-univ}
    across \(\ceil{1024 \ln m}\) samples.}
  \label{fig:email-univ}
\end{figure}

\begin{figure}[p]
  \centering
  \includegraphics[height=0.45\textheight]{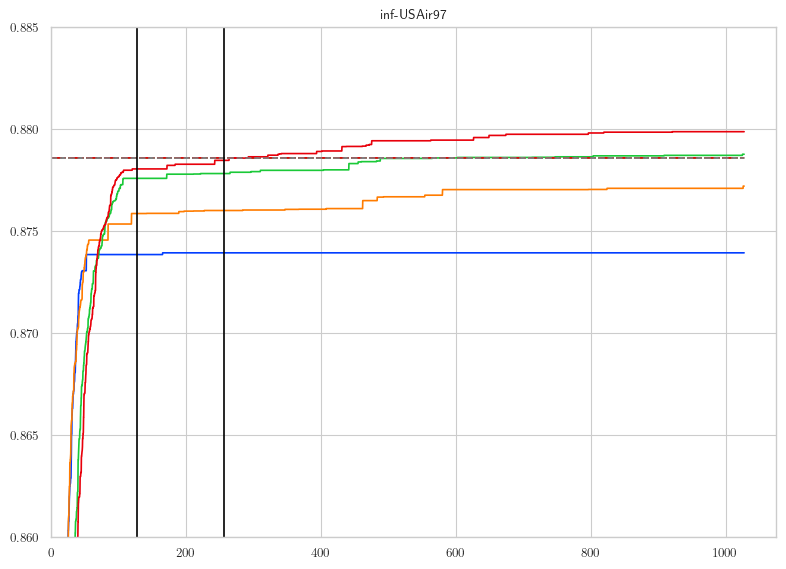}
  \caption{Value of \(\fcc(\Fcal_t, z)\) for \texttt{inf-USAir97}
    across \(\ceil{1024 \ln m}\) samples.}
  \label{fig:inf-USAir97}
\end{figure}

\begin{figure}[p]
  \centering
  \includegraphics[height=0.45\textheight]{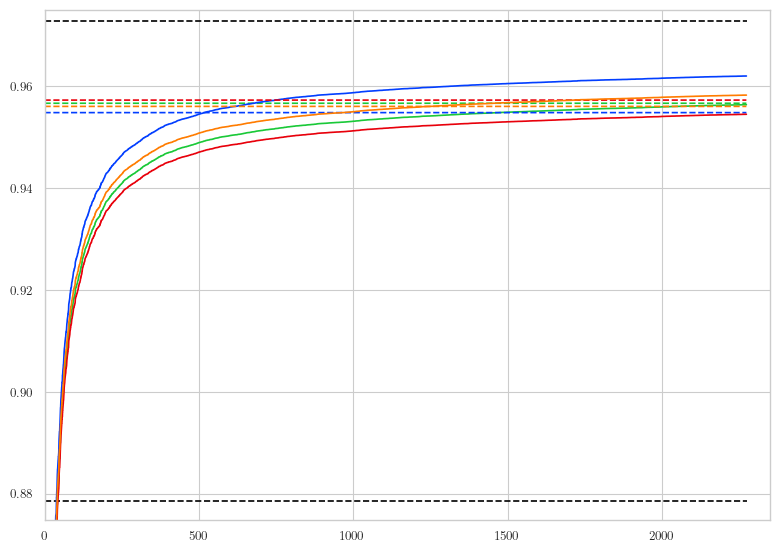}
  \caption{Value of \(\fcc(\Fcal_t, z)\) for \texttt{johnson16-2-4}
    across \(\ceil{2275 \ln m}\) samples.}
  \label{fig:johnson}
\end{figure}

\begin{figure}[p]
  \centering
  \includegraphics[height=0.45\textheight]{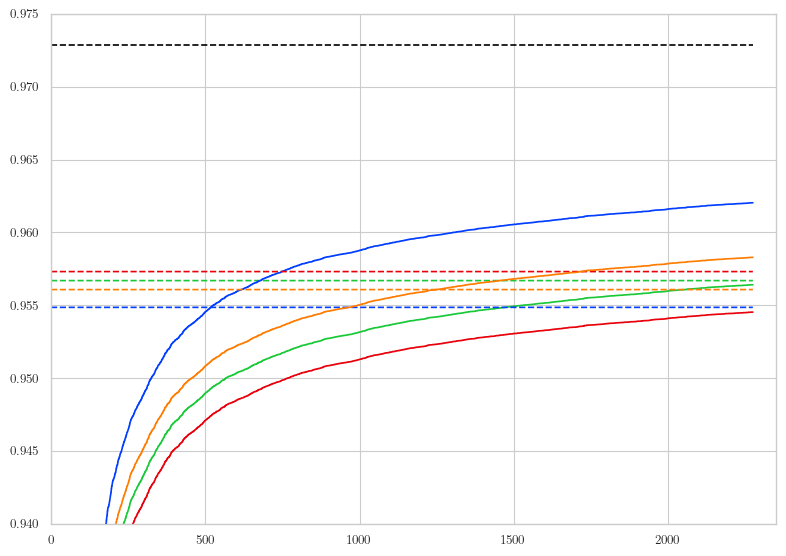}
  \caption{Value of \(\fcc(\Fcal_t, z)\) for \texttt{johnson16-2-4}
    across \(\ceil{2275 \ln m}\) samples.}
  \label{fig:johnson-detail}
\end{figure}

\clearpage
\section{Conclusions}

Our experiments provide ample evidence of the
\cref{eq:thesis-practical} of the primal-dual framework described in
\cref{sec:theory}.
We have presented multiple algorithms which compute
\(\beta\)\nobreakdash-certificates given either a maximum cut or a fractional
cut-covering instance as input.
A theme throughout the paper is how, with the aid of perturbation,
we obtain approximation ratios exceeding the theoretical guarantee of
\(\GWalpha\) with a modest \(\ceil{128 \ln m}\) sample size.
Computing a \(\beta\)-certificate from the nearly optimal solutions to
either of the SDPs \cref{eq:GW-def} or \cref{eq:GW-polar-def} is
typically a cheap computational task relative to nearly solving these
SDPs.

Our positive empirical results build on several aspects.
Convex duality theory guides both theoretical and computational
choices.
Sanitization, which uses
\Cref{alg:Slack-sanitize,alg:representation-sanitize} to produce the
short certificate in \Cref{def:beta-cert}, is crucial in making
our results robust in the presence of floating-point error, which
consequently allows distinct SDP solvers to be employed.
The LP and SDP instances arising from our experiments can be reliably
solved by current LP and SDP solvers.
Perturbation is helpful not only in improving feasibility and properly
addressing thin edges, but also in guaranteeing feasible solutions
certifiable with~\cref{alg:representation-sanitize}.

The experimental results provide validation for the theoretical
duality framework proposed in~\cite{BenedettoProencadeCarliSilvaEtAl2025}.
Perhaps the most unusual property of the duality relationship
presented in~\cref{sec:theory} is that to obtain the dual
problem for a given input, one must solve an SDP to optimality.
This is potentially challenging in the Turing machine model and,
indeed, \cite{BenedettoProencadeCarliSilvaEtAl2025} assumes a
real-number machine model \cite{BlumCuckerEtAl1998} with access to
oracles capable of computing exact square roots and sampling from a
standard normal distribution.
For our actual experiments, we have been working with approximate pairings,
whose quality is captured by the parameter \(\sigma\) as defined
in~\cref{eq:quality-measures}.
Indeed, we have been pairing instances by mapping
\[
  \begin{aligned}
  z \in \Lp{E} &\mapsto \setst{w \in \Lp{E}}{
  \iprodt{w}{z} \ge (1 - \sigma) \GW(G, w) \GW^{\polar}(G, z)
  },
  \text{ and }\\
  w \in \Lp{E} &\mapsto \setst{z \in \Lp{E}}{
    \iprodt{w}{z} \ge (1 - \sigma) \GW(G, w) \GW^{\polar}(G, z)
  }.
  \end{aligned}
\]
When \(\sigma = 0\), we recover the pairing proposed in
\cite{BenedettoProencadeCarliSilvaEtAl2025}.
The final quality of the \(\beta\)-certificates computed are evidence
that our framework is able to obtain good results by working with
floating-point arithmetic.
This robustness to approximate pairings is important even when one
assumes a machine capable of doing exact real-number arithmetic, since
\cite{BenedettoProencadeCarliSilvaEtAl2025} employs perturbation of
the feasible region to address issues arising from thin edges.

Since we were able to reliably produce certificates with sample sizes
much smaller than predicted by theory, some natural questions arise.
\begin{enumerate}
\item What results could one obtain for significantly larger graphs?
  The gap between \(C \coloneqq 128\) and the upper bound
  in~\cref{eq:C-reasonable-range} leaves open the possibility that
  larger graphs might demand more samples to achieve good
  approximation ratios.
  Studying such larger instances requires solving significantly larger
  SDP and LP instances.
\item Conversely, can one prove improved bounds on sample sizes,
  perhaps on restricted families of graphs?
  Theory on the interplay between  perturbation, LP and SDP solver
  behavior, and the randomized aspects of our algorithms could
  potentially guide such improvements.
\item The theoretical framework
  in~\cite{BenedettoProencadeCarliSilvaEtAl2025} has been vastly
  extended into a conic setting in
  \cite{BenedettoProencadeCarliSilvaEtAl2024}.
  Does the \cref{eq:thesis-practical} in computing certificates for the
  maximum cut problem and its dual generalize to computing certificates
  for the problem of maximizing convex quadratic forms on sign
  vectors \cite{Nesterov1998} and its associated dual?
  What about other Boolean Constraint Satisfaction Problems on two
  variables?
\end{enumerate}

\clearpage
\printbibliography
\appendix
\section{Using the Software}
\label{sec:onboarding}

\subsection{Basic Dependencies}
Before compiling our code, you need to install its dependencies.
Our core dependencies are: git, pkg-config, Boost, CMake, BLAS and
LAPACK.
On Ubuntu 24.04.2 LTS, these can be installed with
\begin{verbatim}
sudo apt-get install git pkg-config libboost-all-dev \
  cmake libblas-dev liblapack-dev
\end{verbatim}

The files in the folders \texttt{linux} and \texttt{macos} allow you
to describe where dependencies are found.
We continue this description assuming you are using \texttt{linux}.
The file \texttt{linux/boost.pc}, as distributed, should be able to
detect where one can find the headers and shared libraries to run
Boost.
The file \texttt{linux/gurobi.pc} requires modifications to indicate
where Gurobi is installed on your system.
It should be enough to simply change the line defining \texttt{prefix}
to point to where Gurobi is installed in your system, and the last
line \texttt{Libs} to load the correct libraries for compilation.
The libraries need for compilation change according to your Gurobi
version.
For example, for Gurobi 10 on \cref{eq:biglinux} hardware, the file is
\begin{verbatim}
prefix=/opt/uw/gurobi/10.0.0/linux64
includedir=${prefix}/include
libdir=${prefix}/lib

Name: Gurobi
Description: The Gurobi optimizer
Version: 10.0.0
Cflags: -I${includedir}
Libs: -Wl,-rpath=${libdir} -L${libdir} -lgurobi_g++5.2 -lgurobi100
\end{verbatim}
and for Gurobi 12 on \cref{eq:laptop} hardware it is
\begin{verbatim}
prefix=/home/nathan/Documents/gurobi1202/linux64
includedir=${prefix}/include
libdir=${prefix}/lib

Name: Gurobi
Description: The Gurobi optimizer
Version: 12.0.2
Cflags: -I${includedir}
Libs: -Wl,-rpath=${libdir} -L${libdir} -lgurobi_g++8.5 -lgurobi120
\end{verbatim}
Gurobi requires a license; read more about it in its
\href{https://support.gurobi.com/hc/en-us/articles/14799677517585-Getting-Started-with-Gurobi-Optimizer}{getting
started page}.
MATLAB is an optional dependency (see \cref{ssec:matlab}); if you do
not wish to use it, simply remove the file \texttt{linux/matlab.pc}.

\texttt{Makefile} handles the remaining dependencies.
You can just go into the directory and run
\begin{verbatim}
make
\end{verbatim}
If no issues arise, congratulations, you have a minimal working
installation!

\subsection{Running Experiments}

To run an experiment, you need to create a file describing which
components of the pipeline you want to use, and select their
configuration parameters.

For convenience, the input instances used in this manuscript are
available with the code.
You do need to convert the files into the format used by our pipeline.
You can do so with the following commands:
\begin{verbatim}
./mmconverter instances/*.mtx
./tspconverter instances/*.tsp
\end{verbatim}
A file called \texttt{instances/ENZYMES8.mtx.in} (among other files)
should now exist.

With the instance files available, you can create a file called
\texttt{experiment.cfg} with the following contents:
\begin{verbatim}
instance = instances/ENZYMES8.mtx.in
scaler = w_1_norm
solver = nu_scs
coverer = gurobi
sampler = hyperplane
presampler = null
scs_eps_rel = 1e-4
scs_eps_abs = 1e-4
scs_pert_eps = 0.0
round_eps = 0.0
rank_cutoff = 1e-8
sampler_C = 128
sampler_seed = 1337
\end{verbatim}
You can then run this experiment with \texttt{./main experiments.cfg}.
This files describes a run for the instance \texttt{ENZYMES8} in
\cref{tab:no-round-eps}.
The options \texttt{scaler}, \texttt{solver}, \texttt{coverer},
\texttt{sampler}, and \texttt{presampler} correspond to the options in
our pipeline \cref{fig:pipeline}.
Each options further requires its own parameters to be set.
For example, the \texttt{nu\_scs} solver formulates~\cref{eq:GW-def} as
in~\cref{eq:GW-in-SCS}.
You can specify SCS termination parameters with \texttt{scs\_eps\_rel}
and \texttt{scs\_abs\_rel}, and the perturbation parameter \(\eps\)
used as in~\cref{eq:eps-def} with \texttt{round\_eps}.

The experiments we have used to generate the tables in this paper are
available in the \texttt{Makefile}.
For example, you can run
\begin{verbatim}
make first-table
\end{verbatim}
to create a folder \texttt{first-table} with several configurations of
our pipeline, corresponding to \cref{tab:no-round-eps}.
You can check the options with, for example,
\begin{verbatim}
cat first-table/brg180-nu-null.scs-0000-0000-0000-0000
\end{verbatim}
And then run
\begin{verbatim}
./main first-table/brg180-nu-null.scs-0000-0000-0000-0000
\end{verbatim}
You can then look at the file again and see the results:
\begin{verbatim}
cat first-table/brg180-nu-null.scs-0000-0000-0000-0000
\end{verbatim}

\subsection{Optional dependencies: Matlab}
\label{ssec:matlab}

We offer the option to integrate our pipeline with Matlab to use DDS
and SeDuMi to solve the SDP
relaxations~\cref{eq:GW-def,eq:GW-polar-def}.
Since we interact with matlab from C++, our code needs to load the
MATLAB Data API for C++, and for this reason you should edit the file
\texttt{linux/matlab.pc} to indicate where we can find these files.
Changing the line \texttt{prefix} should suffice.

To install the specific versions of the solvers we work, you can run
\begin{verbatim}
make DDS-2.2 sedumi-1.3.7
\end{verbatim}
You can then run either
\begin{verbatim}
make dds-mm
\end{verbatim}
or
\begin{verbatim}
make sedumi-mm
\end{verbatim}
to create the files describing maximum cut experiments with
\cref{eq:mtx-instances}~instances being solved by DDS or SeDuMi.

%\begin{table}
%  \centering
%  \include{tables/makefile-breakdown}
%  \caption{Tables and Makefile}
%\end{table}

\subsection{Adding Input Instances}

To add an input instance, create a text file where
\begin{enumerate}[(i)]
\item the first line has \(n\) and \(m\), denoting the number of
  vertices and edges in the graph, respectively;
\item the following \(m\) lines have \(u\), \(v\), and \(w\), where
  \(u\) and \(v\) are numbers in \(\set{1, \ldots, n}\) representing a
  vertex and \(w \in \Reals\) is a floating point number representing
  the weight of the edge.
\end{enumerate}
For example, we can represent a path on three vertices with edge weights
\(\nicefrac{3}{2}\) and \(\nicefrac{5}{2}\) with
\begin{verbatim}
3 2
1 2 1.5
2 3 2.5
\end{verbatim}
If we save this file as \texttt{path.in}, one can run our pipeline on
this input instance by changing the instance line to \texttt{instance
  = path.in}.

We provide two helpers to generate these standardized files from other
formats, both of which are compiled when running \texttt{make}.
The executable \texttt{mmconverter} accepts sparse matrices in some of
the formats described by Matrix Market.
In particular, it accepts \texttt{coordinate pattern symmetric} and
\texttt{coordinate real symmetric}.
Running \texttt{./mmconverter instances/ENZYMES8.mtx}, one creates a
file \texttt{instances/ENZYMES8.mtx.in} that is accepted by our
pipeline.
Similarly, the executable \texttt{tspconverter} accepts some of the
formats described by TSPLib.
In particular, it accepts those with \texttt{EDGE\_WEIGHT\_TYPE} given
by \texttt{EXPLICIT}, \texttt{GEO}, \texttt{EUC\_2D}.
Running \texttt{./tspconverter instances/brg180.tsp} creates a a file
\texttt{instances/brg180.tsp.in} which is accepted by our pipeline.
Our \texttt{GEO} instances may not match those used by the TSP
literature, see~\cref{sec:tsplib}.

\clearpage
\section{Formulations}
\label{sec:formulations-appendix}

Throughout this section, \(\Ebb\) and \(\Ybb\) are Euclidean spaces,
\(\Acal \colon \Ebb \to \Ybb\) is a linear transformation, and \(b \in
\Ybb\) and \(c \in \Ebb\) are vectors.

\subsection{SCS}
\label{sec:SCS}

Let \(K \subseteq \Ybb\) be a closed convex cone.
In the linear conic case considered here, the standard form of an
optimization problem solved by SCS is
\begin{equation}
  \tag{\ref{eq:SCS-standard-form}}
  \min \setst{\iprod{c}{x}}{
    x \in \Ebb,\,
    s \in K,\,
    \Acal(x) + s = b
  }
  \ge
  \max \setst{
    \iprod{-b}{y}
  }{
    y \in K^*,\,
    -\Acal^*(y) = c
  }.
\end{equation}
SCS has a stopping criterion based on two parameters: \(\epsabs\) and
\(\epsrel\).
Upon successful termination, SCS returns \((x, s, y)\) such that \(x
\in K\), \(s \in K^*\), and
\begin{subequations}
  \label{eq:SCS-termination}
\begin{align}
  \label{eq:SCS-termination-1}
  \norm[\infty]{\Acal(x) + s - b}
  &\le \epsabs + \epsrel\max\set{
    \norm[\infty]{\Acal(x)},
    \norm[\infty]{s},
    \norm[\infty]{b}
  }\\
  \label{eq:SCS-termination-2}
  \norm[\infty]{\Acal^*(y) + c}
  &\le \epsabs + \epsrel\max\set{
    \norm[\infty]{\Acal^*(y)},
    \norm[\infty]{c}
  }\\
  \label{eq:SCS-termination-3}
  \abs{\iprodt{c}{x} + \iprodt{b}{y}}
  &\le \epsabs + \epsrel\max\set{
    \abs{\iprodt{c}{x}},\,
    \abs{\iprodt{b}{y}}
  }.
\end{align}
\end{subequations}
Note that SCS represents all of its cones as embedded in \(\Reals^d\).
In particular,
\begin{equation}
  \label{eq:scs-matrix-norm}
  \norm[\mathrm{SCS}]{Y}
  = \max\set[\Big]{\,
    \max_{i \in [n]} \abs{Y_{ii}},\,
    \max_{i \neq j \in [n]} \sqrt{2}\abs{Y_{ij}}
  }
  \text{ for every }
  Y \in \Sym{V}.
\end{equation}
For the formulation~\cref{eq:GW-in-SCS}, on successful termination SCS
outputs solutions \(x \in \Reals^V\), \(Y \in \Psd{V}\), and \(S \in
\Psd{V}\) satisfying
\begin{equation}
  \label{eq:SCS-nu-guarantee}
\begin{aligned}
  \norm[\SCS]{
  S - \paren{\Diag(x) - \tfrac{1}{4}\Laplacian_G(w)}
  }
  &\le \epsabs + \epsrel\max\set{
    \norm[\infty]{x},\,
    \norm[\SCS]{S},\,
    \norm[\SCS]{\tfrac{1}{4}\Laplacian_G(w)}
    }
  \\
  \norm[\infty]{\diag({Y}) - \ones}
  &\le \epsabs
  + \epsrel \cdot \max\{ \norm[\infty]{\diag({Y})}, 1\}\\
  \abs{\iprodt{\ones}{x} - \iprod{\tfrac{1}{4}\Laplacian_G(w)}{Y}}
  &\le \epsabs + \epsrel\max\set{
    \abs{\iprodt{\ones}{x}},\,
    \abs{\iprod{\tfrac{1}{4}\Laplacian_G(w)}{Y}}
    }\\
  \iprod{Y}{S} = 0,\,&\text{ with }
  Y \succeq 0, S \succeq 0
\end{aligned}
\end{equation}

\subsection{SeDuMi}

Let \(K \subseteq \Ebb\) be a closed convex cone.
The standard form of an optimization problem solved by SeDuMi is
\begin{equation}
  \label{eq:sedumi-standard-form}
  \min \setst{\iprod{c}{x}}{
    x \in K,\,
    \Acal(x) = b
  }
  \ge
  \max \setst{
    \iprod{b}{y}
  }{
    s \in K^*,\,
    \Acal^*(y) + s = c
  }.
\end{equation}
Note that, in the previous formulation with SCS and DDS, we had \(K
\subseteq \Ybb\), whereas now we assume \(K \subseteq \Ebb\).
Let \(G = (V, E)\) be a graph, and let \(w \in \Lp{E}\) be so \((G,
w)\) is a maximum cut instance.
The SeDuMi formulation of~\cref{eq:GW-gaugef} is given
by~\cref{eq:sedumi-standard-form} with \(K \coloneqq \Psd{V}\) and
\(\Acal \colon \Sym{V} \to \Reals^V\) given by
\begin{equation}
  \label{eq:sedumi-GW}
  \Acal(Y) \coloneqq \diag(Y),\,
  b \coloneqq \ones,\,
  c \coloneqq -\tfrac{1}{4}\Laplacian_G(w).
\end{equation}

\subsection{Formulations of polar SDP}
\label{ssec:GW-polar-formulations}

Let \(G = (V, E)\) be a graph.
Set \(\gamma_G \coloneqq 1 + n + m\) which we use as a uniform scaling
factor.
Let \(\eps,\,\varphi,\,\xi \in (0, 1)\).
The most general formulation for~\cref{eq:GW-polar-def} we studied was
the following:
\begin{equation}
  \label{eq:GW-polar-full}
  \begin{alignedat}[t]{6}
    \text{Minimize \ } & \gamma_G \cdot (\mu - \varphi\trace(Y)), && &\qquad\text{Maximize \ } & \iprodt{\hat{z}}{w},
    \\
    \text{subject to \ } & \tfrac{1}{4}\Laplacian_G^*(Y + \mu \eps I) &&\geq z,&\qquad\text{subject to \ }&\tfrac{\eps}{2} \iprodt{\ones}{w} + (1-\eps)\iprodt{\ones}{x} &&\le \gamma_G,
    \\
    & \diag(Y + \mu \eps I) &&= \mu \ones,&&-\tfrac{1}{4}\Laplacian_G(w) + \Diag(x) &&\succeq \gamma \varphi I,
    \\
    & \mu \in \Lp{},&&&&w \in \Lp{E},
    \\
    & Y \in \Psd{V},&&&&x \in \Reals^V.
  \end{alignedat}
\end{equation}
where \(\hat{z}_e \coloneqq \max\set{z_e, \tfrac{1}{2} \xi
\norm[\infty]{z}}\) for every \(e \in E\).
The parameter \(\varphi\) strengthens the numerical robustness of the
`\(\Diag(x) \succeq \tfrac{1}{4}\Laplacian_G(w)\)' constraint.
The parameters \(\xi\) and \(\eps\) are related to thin-edges: The
parameter \(\xi\) ensures \(\hat{z}_{ij} \ge \tfrac{1}{2} \xi
\norm[\infty]{\hat{z}}\), and \(\eps\) ensures the eigenvalues of the
Gram matrix used for sampling are at least \(\mu \eps\).
Preliminary experiments suggested it was reasonable to keep both \(\xi
= 0\) and \(\varphi = 0\), which explains the simplified formulation
in~\cref{eq:GW-polar-SDP}.

The SCS formulation of~\cref{eq:GW-polar-full} is given
by~\cref{eq:SCS-standard-form} with
\begin{equation}
  \label{eq:fcc-scs}
  \begin{gathered}
    \Ebb \coloneqq \Reals^E \oplus \Reals^V,\,
    \Ybb \coloneqq \Reals \oplus \Reals^E \oplus \Sym{V},\,
    K \coloneqq \Lp{} \oplus \Lp{E} \oplus \Psd{V} \subseteq \Ybb,\\
    \Acal\paren*{
      \begin{bmatrix}
        w \\ x
      \end{bmatrix}
    } \coloneqq
    \begin{bmatrix}
      \tfrac{\eps}{2}\iprodt{\ones}{w} + (1 - \eps)\iprodt{\ones}{x}\\
      -w \\
      \tfrac{1}{4}\Laplacian_G(w) -\Diag(x)
    \end{bmatrix},\,
    b \coloneqq
    \gamma_G
    \begin{bmatrix}
      1 \\
      0 \\
      -\varphi I
    \end{bmatrix},\,
    c \coloneqq
    \begin{bmatrix}
      -\hat{z} \\ 0
    \end{bmatrix}.
  \end{gathered}
\end{equation}
For both DDS ans SeDuMi, we assume \(\varphi = \xi = 0\), and thus
formulate~\cref{eq:GW-polar-SDP}.
The DDS formulation of~\cref{eq:GW-polar-SDP} is given
by~\cref{eq:DDS-standard-form} with
\begin{equation}
\label{eq:DDS-GW-polar}
\begin{gathered}
  \Ebb \coloneqq \Reals^E \oplus \Reals^V,\,
  \Ybb \coloneqq \Reals \oplus \Reals^E \oplus \Sym{V},\,
  K \coloneqq \Lp{} \oplus \Lp{E} \oplus \Psd{V} \subseteq \Ybb,\\
  \Acal\paren*{
    \begin{bmatrix}
      w \\ x
    \end{bmatrix}}
  \coloneqq
  \begin{bmatrix}
    -\tfrac{\eps}{2} \iprodt{\ones}{w} - (1 - \eps)\iprodt{\ones}{x}\\
    w\\
    -\tfrac{1}{4}\Laplacian_G(w) + \Diag(x)
  \end{bmatrix},\,
  b \coloneqq \gamma_G
  \begin{bmatrix}
    1\\
    0\\
    0
  \end{bmatrix},
  \text{ and }
  c \coloneqq
  \begin{bmatrix}
    -z \\ 0
  \end{bmatrix}.
\end{gathered}
\end{equation}
The SeDuMi formulation of~\cref{eq:GW-polar-SDP} is given
by~\cref{eq:sedumi-standard-form} with
\begin{equation}
  \label{eq:SeDuMi-GW-polar}
  \begin{gathered}
    \Ebb \coloneqq \Reals \oplus \Reals^E \oplus \Sym{V},\,
    \Ybb \coloneqq \Reals^E \oplus \Reals^V,\,
    K \coloneqq \Lp{} \oplus \Lp{E} \oplus \Psd{V} \subseteq \Ebb,\\
    \Acal\paren*{
      \begin{bmatrix}
        \mu \\ u \\ Y
      \end{bmatrix}
    } \coloneqq
    \begin{bmatrix}
      \tfrac{1}{4}\Laplacian_G^*(Y + \mu \eps I) - u\\
      (1 - \eps)\mu\ones - \diag(Y)
    \end{bmatrix},\,
    b \coloneqq
    \begin{bmatrix}
      z \\ 0
    \end{bmatrix},\,
    c \coloneqq \gamma_G
    \begin{bmatrix}
      1 \\ 0 \\ 0
    \end{bmatrix}.
  \end{gathered}
\end{equation}
Formulations~\cref{eq:fcc-scs,eq:DDS-GW-polar,eq:SeDuMi-GW-polar} make
concrete our claim that we selected the most straightforward
implementations of \cref{eq:GW-polar-SDP}.
Furthermore, we always formulated our problem over the same cone \(K =
\Lp{} \oplus \Lp{E} \oplus \Psd{V}\).

We compare the stopping criteria for SCS and DDS.
Note that \cref{eq:DDS-termination-1}, by actually asserting
feasibility, is stronger than \cref{eq:SCS-termination-1}, which only
states that the primal solution is near a feasible solution.
The inequality \(\Diag(x) \succeq \tfrac{1}{4}\Laplacian_G(w)\) is
part of primal feasibility in our formulations for both
\cref{eq:GW-def,eq:GW-polar-SDP}.
The other guarantees are ensured in a similar way, be it the linear
constraint on the dual variables or the duality gap bound.
Note that the \(1\) in the RHS of the bounds
\cref{eq:DDS-termination-2,eq:DDS-termination-3} imply that the single
parameter \(\epsdds\) is both an absolute-error and a relative-error
bound.
SeDuMi does not report its stopping criteria in its manual.
However, enough is known of its implementation that we can comment on
it.
It is an Interior-Point Method implementation using Self-Dual
Embedding Technique of \cite{YeToddMuzuno1994}, which does not accept
starting primal or dual solution.
For example, for~\cref{eq:SeDuMi-GW-polar}, on a successful termination,
SeDuMi computes \(w \in \Reals^E\), \(x \in \Reals^V\), and \(S \in
\Lp{} \oplus \Lp{E} \oplus \Psd{V}\) such that
\[
  \Acal^*\paren[\bigg]{
    \begin{bmatrix}
      w \\ x
    \end{bmatrix}
  } + S
  - \begin{bmatrix}
    \gamma_G \\ 0 \\ 0
  \end{bmatrix}
  = \begin{bmatrix}
    \tfrac{\eps}{2}\iprodt{\ones}{w} + (1 - \eps)\iprodt{\ones}{x}\\
    -w\\
    \tfrac{1}{4}\Laplacian_G(w) - \Diag(x)
    \end{bmatrix}
  + S
  - \begin{bmatrix}
    \gamma_G \\ 0 \\ 0
  \end{bmatrix}
  \approx 0.
\]
In other words, similar to SCS, SeDuMi only guarantees that \(\Diag(x)
- \tfrac{1}{4}\Laplacian_G(w)\) is near a positive semidefinite matrix.
In practice, due to the tighter constraint of \(10^{-8}\), the output
of SeDuMi behaved similarly to the output of DDS.

\section{Comparison with SeDuMi}
\label{sec:sedumi}

Experiments in this appendix were run on \cref{eq:laptop}.

\subsection{Maximum Cut Instances}
As input, we are given maximum cut instances \((G, w)\).
We solve~\cref{eq:GW-def} with SeDuMi using formulation
\cref{eq:sedumi-GW}.
We record the SDP solving time, and compare it to the SDP solving time
needed by DDS when collecting the data in
\cref{tab:solvers-1,tab:solvers-2}.
As perturbation does not change the SDP being solved, we show timing
information for \(\eps = 0\).

\stepcounter{tablegroup}%
\renewcommand{\thetable}{\arabic{tablegroup}}%
\begin{table}
  \centering
  \begin{subtable}{.45\linewidth}
    \begin{tabular}{{l}
  *{2}{S[table-format=2.2,group-minimum-digits=4,scientific-notation=false]}}
\toprule
instance & \bypassS{DDS} & \bypassS{SeDuMi} \\
\midrule
{\ttfamily\small bayg29} & 1.32 & 1.14 \\
{\ttfamily\small bays29} & 1.37 & 1.16 \\
{\ttfamily\small berlin52} & 1.42 & 1.12 \\
{\ttfamily\small brazil58} & 1.55 & 1.18 \\
{\ttfamily\small gr96} & 1.87 & 1.27 \\
{\ttfamily\small kroA100} & 2.01 & 1.27 \\
{\ttfamily\small eil101} & 1.87 & 1.29 \\
{\ttfamily\small gr120} & 2.21 & 1.45 \\
{\ttfamily\small bier127} & 2.10 & 1.36 \\
{\ttfamily\small ch130} & 2.17 & 1.41 \\
{\ttfamily\small gr137} & 2.29 & 1.56 \\
{\ttfamily\small ch150} & 2.35 & 1.54 \\
{\ttfamily\small brg180} & 2.82 & 1.62 \\
{\ttfamily\small d198} & 3.74 & 1.98 \\
{\ttfamily\small gr202} & 3.35 & 1.82 \\
{\ttfamily\small a280} & 4.90 & 2.70 \\
\bottomrule
\end{tabular}

  \end{subtable}
  \begin{subtable}{.45\linewidth}
    \begin{tabular}{{l}
  *{2}{S[table-format=2.2,group-minimum-digits=4,scientific-notation=false]}}
\toprule
instance & \bypassS{DDS} & \bypassS{SeDuMi} \\
\midrule
{\ttfamily\small ENZYMES8} & 1.83 & 1.22 \\
{\ttfamily\small soc-dolphins} & 1.60 & 1.17 \\
{\ttfamily\small road-chesapeake} & 1.45 & 1.15 \\
{\ttfamily\small email-enron-only} & 2.24 & 1.41 \\
{\ttfamily\small dwt\_209} & 3.53 & 1.76 \\
{\ttfamily\small ca-netscience} & 12.40 & 4.39 \\
{\ttfamily\small Erdos991} & 13.58 & 7.63 \\
{\ttfamily\small hamming6-2} & 1.42 & 1.13 \\
{\ttfamily\small inf-USAir97} & 10.00 & 3.45 \\
{\ttfamily\small ia-infect-hyper} & 2.11 & 1.30 \\
{\ttfamily\small DD687} & 38.50 & 14.87 \\
{\ttfamily\small dwt\_503} & 16.69 & 8.57 \\
{\ttfamily\small ia-infect-dublin} & 10.23 & 4.57 \\
{\ttfamily\small email-univ} & 149.19 & 53.30 \\
{\ttfamily\small johnson16-2-4} & 1.67 & 1.19 \\
{\ttfamily\small p-hat700-1} & 30.02 & 15.88 \\
\bottomrule
\end{tabular}

  \end{subtable}
  \caption{Comparing SDP~\cref{eq:GW-def} solving times (in seconds) between DDS and SeDuMi.
    For DDS, data corresponds to~\cref{tab:solvers-1} with \(\eps = 0\).}
  \label{tab:solver_timing_mc}
\end{table}

\subsection{Fractional Cut-Covering Instances}
As input, we are given paired fractional cut-covering instances \((G,
z)\).
In \cref{tab:solver_timing_3}, they are \cref{eq:tsp-instances},
and in \cref{tab:solver_timing_4} they are \cref{eq:mtx-instances}.
We solve~\cref{eq:GW-polar-SDP} with SeDuMi (using
formulation~\cref{eq:SeDuMi-GW-polar}) for each of \(\eps \in \set{0,
10^{-8}, \nicefrac{1}{64}}\).
We record the SDP solving time, and compare it to the SDP solving time
needed by DDS when collecting the date in
\cref{tab:solvers-3,tab:solvers-4}.
SeDuMi reported it ran ``into numerical problems'' for the instances
\texttt{ch130}, \texttt{ch150}, \texttt{gr120}, \texttt{gr137}, and
\texttt{gr96}.
The output was, however, accepted by our sanitization procedure.
For the instances \texttt{brg180}, \texttt{d198}, and \texttt{gr202},
SeDuMi was terminated after 8 hours (\(28\ 800\) seconds).

\nexttablegroup
\begin{table}
  \centering
  \begin{tabular}{*{2}{l}
  *{6}{S[table-format=;-2.2,group-minimum-digits=4,scientific-notation=false]}}
\toprule
  instance && \bypassS{DDS} & \bypassS{SeDuMi} & \bypassS{DDS} & \bypassS{SeDuMi} & \bypassS{DDS} & \bypassS{SeDuMi} \\
 && \bypassS{\(\eps = 0\)} & \bypassS{\(\eps = 0\)} & \bypassS{\(\eps = 10^{-8}\)} & \bypassS{\(\eps = 10^{-8}\)} & \bypassS{\(\eps = \nicefrac{1}{64}\)} & \bypassS{\(\eps = \nicefrac{1}{64}\)} \\
\cmidrule(r){1-2}
\cmidrule(lr){3-4}
\cmidrule(lr){5-6}
\cmidrule(l){7-8}
{\ttfamily\small bayg29} && 1.26 & 3.09 & 1.19 & 2.38 & 1.66 & 2.22 \\
{\ttfamily\small bays29} && 1.19 & 1.39 & 1.20 & 0.90 & 2.14 & 0.87 \\
{\ttfamily\small berlin52} && 3.62 & 4.97 & 4.05 & 3.32 & 8.18 & 4.16 \\
{\ttfamily\small brazil58} && 4.96 & 5.56 & 5.87 & 5.64 & 8.32 & 7.03 \\
{\ttfamily\small kroA100} && 5.03 & 17.57 & 5.59 & 17.86 & 7.01 & 16.39 \\
{\ttfamily\small gr96} && 30.68 & 702.20 & 39.45 & 725.17 & 65.54 & 357.24 \\
{\ttfamily\small eil101} && 33.73 & 762.26 & 45.04 & 757.99 & 90.42 & 508.79 \\
{\ttfamily\small gr120} && 106.50 & 872.12 & 131.97 & 878.18 & 312.76 & 1208.93 \\
{\ttfamily\small bier127} && 248.74 & 2355.25 & 314.32 & 2344.48 & 420.88 & 2613.99 \\
{\ttfamily\small ch130} && 121.89 & 2834.15 & 152.12 & 2839.44 & 280.58 & 1924.76 \\
{\ttfamily\small gr137} && 239.31 & 2050.61 & 274.13 & 2049.40 & 420.01 & 2543.89 \\
{\ttfamily\small ch150} && 239.72 & 7509.22 & 287.64 & 7509.28 & 662.26 & 4884.11 \\
{\ttfamily\small brg180} && 563.15 & \bypassS{--} & 658.13 & \bypassS{--} & 1565.76 & \bypassS{--} \\
{\ttfamily\small d198} && 4135.89 & \bypassS{--} & 4880.60 & \bypassS{--} & 5992.67 & \bypassS{--} \\
{\ttfamily\small gr202} && 4039.98 & \bypassS{--} & 4819.71 & \bypassS{--} & 5088.82 & \bypassS{--} \\
\bottomrule
\end{tabular}

  \vspace{-1.5em}
  \caption{Comparing SDP~\cref{eq:GW-polar-SDP} solve time (in seconds) between DDS and SeDuMi.}
  \label{tab:solver_timing_3}
\end{table}

\begin{table}
  \centering
  \begin{tabular}{{l}
  *{6}{S[table-format=2.2,group-minimum-digits=4,scientific-notation=false]}}
\toprule
  \bypassS{instance} & \bypassS{DDS} & \bypassS{SeDuMi} & \bypassS{DDS} & \bypassS{SeDuMi} & \bypassS{DDS} & \bypassS{SeDuMi} \\
 & \bypassS{\(\eps = 0\)} & \bypassS{\(\eps = 0\)} & \bypassS{\(\eps = 10^{-8}\)} & \bypassS{\(\eps = 10^{-8}\)} & \bypassS{\(\eps = \nicefrac{1}{64}\)} & \bypassS{\(\eps = \nicefrac{1}{64}\)} \\
\cmidrule(r){1-1}
\cmidrule(lr){2-3}
\cmidrule(lr){4-5}
\cmidrule(l){6-7}
{\ttfamily\small ENZYMES8} & 1.51 & 0.99 & 1.41 & 0.95 & 1.89 & 1.02 \\
{\ttfamily\small soc-dolphins} & 1.41 & 1.04 & 1.34 & 1.06 & 1.56 & 0.88 \\
{\ttfamily\small road-chesapeake} & 1.15 & 0.91 & 1.12 & 0.96 & 1.16 & 0.81 \\
{\ttfamily\small email-enron-only} & 2.52 & 2.38 & 3.03 & 2.09 & 3.47 & 2.07 \\
{\ttfamily\small dwt\_209} & 5.70 & 3.88 & 4.77 & 3.93 & 5.93 & 3.46 \\
{\ttfamily\small ca-netscience} & 16.59 & 8.77 & 14.03 & 9.07 & 12.98 & 13.59 \\
{\ttfamily\small Erdos991} & 18.35 & 22.27 & 21.66 & 18.86 & 24.44 & 22.52 \\
{\ttfamily\small hamming6-2} & 3.29 & 6.50 & 3.78 & 6.50 & 6.93 & 6.58 \\
{\ttfamily\small inf-USAir97} & 23.00 & 46.14 & 27.33 & 44.24 & 34.38 & 39.34 \\
{\ttfamily\small ia-infect-hyper} & 6.34 & 25.29 & 7.81 & 32.11 & 10.76 & 21.20 \\
{\ttfamily\small DD687} & 74.06 & 112.32 & 69.24 & 119.05 & 90.38 & 117.06 \\
{\ttfamily\small dwt\_503} & 38.28 & 109.08 & 46.53 & 121.54 & 74.42 & 119.09 \\
{\ttfamily\small ia-infect-dublin} & 31.35 & 89.37 & 30.18 & 86.36 & 37.34 & 86.94 \\
{\ttfamily\small email-univ} & 281.62 & 1146.86 & 258.74 & 1250.39 & 364.52 & 1184.11 \\
{\ttfamily\small johnson16-2-4} & 24.45 & 424.85 & 33.77 & 495.51 & 34.81 & 397.62 \\
\bottomrule
\end{tabular}

  \vspace{-1.5em}
  \caption{Comparing SDP~\cref{eq:GW-polar-SDP} solve time (in seconds) between DDS and SeDuMi.}
  \label{tab:solver_timing_4}
\end{table}

As shown in \cref{tab:solver_timing_mc}, SeDuMi outperforms DDS by a
factor in the range \([1.15, 3]\) on our maxcut instances.
This finding
is consistent with other experiments reported in the literature
comparing DDS and SeDuMi.
One contributing reason for DDS's slower performance is its use of a
more conservative and sophisticated step-size strategy relative to
other commonly used interior-point based software.

The picture changes considerably when we turn to the fractional
cut-covering instances.
For the small and easy SDPs reported in
\cref{tab:solver_timing_3,tab:solver_timing_4}, SeDuMi remains faster
than DDS by a factor of up to 3.
However, on many of the larger and more challenging instances, SeDuMi
either takes substantially longer than DDS to solve the instances or
encounters numerical difficulties.
Taken together, these experiments suggest that on the instances used
in our experiments, DDS performs more robustly than SeDuMi on harder
instances, both in terms of computation time and numerical reliability.

\clearpage
\section{Maximum cut for Kneser Graph}
\label{ssec:mc-Kneser}

Let \(n \in \Naturals\).
\cite[Theorem~5]{PoljakTuza1987} prove that, for some \(p \in \set{1,
\ldots, n}\), the shore
\[
  \setst[\bigg]{S \in \binom{n}{2}}{
    S \cap \set{1, \ldots, p} \neq \emptyset
  }
\]
defines a maximum cut of \(\Kn(n, 2)\).
By enumerating such shores for \(\Kn(16, 2)\), we computed its maximum
cut value.
This subsection describes the code used to do so.

We defined a structure to represent vertices of the Kneser graph and
quickly test if they belong to the shore we are testing.
% (lstinputlisting) kneser.c
\begin{lstlisting}[language=C, firstline=5, lastline=22]
size_t p = 0;

struct vertex_t {
	size_t a, b;
};

struct vertex_t vertex(size_t i, size_t j)
{
	assert(i < j);
	return (struct vertex_t) {
		i, j
	};
}

int in_cut(struct vertex_t S)
{
	return (S.a <= p);
}
\end{lstlisting}

We then simply try all choices of \(p\), counting how many edges are
cut for each shore.
To enumerate edges, we enumerate all subsets of \([n]\) with 4
elements \(\set{i, j, k, \ell}\), and then test the three different
edges involving these points:
\[
  \set{\set{i, j}, \set{k, \ell}},\,
  \set{\set{i, k}, \set{j, \ell}},\,
  \set{\set{i, \ell}, \set{j, k}}.
\]
% (lstinputlisting) kneser.c
\begin{lstlisting}[language=C, firstline=24]
void process_edges(size_t i, size_t j, size_t k, size_t l)
{
	cut_size += in_cut(vertex(i, j)) != in_cut(vertex(k, l));
	cut_size += in_cut(vertex(i, k)) != in_cut(vertex(j, l));
	cut_size += in_cut(vertex(i, l)) != in_cut(vertex(j, k));
}

int main()
{
	size_t n = 16;
	for (p = 1; p <= n; p++) {
		cut_size = 0;
		for (size_t i = 1; i <= n; i++)
			for (size_t j = i + 1; j <= n; j++)
				for (size_t k = j + 1; k <= n; k++)
					for (size_t l = k + 1; l <= n; l++)
						process_edges(i, j, k, l);
		printf("p = %zu:\t%zu\n", p, cut_size);
	}
}
\end{lstlisting}

\section{TSPLib Instances}
\label{sec:tsplib}

In this short section, we discuss how the \cref{eq:tsp-instances}
maximum cut instances we have used may not match the instances used in
the broader TSP literature, where they originate from.

TSPlib \cite{TSPlib95} is a collection of weighted graphs encoded in a
dozen different ways, among them the \texttt{GEO} format.
For this format, each vertex is given latitude and longitude
coordinates, and the distance between vertices is the distance on the
three-dimensional sphere.
The TSPlib documentation provides a ``a (simplified) C-implementation
for computing the distances from the input coordinates''.
The code for computing latitude and longitude for every vertex
\texttt{i} is
\begin{verbatim}
PI = 3.141592;
deg = nint( x[i] );
min = x[i] - deg;
latitude[i] = PI * (deg + 5.0 * min / 3.0 ) / 180.0;
deg = nint( y[i] );
min = y[i] - deg;
longitude[i] = PI * (deg + 5.0 * min / 3.0 ) / 180.0
\end{verbatim}
and the code for computing distances between vertices \texttt{i} and
\texttt{j} is
\begin{verbatim}
RRR = 6378.388;
q1 = cos( longitude[i] - longitude[j] );
q2 = cos( latitude[i] - latitude[j] );
q3 = cos( latitude[i] + latitude[j] );
dij = (int) ( RRR * acos( 0.5*((1.0+q1)*q2 - (1.0-q1)*q3) ) + 1.0);
\end{verbatim}
The documentation states ``\texttt{nint(x)} can be replaced by
\texttt{(int) (x+0.5)}'', which computes the nearest integer for
positive floating point numbers.

Whereas in the first part, \texttt{deg} is computed with
\texttt{nint}, in the second part, \texttt{dij} is computed directly
from an integer conversion.
\cite{MirkaWilliamson2023} access TSPlib instances using a
\href{https://github.com/matago/TSPLIB.jl}{Julia library}.
The library uses the Julia function \texttt{trunc}, which rounds the
floating point towards zero, and thus is equivalent to computing the
floor for non-negative floating point numbers.
Importantly, the library uses \texttt{trunc} both when computing
\texttt{deg} and when computing \texttt{dij}, and thus amounts to
changing the lines in the first block of code above into \texttt{deg =
(int) (x[i])} and \texttt{deg = (int) (y[i])}.
The library maintainer has some
\href{https://github.com/matago/TSPLIB.jl/pull/19}{discussion} on the
impact of these changes on the actual graphs produced.
We adopt the same convention as the Julia library, so our graphs match
the ones used in \cite{MirkaWilliamson2023}.

\end{document}